\title[An introduction to model categories with examples]{An introduction to model categories with examples}
\thanks{Date: \today}
\thanks{Keywords: model category, homotopy, dg algebra, dg category}
\author[Xiao-Wu Chen]{Xiao-Wu Chen}
\address{
School of Mathematical Sciences, University of Science and Technology of China, Hefei 230026, Anhui, PR China}
\email{xwchen@mail.ustc.edu.cn}
\theoremstyle{plain}
\newtheorem{lem}{Lemma}[section]
\newtheorem{prop}[lem]{Proposition}
\newtheorem{cor}[lem]{Corollary}
\newtheorem{thm}[lem]{Theorem}
\theoremstyle{remark}
\theoremstyle{definition}
\newtheorem{rem}[lem]{Remark}
\newtheorem{exm}[lem]{Example}
\newtheorem{defn}[lem]{Definition}
\numberwithin{equation}{section}
\begin{document}

\begin{abstract}
We give an informal introduction to model categories, and treat three important examples in some details: the category of small categories, the category of dg algebras, and the category of small dg categories.
\end{abstract}

\maketitle

\setcounter{tocdepth}{2}
\tableofcontents

\section{Introduction}

Homotopy is an important concept in modern mathematics, especially in topology and algebra. A model category means a category with a specific model structure, and provides a general framework to develop a homotopy theory. For example, in dealing with natural isomorphisms between functors, we are really doing a homotopy theory. For another example, a model structure on the category of small dg categories allows us to talk about a meaningful homotopy between dg functors.

In this note, we give an informal introduction to various model structures appearing naturally in algebra. The most important examples are the ones on the category of small categories, on the category of dg algebras, and on the category of small dg categories, respectively. This note might serve as a detailed  introduction to the Dwyer-Kan model structure  on the category of small dg categories.

\section{Preliminaries on categories}

Throughout this section,  we fix a category $\mathcal{C}$. We denote by ${\rm Obj}(\mathcal{C})$ and ${\rm Mor}(\mathcal{C})$ the class of objects and the class of morphisms in $\mathcal{C}$, respectively. For two objects $X$ and $Y$, we denote by $\mathcal{C}(X, Y)$ the Hom set formed by all the  morphisms from $X$ to $Y$.

\subsection{Factor categories and localizations}

By an \emph{equivalence relation} $\mathcal{R}$  on $\mathcal{C}$, we mean
$$\mathcal{R}=(\mathcal{R}_{X, Y})_{X, Y\in {\rm Obj}(\mathcal{C})},$$
 which consists of equivalence relations $\mathcal{R}_{X, Y}$ on $\mathcal{C}(X, Y)$ for all objects $X$ and $Y$; the equivalence relations $\mathcal{R}_{X, Y}$ are subject to the following condition: for any $(\alpha, \beta)\in \mathcal{R}_{X, Y}$, $f\colon X'\rightarrow X$ and $g\colon Y\rightarrow Y'$, the element $(g\circ \alpha\circ f, g\circ \beta \circ f)$ belongs to $\mathcal{R}_{X', Y'}$.

The \emph{factor category} $\mathcal{C}/\mathcal{R}$ is defined such that ${\rm Obj}(\mathcal{C}/\mathcal{R})={\rm Obj}(\mathcal{C})$, and that $(\mathcal{C}/\mathcal{R})(X, Y)=\mathcal{C}(X, Y)/{\mathcal{R}_{X, Y}}$ for any objects $X$ and $Y$. Here, $\mathcal{C}(X, Y)/{\mathcal{R}_{X, Y}}$ denotes the quotient set of $\mathcal{C}(X, Y)$ with respect to the equivalence relation $\mathcal{R}_{X, Y}$. The composition of morphisms in $\mathcal{C}/\mathcal{R}$ is induced from the one in $\mathcal{C}$. The canonical functor
$${\rm can}\colon \mathcal{C}\longrightarrow \mathcal{C}/\mathcal{R}$$
 acts on objects by the identity; it is clearly full and dense.

 Let $F\colon \mathcal{C}\rightarrow \mathcal{D}$ be a functor. Denote by ${\rm Im}(F)$ the \emph{essential image} of $F$, that is, the full subcategory of $\mathcal{D}$ formed by those objects  that are isomorphic to $F(C)$ for some object $C\in \mathcal{C}$. Denote by ${\rm inc}\colon {\rm Im}(F)\rightarrow \mathcal{D}$ the inclusion functor. Associated to $F$, there is an equivalence relation $\mathcal{R}(F)$ on $\mathcal{C}$, which is determined by
 $$\mathcal{R}(F)_{X, Y}=\{(\alpha, \beta)\; |\; \alpha, \beta\in \mathcal{C}(X, Y) \mbox{ with } F(\alpha)=F(\beta)\}.$$
 There is a unique functor $\widetilde{F}\colon \mathcal{C}/{\mathcal{R}(F)}\rightarrow {\rm Im}(F)$ making the following diagram commute.
 \[\xymatrix{
 \mathcal{C}\ar[d]_-{\rm can} \ar[rr]^{F} && \mathcal{D}\\
 \mathcal{C}/{\mathcal{R}(F)} \ar@{.>}[rr]^-{\widetilde{F}} && {\rm Im}(F) \ar[u]_-{\rm inc}
 }\]
 This diagram might be called the \emph{standard factorization} of $F$.

 \begin{prop}
 The above functor $\widetilde{F}\colon \mathcal{C}/{\mathcal{R}(F)}\rightarrow {\rm Im}(F)$ is faithful and dense. Moreover, it is an equivalence if and  only if the given functor $F$ is full. \hfill $\square$
 \end{prop}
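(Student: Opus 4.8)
The plan is to make $\widetilde{F}$ fully explicit using the commuting square and then read off everything from the definition of $\mathcal{R}(F)$. Chasing the identity ${\rm inc}\circ\widetilde{F}\circ{\rm can}=F$ and using that ${\rm can}$ is the identity on objects, one gets $\widetilde{F}(X)=F(X)$ for every object $X$ (an object which indeed lies in ${\rm Im}(F)$), and $\widetilde{F}([\alpha])=F(\alpha)$ for every morphism $\alpha$; this assignment is well defined on equivalence classes precisely because, by definition, $(\alpha,\beta)\in\mathcal{R}(F)_{X,Y}$ if and only if $F(\alpha)=F(\beta)$, and it is the unique assignment compatible with the square. Since ${\rm Obj}(\mathcal{C}/\mathcal{R}(F))={\rm Obj}(\mathcal{C})$ and ${\rm Im}(F)$ is a \emph{full} subcategory of $\mathcal{D}$, for all objects $X,Y$ the functor $\widetilde{F}$ is the map
\[
(\mathcal{C}/\mathcal{R}(F))(X,Y)=\mathcal{C}(X,Y)/\mathcal{R}(F)_{X,Y}\longrightarrow ({\rm Im}(F))(F(X),F(Y))=\mathcal{D}(F(X),F(Y)),\quad [\alpha]\mapsto F(\alpha).
\]

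Granting this, faithfulness and density are immediate. For faithfulness I would note that the displayed map is injective: if $F(\alpha)=F(\beta)$ then $(\alpha,\beta)\in\mathcal{R}(F)_{X,Y}$, so $[\alpha]=[\beta]$; and every morphism of $\mathcal{C}/\mathcal{R}(F)$ with the given source and target is of the form $[\alpha]$. For density, an arbitrary object $D$ of ${\rm Im}(F)$ is by definition isomorphic in $\mathcal{D}$ — hence in ${\rm Im}(F)$, the latter being full — to $F(C)=\widetilde{F}(C)$ for some $C\in\mathcal{C}$, and $C$ is also an object of $\mathcal{C}/\mathcal{R}(F)$.

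For the last assertion I would invoke the standard criterion that a full, faithful and dense functor is an equivalence. As $\widetilde{F}$ is already faithful and dense, it is an equivalence if and only if it is full, so it remains to show that $\widetilde{F}$ is full exactly when $F$ is full; both implications are direct unwindings of the identification of Hom sets above. If $F$ is full, any $g\in\mathcal{D}(F(X),F(Y))=({\rm Im}(F))(\widetilde{F}(X),\widetilde{F}(Y))$ equals $F(\alpha)=\widetilde{F}([\alpha])$ for some $\alpha\in\mathcal{C}(X,Y)$, so $\widetilde{F}$ is full. Conversely, if $\widetilde{F}$ is full, then for $g\in({\rm Im}(F))(\widetilde{F}(X),\widetilde{F}(Y))=\mathcal{D}(F(X),F(Y))$ there is a class $[\alpha]$ with $\widetilde{F}([\alpha])=g$, i.e.\ $F(\alpha)=g$, so $F$ is full.

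I do not expect a genuine obstacle here. The only points needing a little care are that $\widetilde{F}$ is well defined and unique (handled entirely by the definition of $\mathcal{R}(F)$ together with the commuting square), and that the target category is the \emph{full} subcategory ${\rm Im}(F)$ rather than some smaller subcategory — this is exactly what forces its Hom sets to agree with those of $\mathcal{D}$ and is therefore essential for the equivalence between ``$\widetilde{F}$ is full'' and ``$F$ is full''. If one prefers not to cite the full+faithful+dense criterion, one can instead exhibit a quasi-inverse of $\widetilde{F}$ by choosing, for each object of ${\rm Im}(F)$, an object of $\mathcal{C}$ together with an isomorphism to its image; but that seems like unnecessary work.
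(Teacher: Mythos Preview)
Your proposal is correct and complete; the paper itself offers no proof at all for this proposition (it is marked with a terminal $\square$ and left to the reader), so your detailed verification simply fills in what the paper takes as evident. Your explicit description of $\widetilde{F}$ and the unwinding of the definitions is exactly the intended elementary argument.
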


\begin{rem}\label{rem:image}
There is an alternative manner to describe the essential image of a functor, up to an equivalence. Let $F\colon \mathcal{C}\rightarrow \mathcal{D}$ be a functor. We define a new category $\mathcal{C}_F$ such that ${\rm Obj}(\mathcal{C}_F)={\rm Obj}(\mathcal{C})$ and $\mathcal{C}_F(C_1, C_2)=\mathcal{D}(F(C_1), F(C_2))$, whose composition of morphisms is induced by the one in $\mathcal{D}$. We have a natural factorization of $F$ as follows:
$$\mathcal{C}\stackrel{F_1}\longrightarrow \mathcal{C}_F \stackrel{F_2}\longrightarrow \mathcal{D},$$
where $F_1$ acts on objects by the identity, and $F_2$ acts on morphisms by the identity. We observe that $F_2$ induces an equivalence
 $$\mathcal{C}_F\stackrel{\sim}\longrightarrow {\rm Im}(F).$$
\end{rem}

Let $\mathcal{S}$ be a class of  morphisms in $\mathcal{C}$. By a \emph{localization} of $\mathcal{C}$ with respect to $\mathcal{S}$, we mean a category $\mathcal{C}[\mathcal{S}^{-1}]$ together with a functor $q\colon \mathcal{C}\rightarrow \mathcal{C}[\mathcal{S}^{-1}]$ satisfying the following conditions:
\begin{enumerate}
\item[(L1)] $F(f)$ is an isomorphism in $\mathcal{C}[\mathcal{S}^{-1}]$ for any $f\in \mathcal{S}$;
\item[(L2)] for any functor $G\colon \mathcal{C}\rightarrow \mathcal{D}$ sending all the elements in $\mathcal{S}$ to isomorphisms, there exists a unique functor $G'\colon \mathcal{C}[\mathcal{S}^{-1}]\rightarrow \mathcal{D}$ such that $G=G'q$.
\end{enumerate}
If such a localization exists, it is necessarily unique up to a unique isomorphism. For details, we refer to \cite[Chapter~1]{GZ}.

\subsection{Liftings and orthogonality} Let $\mathcal{C}$ be a category. A morphism $f\colon X\rightarrow Y$ is said to be a \emph{retract} of another morphism $f'\colon X'\rightarrow Y'$, if there exists a commutative diagram
\[
\xymatrix{
X\ar[d]_-f \ar[r]^-i & X' \ar[d]^-{f'} \ar[r]^-p & X \ar[d]^-{f}\\
Y\ar[r]^-j & Y' \ar[r]^-q & Y
}
\]
satisfying $p\circ i={\rm Id}_X$ and $q\circ j={\rm Id}_Y$.

\begin{rem}\label{rem:retract}
Assume that $\mathcal{C}$ has an initial object $\emptyset$. Then $\emptyset \rightarrow Y$ is a retract of $\emptyset \rightarrow Y'$ if and only if $Y$ is a \emph{retract} of $Y'$ in $\mathcal{C}$. The latter condition means that there exist morphisms $j\colon Y\rightarrow Y'$ and $q\colon Y'\rightarrow Y$ in $\mathcal{C}$ satisfying $q\circ j={\rm Id}_Y$.
\end{rem}

\begin{lem}
Assume that $f'\colon X'\rightarrow Y'$ is an isomorphism and that $f$ is a retract of $f'$. Then $f$ is also an isomorphism.
\end{lem}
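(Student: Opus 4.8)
The plan is to produce an explicit two-sided inverse for $f$ out of the retract data together with the inverse of $f'$. Fix the commutative diagram witnessing that $f$ is a retract of $f'$, with the labels as in the statement: morphisms $i\colon X\rightarrow X'$, $p\colon X'\rightarrow X$, $j\colon Y\rightarrow Y'$, $q\colon Y'\rightarrow Y$ satisfying $p\circ i={\rm Id}_X$ and $q\circ j={\rm Id}_Y$, together with the commutativity of the two squares, namely $f'\circ i=j\circ f$ (left square) and $f\circ p=q\circ f'$ (right square). Since $f'$ is assumed invertible, we may form $g\colon Y\rightarrow X$ defined by $g=p\circ (f')^{-1}\circ j$.

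I would then check the two identities. For $g\circ f={\rm Id}_X$, use commutativity of the left square: $g\circ f=p\circ (f')^{-1}\circ j\circ f=p\circ (f')^{-1}\circ f'\circ i=p\circ i={\rm Id}_X$. For $f\circ g={\rm Id}_Y$, use commutativity of the right square: $f\circ g=f\circ p\circ (f')^{-1}\circ j=q\circ f'\circ (f')^{-1}\circ j=q\circ j={\rm Id}_Y$. Hence $g$ is a two-sided inverse of $f$, and $f$ is an isomorphism.

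There is no genuine obstacle here; the argument is a one-line diagram chase in each direction. The only point requiring care is bookkeeping: one must keep the orientations of $i,p,j,q$ straight and apply the relations $f'\circ i=j\circ f$ and $f\circ p=q\circ f'$ in the correct direction, so that the cancellations using $(f')^{-1}$, $p\circ i={\rm Id}_X$ and $q\circ j={\rm Id}_Y$ go through. Once the diagram is labelled consistently, the computation is immediate.
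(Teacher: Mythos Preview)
Your proof is correct and matches the paper's approach exactly: the paper simply observes that $f^{-1}=p\circ (f')^{-1}\circ j$, and you have spelled out the two verifications that this is indeed a two-sided inverse.
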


 We point out that this result might  also be deduced from Lemmas~\ref{lem:iso-ortho} and \ref{lem:perp-retract} below.

\begin{proof}
Keep the notation  in the commutative diagram above. We observe that $f^{-1}=p\circ {f'}^{-1}\circ j$.
\end{proof}

Consider a commutative square in $\mathcal{C}$.
\[\xymatrix{
X\ar[d]_-{f}\ar[r]^-{i} & A \ar[d]^-g\\
Y\ar[r]^-{j} & B
}\]
By a \emph{lifting} for the square, we mean  a morphism $h\colon Y\rightarrow A$ making two triangles commute.
\[\xymatrix{
X\ar[d]_-{f}\ar[r]^-{i} & A \ar[d]^-g\\
Y\ar@{.>}[ru]|{h}\ar[r]^-{j} & B
}\]

\begin{defn}
Let $f\colon X\rightarrow Y$ and $g\colon A\rightarrow B$ be two morphisms in $\mathcal{C}$. We say that the ordered pair $(f, g)$ is \emph{orthogonal}, denoted by $f\perp g$, provided that any commutative diagram
\[\xymatrix{
X\ar[d]_-{f}\ar[r]^-{} & A \ar[d]^-g\\
Y\ar[r]^-{} & B
}\]
admits a lifting.
\end{defn}

We have a characterization of isomorphisms using the orthogonality.

\begin{lem}\label{lem:iso-ortho}
Let $f\colon X\rightarrow Y$ be a morphism in $\mathcal{C}$. Then the following statements are equivalent:
\begin{enumerate}
\item[(1)] $f$ is an isomorphism;
\item[(2)] $f\perp g$ for any morphism $g$;
\item[(3)] $g\perp f$ for any morphism $g$;
\item[(4)] $f\perp f$.
\end{enumerate}
\end{lem}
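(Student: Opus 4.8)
The plan is to prove the cycle $(1)\Rightarrow(2)\Rightarrow(4)\Rightarrow(1)$ together with the two implications $(1)\Rightarrow(3)\Rightarrow(4)$; since $(2)\Rightarrow(4)$ and $(3)\Rightarrow(4)$ are obtained at once by specializing the arbitrary morphism $g$ to $f$ itself, this is enough to conclude the equivalence of all four statements.

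For $(1)\Rightarrow(2)$, I would begin with an arbitrary commutative square having left edge $f$, right edge an arbitrary $g\colon A\rightarrow B$, top edge $u\colon X\rightarrow A$ and bottom edge $v\colon Y\rightarrow B$, so that $g\circ u=v\circ f$. The candidate lifting is $h:=u\circ f^{-1}\colon Y\rightarrow A$; one checks $h\circ f=u$ immediately, and $g\circ h=g\circ u\circ f^{-1}=v\circ f\circ f^{-1}=v$, so both triangles commute and $f\perp g$. The implication $(1)\Rightarrow(3)$ is the mirror image: given a commutative square with right edge $f$, left edge an arbitrary $g\colon A\rightarrow B$, top edge $u$ and bottom edge $v$, the morphism $h:=f^{-1}\circ v$ does the job, since $h\circ g=f^{-1}\circ v\circ g=f^{-1}\circ f\circ u=u$ and $f\circ h=v$.

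The only step carrying any real content is $(4)\Rightarrow(1)$, and even it is short: apply the hypothesis $f\perp f$ to the commutative square whose top and bottom edges are $\mathrm{Id}_X$ and $\mathrm{Id}_Y$ respectively and whose two vertical edges are both $f$. A lifting $h\colon Y\rightarrow X$ for this square satisfies precisely $h\circ f=\mathrm{Id}_X$ (upper triangle) and $f\circ h=\mathrm{Id}_Y$ (lower triangle), which exhibits $h$ as a two-sided inverse of $f$. I do not expect any genuine obstacle here; the only thing to watch is the bookkeeping of which triangle of a lifting yields which identity when the definition is unwound.
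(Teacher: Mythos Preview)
Your proof is correct and matches the paper's approach: the paper only writes out the implication $(4)\Rightarrow(1)$, using exactly the same square (both verticals equal to $f$, both horizontals identities) and observing that a lifting is a two-sided inverse of $f$. You have simply spelled out the remaining easy implications that the paper leaves to the reader.
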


\begin{proof}
We only prove ``$(4)\Rightarrow (1)$". It suffices to observe that a lifting of the following diagram
\[\xymatrix{
X\ar[d]_-f \ar@{=}[r] & X \ar[d]^-f\\
Y\ar@{=}[r] & Y
}\]
is necessarily the inverse of $f$. Here, the equality signs mean the corresponding identity morphisms.
\end{proof}

For a class $\mathcal{S}$ of morphisms in $\mathcal{C}$, we define
$$^\perp\mathcal{S}=\{f\in {\rm Mor}(\mathcal{C})\; |\; f\perp g \mbox{ for any }g \in \mathcal{S}\}.$$
Dually, one defines the class $\mathcal{S}^\perp$ of morphisms. By Lemma~\ref{lem:iso-ortho}, both $^\perp\mathcal{S}$ and $\mathcal{S}^\perp$ contain all isomorphisms. For another class $\mathcal{X}$ of morphisms, we write $\mathcal{X}\perp \mathcal{S}$ if $\mathcal{X}\subseteq {^\perp \mathcal{S}}$, or equivalently, $\mathcal{S}\subseteq {\mathcal{X}^\perp}$.

The following two results are easy.

\begin{lem}\label{lem:perp-retract}
The class $^\perp \mathcal{S}$ is closed under retracts, that is, if $f$ is a retract of some element  $f'\in {^\perp\mathcal{S}}$, then we have $f\in {^\perp\mathcal{S}}$. \hfill $\square$
\end{lem}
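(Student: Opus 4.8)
The plan is to reduce, for each $g\in\mathcal{S}$, a lifting problem against the retract $f$ to the corresponding lifting problem against $f'$, to solve the latter using $f'\in{}^\perp\mathcal{S}$, and then to transport the solution back along the retract data.

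So I would fix $g\colon A\to B$ in $\mathcal{S}$ together with a commutative square having left edge $f$, right edge $g$, top edge $a\colon X\to A$ and bottom edge $b\colon Y\to B$; the task is to produce $h\colon Y\to A$ with $h\circ f=a$ and $g\circ h=b$. Keeping the notation $i\colon X\to X'$, $p\colon X'\to X$, $j\colon Y\to Y'$, $q\colon Y'\to Y$ from the definition of a retract — so that $p\circ i=\mathrm{Id}_X$, $q\circ j=\mathrm{Id}_Y$, $f'\circ i=j\circ f$ and $f\circ p=q\circ f'$ — I would first observe that the square with left edge $f'$, right edge $g$, top edge $a\circ p\colon X'\to A$ and bottom edge $b\circ q\colon Y'\to B$ commutes, since $g\circ(a\circ p)=(g\circ a)\circ p=(b\circ f)\circ p=b\circ(f\circ p)=(b\circ q)\circ f'$. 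As $f'\perp g$, this square admits a lifting $h'\colon Y'\to A$ with $h'\circ f'=a\circ p$ and $g\circ h'=b\circ q$.

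I would then set $h:=h'\circ j$ and check that it does the job: $h\circ f=h'\circ(j\circ f)=h'\circ(f'\circ i)=(h'\circ f')\circ i=(a\circ p)\circ i=a\circ(p\circ i)=a$, and $g\circ h=(g\circ h')\circ j=(b\circ q)\circ j=b\circ(q\circ j)=b$. Thus $f\perp g$, and since $g\in\mathcal{S}$ was arbitrary, $f\in{}^\perp\mathcal{S}$.

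There is essentially no obstacle here; the proof is a single diagram chase. The only point requiring a little care is the bookkeeping: the source square must be enlarged by precomposing $a$ and $b$ with the retractions $p$ and $q$, whereas the desired lift is recovered by precomposing the lift $h'$ with the section $j$ — confusing these produces maps with the wrong (co)domains. The dual assertion, that $\mathcal{S}^\perp$ is closed under retracts, follows by the evident dualization, or by applying the present statement in $\mathcal{C}^{\mathrm{op}}$.
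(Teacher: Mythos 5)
Your proof is correct and is exactly the standard diagram chase that the paper leaves as an exercise (the lemma is stated with no written proof): enlarge the lifting problem along the retractions $p$, $q$, solve it using $f'\perp g$, and restrict the solution along the section $j$. The verification of both triangle identities and the commutativity of the enlarged square are all accounted for, so nothing is missing.
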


\begin{lem}\label{lem:perp-retract-comp}
The class $^\perp \mathcal{S}$ is closed under compositions, that is, if $f\colon X\rightarrow Y$  and $g\colon Y\rightarrow Z$ lie in $^\perp \mathcal{S}$ , then we have $g\circ f\in {^\perp\mathcal{S}}$. \hfill $\square$
\end{lem}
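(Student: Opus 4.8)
The plan is to run a two-step diagram chase, splitting the lifting problem for $g\circ f$ into successive lifting problems for $f$ and for $g$. Fix a morphism $s\colon A\rightarrow B$ in $\mathcal{S}$ together with a commutative square whose left edge is $g\circ f\colon X\rightarrow Z$, top edge $u\colon X\rightarrow A$, bottom edge $v\colon Z\rightarrow B$, and right edge $s$. We must produce a morphism $h\colon Z\rightarrow A$ with $h\circ(g\circ f)=u$ and $s\circ h=v$.

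First I would reinterpret the given square as a commutative square with left edge $f\colon X\rightarrow Y$, top edge $u$, bottom edge $v\circ g\colon Y\rightarrow B$, and right edge $s$; commutativity holds since $s\circ u=v\circ(g\circ f)=(v\circ g)\circ f$. As $f\in{}^\perp\mathcal{S}$, this square admits a lifting $k\colon Y\rightarrow A$ with $k\circ f=u$ and $s\circ k=v\circ g$. The identity $s\circ k=v\circ g$ then says exactly that the square with left edge $g\colon Y\rightarrow Z$, top edge $k$, bottom edge $v$, and right edge $s$ commutes. Since $g\in{}^\perp\mathcal{S}$, it admits a lifting $h\colon Z\rightarrow A$ with $h\circ g=k$ and $s\circ h=v$.

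Finally I would verify that this $h$ solves the original problem: $s\circ h=v$ is immediate, and $h\circ(g\circ f)=(h\circ g)\circ f=k\circ f=u$. Hence $(g\circ f)\perp s$, and since $s\in\mathcal{S}$ was arbitrary, $g\circ f\in{}^\perp\mathcal{S}$. There is no genuine obstacle here; the only point requiring care is the order in which the two orthogonality properties are invoked and the bookkeeping of which composite serves as the bottom edge at each stage. The dual assertion, that $\mathcal{S}^\perp$ is closed under compositions, follows from the same argument read in the opposite direction.
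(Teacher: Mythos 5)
Your argument is correct and is exactly the standard two-step diagram chase the paper has in mind (the paper labels this lemma as ``easy'' and omits the proof): first lift against $f$ using the bottom edge $v\circ g$, then lift against $g$ using the resulting $k$ as the top edge. All the verifications $h\circ(g\circ f)=k\circ f=u$ and $s\circ h=v$ check out, so there is nothing to add.
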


\subsection{The small object argument}

In what follows, we assume that $\mathcal{C}$ has small colimits. In particular, it has pushouts and infinite coproducts.

\begin{lem}\label{lem:pushout}
The class $^\perp \mathcal{S}$ is closed under pushouts, that is, for any pushout diagram
\[\xymatrix{
X\ar[d]_-f \ar[r]& X' \ar[d]^-{f'}\\
Y\ar[r] & Y
}\]
with $f\in {^\perp \mathcal{S}}$, we have $f'\in {^\perp \mathcal{S}}$. \hfill $\square$
\end{lem}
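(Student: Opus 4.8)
The plan is to verify directly that $f'\perp g$ for every $g\in\mathcal{S}$, feeding the given square through the lifting property of $f$ and then through the universal property of the pushout. Write the pushout square as
\[\xymatrix{
X\ar[d]_-f \ar[r]^-a & X' \ar[d]^-{f'}\\
Y\ar[r]^-b & Y'
}\]
so that $f'\circ a=b\circ f$ and $(Y',f',b)$ is the pushout of $f$ along $a$. Fix a morphism $g\colon A\rightarrow B$ in $\mathcal{S}$ together with an arbitrary commutative square with top edge $u\colon X'\rightarrow A$ and bottom edge $v\colon Y'\rightarrow B$, that is, $g\circ u=v\circ f'$; we must produce a lifting $h\colon Y'\rightarrow A$.

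First I would produce an auxiliary lifting over the original morphism $f$. Precomposing the given square with the maps $a$ and $b$ yields the commutative square
\[\xymatrix{
X\ar[d]_-f \ar[r]^-{u\circ a} & A \ar[d]^-g\\
Y\ar[r]^-{v\circ b} & B
}\]
where commutativity holds because $g\circ(u\circ a)=v\circ f'\circ a=v\circ(b\circ f)$. Since $f\in{}^\perp\mathcal{S}$ and $g\in\mathcal{S}$, this square admits a lifting $k\colon Y\rightarrow A$ satisfying $k\circ f=u\circ a$ and $g\circ k=v\circ b$.

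Next I would invoke the universal property of the pushout. The pair of morphisms $u\colon X'\rightarrow A$ and $k\colon Y\rightarrow A$ satisfies $u\circ a=k\circ f$, so it factors uniquely through $(Y',f',b)$: there is a unique $h\colon Y'\rightarrow A$ with $h\circ f'=u$ and $h\circ b=k$. The identity $h\circ f'=u$ is precisely the upper triangle required of a lifting of the original square; for the lower triangle one must check $g\circ h=v$, and this is the one place where a short argument is needed rather than a direct reading-off. Both $g\circ h$ and $v$ are morphisms $Y'\rightarrow B$, and they agree after precomposition with $f'$ (since $g\circ h\circ f'=g\circ u=v\circ f'$) and after precomposition with $b$ (since $g\circ h\circ b=g\circ k=v\circ b$); hence by the uniqueness clause of the universal property of the pushout, $g\circ h=v$. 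Therefore $h$ is a lifting for the original square, so $f'\perp g$, and as $g\in\mathcal{S}$ was arbitrary we conclude $f'\in{}^\perp\mathcal{S}$. I do not anticipate any genuine obstacle here; the only thing to be careful about is the bookkeeping with the two universal properties, and in particular remembering to use the uniqueness part of the pushout to deduce $g\circ h=v$ instead of hoping to see it immediately.
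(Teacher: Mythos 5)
Your argument is correct and complete: it is exactly the standard proof that the paper leaves to the reader (the lemma is stated with its proof omitted), namely precomposing the lifting problem with the pushout legs, lifting against $f$, and then using both the existence and the uniqueness clauses of the pushout's universal property to build $h$ and verify $g\circ h=v$. Nothing is missing.
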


\begin{lem}\label{lem:coproduct}
The class $^\perp \mathcal{S}$ is closed under infinite coproducts, that is, for any set-indexed family $\{f_i\}_{i\in \Lambda}$ of morphisms in $^\perp \mathcal{S}$, we have $\coprod_{i\in \Lambda} f_i\in {^\perp \mathcal{S}}$. \hfill $\square$
\end{lem}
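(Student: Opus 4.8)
The plan is to verify the lifting property directly from the universal property of coproducts. Fix a set-indexed family $\{f_i\colon X_i\rightarrow Y_i\}_{i\in\Lambda}$ of morphisms in $^\perp\mathcal{S}$, and set $f=\coprod_{i\in\Lambda}f_i\colon \coprod_{i\in\Lambda}X_i\rightarrow \coprod_{i\in\Lambda}Y_i$. Write $\iota_i\colon X_i\rightarrow \coprod_{i\in\Lambda}X_i$ and $\kappa_i\colon Y_i\rightarrow \coprod_{i\in\Lambda}Y_i$ for the structural morphisms; by the very definition of $\coprod f_i$ we have $f\circ\iota_i=\kappa_i\circ f_i$ for each $i$. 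Now take an arbitrary $g\colon A\rightarrow B$ in $\mathcal{S}$ together with a commutative square whose left edge is $f$, right edge is $g$, top edge is some $u\colon \coprod_{i\in\Lambda}X_i\rightarrow A$ and bottom edge is some $v\colon \coprod_{i\in\Lambda}Y_i\rightarrow B$. The goal is to produce a lifting $h\colon \coprod_{i\in\Lambda}Y_i\rightarrow A$.

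First I would restrict the square to each index $i$: precomposing the top row with $\iota_i$ and the bottom row with $\kappa_i$, and using $f\circ\iota_i=\kappa_i\circ f_i$, yields a commutative square with left edge $f_i$, right edge $g$, top edge $u\circ\iota_i$ and bottom edge $v\circ\kappa_i$. Since $f_i\in{^\perp\mathcal{S}}$ and $g\in\mathcal{S}$, this square admits a lifting $h_i\colon Y_i\rightarrow A$, so that $h_i\circ f_i=u\circ\iota_i$ and $g\circ h_i=v\circ\kappa_i$.

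Next I would assemble the $h_i$ into a single morphism: by the universal property of the coproduct $\coprod_{i\in\Lambda}Y_i$, the family $\{h_i\}_{i\in\Lambda}$ induces a unique morphism $h\colon \coprod_{i\in\Lambda}Y_i\rightarrow A$ with $h\circ\kappa_i=h_i$ for all $i$. It then remains to check that $h$ is a lifting for the original square, namely $h\circ f=u$ and $g\circ h=v$, and both identities are verified by precomposition with the structural morphisms: $h\circ f\circ\iota_i=h\circ\kappa_i\circ f_i=h_i\circ f_i=u\circ\iota_i$ for all $i$ gives $h\circ f=u$ by the uniqueness part of the universal property of $\coprod_{i\in\Lambda}X_i$, and $g\circ h\circ\kappa_i=g\circ h_i=v\circ\kappa_i$ for all $i$ gives $g\circ h=v$ by the uniqueness part of the universal property of $\coprod_{i\in\Lambda}Y_i$. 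Hence $\coprod_{i\in\Lambda}f_i\in{^\perp\mathcal{S}}$.

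There is no genuine obstacle here; the proof is a routine diagram chase. The only points worth a word of care are that $\Lambda$ is assumed to be a set, so the coproducts in question exist in $\mathcal{C}$ by the standing hypothesis that $\mathcal{C}$ has small colimits, and that the liftings $h_i$ may be chosen independently for each $i$ — no compatibility among them is needed, since the universal property of the coproduct glues an arbitrary family $\{h_i\}_{i\in\Lambda}$ into the single morphism $h$.
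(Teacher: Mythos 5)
Your proof is correct and is exactly the routine diagram chase the paper has in mind when it states this lemma without proof: lift componentwise using $f_i\in{^\perp\mathcal{S}}$, glue the liftings via the universal property of $\coprod_{i\in\Lambda}Y_i$, and check the two triangle identities by precomposing with the structural morphisms. Nothing is missing.
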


Consider a sequence of morphisms in $\mathcal{C}$.
$$X_0\longrightarrow X_1 \longrightarrow X_2\longrightarrow \cdots$$
The induced morphism
$$X_0\longrightarrow {\rm colim}_{i\geq 0} \; X_i$$
 is called the \emph{transfinite composition} of the sequence; it is unique up to a unique isomorphism. We mention that any finite composition might be realized as a transfinite composition.

\begin{lem}\label{lem:trans-comp}
The class $^\perp \mathcal{S}$ is closed under transfinite compositions, that is, for any sequence above with each $X_i\rightarrow X_{i+1}$ in $^\perp \mathcal{S}$, the transfinite composition $X_0\rightarrow {\rm colim}_{i\geq 0} \; X_i$ lies in $^\perp \mathcal{S}$. \hfill $\square$
\end{lem}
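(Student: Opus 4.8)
The plan is to fix a morphism $g\colon A\to B$ in $\mathcal{S}$ together with a commutative square
\[\xymatrix{
X_0\ar[d]_-{f_\infty}\ar[r]^-{u} & A\ar[d]^-{g}\\
X_\infty \ar[r]^-{v} & B
}\]
where $X_\infty={\rm colim}_{i\geq 0}\, X_i$ and $f_\infty$ is the transfinite composition, and to produce a lifting $h\colon X_\infty\to A$. Write $f_i\colon X_i\to X_{i+1}$ for the maps of the sequence and $c_i\colon X_i\to X_\infty$ for the canonical morphisms into the colimit; these form a cocone, so $c_{i+1}\circ f_i=c_i$, and by definition of transfinite composition $c_0=f_\infty$. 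Since the diagram is indexed by the natural numbers, no limit-ordinal stage intervenes, and a plain induction on $i$ will suffice.

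First I would construct, by induction on $i$, morphisms $h_i\colon X_i\to A$ with $h_{i+1}\circ f_i=h_i$ and $g\circ h_i=v\circ c_i$. Put $h_0=u$; then $g\circ h_0=v\circ c_0$ is exactly the commutativity of the given square. Assuming $h_i$ has been built, the square
\[\xymatrix{
X_i\ar[d]_-{f_i}\ar[r]^-{h_i} & A\ar[d]^-{g}\\
X_{i+1} \ar[r]^-{v\circ c_{i+1}} & B
}\]
commutes, because $g\circ h_i=v\circ c_i=v\circ c_{i+1}\circ f_i$. As $f_i\in {^\perp\mathcal{S}}$ and $g\in \mathcal{S}$, it admits a lifting $h_{i+1}\colon X_{i+1}\to A$, which by the definition of a lifting means precisely that $h_{i+1}\circ f_i=h_i$ and $g\circ h_{i+1}=v\circ c_{i+1}$. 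This completes the induction.

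The family $\{h_i\}_{i\geq 0}$ is compatible with the transition maps, so the universal property of $X_\infty$ yields a unique $h\colon X_\infty\to A$ with $h\circ c_i=h_i$ for all $i$. Then $h\circ f_\infty=h\circ c_0=h_0=u$, giving the upper triangle. For the lower triangle, $g\circ h$ and $v$ are two morphisms $X_\infty\to B$ with $(g\circ h)\circ c_i=g\circ h_i=v\circ c_i$ for every $i$, so the \emph{uniqueness} clause of the universal property forces $g\circ h=v$. Hence $h$ is a lifting of the original square, and $f_\infty\in {^\perp\mathcal{S}}$.

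There is no real obstacle here: the proof is a routine induction followed by one application of the colimit's universal property. The only points meriting care are that the liftings automatically glue (since "lifting" already builds in the compatibility $h_{i+1}\circ f_i=h_i$) and that the lower triangle is obtained from the uniqueness part of the universal property rather than verified by hand. I would also note that, since any finite composition is realizable as a transfinite composition (as remarked just before the statement), this lemma recovers Lemma~\ref{lem:perp-retract-comp} for two-fold, and hence arbitrary finite, compositions.
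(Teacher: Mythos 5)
Your proof is correct and is exactly the routine verification the paper omits (the lemma is stated with only a $\square$): build compatible liftings $h_i$ by induction using $f_i\in{^\perp\mathcal{S}}$ at each stage, glue them via the universal property of the colimit, and get the lower triangle from the uniqueness clause. Nothing further is needed.
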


\begin{defn}
Let $\mathcal{S}$ be a class of morphisms in $\mathcal{C}$. We denote by $\mathcal{S}\mbox{-cell}$ the class of morphisms formed by transfinite compositions of pushouts of coproducts of elements in $\mathcal{S}$. In other words, a morphism in $\mathcal{S}\mbox{-cell}$ is the transfinite composition of a sequence,
$$X_0\stackrel{\phi_0}\longrightarrow X_1 \stackrel{\phi_1}\longrightarrow X_2\longrightarrow \cdots$$
where each $\phi_i$ fits into a pushout diagram
\[
\xymatrix{\coprod_{x\in \Lambda_i} A_x \ar[d]_-{\coprod_{x\in \Lambda_i} f_x} \ar[r]& X_i\ar[d]^-{\phi_i}\\
\coprod_{x\in \Lambda_i} B_x \ar[r]& X_{i+1}
}\]
with each $f_x\colon A_x\rightarrow B_x\in \mathcal{S}$.
\end{defn}

\begin{prop}\label{prop:cell}
Assume that $f$ is a retract of some morphism in $\mathcal{S}\mbox{-{\rm cell}}$. Then we have $f\in {^\perp(\mathcal{S}^\perp)}$.
\end{prop}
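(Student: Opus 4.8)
The plan is to reduce the statement to the formal closure properties of the classes of the form ${}^\perp\mathcal{T}$ collected in Lemmas~\ref{lem:perp-retract}, \ref{lem:pushout}, \ref{lem:coproduct} and \ref{lem:trans-comp}, applied with $\mathcal{T}=\mathcal{S}^\perp$. The starting observation is that, directly from the definition of orthogonality, $\mathcal{S}\perp\mathcal{S}^\perp$, that is, $\mathcal{S}\subseteq {}^\perp(\mathcal{S}^\perp)$.

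The first step is to show the inclusion $\mathcal{S}\mbox{-{\rm cell}}\subseteq {}^\perp(\mathcal{S}^\perp)$. Let $\phi\colon X_0\rightarrow {\rm colim}_{i\geq 0}\, X_i$ be a morphism in $\mathcal{S}\mbox{-{\rm cell}}$, realized as the transfinite composition of a sequence $X_0\stackrel{\phi_0}\longrightarrow X_1\stackrel{\phi_1}\longrightarrow X_2\longrightarrow\cdots$ in which each $\phi_i$ is the pushout of $\coprod_{x\in\Lambda_i} f_x$ along some morphism $\coprod_{x\in\Lambda_i}A_x\rightarrow X_i$, with each $f_x\in\mathcal{S}$. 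Since each $f_x$ lies in ${}^\perp(\mathcal{S}^\perp)$, Lemma~\ref{lem:coproduct}, applied with $\mathcal{S}$ there replaced by $\mathcal{S}^\perp$, gives $\coprod_{x\in\Lambda_i} f_x\in {}^\perp(\mathcal{S}^\perp)$; then Lemma~\ref{lem:pushout} gives $\phi_i\in {}^\perp(\mathcal{S}^\perp)$ for every $i$; and finally Lemma~\ref{lem:trans-comp} gives $\phi\in {}^\perp(\mathcal{S}^\perp)$. The second and final step is to invoke closure under retracts: if $f$ is a retract of some $f'\in\mathcal{S}\mbox{-{\rm cell}}\subseteq {}^\perp(\mathcal{S}^\perp)$, then Lemma~\ref{lem:perp-retract}, again applied with $\mathcal{S}^\perp$ in place of $\mathcal{S}$, yields $f\in {}^\perp(\mathcal{S}^\perp)$.

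There is no genuine obstacle here; the proof is a bookkeeping assembly of the four closure lemmas. The only points requiring a little care are that those lemmas were phrased for a class ${}^\perp\mathcal{S}$ with $\mathcal{S}$ \emph{arbitrary}, so that they apply verbatim with $\mathcal{S}$ replaced by $\mathcal{S}^\perp$; and that a finite composition (in particular the empty one) counts as a transfinite composition, so that short or degenerate cell sequences, as well as the initial observation $\mathcal{S}\subseteq {}^\perp(\mathcal{S}^\perp)$, cause no difficulty.
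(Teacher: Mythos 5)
Your proposal is correct and follows exactly the paper's own argument: the observation $\mathcal{S}\subseteq {}^\perp(\mathcal{S}^\perp)$ followed by the closure of ${}^\perp(\mathcal{S}^\perp)$ under coproducts, pushouts, transfinite compositions, and retracts (Lemmas~\ref{lem:coproduct}, \ref{lem:pushout}, \ref{lem:trans-comp}, \ref{lem:perp-retract}). You have merely written out the bookkeeping that the paper leaves implicit.
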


\begin{proof}
We observe $\mathcal{S}\subseteq {^\perp(\mathcal{S}^\perp)}$. Now, we apply Lemmas~\ref{lem:coproduct}, \ref{lem:pushout}, \ref{lem:trans-comp}, and  \ref{lem:perp-retract}.
\end{proof}

\begin{defn}
An object $A$ is \emph{sequentially small}, if for any sequence $X_0\rightarrow X_1 \rightarrow X_2\rightarrow \cdots $ of morphisms in $\mathcal{C}$, the canonical map
$${\rm colim}_{i\geq 0}\; \mathcal{C}(A, X_i)\longrightarrow \mathcal{C}(A, {\rm colim}_{i\geq 0} \; X_i)$$
is a bijection.
\end{defn}

The following fundamental result is due to \cite[Lemma~II.3.3]{Qui}. For a more general version using large ordinals, we refer to \cite[Section~10.5]{Hir}.

\begin{thm}\label{thm:Quillen}{\rm (The small object argument)}
Assume that $\mathcal{S}$ is a set of  morphisms whose domains are sequentially small. Then any morphism $f$ admits a factorization $f=p\circ i$ with $i\in \mathcal{S}\mbox{-}{\rm cell}$ and $p\in \mathcal{S}^\perp$.
\end{thm}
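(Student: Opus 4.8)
The plan is to construct the factorization by a transfinite (here, countable) iteration that successively attaches cells in order to force the right-hand map to lie in $\mathcal{S}^\perp$. Fix a morphism $f\colon X\rightarrow Y$. I will build a sequence
\[
X=Z_0\stackrel{\phi_0}\longrightarrow Z_1\stackrel{\phi_1}\longrightarrow Z_2\longrightarrow\cdots
\]
together with compatible morphisms $p_n\colon Z_n\rightarrow Y$ satisfying $p_0=f$ and $p_{n+1}\circ\phi_n=p_n$. At stage $n$, I consider the set $S_n$ of all commutative squares
\[
\xymatrix{
A_s\ar[d]_-{g_s}\ar[r]^-{a_s} & Z_n\ar[d]^-{p_n}\\
B_s\ar[r]^-{b_s} & Y
}
\]
indexed by $s\in S_n$, where $g_s\colon A_s\rightarrow B_s$ ranges over $\mathcal{S}$ (a \emph{set}, by hypothesis). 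I form the pushout
\[
\xymatrix{
\coprod_{s\in S_n}A_s\ar[d]_-{\coprod g_s}\ar[r]^-{(a_s)}& Z_n\ar[d]^-{\phi_n}\\
\coprod_{s\in S_n}B_s\ar[r]& Z_{n+1}
}
\]
and define $p_{n+1}\colon Z_{n+1}\rightarrow Y$ as the unique map induced by $p_n$ on $Z_n$ and by $(b_s)$ on $\coprod_{s}B_s$, using the universal property of the pushout together with the commutativity of each square. By construction each $\phi_n$ is a pushout of a coproduct of elements of $\mathcal{S}$, so the transfinite composition $i\colon X=Z_0\rightarrow Z_\infty:={\rm colim}_{n\geq 0}Z_n$ lies in $\mathcal{S}\mbox{-cell}$; and the $p_n$ induce $p\colon Z_\infty\rightarrow Y$ with $p\circ i=f$.

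It remains to check $p\in\mathcal{S}^\perp$. So I take an arbitrary square
\[
\xymatrix{
A\ar[d]_-{g}\ar[r]^-{u} & Z_\infty\ar[d]^-{p}\\
B\ar[r]^-{v} & Y
}
\]
with $g\colon A\rightarrow B\in\mathcal{S}$, and must produce a lifting $B\rightarrow Z_\infty$. Here is where sequential smallness of $A$ enters: since $Z_\infty={\rm colim}_n Z_n$, the map $u\colon A\rightarrow Z_\infty$ factors (because $\mathcal{C}(A,-)$ commutes with the colimit of the sequence) as $A\stackrel{u_n}\rightarrow Z_n\rightarrow Z_\infty$ for some $n$, and one can further arrange that $p_n\circ u_n = v\circ g$ — strictly speaking this compatibility is an equality of two maps $A\rightarrow Y$, which again becomes true at some finite stage, possibly after increasing $n$; I should be slightly careful and note that $Y$ need not be sequentially small, so instead I check directly that $p_n\circ u_n$ and $v\circ g$ agree after composing into $Y$, which they do since both represent the image of $u$ under $p$ composed with $g$ in a suitable sense, or more cleanly, I replace the argument by the observation that the pair $(u_n, v)$ forms precisely a square of the type indexed in $S_n$. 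Then the canonical map $B\rightarrow\coprod_{s\in S_n}B_s\rightarrow Z_{n+1}\rightarrow Z_\infty$ associated to this square is, by the very construction of $\phi_n$ and $p_{n+1}$, a lifting: it restricts along $g$ to $A\rightarrow Z_n\rightarrow Z_\infty$ which equals $u$ (up to the factorization we fixed), and postcomposing with $p$ gives $v$ because $p_{n+1}$ was defined on the $B_s$-summand via $(b_s)=v$.

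I would organize the write-up so that the routine verifications — that $p_{n+1}$ is well defined, that the $\phi_n$ are genuinely of the required cellular form, that $p\circ i=f$ — are dispatched quickly by appeals to universal properties, and so that the one substantive point, the lifting property of $p$, gets the care it deserves. The main obstacle, and the only place the hypotheses are really used, is the factorization step $u=(\text{something})\circ u_n$ through a finite stage: this is exactly the content of ``$A$ sequentially small,'' and the subtlety is handling the compatibility with $v$ cleanly given that $Y$ carries no smallness assumption. The cleanest fix, which I would adopt, is to phrase the indexing set $S_n$ as the set of \emph{all} commutative squares from elements of $\mathcal{S}$ to $(p_n\colon Z_n\rightarrow Y)$, and then observe that once $u$ factors through $u_n\colon A\rightarrow Z_n$, the outer square $(u_n, v)$ automatically commutes (both composites $A\rightarrow Y$ equal $p\circ u$ precomposed appropriately, using $p\circ(\text{canonical }Z_n\rightarrow Z_\infty)=p_n$), so it is one of the squares we already ``solved'' by attaching a cell at stage $n$; no second application of smallness to $Y$ is needed. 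Everything else is then formal, and $i\in\mathcal{S}\mbox{-cell}$ is immediate from the definition.
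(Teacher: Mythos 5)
Your construction is exactly the one in the paper: at each stage you index over all commutative squares from elements of $\mathcal{S}$ to $p_n\colon Z_n\rightarrow Y$, push out, and pass to the colimit, so the approach is essentially identical. Your verification that $p\in\mathcal{S}^\perp$ (which the paper omits as routine) is also correct, including the key observation that only the domain $A$ needs to be sequentially small because the commutativity $p_n\circ u_n=v\circ g$ is automatic from $p\circ{\rm can}_n=p_n$, so no smallness assumption on $Y$ is required.
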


\begin{proof}
We will factorize  $f\colon X\rightarrow Y$ as a composition  $X\rightarrow E_\infty \rightarrow Y$. Set $E_0=X$, and $f_0=f\colon E_0\rightarrow Y$. Assume that $f_i\colon E_i\rightarrow Y$ is defined. Consider the following index set.
\[
\xymatrix@R=2pt@C=8pt{& A_s \ar[dd]_-{g_s}\ar[r]^-{a_s} & E_i \ar[dd]^-{f_i}\\
S_i=\{ s\colon & & & \;|\; s \mbox{ any commutative diagram with } g_s\in \mathcal{S} \}\\
                     & B_s \ar[r]^-{b_s} & Y
                     }\]
Here, we use the assumption that $\mathcal{S}$ is a \emph{set}. Consider the following pushout diagram.
\[
\xymatrix{\coprod_{s\in S_i} A_s  \ar[d]_-{\coprod_{s\in S_i}g_s} \ar[rr]^-{\sum_{s\in S_i}a_s} && E_i\ar@{.>}[d]^-{\phi_i}\\
\coprod_{s\in S_i} B_s\ar@{.>}[rr]^-{\sum_{s\in S_i}c_s} && E_{i+1}
}\]
There is a unique morphism $f_{i+1}\colon E_{i+1}\rightarrow Y$ satisfying
$$f_{i+1}\circ\phi_i=f_i \mbox{ and } f_{i+1}\circ \sum_{s\in S_i} c_s=\sum_{s\in S_i}b_s.$$
 By induction, we obtain a sequence.
$$X=E_0\stackrel{\phi_0}\longrightarrow E_1\stackrel{\phi_1}\longrightarrow E_2 \longrightarrow \cdots $$
We define $E_\infty={\rm colim}_{i\geq 0} \; E_i$ and set $X\rightarrow E_\infty$ to be the corresponding transfinite composition. We have the induced morphism $E_\infty \rightarrow Y$ by $f_i$'s.

To verify that $E_\infty\rightarrow Y$ lies in $\mathcal{S}^\perp$, one uses the smallness assumption on the domains of elements in $\mathcal{S}$. We omit the routine verification.
\end{proof}

We have a partial converse of Proposition~\ref{prop:cell}.

\begin{cor}\label{cor:retract}
Assume that $\mathcal{S}$ is a set of  morphisms whose domains are sequentially small. Then any morphism in ${^\perp(\mathcal{S}^\perp)}$ is a retract of some morphism in $\mathcal{S}\mbox{-{\rm cell}}$.
\end{cor}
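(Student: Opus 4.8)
The plan is to use the factorization provided by the small object argument (Theorem~\ref{thm:Quillen}) together with the classical ``retract argument''. Given a morphism $f\colon X\to Y$ in ${^\perp(\mathcal{S}^\perp)}$, I would first apply Theorem~\ref{thm:Quillen} to factor it as $f=p\circ i$, where $i\colon X\to E$ lies in $\mathcal{S}\mbox{-cell}$ and $p\colon E\to Y$ lies in $\mathcal{S}^\perp$.

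Next I would exploit the hypothesis $f\in {^\perp(\mathcal{S}^\perp)}$ against the morphism $p\in \mathcal{S}^\perp$. Concretely, consider the commutative square
\[
\xymatrix{
X\ar[d]_-{f}\ar[r]^-{i} & E \ar[d]^-{p}\\
Y\ar@{=}[r] & Y
}
\]
which commutes since $p\circ i=f$. Because $f\perp p$, there is a lifting $h\colon Y\to E$ with $h\circ f=i$ and $p\circ h={\rm Id}_Y$. This is the step that makes everything work, and it is essentially the only place the hypothesis is used.

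Finally, I would assemble the retract diagram exhibiting $f$ as a retract of $i$:
\[
\xymatrix{
X\ar[d]_-f \ar@{=}[r] & X \ar[d]^-{i} \ar[r]^-{i} & X \ar[d]^-{f}\\
Y\ar[r]^-{h} & E \ar[r]^-{p} & Y
}
\]
Here the left square commutes because $h\circ f=i$, and the right square commutes because $p\circ i=f$; the horizontal composites are $i\circ(\text{the lower }h)$... more precisely one checks $p\circ h={\rm Id}_Y$ on the bottom and ${\rm Id}_X$ on the top, so this is a genuine retract in the sense defined earlier. Hence $f$ is a retract of $i\in \mathcal{S}\mbox{-cell}$, as desired.

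The one genuinely nontrivial input is Theorem~\ref{thm:Quillen} itself, whose proof was only sketched (the verification that $E_\infty\to Y$ lies in $\mathcal{S}^\perp$ was omitted); everything after that is the standard retract argument and is purely formal. So the ``main obstacle'' is not in this corollary but in the theorem it invokes; given that theorem, there is no real difficulty, only the bookkeeping of the two commuting squares above.
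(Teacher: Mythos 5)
Your proof is correct and follows essentially the same route as the paper's: factor $f=p\circ i$ via Theorem~\ref{thm:Quillen}, use $f\perp p$ to produce the lifting $h$, and assemble the standard retract diagram. The only slip is cosmetic: the top-right horizontal arrow in your retract diagram should be $\mathrm{Id}_X$ rather than $i$ (an arrow $X\to X$ labelled $i$ does not typecheck), as your own verification ``${\rm Id}_X$ on the top'' already indicates.
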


\begin{proof}
Take any $f\in {^\perp(\mathcal{S}^\perp)}$. By Theorem~\ref{thm:Quillen}, we have a commutative square
\[
\xymatrix{
X\ar[d]_-{f} \ar[r]^-i & T\ar[d]^-p\\
Y\ar@{=}[r] & Y
}\]
satisfying $p\in \mathcal{S}^\perp$ and $i\in \mathcal{S}\mbox{-cell}$. By the fact that $f\in {^\perp(\mathcal{S}^\perp)}$ and $p\in \mathcal{S}^\perp$, we have a lifting $h\colon Y\rightarrow T$ for the square. Then we have the following commutative diagram.
\[
\xymatrix{
X\ar[d]_-{f}\ar@{=}[r] & X\ar[d]^-{i} \ar@{=}[r]  & X \ar[d]^-{f}\\
Y\ar[r]^-h &T \ar[r]^-p & Y
}\]
It follows that $f$ is a retract of $i$, as required.
\end{proof}

\section{Model categories}

Throughout this section, $\mathcal{C}$ is a category with finite colimits and limits. The initial object is denoted by $\emptyset$, and the terminal object is denoted by $\ast$. We will follow \cite{DS} closely.

\begin{defn}
A \emph{model structure} on $\mathcal{C}$ is a triple $(\mathcal{C}of, \mathcal{W}e, \mathcal{F}ib)$ consisting of three classes of morphisms, which satisfy the following axioms.
\begin{enumerate}
\item[(MC1)] Any of $\mathcal{F}ib, \mathcal{W}e, \mathcal{C}of$ is closed under compositions, and contains all the identity morphisms.
\item[(MC2)]  Any of $\mathcal{F}ib, \mathcal{W}e, \mathcal{C}of$ is closed under retracts.
\item[(MC3)] The two-out-of-three property: if $f$ and $g$ are morphisms such that $g\circ f$ is defined and if two of $f$, $g$ and $g\circ f$ lie in $\mathcal{W}e$, then so does the third.
\item[(MC4)] The lifting axiom: we have $\mathcal{C}of \perp (\mathcal{W}e\cap\mathcal{F}ib)$ and $(\mathcal{C}of\cap \mathcal{W}e) \perp \mathcal{F}ib$.
\item[(MC5)]  The factorization axiom: any morphism $f$ can be factorized in two ways: (i) $f=p\circ i$ with $i\in \mathcal{C}of$ and $p\in (\mathcal{W}e\cap\mathcal{F}ib)$, and (ii)  $f=q\circ j$ with $j\in (\mathcal{C}of\cap \mathcal{W}e)$ and $q\in \mathcal{F}ib$.
\end{enumerate}
By a \emph{model category}, we mean a category with a model structure.
\end{defn}

\begin{rem}
The above model category coincides with the \emph{closed} model category in \cite[Chapter I, \S 5]{Qui}.  As mentioned in \cite[Chapter I, Introduction]{Qui}, the term ``model category" is  short for ``a category of models for a homotopy theory".
\end{rem}

Morphisms in $\mathcal{C}of$ (resp.,  $\mathcal{W}e$, $\mathcal{F}ib$) are called \emph{cofibrations} (resp., \emph{weak equivalences}, \emph{fibrations}), which will be often written as $\hookrightarrow $ (resp., $\stackrel{\sim}\rightarrow$, $\twoheadrightarrow$). Morphisms in $\mathcal{C}of\cap \mathcal{W}e$ (resp., $\mathcal{W}e\cap \mathcal{F}ib$) are called \emph{acyclic cofibrations} (resp., \emph{acyclic fibrations}), which will be written as $\stackrel{\sim}\hookrightarrow$ (resp., $\stackrel{\sim}\twoheadrightarrow$). For example, the two factorizations in (MC5) are illustrated in the following commutative diagram.
\[
\xymatrix{
& Y' \ar@{>>}[dr]^-{\stackrel{p}{\sim}}\\
X \ar[rr]|{f}  \ar@{^{(}->}[dr]_-{\stackrel{\sim}{j}} \ar@{^{(}->}[ur]^-{i} && Y\\
& X' \ar@{>>}[ur]_-{q}
}\]

An object $A$ is called \emph{cofibrant}, if the unique morphism $\emptyset\rightarrow A$ is a cofibration. Dually, an object $X$ is called \emph{fibrant}, if the unique morphism $X\rightarrow \ast$ is a fibration.

\begin{prop}\label{prop:model-or}
Assume that $(\mathcal{C}of, \mathcal{W}e, \mathcal{F}ib)$ is a model structure. Then the following two statements hold.
\begin{enumerate}
\item $\mathcal{C}of={^\perp(\mathcal{W}e\cap \mathcal{F}ib)}$ and $\mathcal{C}of^\perp=\mathcal{W}e\cap \mathcal{F}ib$.
\item $\mathcal{C}of \cap \mathcal{W}e={^\perp \mathcal{F}ib}$ and $(\mathcal{C}of \cap \mathcal{W}e)^\perp=\mathcal{F}ib$.
\end{enumerate}
\end{prop}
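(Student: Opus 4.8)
The plan is to prove the four equalities by combining the lifting axiom (MC4), the retract axiom (MC2), and the factorization axiom (MC5), using a standard retract argument often called the \emph{retract trick}. Note that the four equalities come in two dual pairs, so it suffices to prove statement (1); statement (2) follows by the same argument applied to the other half of (MC4) and (MC5). Within (1), I will first establish the two inclusions that are immediate from (MC4), and then upgrade them to equalities using the retract trick.

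First I would record the easy inclusions. By (MC4) we have $\mathcal{C}of \perp (\mathcal{W}e\cap\mathcal{F}ib)$, which unravels precisely to $\mathcal{C}of \subseteq {}^\perp(\mathcal{W}e\cap\mathcal{F}ib)$ and, equivalently, to $\mathcal{W}e\cap\mathcal{F}ib \subseteq \mathcal{C}of^\perp$. So the content is the reverse inclusions: ${}^\perp(\mathcal{W}e\cap\mathcal{F}ib)\subseteq \mathcal{C}of$ and $\mathcal{C}of^\perp \subseteq \mathcal{W}e\cap\mathcal{F}ib$.

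For the reverse inclusions I would apply the retract trick. Take any $f\colon X\to Y$ in ${}^\perp(\mathcal{W}e\cap\mathcal{F}ib)$. By (MC5)(i), factor $f = p\circ i$ with $i\in\mathcal{C}of$ and $p\in \mathcal{W}e\cap\mathcal{F}ib$. Since $f\perp p$, the commutative square with top edge $i\colon X\to (\text{intermediate object})$, bottom edge $\mathrm{Id}_Y$, left edge $f$, and right edge $p$ admits a lifting $h$; this $h$ exhibits $f$ as a retract of $i$ (the diagram is exactly the one appearing in the proof of Corollary~\ref{cor:retract}). By (MC2), $\mathcal{C}of$ is closed under retracts, so $f\in\mathcal{C}of$. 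This proves ${}^\perp(\mathcal{W}e\cap\mathcal{F}ib)\subseteq \mathcal{C}of$, hence $\mathcal{C}of={}^\perp(\mathcal{W}e\cap\mathcal{F}ib)$. Dually, take $g\colon A\to B$ in $\mathcal{C}of^\perp$; factor $g = p\circ i$ with $i\in\mathcal{C}of$ and $p\in\mathcal{W}e\cap\mathcal{F}ib$. Since $i\in\mathcal{C}of$ and $g\in\mathcal{C}of^\perp$, the square with top edge $\mathrm{Id}_A$, left edge $i$, right edge $g$, bottom edge $p$ has a lifting, exhibiting $g$ as a retract of $p$; then (MC2) applied to $\mathcal{W}e\cap\mathcal{F}ib$ (which is closed under retracts, being the intersection of two classes each closed under retracts) gives $g\in\mathcal{W}e\cap\mathcal{F}ib$. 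Combined with the easy inclusion, $\mathcal{C}of^\perp=\mathcal{W}e\cap\mathcal{F}ib$. Statement (2) is proved identically, using the other clause of (MC4), the factorization (MC5)(ii), and the fact that $\mathcal{C}of\cap\mathcal{W}e$ is closed under retracts.

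The main obstacle, such as it is, is bookkeeping: one must arrange each retract square so that the correct edges are identities and the lifting genuinely produces the retract diagram demanded by (MC2). There is no deep difficulty — the proof is purely formal once (MC4), (MC5), and closure under retracts are in hand — but care is needed to verify that the lifting $h$ satisfies \emph{both} triangle identities, so that it serves simultaneously as the section (or retraction) making the retract diagram commute on the nose.
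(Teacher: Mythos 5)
Your proof is correct and is exactly the standard retract argument (sometimes called the retract trick) that the paper omits, deferring to \cite{DS}; the diagrams you describe are precisely those used in the proof of Corollary~\ref{cor:retract}, and the closure of $\mathcal{W}e\cap\mathcal{F}ib$ and $\mathcal{C}of\cap\mathcal{W}e$ under retracts indeed follows from (MC2). Nothing further is needed.
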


\begin{rem}
By combining Proposition~\ref{prop:model-or} with the results in Section 1, we infer that both $\mathcal{C}of$ and $\mathcal{C}of \cap \mathcal{W}e$ are closed under pushouts, finite coproducts and compositions. It follows that any finite coproduct of cofibrant objects is cofibrant.
\end{rem}

For an object $A$, we consider the \emph{codiagonal morphism}
$${\rm Id}_A+{\rm Id}_A\colon A\coprod A\longrightarrow A.$$

\begin{defn}
A \emph{cylinder object} of $A$ is an object $A\wedge I$ of $\mathcal{C}$ together with a diagram
$$A\coprod A \xrightarrow{i_0+i_1} A\wedge I \stackrel{\sim}\longrightarrow A$$
which factors ${\rm Id}_A+{\rm Id}_A$. The cylinder object is \emph{good} if $i_0+i_1$ is a cofibration. It is \emph{very good} if in addition the weak equivalence $A\wedge I \stackrel{\sim}\rightarrow A$ is a fibration.
\end{defn}

\begin{rem}\label{rem:cylinder}
(1) Denote the above weak equivalence by $w$.  Then we have $w\circ i_0={\rm Id}_A=w\circ i_1$. It follows from (MC3) that  both $i_0$ and $i_1$ are weak equivalences.

(2) A cylinder object is really a diagram, not just a single object. By (MC5), the object $A$ has at least one very good cylinder object.
\end{rem}

\begin{defn}
Two morphisms $f, g\colon A\rightarrow X$ in $\mathcal{C}$ are \emph{left-homotopic}, written as $f\stackrel{l}\sim g$,  provided that there exist a cylinder object
$$A\coprod A \xrightarrow{i_0+i_1} A\wedge I \stackrel{\sim}\longrightarrow A$$
of $A$ such that $f+g\colon A\coprod A \rightarrow X$ factors through $i_0+i_1$. In other words, there exists a morphism $H\colon A\wedge I \rightarrow X$, called a \emph{left-homotopy} from $f$ to $g$, such that $H\circ i_0=f$ and $H\circ i_1=g$.

The left-homotopy $H$ is \emph{good} (resp., \emph{very good}), if the cylinder object $A\wedge I$ is good (resp., very good).
\end{defn}

\begin{rem}
(1) Assume that $f\stackrel{l}\sim g$. Then $f\in \mathcal{W}e$ if and only if $g\in \mathcal{W}e$.

(2) Assume that $f\stackrel{l}\sim g$. By applying (MC5) to $i_0+i_1$, we infer that there exists always a good left-homotopy from $f$ to $g$.
\end{rem}

Dually, we take an object $X$ and consider the \emph{diagonal morphism}.
$$\begin{pmatrix}
{\rm Id}_X\\
{\rm Id}_X
\end{pmatrix}\colon X\longrightarrow X\times X$$

\begin{defn}
A \emph{path object} for $X$ is an object $X^I$ in $\mathcal{C}$ together with a diagram,
$$X \stackrel{\sim}\longrightarrow X^I \xrightarrow{\begin{pmatrix}
p_0\\
p_1
\end{pmatrix}}  X\times X$$
which factors the diagonal morphism. The path object $X^I$ is \emph{good} if $\begin{pmatrix}
p_0\\
p_1
\end{pmatrix}$ is a fibration; it is \emph{very good} if in addition the weak equivalence $X\stackrel{\sim}\rightarrow X^I$ is a cofibration.
\end{defn}

The following is dual to Remark~\ref{rem:cylinder}.

\begin{rem}
Both the morphisms $p_0$ and $p_1$ are  weak equivalences.  By (MC5), the object $X$ has at least one very good path object.
\end{rem}

\begin{defn}
Two morphisms $f, g\colon A\rightarrow X$ in $\mathcal{C}$ are \emph{right-homotopic}, written as $f\stackrel{r}\sim g$,  provided that there exist a path object
$$X \stackrel{\sim}\longrightarrow X^I \xrightarrow{\begin{pmatrix}
p_0\\
p_1
\end{pmatrix}}  X\times X$$
of $X$ such that $\begin{pmatrix} f\\ g \end{pmatrix} \colon A \rightarrow X\times X$ factors through $\begin{pmatrix}p_0\\ p_1\end{pmatrix}$. In other words, there exists a morphism $K\colon A \rightarrow X^I$, called a \emph{right-homotopy} from $f$ to $g$, such that $p_0\circ K=f$ and $p_1\circ K=g$.

The right-homotopy $K$ is \emph{good} (resp., \emph{very good}), if the path object $X^I$ is good (resp., very good).
\end{defn}

\begin{rem}
Assume that $f\stackrel{r}\sim g$. There exists a good right-homotopy from $f$ to $g$. Moreover, $f$ is a weak equivalence if and only if so is $g$.
\end{rem}

\begin{prop}\label{prop:cof-fib}
Let $A$ and $X$ be two objects in $\mathcal{C}$. Then the following statements hold.
\begin{enumerate}
\item  If $A$ is cofibrant, then $\stackrel{l}\sim$ is an equivalence relation on $\mathcal{C}(A, X)$.
\item If $X$ is fibrant,  then $\stackrel{r}\sim$ is an equivalence relation on $\mathcal{C}(A, X)$.
\item If $A$ is cofibrant and $X$ is fibrant, then the two equivalence relations $\stackrel{l}\sim$ and $\stackrel{r}\sim$  on $\mathcal{C}(A, X)$ coincide.
\end{enumerate}
\end{prop}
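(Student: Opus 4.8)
The plan is to exploit the self-duality of the set-up. Statements (1) and (2) are dual to one another, since passing to $\mathcal{C}^{\rm op}$ interchanges cylinder objects of $A$ with path objects of $A$ and interchanges ``cofibrant'' with ``fibrant''; likewise, in (3) the implications ``$f\stackrel{l}\sim g\Rightarrow f\stackrel{r}\sim g$'' and ``$f\stackrel{r}\sim g\Rightarrow f\stackrel{l}\sim g$'' are mutually dual. So I would give full arguments for (1) and for the first implication of (3), and deduce (2) and the second implication by dualization. Throughout I would use the remarks above that allow one to replace an arbitrary left- (resp.\ right-) homotopy by a good one.

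For (1), reflexivity is clear: $A\coprod A\xrightarrow{{\rm Id}_A+{\rm Id}_A}A\xrightarrow{{\rm Id}_A}A$ is a cylinder object and $f$ is a left-homotopy from $f$ to itself. Symmetry needs no hypothesis on $A$, since interchanging $i_0$ and $i_1$ turns a cylinder object into a cylinder object, and then a left-homotopy from $f$ to $g$ becomes one from $g$ to $f$. The real content is \emph{transitivity}. Given good left-homotopies $H\colon A\wedge I\to X$ from $f$ to $g$ and $H'\colon A\wedge I'\to X$ from $g$ to $h$, with weak equivalences $w$ and $w'$, I would form the pushout $C$ of $A\wedge I\xleftarrow{i_1}A\xrightarrow{i_0'}A\wedge I'$, with structure maps $\alpha\colon A\wedge I\to C$ and $\beta\colon A\wedge I'\to C$, and claim that $C$ is a cylinder object of $A$ via $(\alpha\circ i_0)+(\beta\circ i_1')\colon A\coprod A\to C$ together with the morphism $C\to A$ induced by $w$ and $w'$ (these agree on $A$ because $w\circ i_1={\rm Id}_A=w'\circ i_0'$); since $H\circ i_1=g=H'\circ i_0'$, the maps $H$ and $H'$ then induce a map $C\to X$, which is a left-homotopy from $f$ to $h$. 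One checks immediately that $A\coprod A\to C\to A$ is the codiagonal, so the task reduces to showing $C\to A\in\mathcal{W}e$. For this I would observe that $i_0'$ is an acyclic cofibration --- a weak equivalence by Remark~\ref{rem:cylinder}, and a cofibration because it factors as $A\hookrightarrow A\coprod A\xrightarrow{i_0'+i_1'}A\wedge I'$ in which the first map is a pushout of the cofibration $\emptyset\to A$ (this is where cofibrancy of $A$ enters) and the second is a cofibration since the cylinder is good. Then $\alpha$, a pushout of $i_0'$, is an acyclic cofibration by Proposition~\ref{prop:model-or} and Lemma~\ref{lem:pushout}, in particular a weak equivalence; since $w=(C\to A)\circ\alpha$, the two-out-of-three axiom (MC3) gives $C\to A\in\mathcal{W}e$.

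Statement (2) follows by the dual construction: one forms the pullback over $X$ of two good path objects, the crucial point being that when $X$ is fibrant the components $p_0,p_1\colon X^I\to X$ of a good path object are acyclic fibrations --- each is a weak equivalence because its composite with the structure weak equivalence $s\colon X\to X^I$ is ${\rm Id}_X$, so (MC3) applies, and each is a fibration because it is the composite of the path-object fibration $X^I\to X\times X$ with a projection $X\times X\to X$, the latter being a fibration as a pullback of $X\to\ast$ --- together with the fact that acyclic fibrations are stable under pullback. For the first implication of (3), assume $A$ cofibrant and $f\stackrel{l}\sim g$. Choose a good left-homotopy $H\colon A\wedge I\to X$, so that $i_0\colon A\to A\wedge I$ is an acyclic cofibration (a cofibration by the argument used in the transitivity step, a weak equivalence by Remark~\ref{rem:cylinder}), and choose a good path object for $X$, with fibration $X^I\to X\times X$ of components $p_0,p_1$ and weak equivalence $s\colon X\to X^I$. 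The square with left edge $i_0$, right edge $X^I\to X\times X$, top edge $s\circ f$, and bottom edge the morphism $A\wedge I\to X\times X$ with components $f\circ w$ and $H$ is commutative, so by (MC4) it has a lifting $\Phi\colon A\wedge I\to X^I$; setting $K=\Phi\circ i_1$ we get $p_0\circ K=f\circ w\circ i_1=f$ and $p_1\circ K=H\circ i_1=g$, so $f\stackrel{r}\sim g$. Dually, if $X$ is fibrant and $f\stackrel{r}\sim g$, one takes a good right-homotopy $K\colon A\to X^I$ (so $p_0,p_1$ are acyclic fibrations, as above) and a good cylinder object $A\coprod A\xrightarrow{i_0+i_1}A\wedge I\xrightarrow{w}A$, and applies (MC4) to the square with left edge the cofibration $i_0+i_1$, right edge the acyclic fibration $p_0$, top edge the map $A\coprod A\to X^I$ equal to $s\circ f$ on the first summand and to $K$ on the second, and bottom edge $f\circ w$; composing the resulting lifting with $p_1$ produces a left-homotopy from $f$ to $g$. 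When $A$ is cofibrant and $X$ is fibrant, both implications apply, so $\stackrel{l}\sim$ and $\stackrel{r}\sim$ coincide, and by (1) and (2) the common relation is an equivalence relation.

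I expect the transitivity part of (1) --- and equally of (2) --- to be the main obstacle: the bookkeeping needed to recognise the pushout $C$ (respectively the pullback) as carrying a cylinder-object (respectively path-object) structure, and in particular to identify $\alpha$ as a pushout of an acyclic cofibration and then apply (MC3) correctly, is where the model axioms genuinely get used; by contrast reflexivity, symmetry, and the lifting arguments for (3) are essentially formal consequences of (MC1)--(MC5) and of the orthogonality results of the previous section.
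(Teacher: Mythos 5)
Your proof is correct and is precisely the standard argument of Dwyer--Spalinski (Lemmas 4.7 and 4.21 of \cite{DS}), which is the source the paper follows; the paper itself states this proposition without proof. All the key points are handled properly: cofibrancy of $A$ is used exactly where needed (to make $A\to A\coprod A$, hence $i_0$ and $i_0'$, cofibrations, so that the glued cylinder's structure map to $A$ is a weak equivalence by pushout-stability of acyclic cofibrations and (MC3)), the dual statement for fibrant $X$ is correctly reduced to the projection $X\times X\to X$ being a pullback of $X\to\ast$, and the two lifting squares in (3) are set up correctly.
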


In the situation of Proposition~\ref{prop:cof-fib}(3), we will abbreviate $\stackrel{l}\sim$ and $\stackrel{r}\sim$  as $\sim$.  We denote by $\pi(A, X)$ the set of equivalence classes of $\mathcal{C}(A, X)$ under $\sim$.

\begin{rem}\label{rem:homo-fixed}
Assume that  $A$ is cofibrant and $X$ is fibrant. We \emph{fix} a good cylinder object $A\wedge I$ for $A$ and a good path object $X^I$ for $X$. Let $f, g\colon A\rightarrow X$ be two morphisms. Then $f\sim g$ if and only if $f\stackrel{l}\sim g$ via the fixed cylinder object $A\wedge I$, if and only if $f\stackrel{r}\sim g$ via the fixed path object $X^I$.
\end{rem}

Denote by $\mathcal{C}_{\rm cf}$ the  full subcategory of $\mathcal{C}$ formed by objects which are both cofibrant and fibrant. The homotopy $\sim$ yields an equivalence relation on $\mathcal{C}_{\rm cf}$. We form the factor category ${\mathcal{C}_{\rm cf}/{\sim}}$.

\begin{defn}
Let $\mathcal{C}$ be a model category. The \emph{homotopy category} ${\rm Ho}(\mathcal{C})$ is defined to the localization $\mathcal{C}[\mathcal{W}e^{-1}]$ of $\mathcal{C}$ with respect to all weak equivalences.
\end{defn}

The following is the  main result of \cite[Chapter I, \S 1]{Qui}.

\begin{thm}\label{thm:homo-cat}{\rm (Quillen)}
 Let $\mathcal{C}$ be a model category. Then the homotopy category ${\rm Ho}(\mathcal{C})$ exists and the canonical functor
 $${\mathcal{C}_{\rm cf}/{\sim}} \longrightarrow {\rm Ho}(\mathcal{C}), \; A\mapsto A$$
  is an equivalence.
\end{thm}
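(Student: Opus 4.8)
The plan is to construct a quasi-inverse to the canonical functor $\gamma\colon \mathcal{C}_{\rm cf}/{\sim}\to {\rm Ho}(\mathcal{C})$ by first building a "cofibrant-fibrant replacement" functor and then showing that on $\mathcal{C}_{\rm cf}$ the localization at weak equivalences is already achieved by passing to homotopy classes. First I would establish the key input: if $A$ is cofibrant and $X$ is fibrant, then a weak equivalence $w\colon A\to X$ becomes invertible in $\mathcal{C}_{\rm cf}/{\sim}$, and more generally two weakly equivalent maps between such objects that are themselves weak equivalences... — more precisely, the crucial lemma is that a map between cofibrant-fibrant objects is a weak equivalence if and only if it becomes an isomorphism in $\mathcal{C}_{\rm cf}/{\sim}$. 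One direction uses (MC5) to factor $w=q\circ j$ with $j$ an acyclic cofibration and $q$ an acyclic fibration, then uses the lifting axiom (MC4) together with Proposition~\ref{prop:cof-fib} to produce a homotopy inverse of each factor; the other direction uses that $\stackrel{l}\sim$-related maps are simultaneously weak equivalences or not (a remark already recorded) plus (MC3).

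Next I would show that for every object $Z\in\mathcal{C}$ there is a cofibrant-fibrant object $RQ(Z)$ together with a weak equivalence connecting them: apply the factorization (MC5)(i) to $\emptyset\to Z$ to get a cofibrant $QZ$ with an acyclic fibration $QZ\xrightarrow{\sim} Z$, then apply (MC5)(ii) to $QZ\to\ast$ to get a fibrant (and still cofibrant, since acyclic cofibrations are closed under composition with $\emptyset\to QZ$) object $RQZ$ with an acyclic cofibration $QZ\xrightarrow{\sim} RQZ$. Using the lifting axiom and Proposition~\ref{prop:cof-fib} one checks that the assignment $Z\mapsto RQZ$ extends to a functor $\mathcal{C}\to \mathcal{C}_{\rm cf}/{\sim}$ on the nose (a map $Z\to Z'$ lifts, uniquely up to homotopy, through the replacements), and that it sends weak equivalences to isomorphisms by the lemma of the previous paragraph combined with (MC3). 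By the universal property (L2) of the localization, this induces a functor $\overline{RQ}\colon {\rm Ho}(\mathcal{C})\to \mathcal{C}_{\rm cf}/{\sim}$.

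Finally I would check the two composites are naturally isomorphic to the identities. The composite $\gamma\circ\overline{RQ}$ on $\mathcal{C}$ sends $Z$ to $RQZ$, and the zigzag of weak equivalences $Z\xleftarrow{\sim} QZ\xrightarrow{\sim} RQZ$ gives, after applying $q\colon\mathcal{C}\to{\rm Ho}(\mathcal{C})$, a natural isomorphism $\mathrm{Id}\cong \gamma\circ\overline{RQ}$; naturality follows from the functoriality constructed above. For the other composite, on a cofibrant-fibrant $A$ we have $RQA$ together with a weak equivalence $A\to RQA$ (composing the two replacement maps appropriately), which by the first lemma is an isomorphism in $\mathcal{C}_{\rm cf}/{\sim}$, and these assemble into a natural isomorphism $\mathrm{Id}_{\mathcal{C}_{\rm cf}/{\sim}}\cong \overline{RQ}\circ\gamma$. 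I expect the main obstacle to be the bookkeeping in showing $RQ$ is genuinely functorial into $\mathcal{C}_{\rm cf}/{\sim}$ — i.e. that the lifts are well-defined on homotopy classes and respect composition — which is exactly where Proposition~\ref{prop:cof-fib}(3) and the lifting axiom must be used carefully; the existence of ${\rm Ho}(\mathcal{C})$ itself is a non-issue once one invokes (L2), and the rest is a matter of organizing the weak equivalences into natural transformations.
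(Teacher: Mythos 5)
The paper does not actually prove Theorem~\ref{thm:homo-cat}; it is quoted from Quillen, so there is no in-text proof to compare against. Your outline is the standard Quillen/Dwyer--Spali\'nski route (Whitehead lemma, cofibrant--fibrant replacement, universal property of the localization), but two steps, as you have written them, would not go through. The first is the converse half of your ``crucial lemma''. From a homotopy inverse $g$ of $f$ you get $g\circ f\sim \mathrm{Id}_A$ and $f\circ g\sim \mathrm{Id}_X$, hence $g\circ f$ and $f\circ g$ are weak equivalences by the recorded remark; but (MC3) applied to these two memberships tells you nothing about $f$ itself. The inference ``$g\circ f,\ f\circ g\in\mathcal{W}e\ \Rightarrow\ f\in\mathcal{W}e$'' is the two-out-of-\emph{six} property, which is not an axiom (it is, in fact, a consequence of the theorem being proved). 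The correct argument factors $f=p\circ i$ with $i$ an acyclic cofibration, observes that $p$ is then a fibration between cofibrant--fibrant objects which is a homotopy equivalence, and shows by an explicit lifting-and-retract argument that such a $p$ is an acyclic fibration. Fortunately, the theorem only ever uses the forward direction (weak equivalence between cofibrant--fibrant objects $\Rightarrow$ isomorphism in $\mathcal{C}_{\rm cf}/{\sim}$): that is what makes $RQ$ carry weak equivalences to isomorphisms and what identifies $A$ with $RQA$. So the error is excisable, but as stated that step is wrong.

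The second problem is the existence of ${\rm Ho}(\mathcal{C})$. Condition (L2) is a property that a localization must satisfy, not a device that produces one; you cannot ``invoke (L2)'' to obtain the category you intend to map out of. Either you construct ${\rm Ho}(\mathcal{C})$ explicitly --- objects those of $\mathcal{C}$, with $\mathrm{Hom}(X,Y)=\pi(RQX,RQY)$ --- and verify (L1) and (L2) for it (Quillen's route, after which the equivalence with $\mathcal{C}_{\rm cf}/{\sim}$ is nearly tautological), or you take the formal zigzag localization, accept that its Hom-classes are a priori large, and use the equivalence you establish to conclude a posteriori that they are sets. Either way, existence is part of what must be proved, not a non-issue. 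A smaller instance of the same looseness: the two replacement maps $QA\rightarrow A$ and $QA\rightarrow RQA$ both point \emph{out of} $QA$, so they do not compose to a weak equivalence $A\rightarrow RQA$; you need one more lifting (a section of the acyclic fibration $QA\rightarrow A$ against the cofibration $\emptyset\rightarrow A$, using that $A$ is cofibrant) before the forward Whitehead lemma can be applied to produce the natural isomorphism $\overline{RQ}\circ\gamma\cong \mathrm{Id}$.
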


\section{The category of small categories}

In this section, we recall the natural model structure on the category of small categories.
We denote by ${\rm Cat}$ the category of small categories. We follow the treatment in \cite{Rezk}.

\subsection{Quivers and (co)limits}

Let $\mathcal{C}$ be a small category and $\mathcal{D}$ be any category. We denote by $\mathcal{H}om(\mathcal{C}, \mathcal{D})$ the \emph{functor category} from $\mathcal{C}$ to $\mathcal{D}$, whose objects are functors from $\mathcal{C}$ to $\mathcal{D}$, and whose morphisms are  natural transformations between the functors.

We denote by ${\rm Set}$ the category of sets. For each $n\geq 1$, we define a category $\mathcal{K}_n$ such that ${\rm Obj} (\mathcal{K}_n)=\{0,1\}$ and ${\rm Mor}(\mathcal{K}_n)=\{{\rm Id}_0, {\rm Id}_1, \alpha_1, \cdots, \alpha_n\}$, where all the morphisms $\alpha_i$ are from $1$ to $0$. For example, we visualize $\mathcal{K}_2$ as the following picture.
\[\xymatrix{
1 \ar@<+.7ex>[rr]^-{\alpha_1} \ar@<-.7ex>[rr]_-{\alpha_2}  &&¡¡0
}\]
We denote by $\mathcal{K}_0$ the discrete category with two objects $0$ and $1$.

Recall that a \emph{quiver} $Q=(Q_0, Q_1;s, t)$ consists of a set $Q_0$ of vertices, a set $Q_1$ of arrows and two maps $s, t\colon Q_1\rightarrow Q_0$ which assign to any arrow $\alpha$ its starting vertex $s(\alpha)$ and terminating vertex $t(\alpha)$. A morphism $\phi=(\phi_0, \phi_1)\colon Q\rightarrow Q'$ between quivers is given by two maps $\phi_0\colon Q_0\rightarrow Q'_0$ and $\phi_1\colon Q_1\rightarrow Q'_1$ satisfying
$$\phi_0 \circ s=s'\circ \phi_1 \mbox{ and } \phi_0 \circ t=t'\circ \phi_1.$$
 The above condition means that for any arrow $\alpha$ in $Q$, we have $$s'(\phi_1(\alpha))=\phi_0(s(\alpha)) \mbox{ and } t'(\phi_1(\alpha))=\phi_0(t(\alpha)).$$
 The composition of morphisms is defined naturally. We denote by ${\rm Quiv}$ the category of quivers.

We observe that any quiver $Q$ gives rise to a functor $Q\colon \mathcal{K}_2\rightarrow {\rm Set}$ such that $Q(1)=Q_1$, $Q(0)=Q_0$, $Q(\alpha_1)=s$ and $Q(\alpha_2)=t$.  This observation yields the following result.

\begin{prop}
The above correspondence gives rise to an isomorphism of categories: ${\rm Quiv}\simeq \mathcal{H}om(\mathcal{K}_2, {\rm Set})$. \hfill $\square$
\end{prop}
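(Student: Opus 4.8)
The plan is to exhibit an explicit functor in each direction and check that the two composites are the identity on the nose, so that the claimed isomorphism $\mathrm{Quiv}\simeq\mathcal{H}om(\mathcal{K}_2,\mathrm{Set})$ is genuinely an isomorphism of categories, not merely an equivalence. First I would take the correspondence already described in the paragraph preceding the statement and package it as a functor $\Phi\colon\mathrm{Quiv}\to\mathcal{H}om(\mathcal{K}_2,\mathrm{Set})$: on objects, a quiver $Q=(Q_0,Q_1;s,t)$ is sent to the functor $\Phi(Q)\colon\mathcal{K}_2\to\mathrm{Set}$ with $\Phi(Q)(1)=Q_1$, $\Phi(Q)(0)=Q_0$, $\Phi(Q)(\alpha_1)=s$, $\Phi(Q)(\alpha_2)=t$ (and identities to identities); on a quiver morphism $\phi=(\phi_0,\phi_1)\colon Q\to Q'$, the natural transformation $\Phi(\phi)\colon\Phi(Q)\Rightarrow\Phi(Q')$ has components $\phi_1$ at the object $1$ and $\phi_0$ at the object $0$. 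The naturality squares for $\Phi(\phi)$ are precisely the two equations $\phi_0\circ s=s'\circ\phi_1$ and $\phi_0\circ t=t'\circ\phi_1$ defining a quiver morphism, so $\Phi(\phi)$ is well defined and natural, and $\Phi$ visibly respects identities and composition.

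Next I would define the inverse functor $\Psi\colon\mathcal{H}om(\mathcal{K}_2,\mathrm{Set})\to\mathrm{Quiv}$. Given a functor $F\colon\mathcal{K}_2\to\mathrm{Set}$, since $\mathrm{Mor}(\mathcal{K}_2)=\{\mathrm{Id}_0,\mathrm{Id}_1,\alpha_1,\alpha_2\}$ with $\alpha_1,\alpha_2\colon 1\to 0$ and there are no nontrivial composable pairs (the only compositions are with identities), a functor out of $\mathcal{K}_2$ is the same data as: two sets $F(1),F(0)$ and two maps $F(\alpha_1),F(\alpha_2)\colon F(1)\to F(0)$, with no further relations to verify. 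So I set $\Psi(F)=(F(0),F(1);F(\alpha_1),F(\alpha_2))$, and for a natural transformation $\eta\colon F\Rightarrow G$ I set $\Psi(\eta)=(\eta_0,\eta_1)$, which is a quiver morphism exactly because the naturality squares at $\alpha_1$ and $\alpha_2$ read $\eta_0\circ F(\alpha_i)=G(\alpha_i)\circ\eta_1$. Functoriality of $\Psi$ is immediate from componentwise composition of natural transformations.

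Finally I would check $\Psi\Phi=\mathrm{Id}_{\mathrm{Quiv}}$ and $\Phi\Psi=\mathrm{Id}_{\mathcal{H}om(\mathcal{K}_2,\mathrm{Set})}$. The first is a direct unwinding: $\Psi(\Phi(Q))=(Q_0,Q_1;s,t)=Q$ and likewise on morphisms. For the second, $\Phi(\Psi(F))$ agrees with $F$ on the objects $0,1$ and on the generating morphisms $\alpha_1,\alpha_2$, hence on all of $\mathcal{K}_2$ since the remaining morphisms are identities which every functor sends to identities; the same observation handles natural transformations. There is no real obstacle here — the one point that deserves an explicit remark is the reason a functor $\mathcal{K}_2\to\mathrm{Set}$ carries \emph{no} data beyond the two sets and two maps, namely that $\mathcal{K}_2$ has no nontrivial compositions and hence no associativity or compatibility constraints to impose; once that is noted the argument is a routine bijection-of-data verification, so in the writeup I would state $\Phi$ and $\Psi$ and leave the composite-identity checks to the reader. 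The same scheme visibly adapts to give $\mathrm{Quiv}\simeq\mathcal{H}om(\mathcal{K}_n,\mathrm{Set})$ variants if needed, but only the $n=2$ case is asserted.
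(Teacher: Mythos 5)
Your proof is correct and is exactly the verification the paper leaves to the reader (the proposition is stated with no written proof, the correspondence having been described in the preceding paragraph): you package the stated correspondence as mutually inverse functors and observe that $\mathcal{K}_2$ has no nontrivial compositions, so no further coherence needs checking. Nothing to add.
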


 Each small category $\mathcal{C}$ has its \emph{underlying quiver}
$$U(\mathcal{C})=({\rm Obj}(\mathcal{C}), {\rm Mor}(\mathcal{C}); s, t),$$
where the maps $s, t\colon {\rm Mor}(\mathcal{C})\rightarrow {\rm Obj}(\mathcal{C})$ describe the domains and codomains of morphisms, respectively. Consequently, we have the forgetful functor
$$U\colon {\rm Cat}\longrightarrow {\rm Quiv}.$$

Let $Q$ be a quiver. A \emph{path} of length $n$ is a sequence $p=\alpha_n\cdots \alpha_2\alpha_1$ of arrows satisfying $t(\alpha_i)=s(\alpha_{i+1})$ for each $1\leq i\leq n-1$. We set $s(p)=s(\alpha_1)$ and $t(p)=t(\alpha_n)$. Here, we use the convention of concatenating paths from right to left. A path of length one is nothing but an arrow. To  each vertex $i$, we associate a trivial path $e_i$ of length zero.

The \emph{path category} $\mathcal{P}(Q)$ is defined such that ${\rm Obj}(\mathcal{P}(Q))=Q_0$ and $\mathcal{P}(Q)(i, j)$ is the set of all paths from $i$ to $j$; the composition is given by  concatenation of paths. This construction gives rise to a functor
$$\mathcal{P}\colon {\rm Quiv}\longrightarrow {\rm Cat}.$$

\begin{prop}
Keep the notation as above. Then we have an adjoint pair $(\mathcal{P}, U)$.
\end{prop}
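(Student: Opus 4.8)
The plan is to verify the adjunction via the universal property of the unit. First I would define, for each quiver $Q$, a morphism of quivers $\eta_Q\colon Q\rightarrow U(\mathcal{P}(Q))$ which is the identity on vertices and sends an arrow $\alpha$ to the corresponding path of length one. This is well defined since the length-one path $\alpha$ has starting vertex $s(\alpha)$ and terminating vertex $t(\alpha)$ in $\mathcal{P}(Q)$; naturality of $\eta$ in $Q$ is immediate from the definition of $\mathcal{P}$ on morphisms.

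Next, given any small category $\mathcal{C}$ and any morphism of quivers $\phi=(\phi_0,\phi_1)\colon Q\rightarrow U(\mathcal{C})$, I would build a functor $F\colon \mathcal{P}(Q)\rightarrow\mathcal{C}$ by setting $F(i)=\phi_0(i)$ on objects, $F(e_i)={\rm Id}_{\phi_0(i)}$ on the trivial paths, and
\[
F(\alpha_n\cdots\alpha_1)=\phi_1(\alpha_n)\circ\cdots\circ\phi_1(\alpha_1)
\]
on a path of positive length. The identities $\phi_0\circ s=s'\circ\phi_1$ and $\phi_0\circ t=t'\circ\phi_1$ are precisely what guarantees that consecutive factors on the right-hand side are composable in $\mathcal{C}$. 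One then checks that $F$ is a functor — concatenation of paths goes to composition in $\mathcal{C}$, which is associative and unital — and that $U(F)\circ\eta_Q=\phi$ by construction.

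Finally I would check uniqueness: if $G\colon\mathcal{P}(Q)\rightarrow\mathcal{C}$ is any functor with $U(G)\circ\eta_Q=\phi$, then $G$ agrees with $\phi_0$ on objects and with $\phi_1$ on length-one paths, and it necessarily sends each $e_i$ to ${\rm Id}_{\phi_0(i)}$; since every path is a (possibly empty) composite of arrows, functoriality forces $G=F$. This shows that $(\mathcal{P}(Q),\eta_Q)$ is initial in the comma category $(Q\downarrow U)$, equivalently that $F\mapsto U(F)\circ\eta_Q$ is a bijection ${\rm Cat}(\mathcal{P}(Q),\mathcal{C})\xrightarrow{\ \sim\ }{\rm Quiv}(Q,U(\mathcal{C}))$ natural in both arguments, which is exactly the assertion that $(\mathcal{P},U)$ is an adjoint pair.

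None of these steps is deep; the only places needing a little care are the well-definedness of $F$ (where the two compatibility identities of a quiver morphism enter) and the uniqueness argument, whose content is that the arrows of $Q$, together with the trivial paths, generate $\mathcal{P}(Q)$ under composition. I do not expect any genuine obstacle beyond this routine bookkeeping.
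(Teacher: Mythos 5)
Your proposal is correct and is essentially the same argument as the paper's, which simply records the bijection $F\mapsto U(F)\circ\eta_Q$ (restriction of a functor to vertices and arrows) and leaves the verification implicit. You have merely supplied the routine details — construction of the inverse via composition along paths, and the uniqueness/generation argument — that the paper omits.
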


\begin{proof}
The bijection
$${\rm Cat}(\mathcal{P}(Q), \mathcal{C})\simeq {\rm Quiv}(Q, U(\mathcal{C}))$$
sends a functor $F$ to a quiver morphism $\phi=(\phi_0, \phi_1)$, where $\phi_0$ is given by the action of $F$ on objects and $\phi_1$ is given by the action of $F$ on $Q_1$.
\end{proof}

Consider  the category ${\rm Cat}$ of small categories. We observe that the empty category $\emptyset$ is the initial object, and that the \emph{unit category} $\mathbf{1}=\{*\}$ consisting of a single object and a single morphism is the terminal object.

Let $\Lambda$ be a small index category and $\mathcal{C}\colon \Lambda\rightarrow {\rm Cat}$ be a functor, that is a $\Lambda$-diagram in ${\rm Cat}$. To simplify the notation, we set $\Lambda_0={\rm Obj}(\Lambda)$ and $\Lambda_1={\rm Mor}(\Lambda)$;  for $i\in \Lambda_0$, we write $\mathcal{C}(i)$ as $\mathcal{C}_i$, which is a category; for $\alpha\colon i \rightarrow j\in \Lambda_1$, we write $\mathcal{C}(\alpha)$ as $\mathcal{C}_\alpha$, which is a functor from $\mathcal{C}_i$ to $\mathcal{C}_j$.

\begin{prop}
The category ${\rm Cat}$ has small limits and colimits.
\end{prop}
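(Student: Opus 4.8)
The plan is to establish that ${\rm Cat}$ has all small limits and all small colimits by constructing them explicitly, using the forgetful functor $U\colon {\rm Cat}\to {\rm Quiv}$ and, one level down, the fact that ${\rm Quiv}\simeq \mathcal{H}om(\mathcal{K}_2,{\rm Set})$ inherits all (co)limits pointwise from ${\rm Set}$. Since ${\rm Set}$ is complete and cocomplete, functor categories into ${\rm Set}$ are complete and cocomplete with (co)limits computed objectwise; hence ${\rm Quiv}$ has all small limits and colimits, and in particular these are computed separately on vertex sets and arrow sets. This reduces the problem to transporting the quiver-level (co)limits to ${\rm Cat}$.

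For \textbf{limits}, I would argue that $U$ creates small limits. Given a $\Lambda$-diagram $\mathcal{C}\colon\Lambda\to {\rm Cat}$, form the limit quiver $Q=\lim_\Lambda U(\mathcal{C})$ in ${\rm Quiv}$. Concretely $Q_0=\lim_\Lambda {\rm Obj}(\mathcal{C}_i)$ as a set and $Q_1=\lim_\Lambda {\rm Mor}(\mathcal{C}_i)$, with $s,t$ induced componentwise. A compatible family of morphisms — arrows $\alpha$ in the various $\mathcal{C}_i$, agreeing along the $\mathcal{C}_\alpha$ — that is composable in $Q$ is composable in each $\mathcal{C}_i$, and the componentwise composites again form a compatible family with matching source and target; one checks associativity and units componentwise. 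This equips $Q$ with a unique category structure making all projections $Q\to \mathcal{C}_i$ into functors, and the resulting cone is a limit cone in ${\rm Cat}$ (any cone of categories over the diagram is in particular a cone of quivers, giving a unique quiver morphism to $Q$, which is automatically a functor since it commutes with composition componentwise). The verification here is entirely routine.

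For \textbf{colimits}, $U$ does not create colimits (the colimit of underlying quivers need not already carry the right composition), so I would instead use the adjunction $(\mathcal{P}, U)$. One standard route: any category is the coequalizer of a pair of maps between free categories, so it suffices to build coproducts and coequalizers in ${\rm Cat}$, or — more efficiently — to invoke the general principle that a category with a monadic-type forgetful functor to a cocomplete base, or simply with enough structure, is cocomplete. Concretely, given a diagram $\mathcal{C}\colon\Lambda\to{\rm Cat}$, I would take $Q={\rm colim}_\Lambda U(\mathcal{C})$ in ${\rm Quiv}$, form the free category $\mathcal{P}(Q)$, and then quotient $\mathcal{P}(Q)$ by the smallest congruence (an equivalence relation on hom-sets compatible with composition, as in Section 2) that forces, for each $i$, the composite of arrows coming from a path in $\mathcal{C}_i$ to equal the arrow representing their composite in $\mathcal{C}_i$, and forces trivial paths $e_x$ to equal identity morphisms. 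Call the quotient $\mathcal{D}$. Each structure functor $\mathcal{C}_i\to\mathcal{D}$ is well-defined because these relations are imposed, and the universal property follows: a compatible family of functors $\mathcal{C}_i\to\mathcal{E}$ induces a quiver morphism $Q\to U(\mathcal{E})$, hence by the adjunction a functor $\mathcal{P}(Q)\to\mathcal{E}$, which kills the congruence by functoriality and therefore factors uniquely through $\mathcal{D}$.

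The main obstacle is the colimit construction: unlike limits, colimits of categories genuinely require the free-category-then-quotient step, and the delicate point is to describe the congruence precisely and to check it is well-defined (that "the smallest congruence containing a given set of relations" exists — which it does, being the intersection of all congruences containing it — and that the structure functors land in the quotient). Associativity and unitality in $\mathcal{D}$ are inherited from $\mathcal{P}(Q)$ and are automatic; the only real content is the universal property, which, as sketched, reduces cleanly to the adjunction $(\mathcal{P},U)$ together with the defining property of a factor category from Section 2. I would present the limit case in full detail and indicate the colimit case via this reduction, as the bookkeeping in the quotient is routine once the congruence is named.
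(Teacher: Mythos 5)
Your proposal is correct and follows essentially the same route as the paper: limits are computed on underlying quivers with componentwise composition, and colimits are built by forming a quiver on the colimit of object sets, taking its path category via the adjunction $(\mathcal{P}, U)$, and quotienting by the congruence enforcing composites and identities. The only cosmetic difference is that you identify arrows along the diagram already at the quiver-colimit stage, whereas the paper keeps the full disjoint union of morphisms and adds the identification $(h, \mathcal{C}_\alpha(h))$ as a generating relation of the congruence; the two quotients satisfy the same universal property.
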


\begin{proof}
Let $\mathcal{C}\colon \Lambda\rightarrow {\rm Cat}$ be a $\Lambda$-diagram in ${\rm Cat}$. We will describe its limit and colimit.

(1) The limit category $\mathcal{L}={\rm lim}_{i\in \Lambda_0}\; \mathcal{C}$ is given as follows:
\begin{align*}
{\rm Obj}(\mathcal{L}) &={\rm lim}_{i\in \Lambda_0} \; {\rm Obj}(\mathcal{C}_i)\\
                    &=\{ (X_i)_{i\in \Lambda_0}\; |\; X_i\in {\rm Obj}(\mathcal{C}_i); \mbox{ for any } \alpha\colon i\rightarrow j \mbox{ in } \Lambda_1, \; \mathcal{C}_\alpha(X_i)=X_j\};
\end{align*}
the Hom set is given by
\begin{align*}
\mathcal{L}((X_i)_{i\in \Lambda_0}, & (Y_i)_{i\in \Lambda_0})  = {\rm lim}_{i\in \Lambda_0} \; \mathcal{C}_i(X_i, Y_i)\\
&=\{(f_i)_{i\in \Lambda_0}\; |\; f_i\in \mathcal{C}_i(X_i, Y_i); \mbox{ for any } \alpha\colon i\rightarrow j \mbox{ in } \Lambda_1, \; \mathcal{C}_\alpha(f_i)=f_j \}.
\end{align*}
The composition of morphisms in $\mathcal{L}$ is defined naturally.

(2) The colimit category $\mathcal{CL}$ is given  such that
$${\rm Obj}(\mathcal{CL})={\rm colim}_{i\in \Lambda_0}\; {\rm Obj}(\mathcal{C}_i)=\bigsqcup_{i\in \Lambda_0} {\rm Obj}(\mathcal{C}_i)/{\sim},$$
where the equivalence relation $\sim$ is generated by the following set
$$\{(X_i, X_j)\; |\; X_i\in {\rm Obj}(\mathcal{C}_i), X_j \in {\rm Obj}(\mathcal{C}_j),\;  \exists \; \alpha\colon i\rightarrow j \mbox{ satisfying } \mathcal{C}_\alpha(X_i)=X_j\}.$$
Therefore, an object in $\mathcal{CL}$ is given by an equivalence class $[X_i]$ for some object $X_i$ in $\mathcal{C}_i$.  Consider the following quiver
$$Q=({\rm Obj}(\mathcal{CL}), \bigsqcup_{i\in \Lambda_0} {\rm Mor}(\mathcal{C}_i); s, t),$$
where for any morphism $f\colon X_i\rightarrow Y_i$ in $\mathcal{C}_i$, we define $s(f)=[X_i]$ and $t(f)=[Y_i]$.

Consider the path category  $\mathcal{P}=\mathcal{P}(Q)$. Set $\mathcal{R}$ to be the equivalence relation on $\mathcal{P}$ generated by the following elements:
\begin{enumerate}
\item[(1)] $(g_i\circ f_i, g_if_i)$ for any composable morphisms $f_i$ and $g_i$ in $\mathcal{C}_i$;
\item[(2)] $({\rm Id}_{X_i}, e_{[X_i]})$ for any object $X_i$ in $\mathcal{C}_i$;
\item[(3)] $(h, \mathcal{C}_\alpha(h))$ for any $\alpha\colon i\rightarrow j$ in $\Lambda_1$ and any morphism $h$ in $\mathcal{C}_i$.
\end{enumerate}
Here, $g_i\circ f_i$ means the composite morphism in $\mathcal{C}_i$ and thus is an arrow in $Q$,  and $g_if_i$ is the corresponding path of length two in $Q$; ${\rm Id}_{X_i}$ is the identity morphism in $\mathcal{C}_i$ and thus is an arrow in $Q$, and $e_{[X_i]}$ denotes the trivial path concentrated in the vertex $[X_i]$. Finally, we have $\mathcal{CL}=\mathcal{P}/\mathcal{R}$.
\end{proof}

\begin{rem}
(1) Since the forgetful functor $U$ has a left adjoint, it preserves limits. This fact indicates that the construction of the limit category is straightforward.

(2) From the above constructions, we infer that the functor
$${\rm Obj}\colon {\rm Cat}\longrightarrow {\rm Set},$$
taking the set of objects from any given small category, commutes with limits and colimits. Indeed, this functor has a left adjoint and a right adjoint.

(3) Although the colimit category is hard to describe, coproducts of categories in ${\rm Cat}$ are rather easy, which are   simply given by taking disjoint unions of objects and morphisms, respectively.

(4) Assume that the index category $\Lambda$ is filtered. The filtered colimit $\mathcal{CL}$ is much easier to describe. We have
$${\rm Obj}(\mathcal{CL})={\rm colim}_{i\in \Lambda_0}\; {\rm Obj}(\mathcal{C}_i)=\bigsqcup_{i\in \Lambda_0} {\rm Obj}(\mathcal{C}_i)/{\sim},$$
where the equivalence relation $\sim$ is given as follows: for any $X_i\in {\rm Obj}(\mathcal{C}_i)$ and $X_j\in {\rm Obj}(\mathcal{C}_j)$, $X_i\sim X_j$ if and only if there exist  $\alpha\colon i\rightarrow k$ and $\beta\colon j\rightarrow k$ in $\Lambda_1$ such that $\mathcal{C}_\alpha(X_i)=\mathcal{C}_\beta(X_j)$. Similarly, we have
$${\rm Mor}(\mathcal{CL})={\rm colim}_{i\in \Lambda_0}\; {\rm Mor}(\mathcal{C}_i)=\bigsqcup_{i\in \Lambda_0} {\rm Mor}(\mathcal{C}_i)/{\sim},$$
where for any $f_i\in {\rm Mor}(\mathcal{C}_i)$ and $f_j\in {\rm Mor}(\mathcal{C}_j)$, $f_i\sim f_j$ if and only if  there exist $\alpha\colon i\rightarrow k$ and $\beta\colon j\rightarrow k$ in $\Lambda_1$ such that $\mathcal{C}_\alpha(f_i)=\mathcal{C}_\beta(f_j)$. Moreover, if $f\colon X\rightarrow Y$ is a morphism  in $\mathcal{C}_i$, then  $[f]\colon [X]\rightarrow [Y]$ is a morphism in $\mathcal{CL}$.
\end{rem}

\begin{exm}
Denote by $\mathcal{L}$ the path category of the following quiver of $A_2$.
\[\xymatrix{ 1 \ar[rr]^\alpha && 2}\]
Consider the following $\mathcal{K}_2$-diagram in ${\rm Cat}$.
\[\xymatrix{
{\bf 1} \ar@<.7ex>[rr]^-{\iota_1} \ar@<-.7ex>[rr]_-{\iota_2}&& \mathcal{L}
}\]
Here, $\bf{1}=\{*\}$ is the unit category, and $\iota_i$ is  the unique functor sending $*$ to $i$, for $i=1,2$. The colimit of this diagram, namely the coequalizer of $\iota_1$ and $\iota_2$, is the path category of the following Jordan quiver.
\[\xymatrix{
\cdot \ar@(ru, rd)^-{\alpha}}\]
Here, the dot means the unique vertex $[1]=[2]$.
\end{exm}

\subsection{Injections, isofibrations and liftings}\label{subs:Cfl}

We begin with two observations on equivalences of categories.

\begin{lem}
Let $F$ and $F'$ be two functors. Assume that $F$ is a retract of $F'$ and that $F'$ is an equivalence of categories. Then so is $F$.
\end{lem}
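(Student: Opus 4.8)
The plan is to use the classical fact that a functor is an equivalence of categories if and only if it is fully faithful and dense, and to check that each of these two properties passes to a retract. Write the hypothesis that $F$ is a retract of $F'$ as a commutative diagram
\[
\xymatrix{
\mathcal{C}\ar[d]_-F \ar[r]^-I & \mathcal{C}' \ar[d]^-{F'} \ar[r]^-P & \mathcal{C} \ar[d]^-{F}\\
\mathcal{D}\ar[r]^-J & \mathcal{D}' \ar[r]^-Q & \mathcal{D}
}
\]
in which $F\colon \mathcal{C}\rightarrow \mathcal{D}$, $F'\colon \mathcal{C}'\rightarrow \mathcal{D}'$, and $P\circ I={\rm Id}_{\mathcal{C}}$, $Q\circ J={\rm Id}_{\mathcal{D}}$, $F'\circ I=J\circ F$, $F\circ P=Q\circ F'$.

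For full faithfulness, fix two objects $X$ and $Y$ of $\mathcal{C}$. Applying the two squares above on Hom sets and using $P\circ I={\rm Id}_{\mathcal{C}}$ and $Q\circ J={\rm Id}_{\mathcal{D}}$ (so that, in particular, $PIX=X$ and $QJFX=FX$), the map $F_{X,Y}\colon \mathcal{C}(X,Y)\rightarrow \mathcal{D}(FX,FY)$ is exhibited as a retract, in the category ${\rm Set}$, of the map $F'_{IX,IY}\colon \mathcal{C}'(IX,IY)\rightarrow \mathcal{D}'(F'IX,F'IY)$. Since $F'$ is an equivalence, this latter map is a bijection; by the fact (proved earlier, in the subsection on liftings) that a retract of an isomorphism is an isomorphism, applied in ${\rm Set}$, the map $F_{X,Y}$ is a bijection. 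As $X$ and $Y$ were arbitrary, $F$ is fully faithful.

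For density, take any object $B$ of $\mathcal{D}$. Since $F'$ is dense, there is an object $A'$ of $\mathcal{C}'$ and an isomorphism $J(B)\cong F'(A')$ in $\mathcal{D}'$. Applying the functor $Q$ and using $F\circ P=Q\circ F'$ together with $Q\circ J={\rm Id}_{\mathcal{D}}$, we obtain isomorphisms $B=Q(J(B))\cong Q(F'(A'))=F(P(A'))$ in $\mathcal{D}$. Hence $B$ lies in the essential image of $F$, so $F$ is dense, and therefore an equivalence.

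Alternatively, one could argue directly with quasi-inverses: choosing a quasi-inverse $G'$ of $F'$, the functor $G=P\circ G'\circ J$ is a quasi-inverse of $F$, the natural isomorphisms $G\circ F\cong {\rm Id}_{\mathcal{C}}$ and $F\circ G\cong {\rm Id}_{\mathcal{D}}$ being obtained from $G'\circ F'\cong {\rm Id}_{\mathcal{C}'}$ and $F'\circ G'\cong {\rm Id}_{\mathcal{D}'}$ by whiskering with $I,P$ and with $J,Q$ and inserting the identities $P\circ I={\rm Id}_{\mathcal{C}}$, $Q\circ J={\rm Id}_{\mathcal{D}}$. Either way there is no substantial obstacle; the only point demanding a little care is the bookkeeping in the Hom-set retract square, where one must recognise the object-level composite $P\circ I$ as ${\rm Id}_{\mathcal{C}}$ in order to identify the relevant Hom sets.
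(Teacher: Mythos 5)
Your proposal is correct. Your primary argument --- verifying that full faithfulness and density each pass to retracts, via the observation that $F_{X,Y}$ is a retract of the bijection $F'_{IX,IY}$ in ${\rm Set}$ and that essential surjectivity is inherited by applying $Q$ --- is a genuinely different route from the paper's, which simply takes a quasi-inverse $G'$ of $F'$ and checks that $G=P\circ G'\circ J$ is a quasi-inverse of $F$ (exactly your ``alternative'' paragraph). The paper's one-line argument is shorter and fits the pattern used throughout the note of producing explicit (quasi-)inverses from retract data; your main argument is slightly longer but has the virtue of reducing the statement to the already-established fact that a retract of an isomorphism is an isomorphism, applied levelwise in ${\rm Set}$, which makes the ``retract-closure'' mechanism more transparent and parallels how the paper later handles injections and isofibrations. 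Both arguments are complete; the only bookkeeping point, the identification $PIX=X$ and $QJFX=FX$ needed to line up the Hom sets, is one you handle correctly.
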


\begin{proof}
Consider the commutative diagram
\begin{align}\label{diag:retract}
\xymatrix{
\mathcal{C}\ar[d]_-{F} \ar[r]^-{i} & \mathcal{C}' \ar[d]^-{F'}\ar[r]^{p} & \mathcal{C} \ar[d]^-{F}\\
\mathcal{D} \ar[r]^-{j} & \mathcal{D}' \ar[r]^{q} & \mathcal{D}
}
\end{align}
satisfying $pi={\rm Id}_\mathcal{C}$ and $qj={\rm Id}_\mathcal{D}$.  Assume that $G'$ is a quasi-inverse of $F'$. Then $G=pG'j$ is a quasi-inverse of $F$.
\end{proof}

The following fact is standard.

\begin{lem}
Consider two composable functors $F\colon \mathcal{C}\rightarrow \mathcal{D}$ and $G\colon \mathcal{D} \rightarrow \mathcal{E}$. If any two of $F$, $G$ and $GF$ are  equivalences, then all the three are equivalences. \hfill $\square$
\end{lem}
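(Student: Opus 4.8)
The plan is to reduce the statement to two elementary closure properties of the class of equivalences of categories, and then to dispatch the three cases by writing the remaining functor, up to natural isomorphism, as a composite of two functors already known to be equivalences.

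First I would record two auxiliary facts, both proved by juggling quasi-inverses. \emph{(i) A composite of two equivalences is an equivalence.} Indeed, if $G_1$ is a quasi-inverse of $F_1$ and $G_2$ a quasi-inverse of $F_2$ (where the composites below are defined), then $G_1 G_2$ is a quasi-inverse of $F_2 F_1$, since $G_1 G_2 F_2 F_1\cong G_1 F_1\cong {\rm Id}$ and $F_2 F_1 G_1 G_2\cong F_2 G_2\cong {\rm Id}$; here one freely uses that whiskering a natural isomorphism by a functor yields a natural isomorphism. \emph{(ii) If a functor $F$ is naturally isomorphic to an equivalence $F'$, then $F$ is an equivalence.} Indeed, any quasi-inverse $G'$ of $F'$ is also a quasi-inverse of $F$, because $G' F\cong G' F'\cong {\rm Id}$ and $F G'\cong F' G'\cong {\rm Id}$. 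Finally, ${\rm Id}_{\mathcal{C}}$ is trivially an equivalence.

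Now I would treat the three cases. If $F$ and $G$ are equivalences, then $GF$ is an equivalence by (i). If $F$ and $GF$ are equivalences, choose a quasi-inverse $F'$ of $F$; then $G\cong G\circ(F\circ F')=(G\circ F)\circ F'$, a composite of the equivalences $GF$ and $F'$, hence an equivalence by (i), and therefore $G$ is an equivalence by (ii). Symmetrically, if $G$ and $GF$ are equivalences, choose a quasi-inverse $G'$ of $G$; then $F\cong (G'\circ G)\circ F=G'\circ(GF)$, a composite of the equivalences $G'$ and $GF$, so $F$ is an equivalence by (i) and (ii). There is no genuine obstacle here: the content is the bookkeeping of associativity and of whiskering natural isomorphisms. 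The only point worth stating carefully is fact (ii)---that being an equivalence is invariant under natural isomorphism of functors---since it is precisely what promotes the identities $G\cong GF\circ F'$ and $F\cong G'\circ GF$, which hold only up to isomorphism, into genuine conclusions about $G$ and $F$.
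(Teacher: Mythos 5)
Your proof is correct. The paper omits the argument entirely (the lemma is stated as ``standard'' with the proof left to the reader), and what you give is precisely the standard argument: closure of equivalences under composition, invariance under natural isomorphism, and the identifications $G\cong (GF)\circ F'$ and $F\cong G'\circ (GF)$.
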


We define the \emph{interval category}  $I$ such that ${\rm Obj}(I)=\{0, 1\}$ and ${\rm Mor}(I)=\{{\rm Id}_0, {\rm Id}_1, a, a^{-1}\}$; here,  $a$ is an isomorphism from $0$ to $1$, whose inverse is  $a^{-1}$.

\begin{defn}
A functor $F\colon \mathcal{C}\rightarrow \mathcal{D}$ is called an \emph{injection} provided that $F$ is injective on objects, that is, the map ${\rm Obj}(F)\colon {\rm Obj}(\mathcal{C})\rightarrow {\rm Obj}(\mathcal{D})$ is injective. It is called an \emph{acyclic injection} if it is an injection and an equivalence.
\end{defn}

We observe that the composition of two injections is an injection.

\begin{exm}\label{exm:fib}
The unique functor $\emptyset\rightarrow \mathbf{1}$ is an injection. The obvious functors $\mathcal{K}_0\rightarrow \mathcal{K}_1$ and $\mathcal{K}_2\rightarrow \mathcal{K}_1$, which acts on objects by the identity,  are both injections. The inclusion ${\rm inc}_0\colon \mathbf{1}\rightarrow I$ sending $*$ to $0$ is an acyclic injection.
\end{exm}

\begin{lem}
Assume that $F\colon \mathcal{C}\rightarrow \mathcal{D}$ is a retract of $F'\colon \mathcal{C}'\rightarrow \mathcal{D}'$. If $F'$ is an injection, then so is $F$.
\end{lem}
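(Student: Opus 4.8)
The plan is to transport the whole situation along the functor ${\rm Obj}\colon {\rm Cat}\to{\rm Set}$ and thereby reduce to the corresponding elementary fact about maps of sets. So the first step is: fix the commutative diagram witnessing that $F$ is a retract of $F'$, with the top row $i\colon \mathcal{C}\to\mathcal{C}'$, $p\colon \mathcal{C}'\to\mathcal{C}$ (composing to ${\rm Id}_{\mathcal{C}}$), the bottom row $j\colon \mathcal{D}\to\mathcal{D}'$, $q\colon \mathcal{D}'\to\mathcal{D}$ (composing to ${\rm Id}_{\mathcal{D}}$), and vertical maps $F$, $F'$, $F$. Applying ${\rm Obj}$ yields a commutative diagram in ${\rm Set}$ of the same shape, and since ${\rm Obj}$ preserves compositions and identities the two horizontal composites become the identity maps of ${\rm Obj}(\mathcal{C})$ and ${\rm Obj}(\mathcal{D})$. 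Hence ${\rm Obj}(F)$ is a retract, in ${\rm Set}$, of the map ${\rm Obj}(F')$.

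The second step uses that, by definition, $F$ is an injection precisely when ${\rm Obj}(F)$ is an injective map of sets; so it remains to observe that a retract of an injective map of sets is injective (equivalently, that retracts of monomorphisms are monomorphisms). This is immediate by a short chase: if ${\rm Obj}(F)$ sends two objects $X,Y$ of $\mathcal{C}$ to the same object, apply ${\rm Obj}(j)$, use commutativity of the left-hand square to rewrite the resulting equality through ${\rm Obj}(F')$, cancel ${\rm Obj}(F')$ by its injectivity, and then apply ${\rm Obj}(p)$ together with $pi={\rm Id}_{\mathcal{C}}$ to deduce $X=Y$. If one prefers to stay within the orthogonality formalism already set up, one instead checks that a map $f$ in ${\rm Set}$ is injective if and only if $\nabla\perp f$ for the fold map $\nabla\colon \{*\}\sqcup\{*\}\to\{*\}$, and then invokes the evident dual of Lemma~\ref{lem:perp-retract}.

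I do not expect any genuine obstacle here: the argument is purely formal, resting only on the facts that ${\rm Obj}$ is a functor and that the defining condition ``injection'' depends on nothing but ${\rm Obj}(F)$. The single point worth flagging is exactly this last one: since an injection is a condition on objects alone, the morphism components of $i$, $p$, $j$, $q$ never enter the proof, and in particular this lemma is strictly weaker than, and should not be conflated with, the analogous statements about full or faithful functors, which would demand more.
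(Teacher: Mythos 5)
Your proof is correct and is essentially the paper's argument: the paper likewise observes that $i$ is injective on objects (from $pi={\rm Id}_{\mathcal{C}}$), hence $jF=F'i$ is injective on objects, hence so is $F$ --- which is exactly your diagram chase through ${\rm Obj}$. Your closing observation that the condition depends only on ${\rm Obj}(F)$, and the optional reformulation via $\nabla\perp f$ and the dual of Lemma~\ref{lem:perp-retract}, are both accurate but do not change the substance.
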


\begin{proof}
Consider the commutative diagram (\ref{diag:retract}). Then ${i}$ is injective on objects. As $F'$ is injective on objects, so is $F'i=jF$. It follows that $F$ is injective on objects.
\end{proof}

The following notion is somehow subtle.

\begin{defn}
A functor $F\colon \mathcal{C}\rightarrow \mathcal{D}$ is called an \emph{isofibration} provided that for any given isomorphism $g\colon F(C)\rightarrow D$ in $\mathcal{D}$ with $C\in {\rm Obj}(\mathcal{C})$, there exists an isomorphism $h\colon C\rightarrow C'$ in $\mathcal{C}$ satisfying $F(h)=g$, in particular, $F(C')=D$. It is called an \emph{acyclic isofibration} if it is an isofibration and an equivalence.
\end{defn}

We observe that the composition of two isofibrations is an isofibration. The following result is evident.

\begin{lem}\label{lem:ac-fib}
A functor $F\colon \mathcal{C}\rightarrow \mathcal{D}$ is an acyclic isofibration if and only if it is fully faithful and surjective on objects. \hfill $\square$
\end{lem}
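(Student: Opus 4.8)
The statement to prove is Lemma~\ref{lem:ac-fib}: a functor $F\colon \mathcal{C}\rightarrow \mathcal{D}$ is an acyclic isofibration if and only if it is fully faithful and surjective on objects.

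The plan is to prove the two implications separately, exploiting the basic fact that a functor is an equivalence of categories exactly when it is fully faithful and essentially surjective. For the forward direction, I would start from an acyclic isofibration $F$, which by definition is both an isofibration and an equivalence; I then only need to upgrade ``essentially surjective'' to ``surjective on objects''. Given any object $D\in\mathcal{D}$, essential surjectivity produces an object $C\in\mathcal{C}$ together with an isomorphism $g\colon F(C)\rightarrow D$; feeding this isomorphism into the isofibration property yields an isomorphism $h\colon C\rightarrow C'$ in $\mathcal{C}$ with $F(h)=g$, hence $F(C')=D$. So $F$ is surjective on objects, and full faithfulness is inherited from being an equivalence.

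For the converse, suppose $F$ is fully faithful and surjective on objects. Surjectivity on objects is stronger than essential surjectivity, so $F$ is an equivalence. It remains to check $F$ is an isofibration: given $C\in{\rm Obj}(\mathcal{C})$ and an isomorphism $g\colon F(C)\rightarrow D$ in $\mathcal{D}$, use surjectivity on objects to pick $C'$ with $F(C')=D$; then $g\in\mathcal{D}(F(C),F(C'))$, and by full faithfulness there is a unique $h\colon C\rightarrow C'$ with $F(h)=g$. Full faithfulness also transports the inverse: the morphism $h'\colon C'\rightarrow C$ with $F(h')=g^{-1}$ satisfies $F(h'h)=F({\rm Id}_C)$ and $F(hh')=F({\rm Id}_{C'})$, so by faithfulness $h$ is an isomorphism. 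Hence $F$ is an isofibration, and combined with being an equivalence, an acyclic isofibration.

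I do not anticipate any real obstacle here; the only point requiring a word of care is the routine verification in the converse that the lifted morphism $h$ is actually an isomorphism rather than merely a morphism, which follows from faithfulness reflecting isomorphisms once one knows $F$ is full. Everything else is a direct unwinding of the definitions of equivalence, isofibration, and fully faithful.
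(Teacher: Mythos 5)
Your proof is correct and is exactly the routine unwinding of definitions that the paper has in mind when it calls the lemma ``evident'' and omits the proof. Both directions are handled properly, including the one point needing care (that the lift $h$ in the converse is an isomorphism because fully faithful functors reflect isomorphisms).
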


 Recall the inclusion functor ${\rm inc}_0\colon \mathbf{1}\rightarrow I$; see Example~\ref{exm:fib}.

\begin{prop}\label{prop:fib-perp}
A functor $F\colon \mathcal{C}\rightarrow \mathcal{D}$ is an isofibration if and only if ${\rm inc}_0\perp F$.
\end{prop}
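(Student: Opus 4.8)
The statement is an iff, so I would prove the two implications separately, both by directly unwinding the definitions of ``isofibration'' and of the orthogonality relation ${\rm inc}_0\perp F$.

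First I would make explicit what a commutative square
\[\xymatrix{
\mathbf{1}\ar[d]_-{{\rm inc}_0}\ar[r]^-{a} & \mathcal{C}\ar[d]^-{F}\\
I\ar[r]^-{b} & \mathcal{D}
}\]
amounts to. A functor $a\colon \mathbf{1}\rightarrow \mathcal{C}$ is the same as a choice of object $C=a(*)\in{\rm Obj}(\mathcal{C})$; a functor $b\colon I\rightarrow \mathcal{D}$ is the same as a choice of isomorphism $g=b(\alpha)$ in $\mathcal{D}$ (with source $b(0)$ and target $b(1)$), since $I$ has exactly one non-identity isomorphism $\alpha\colon 0\to 1$ and its inverse; here I use the letter $\alpha$ for the generating arrow of $I$ (called $a$ in the definition of $I$, but I will rename to avoid clashing with the functor $a$). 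Commutativity $F\circ a = b\circ{\rm inc}_0$ says precisely that $b(0)=F(C)$, i.e. $g$ is an isomorphism in $\mathcal{D}$ with source $F(C)$. A lifting $h\colon I\rightarrow\mathcal{C}$ for this square is a functor with $h(0)=C$ (from $h\circ{\rm inc}_0=a$) and $F\circ h=b$; such a functor is the same as a choice of isomorphism $\tilde h=h(\alpha)\colon C\rightarrow C'$ in $\mathcal{C}$ (where $C'=h(1)$) with $F(\tilde h)=g$. So once the dictionary ``functors out of $\mathbf{1}$ $=$ objects, functors out of $I$ $=$ isomorphisms'' is set up, a lifting for the square is exactly the datum required in the definition of isofibration. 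This makes both directions immediate: if $F$ is an isofibration, every such square has a lifting, so ${\rm inc}_0\perp F$; conversely, given an isomorphism $g\colon F(C)\rightarrow D$, package it as a square as above (with $b$ the functor $I\to\mathcal{D}$ determined by $g$, which is legitimate precisely because $g$ is invertible), apply the lifting hypothesis, and read off the required isomorphism $h\colon C\rightarrow C'$ in $\mathcal{C}$ with $F(h)=g$.

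The only point needing a little care — and the closest thing to an obstacle — is the bijection ``functors $I\to\mathcal{D}$ $\leftrightarrow$ isomorphisms in $\mathcal{D}$'': one must check that an arbitrary isomorphism $g$ of $\mathcal{D}$ really does extend to a (necessarily unique) functor $I\to\mathcal{D}$ sending $\alpha\mapsto g$, i.e. that the functoriality constraints (respecting identities, composition, and in particular $\alpha^{-1}\mapsto g^{-1}$ together with $\alpha^{-1}\alpha={\rm Id}_0$, $\alpha\alpha^{-1}={\rm Id}_1$) are automatically satisfied. This holds exactly because $g$ is an isomorphism, so its inverse $g^{-1}$ exists in $\mathcal{D}$ and satisfies the two relations; this is the reason the definition of isofibration (and hence the proposition) is about \emph{isomorphisms} rather than arbitrary morphisms. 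I would state this dictionary as a one-line observation and then give the two implications in two short sentences each.

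Everything else is bookkeeping: matching $C'=h(1)$, $F(C')=D$, and noting that the ``two triangles commute'' condition for a lifting splits exactly into $h\circ{\rm inc}_0=a$ (the upper triangle, giving $h(0)=C$) and $F\circ h=b$ (the lower triangle, giving $F(h(\alpha))=g$). No colimit or smallness machinery is needed here; the proposition is purely a reformulation.
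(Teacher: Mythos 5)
Your proof is correct and takes essentially the same route as the paper: the paper likewise encodes the object as a functor $\mathbf{1}\rightarrow\mathcal{C}$ and the isomorphism as a functor $I\rightarrow\mathcal{D}$, and reads the lifting of the resulting square as the isomorphism required by the definition of isofibration. Your explicit remark that the dictionary between isomorphisms in $\mathcal{D}$ and functors $I\rightarrow\mathcal{D}$ uses invertibility is a worthwhile detail the paper leaves implicit (it only writes out the ``if'' direction), but it is not a different argument.
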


\begin{proof}
We only prove the ``if" part. Assume that ${\rm inc}_0\perp F$. Take any isomorphism $g\in F(C)\rightarrow D$ in $\mathcal{D}$. We define a functor $G\colon I\rightarrow \mathcal{D}$ by $G(0)=F(C)$, $G(1)=D$ and $G(a)=g$. We have a functor $\iota_C\colon \mathbf{1}\rightarrow \mathcal{C}$ given by $\iota_C(*)=C$. So, we have the following commutative square  in ${\rm Cat}$.
\[\xymatrix{
\mathbf{1} \ar[d]_-{{\rm inc}_0} \ar[r]^-{\iota_C} & \mathcal{C}\ar[d]^-{F}\\
I\ar[r]^-{G}& \mathcal{D}
}\]
Therefore, there is a lifting $H\colon I\rightarrow \mathcal{C}$ of the square above. We observe that $H(a)$ is an isomorphism in $\mathcal{C}$ satisfying $FH(a)=g$, proving that $F$ is an isofibration.
\end{proof}

\begin{lem}
Assume that $F\colon \mathcal{C}\rightarrow \mathcal{D}$ is a retract of $F'\colon \mathcal{C}'\rightarrow \mathcal{D}'$. If $F'$ is an  isofibration, then so is $F$.
\end{lem}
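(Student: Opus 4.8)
The plan is to reduce the retract-stability of isofibrations to the orthogonality characterization just proved in Proposition~\ref{prop:fib-perp}, together with the general retract-stability lemma for left orthogonal classes. By Proposition~\ref{prop:fib-perp}, a functor $G$ is an isofibration precisely when $\{{\rm inc}_0\}\perp G$, that is, when $G\in \{{\rm inc}_0\}^\perp$. Since $F$ is a retract of $F'$ and $F'\in \{{\rm inc}_0\}^\perp$, and since the class $\mathcal{S}^\perp$ is closed under retracts for any class $\mathcal{S}$ (this is the dual of Lemma~\ref{lem:perp-retract}), we conclude $F\in \{{\rm inc}_0\}^\perp$, hence $F$ is an isofibration. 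So really the whole proof is a single invocation, provided one has recorded the dual of Lemma~\ref{lem:perp-retract}.

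If instead one wants a direct, self-contained argument, I would unwind the definitions as in the other retract lemmas in this subsection. Keep the notation of diagram~(\ref{diag:retract}): we have $i\colon \mathcal{C}\to \mathcal{C}'$, $p\colon \mathcal{C}'\to \mathcal{C}$ with $pi={\rm Id}_\mathcal{C}$, and $j\colon \mathcal{D}\to \mathcal{D}'$, $q\colon \mathcal{D}'\to \mathcal{D}$ with $qj={\rm Id}_\mathcal{D}$, and $F'i=jF$, $Fp=qF'$. Given an isomorphism $g\colon F(C)\to D$ in $\mathcal{D}$, I would push it into $\mathcal{D}'$ by forming $j(g)\colon jF(C)\to j(D)$, noting $jF(C)=F'i(C)=F'(i(C))$, so that $j(g)$ is an isomorphism in $\mathcal{D}'$ with source $F'(i(C))$. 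Since $F'$ is an isofibration, there is an isomorphism $h'\colon i(C)\to C''$ in $\mathcal{C}'$ with $F'(h')=j(g)$. Now set $h=p(h')\colon pi(C)\to p(C'')$, i.e.\ $h\colon C\to p(C'')$, which is an isomorphism in $\mathcal{C}$. Then $F(h)=F(p(h'))=q(F'(h'))=q(j(g))=g$, as desired; in particular $F(p(C''))=D$. Thus $F$ is an isofibration.

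The main obstacle — really the only point requiring any care — is keeping the four commuting relations among $i,p,j,q,F,F'$ straight so that the source and target objects match up at each step; in particular one must use $jF(C)=F'(i(C))$ to land in the domain where $F'$'s isofibration property applies, and then use $qj={\rm Id}_\mathcal{D}$ and $Fp=qF'$ to bring the lifted isomorphism back down to $\mathcal{C}$ with the correct image. Everything else is formal. I would present the short orthogonality argument as the proof and perhaps remark that it is an instance of the general principle that any class of the form $\mathcal{S}^\perp$ is closed under retracts, paralleling Lemma~\ref{lem:perp-retract}.
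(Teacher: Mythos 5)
Your first argument is exactly the paper's proof: combine Proposition~\ref{prop:fib-perp} with the dual of Lemma~\ref{lem:perp-retract} (closure of $\mathcal{S}^\perp$ under retracts). The direct unwinding you offer as an alternative is also correct, but the orthogonality argument is the one the paper records.
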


\begin{proof}
Combine Proposition~\ref{prop:fib-perp} and the dual of Lemma~\ref{lem:perp-retract}.
\end{proof}

The following results are similar to Proposition~\ref{prop:fib-perp} with very similar proofs. We use the injections in Example~\ref{exm:fib}.

\begin{prop}\label{prop:3-ortho}
Let $F\colon \mathcal{C}\rightarrow \mathcal{D}$ be a functor. Then the following statements hold.
\begin{enumerate}
\item[(1)] The functor $F$ is surjective on objects if and only if $(\emptyset\rightarrow \mathbf{1})\perp F$.
\item[(2)] The functor $F$ is full if and only if $(\mathcal{K}_0\rightarrow \mathcal{K}_1)\perp F$.
\item[(3)] The functor $F$ is faithful if and only if $(\mathcal{K}_2\rightarrow \mathcal{K}_1)\perp F$.
\end{enumerate}
\end{prop}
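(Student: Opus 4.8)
The plan is to establish all three equivalences by the device already used for Proposition~\ref{prop:fib-perp}: a functor out of one of the small ``test'' categories $\emptyset$, $\mathbf{1}$, $\mathcal{K}_0$, $\mathcal{K}_1$, $\mathcal{K}_2$ is nothing but a choice of the finite configuration of objects and morphisms that the category ``diagrams'', so a commutative square over $F$ with a prescribed left edge, together with a lifting for it, can be rewritten as completely explicit data. Once that translation is set up, each statement becomes a tautology. I will dispatch the ``only if'' directions quickly (plug the defining configurations into the orthogonality hypothesis) and spend the argument on the ``if'' directions, exactly as in Proposition~\ref{prop:fib-perp}.

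For (1): a square with left edge $\emptyset\to\mathbf{1}$ and right edge $F$ is just a functor $G\colon\mathbf{1}\to\mathcal{D}$, i.e.\ a choice of object $D=G(\ast)$ of $\mathcal{D}$ (the top edge $\emptyset\to\mathcal{C}$ and both commutativities being forced, $\emptyset$ being initial), and a lifting $H\colon\mathbf{1}\to\mathcal{C}$ is exactly an object $C$ of $\mathcal{C}$ with $F(C)=D$; hence $(\emptyset\to\mathbf{1})\perp F$ says precisely that ${\rm Obj}(F)$ is surjective. For (2), writing $\iota\colon\mathcal{K}_0\to\mathcal{K}_1$ for the injection of Example~\ref{exm:fib}: a square over $F$ with left edge $\iota$ consists of two objects $C_0,C_1$ of $\mathcal{C}$ and a morphism $g\colon F(C_1)\to F(C_0)$ in $\mathcal{D}$ (commutativity forces $G(0)=F(C_0)$ and $G(1)=F(C_1)$), and a compatible lifting $\mathcal{K}_1\to\mathcal{C}$ is exactly a morphism $h\colon C_1\to C_0$ with $F(h)=g$; so $\iota\perp F$ is exactly fullness of $F$. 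For (3), writing $\pi\colon\mathcal{K}_2\to\mathcal{K}_1$ for the collapsing injection: a square over $F$ with left edge $\pi$ amounts to objects $C_0,C_1$ of $\mathcal{C}$ together with a pair of parallel morphisms $h_1,h_2\colon C_1\to C_0$ satisfying $F(h_1)=F(h_2)$ (that equality is the content of the square's commutativity, both sides being $G(\alpha_1)$), while a lifting $H\colon\mathcal{K}_1\to\mathcal{C}$ with $H\pi$ equal to the top functor is forced to satisfy $H(\alpha_1)=h_1=h_2$; hence a lifting exists iff $h_1=h_2$, and $\pi\perp F$ is exactly faithfulness of $F$.

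There is no genuine obstacle here; the one point to watch is the bookkeeping of quantifiers in each translation — one must check that as the commutative square ranges over all of its instances, the pair $(C_0,C_1)$ ranges over all pairs of objects of $\mathcal{C}$ and the accompanying morphism (or parallel pair of morphisms) ranges over all such with the prescribed image under $F$, so that ``a lifting always exists'' unwinds to exactly surjectivity of ${\rm Obj}(F)$, or surjectivity, or injectivity of $F$ on each Hom-set. Everything else is the routine verification sketched above, following the template of the proof of Proposition~\ref{prop:fib-perp}.
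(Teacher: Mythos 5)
Your proposal is correct and follows exactly the route the paper intends: the paper omits the proof, remarking only that it is ``similar to Proposition~\ref{prop:fib-perp} with very similar proofs'' using the injections of Example~\ref{exm:fib}, and your unwinding of squares and liftings over $\emptyset\rightarrow\mathbf{1}$, $\mathcal{K}_0\rightarrow\mathcal{K}_1$ and $\mathcal{K}_2\rightarrow\mathcal{K}_1$ into surjectivity on objects, fullness and faithfulness is precisely that argument, with the quantifier bookkeeping handled correctly.
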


\begin{cor}
A functor $F\colon \mathcal{C}\rightarrow \mathcal{D}$ is an acyclic isofibration if and only if $\{\emptyset\rightarrow \mathbf{1},  \mathcal{K}_0\rightarrow \mathcal{K}_1,\mathcal{K}_2\rightarrow \mathcal{K}_1\}\perp F$.
\end{cor}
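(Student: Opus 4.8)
The plan is to reduce the statement to the combination of Lemma~\ref{lem:ac-fib} and Proposition~\ref{prop:3-ortho}, so that no genuinely new argument is needed. First I would unwind the notation: by definition, $\{\emptyset\rightarrow \mathbf{1},\ \mathcal{K}_0\rightarrow \mathcal{K}_1,\ \mathcal{K}_2\rightarrow \mathcal{K}_1\}\perp F$ means precisely that each of the three displayed functors is orthogonal to $F$, i.e.\ that $(\emptyset\rightarrow\mathbf{1})\perp F$ and $(\mathcal{K}_0\rightarrow\mathcal{K}_1)\perp F$ and $(\mathcal{K}_2\rightarrow\mathcal{K}_1)\perp F$ all hold simultaneously.

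Next I would invoke Proposition~\ref{prop:3-ortho}, which identifies these three orthogonality conditions with $F$ being surjective on objects, full, and faithful, respectively. Hence the right-hand side of the asserted equivalence is equivalent to ``$F$ is full, faithful, and surjective on objects,'' that is, ``$F$ is fully faithful and surjective on objects.''

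Finally I would apply Lemma~\ref{lem:ac-fib}, which says that a functor is an acyclic isofibration if and only if it is fully faithful and surjective on objects. Chaining these two equivalences gives the claim. The only point requiring a word of care is that the conjunction of the three separate lifting properties really does match the single condition $\{\ldots\}\perp F$; since orthogonality against a set of morphisms is by convention orthogonality against each member, this is immediate, and I do not expect any real obstacle here. In short, the corollary is a formal consequence of the two cited results, and the ``proof'' is simply the observation that ``fully faithful $+$ surjective on objects'' decomposes as ``full $+$ faithful $+$ surjective on objects.''
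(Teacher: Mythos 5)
Your proposal is correct and is exactly the paper's argument: the paper's proof is simply ``Combine Lemma~\ref{lem:ac-fib} and Proposition~\ref{prop:3-ortho}.'' Your added remark that orthogonality against the set means orthogonality against each member, and that ``fully faithful and surjective on objects'' decomposes into the three separate conditions, is the same (implicit) bookkeeping the paper relies on.
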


\begin{proof}
Combine Lemma~\ref{lem:ac-fib} and Proposition~\ref{prop:3-ortho}.
\end{proof}

\subsection{The natural model structure} The following two orthogonality results are less trivial.

\begin{prop}
Let $F\colon \mathcal{C}\rightarrow \mathcal{D}$ be an acyclic injection, and $G\colon \mathcal{M}\rightarrow \mathcal{N}$ be an isofibration. Then we have $F\perp G$.
\end{prop}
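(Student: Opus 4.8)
The plan is to construct the required lifting by hand. Given a commutative square
\[\xymatrix{
\mathcal{C}\ar[d]_-{F}\ar[r]^-{P} & \mathcal{M}\ar[d]^-{G}\\
\mathcal{D}\ar[r]^-{Q} & \mathcal{N}
}\]
I will produce a functor $H\colon \mathcal{D}\rightarrow \mathcal{M}$ with $HF=P$ and $GH=Q$, using the three features packaged into ``acyclic injection'': $F$ is injective on objects, fully faithful, and essentially surjective.

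First I would define $H$ on objects. Essential surjectivity lets me fix, for each object $D$ of $\mathcal{D}$, an object $C_D$ of $\mathcal{C}$ and an isomorphism $\theta_D\colon F(C_D)\rightarrow D$ in $\mathcal{D}$; crucially, whenever $D$ lies in the image of $F$ on objects I take $C_D=F^{-1}(D)$ (well defined since $F$ is an injection) and $\theta_D={\rm Id}_D$. Applying $Q$ yields an isomorphism $Q(\theta_D)\colon GP(C_D)\rightarrow Q(D)$ in $\mathcal{N}$ (using $QF=GP$), and since $G$ is an isofibration I may lift it to an isomorphism $\widetilde{\theta}_D\colon P(C_D)\rightarrow H(D)$ in $\mathcal{M}$ with $G(\widetilde{\theta}_D)=Q(\theta_D)$, choosing $\widetilde{\theta}_D={\rm Id}_{P(C_D)}$ when $D$ is in the image of $F$. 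This defines $H$ on objects, and already forces $G(H(D))=Q(D)$ and $H(F(C))=P(C)$.

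Next I would define $H$ on a morphism $f\colon D\rightarrow D'$: the composite $\theta_{D'}^{-1}\circ f\circ\theta_D\colon F(C_D)\rightarrow F(C_{D'})$ equals $F(g)$ for a unique $g\colon C_D\rightarrow C_{D'}$ by full faithfulness of $F$, and I set $H(f)=\widetilde{\theta}_{D'}\circ P(g)\circ\widetilde{\theta}_D^{-1}$. Functoriality of $H$ then follows from the uniqueness clause of full faithfulness (identities yield $g={\rm Id}$, and the morphism associated to a composite $f'\circ f$ is $g'\circ g$), after which the $\widetilde{\theta}$'s telescope. The identity $HF=P$ reduces to observing that for $\alpha$ in $\mathcal{C}$ the associated $g$ is $\alpha$ itself (faithfulness) while the relevant $\theta$'s and $\widetilde{\theta}$'s are identities; and $GH=Q$ reduces to $GP(g)=QF(g)=Q(\theta_{D'})^{-1}\circ Q(f)\circ Q(\theta_D)$, which cancels against $G(\widetilde{\theta}_{D'})=Q(\theta_{D'})$ and $G(\widetilde{\theta}_D)=Q(\theta_D)$.

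The only step where a hypothesis does genuine work is the single use of the isofibration property of $G$ to obtain the isomorphisms $\widetilde{\theta}_D$; everything else is bookkeeping. The subtle point — and the reason the hypothesis says ``injection'' rather than merely ``equivalence'' — is that to get $HF=P$ \emph{strictly} one must make the trivial choice of $C_D$, $\theta_D$, $\widetilde{\theta}_D$ on the image of $F$, and this is unambiguous precisely because $F$ is injective on objects. Keeping these compatibility choices straight is the main thing to be careful about.
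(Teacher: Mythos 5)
Your proposal is correct and follows essentially the same route as the paper's proof: use density of $F$ to choose, for each object of $\mathcal{D}$, a comparison isomorphism to an object in the image (taking the trivial choice on the image, which is unambiguous because $F$ is injective on objects), lift these isomorphisms through the isofibration $G$, and transport morphisms via the full faithfulness of $F$. The orientation of your isomorphisms is reversed relative to the paper's, but this is immaterial.
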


\begin{proof}
Consider the following commutative square.
\[\xymatrix{
\mathcal{C}\ar[d]_-{F} \ar[r]^-i &\mathcal{M}\ar[d]^-{G}\\
\mathcal{D} \ar[r]^-j & \mathcal{N}
}\]
Take any $X\in {\rm Obj}(\mathcal{D})$. Since $F$ is dense, we fix an object $C_X$ in $\mathcal{C}$ and an isomorphism
$$u_X\colon X\longrightarrow F(C_X)$$
in $\mathcal{D}$. Moreover, if $X=F(C)$ for some object $C$, we will take $C_X=C$ and $u_X={\rm Id}_{X}$; in this case, such an object $C$ is unique, since $F$ is an injection.

Consider the isomorphism
$$j(u_X)\colon j(X)\longrightarrow jF(C_X)=Gi(C_X)$$
in $\mathcal{N}$. Since $G$ is an  isofibration, we might fix an isomorphism
$$h_X\colon H(X)\longrightarrow i(C_X)$$
in $\mathcal{M}$ such that $G(h_X)=j(u_X)$; in particular, we have $GH(X)=j(X)$. If $X=F(C)$, we will take $H(X)=i(C)$ and $h_X={\rm Id}_{i(C)}$.

We will define a lifting functor $H\colon \mathcal{D}\rightarrow \mathcal{M}$ for the square. The above argument defines its action on objects. For any morphism $f\colon X\rightarrow Y$ in $\mathcal{D}$, there is a unique morphism $C_f\colon C_X\rightarrow C_{Y}$ in $\mathcal{C}$ making the following diagram commute.
\[
\xymatrix{
X\ar[d]_-{f} \ar[r]^-{u_X} & F(C_X)\ar@{.>}[d]^-{F(C_f)}\\
Y\ar[r]^-{u_Y} & F(C_Y)
}\]
Here, we use the fully-faithfulness of $F$. Now, we define
$$H(f)=h_Y^{-1}\circ i(C_f)\circ h_X\colon H(X)\longrightarrow H(Y).$$
 This defines the required lifting functor $H$.
\end{proof}

\begin{prop}
Let $F\colon \mathcal{C}\rightarrow \mathcal{D}$ be an injection, and $G\colon \mathcal{M}\rightarrow \mathcal{N}$ be an acyclic isofibration. Then we have $F\perp G$.
\end{prop}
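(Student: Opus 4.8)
The plan is to construct a lifting functor $H\colon \mathcal{D}\rightarrow \mathcal{M}$ directly, exploiting that by Lemma~\ref{lem:ac-fib} the acyclic isofibration $G$ is fully faithful and surjective on objects. Given a commutative square with top arrow $i\colon \mathcal{C}\rightarrow \mathcal{M}$ and bottom arrow $j\colon \mathcal{D}\rightarrow \mathcal{N}$, I would first define $H$ on objects. If $X\in {\rm Obj}(\mathcal{D})$ lies in the image of $F$, say $X=F(C)$, set $H(X)=i(C)$; this is unambiguous because $F$ is an injection, so $C$ is unique, and it is consistent with the square since $GH(X)=Gi(C)=jF(C)=j(X)$. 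For $X$ not in the image of $F$, use surjectivity of $G$ on objects to pick any object $H(X)\in {\rm Obj}(\mathcal{M})$ with $G(H(X))=j(X)$.

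Next I would define $H$ on morphisms. For $f\colon X\rightarrow Y$ in $\mathcal{D}$, the morphism $j(f)$ runs from $G(H(X))=j(X)$ to $G(H(Y))=j(Y)$ in $\mathcal{N}$; since $G$ is fully faithful, there is a unique morphism $H(f)\colon H(X)\rightarrow H(Y)$ in $\mathcal{M}$ with $G(H(f))=j(f)$. Functoriality of $H$ then follows formally from this uniqueness: applying $G$ to either $H(g)\circ H(f)$ or $H(g\circ f)$ gives $j(g)\circ j(f)=j(g\circ f)$, so faithfulness of $G$ forces them to agree, and likewise $H({\rm Id}_X)={\rm Id}_{H(X)}$.

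It remains to check the two triangles. The identity $G\circ H=j$ holds by construction, on objects and on morphisms. For $H\circ F=i$: on objects this is exactly how $H$ was defined on the image of $F$; on a morphism $\alpha\colon C\rightarrow C'$ of $\mathcal{C}$, both $H(F(\alpha))$ and $i(\alpha)$ are morphisms from $i(C)$ to $i(C')$, and applying $G$ yields $j(F(\alpha))$ in both cases, so faithfulness of $G$ gives $H(F(\alpha))=i(\alpha)$. Hence $H$ is the desired lifting.

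There is no serious obstacle here: unlike the preceding proposition, where one transports along chosen isomorphisms supplied by the isofibration, full faithfulness of $G$ here upgrades ``lift objects'' to ``lift all morphisms canonically'', and $F$ being an injection is needed only to make the object-level definition of $H$ on the image of $F$ well posed. The one point requiring a little care is that the choices of $H(X)$ for $X$ outside the image of $F$ are genuinely arbitrary, so functoriality must be argued from uniqueness of morphism lifts rather than from any compatibility among those object choices.
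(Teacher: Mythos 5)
Your proof is correct and follows essentially the same route as the paper: define $H$ on objects via surjectivity of $G$ (taking $H(F(C))=i(C)$, well posed since $F$ is an injection), then lift morphisms uniquely using full faithfulness of $G$, with functoriality and the two commuting triangles following from that uniqueness. The paper leaves the final verification as routine; your write-up simply makes those checks explicit.
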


\begin{proof}
Consider the following commutative square.
\[\xymatrix{
\mathcal{C}\ar[d]_-{F} \ar[r]^-i &\mathcal{M}\ar[d]^-{G}\\
\mathcal{D} \ar[r]^-j & \mathcal{N}
}\]
We will construct a lifting functor $H\colon \mathcal{D}\rightarrow \mathcal{M}$. Take an object $X$ in $\mathcal{D}$. By Lemma~\ref{lem:ac-fib}, $G$ is surjective on objects. We will fix an object $H(X)$ in $\mathcal{M}$ such that $j(X)=GH(X)$. Moreover, if $X=F(C)$ for some object $C$ in $\mathcal{C}$, we always take $H(X)=i(C)$. Here, we use the uniqueness of such $C$, as $F$ is an injection.

For any morphism $f\colon X\rightarrow Y$ in $\mathcal{D}$, by the fully-faithfulness of $G$ there is a unique morphism $H(f)\colon H(X)\rightarrow H(Y)$ in $\mathcal{M}$ such that $GH(f)=j(f)$. One verifies that $H$ is a well-defined functor and that it is a lifting for the square above.
\end{proof}

Let $F\colon \mathcal{C}\rightarrow \mathcal{D}$ be a functor. We will describe two factorizations of $F$.

We define a new category $\mathcal{D}'$ as follows: ${\rm Obj}(\mathcal{D}')={\rm Obj}(\mathcal{C})\sqcup {\rm Obj}(\mathcal{D})$; for any object $C_1, C_2\in {\rm Obj}(\mathcal{C})$ and $D_1, D_2 \in {\rm Obj}(\mathcal{D})$, the Hom-sets are given by
\begin{align*}
&\mathcal{D}'(C_1, C_2)=\mathcal{D}(F(C_1), F(C_2)),  \; \mathcal{D}'(C_1, D_1)=\mathcal{D}(F(C_1), D_1), \\
& \mathcal{D}'(D_1, C_1)=\mathcal{D}(D_1, F(C_1)),  \mbox{ and } \mathcal{D}'(D_1, D_2)=\mathcal{D}(D_1, D_2).
\end{align*}
The composition of morphisms in $\mathcal{D}'$ is induced by the one in $\mathcal{D}$. We observe that both $\mathcal{D}$ and the category $\mathcal{C}_F$ in Remark~\ref{rem:image} are full subcategories of $\mathcal{D}'$.

We have a canonical functor $j\colon \mathcal{C}\rightarrow \mathcal{D}'$ given by $j(C)=C$ and $j(f)=F(f)$. Another canonical functor $p\colon \mathcal{D}'\rightarrow \mathcal{D}$ is given such that $p(C)=F(C)$ and $p(D)=D$, whose action on morphisms is the identity.

\begin{prop}\label{prop:factor1}
The functor $F\colon \mathcal{C}\rightarrow \mathcal{D}$ admits a factorization $F=pj$. Moreover, $j\colon \mathcal{C}\rightarrow \mathcal{D}'$ is an injection and $p\colon \mathcal{D}'\rightarrow \mathcal{D}$ is an acyclic isofibration.
\end{prop}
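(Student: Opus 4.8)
The plan is to verify the three claims in order: that $F = pj$, that $j$ is an injection, and that $p$ is an acyclic isofibration. The first and second are essentially immediate from the construction, and the bulk of the work is the third.

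First I would check $F = pj$. On objects, $pj(C) = p(C) = F(C)$ by definition of $p$. On a morphism $f$ in $\mathcal{C}$, we have $j(f) = F(f)$ viewed as a morphism in $\mathcal{D}'(C_1,C_2) = \mathcal{D}(F(C_1),F(C_2))$, and $p$ acts as the identity on morphisms, so $pj(f) = F(f)$. Next, $j$ is an injection: its action on objects is the inclusion ${\rm Obj}(\mathcal{C}) \hookrightarrow {\rm Obj}(\mathcal{C}) \sqcup {\rm Obj}(\mathcal{D})$, which is manifestly injective. (One should also remark that $j$ is indeed a functor, which follows since composition in $\mathcal{D}'$ among objects of $\mathcal{C}$ is induced from that in $\mathcal{D}$ via $F$, so $j$ is just $F$ with an enlarged codomain object-set.)

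The main point is that $p\colon \mathcal{D}'\to\mathcal{D}$ is an acyclic isofibration. By Lemma~\ref{lem:ac-fib}, it suffices to show that $p$ is fully faithful and surjective on objects. Surjectivity on objects is clear, since $p$ restricts to the identity on the summand ${\rm Obj}(\mathcal{D})\subseteq{\rm Obj}(\mathcal{D}')$. For fullness and faithfulness, I would argue case by case on whether the source and target objects of $\mathcal{D}'$ lie in the $\mathcal{C}$-summand or the $\mathcal{D}$-summand. In each of the four cases the relevant Hom-set of $\mathcal{D}'$ is \emph{defined} to be a Hom-set of $\mathcal{D}$ between the corresponding $p$-images: for instance $\mathcal{D}'(C_1,D_1) = \mathcal{D}(F(C_1),D_1) = \mathcal{D}(p(C_1),p(D_1))$, and similarly for the other three combinations. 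The map induced by $p$ on these Hom-sets is the identity under this identification, hence bijective; one should note that $p$ respects composition precisely because composition in $\mathcal{D}'$ was defined via $\mathcal{D}$, so the identification is compatible across all four cases.

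The step I expect to require the most care is confirming that $\mathcal{D}'$ is a well-defined category at all — that is, that the four families of Hom-sets, with composition inherited from $\mathcal{D}$, genuinely assemble into an associative, unital composition. This is where one must be slightly attentive: a composite such as $\mathcal{D}'(D_1,C_1)\times\mathcal{D}'(C_1,D_2)\to\mathcal{D}'(D_1,D_2)$ is the composite $\mathcal{D}(D_1,F(C_1))\times\mathcal{D}(F(C_1),D_2)\to\mathcal{D}(D_1,D_2)$ in $\mathcal{D}$, and one checks that all such composites match up and that $F$ being a functor guarantees the $C$-to-$C$ composites are consistent with those through $\mathcal{C}$. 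Once this bookkeeping is in place, the rest of the proposition follows formally from Lemma~\ref{lem:ac-fib}, and the verification is routine; I would state it as such and omit the elementary details.
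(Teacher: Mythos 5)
Your proof is correct and follows essentially the same route as the paper: both reduce the key claim to Lemma~\ref{lem:ac-fib} by checking that $p$ is fully faithful and surjective on objects (the paper phrases full faithfulness by exhibiting the inclusion $\mathcal{D}\hookrightarrow\mathcal{D}'$ as a quasi-inverse, whereas you read it off directly from the definition of the Hom-sets of $\mathcal{D}'$, which amounts to the same observation). The extra care you take with the well-definedness of $\mathcal{D}'$ and the functoriality of $j$ is sound but routine, as you say.
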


\begin{proof}
The functor $p$ is an equivalence, whose quasi-inverse is the inclusion functor $\mathcal{D}\hookrightarrow \mathcal{D}'$. Since $p$ is clearly surjective on objects, it is an acyclic isofibration by Lemma~\ref{lem:ac-fib}.
\end{proof}

\begin{rem}
(1) Consider the functor $F_1\colon \mathcal{C}\rightarrow \mathcal{C}_F$ in Remark~\ref{rem:image}. We observe that $j$ is the composition of $F_1$ with the inclusion $\mathcal{C}_F\hookrightarrow \mathcal{D}'$.

(2) Denote by ${\rm inc}\colon \mathcal{D}\rightarrow \mathcal{D}'$ the inclusion functor. We have another factorization $F=p({\rm inc}F)$ of $F$. However, ${\rm inc}F\colon \mathcal{C} \rightarrow \mathcal{D}'$ is not an injection in general.
\end{rem}

For the given functor $F\colon \mathcal{C}\rightarrow \mathcal{D}$, we define another new category $\mathcal{C}'$ consisting of triples. More precisely, we have
$${\rm Obj}(\mathcal{C}')=\{(C, \alpha, D)\; |\; C\in {\rm Obj}(\mathcal{C}), D\in {\rm Obj}(\mathcal{D}), \alpha\colon F(C)\rightarrow D \mbox{ an isomorphism in } \mathcal{D}\}.$$
The Hom sets are given by
$$\mathcal{C}'((C, \alpha, D), (C', \alpha', D'))=\mathcal{C}(C, C'),$$
and the composition of morphisms is induced by the one in $\mathcal{C}$.

The canonical functor $\iota\colon \mathcal{C}\rightarrow \mathcal{C}'$ sends $C$ to $(C, {\rm Id}_{F(C)}, F(C))$ and acts on morphisms by the identity. The projection functor $q\colon \mathcal{C}'\rightarrow \mathcal{D}$ sends $(C, \alpha, D)$ to $D$, and a morphism $f\colon (C, \alpha, D)\rightarrow (C', \alpha', D')$ to $\alpha'\circ F(f)\circ \alpha^{-1}$.

\begin{prop}\label{prop:factor2}
The functor $F\colon \mathcal{C}\rightarrow \mathcal{D}$ admits a factorization $F=q\iota$. Moreover, $\iota\colon \mathcal{C}\rightarrow \mathcal{C}'$ is an acyclic injection and $q\colon \mathcal{C}'\rightarrow \mathcal{D}$ is an isofibration.
\end{prop}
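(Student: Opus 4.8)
The plan is to verify the three required properties directly: that $F=q\iota$, that $\iota$ is an acyclic injection, and that $q$ is an isofibration. Almost everything is bookkeeping, so I would organize it around one recurring observation. For two objects $(C,\alpha,D)$ and $(C,\beta,D')$ of $\mathcal{C}'$ sharing the \emph{same} first component $C$, the morphism $\mathrm{Id}_C$, which lives in $\mathcal{C}'((C,\alpha,D),(C,\beta,D'))=\mathcal{C}(C,C)$, is an isomorphism in $\mathcal{C}'$ with inverse again $\mathrm{Id}_C$ (since composition in $\mathcal{C}'$ is induced from $\mathcal{C}$), and $q$ sends it to the isomorphism $\beta\circ\alpha^{-1}\colon D\to D'$. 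This single fact will drive both the essential surjectivity of $\iota$ and the lifting property of $q$.

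First I would check that $q$ is a well-defined functor: preservation of identities follows from $q(\mathrm{Id}_{(C,\alpha,D)})=\alpha\circ F(\mathrm{Id}_C)\circ\alpha^{-1}=\mathrm{Id}_D$, and preservation of composition from inserting $(\alpha')^{-1}\circ\alpha'$ into the middle of $\alpha''\circ F(g)\circ F(f)\circ\alpha^{-1}$. The factorization $F=q\iota$ is then immediate: on objects $q\iota(C)=q(C,\mathrm{Id}_{F(C)},F(C))=F(C)$, and on morphisms $q\iota(f)=\mathrm{Id}_{F(C')}\circ F(f)\circ\mathrm{Id}_{F(C)}^{-1}=F(f)$.

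Next I would treat $\iota$. It is an injection because the first-component projection already recovers $C$ from $\iota(C)=(C,\mathrm{Id}_{F(C)},F(C))$, so $\iota$ is injective on objects. It is fully faithful because $\mathcal{C}'(\iota(C),\iota(C'))=\mathcal{C}(C,C')$ with $\iota$ acting as the identity on these Hom sets. For essential surjectivity I would apply the recurring observation: any object $(C,\alpha,D)$ is isomorphic, via $\mathrm{Id}_C$, to $(C,\mathrm{Id}_{F(C)},F(C))=\iota(C)$. Being fully faithful and essentially surjective, $\iota$ is an equivalence, hence an acyclic injection. Finally, to see that $q$ is an isofibration, I take an object $(C,\alpha,D)$ of $\mathcal{C}'$ and an isomorphism $g\colon D\to D'$ in $\mathcal{D}$; then $g\circ\alpha\colon F(C)\to D'$ is an isomorphism, so $(C,g\circ\alpha,D')$ is an object of $\mathcal{C}'$, and by the recurring observation $\mathrm{Id}_C\colon(C,\alpha,D)\to(C,g\circ\alpha,D')$ is an isomorphism in $\mathcal{C}'$ with $q(\mathrm{Id}_C)=(g\circ\alpha)\circ\alpha^{-1}=g$, which is exactly the lift demanded by the definition of isofibration.

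There is no serious obstacle here. The only point needing a moment's care is noticing that a morphism of $\mathcal{C}'$ between two triples with a common first component is literally an endomorphism of $C$ in $\mathcal{C}$, so choosing it to be $\mathrm{Id}_C$ makes it automatically invertible in $\mathcal{C}'$; once this is observed, both the essential surjectivity of $\iota$ and the isofibration property of $q$ fall out at once.
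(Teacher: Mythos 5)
Your proof is correct and follows essentially the same route as the paper: the isofibration lift via the object $(C,g\circ\alpha,D')$ and the morphism ${\rm Id}_C$ is exactly the paper's argument, and your "fully faithful plus essentially surjective" verification for $\iota$ is just an unpacked version of the paper's observation that ${\rm pr}_1$ is a quasi-inverse of $\iota$. The extra bookkeeping (functoriality of $q$, the identity $F=q\iota$) is welcome but not a difference in approach.
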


\begin{proof}
For any isomorphism $\beta\colon q(C, \alpha, D)=D\rightarrow D'$ in $\mathcal{D}$, we have an object $(C, \beta\circ \alpha, D')$. The following morphism
$${\rm Id}_C\colon (C, \alpha, D)\longrightarrow (C, \beta\circ \alpha, D')$$
is an isomorphism, which is sent by $q$ to the given isomorphism $\beta$. This proves that $q$ is an isofibration.

 Consider the projection functors ${\rm pr}_1\colon \mathcal{C}'\rightarrow \mathcal{C}$. We observe that ${\rm pr}_1$ is a quasi-inverse of $\iota$.
\end{proof}

\begin{rem}
We have another factorization $F=(F{\rm pr}_1)\iota$. However, $F{\rm pr}_1\colon \mathcal{C}'\rightarrow \mathcal{D}$ is not an isofibration in general.
\end{rem}

\begin{exm}
Take $F$ above to be the identity functor ${\rm Id}_\mathcal{C}\colon \mathcal{C}\rightarrow \mathcal{C}$. Then the category $\mathcal{D'}$ is isomorphic to $\mathcal{C}\times I$, and $\mathcal{C}'$ is isomorphic to $\mathcal{H}om(I, \mathcal{C})$. Here, the interval category $I$ is defined in Subsection~\ref{subs:Cfl},  and $\mathcal{C}\times I=\mathcal{C}\prod I$ is their product.
\end{exm}

We summarize the results in this subsection.  We denote by $\mathcal{I}nj$ the class of all injections, by $\mathcal{E}quiv$ the class of all equivalences, and by $\mathcal{I}sof$ the class of all isofibrations.

\begin{thm}
The triple $(\mathcal{I}nj, \mathcal{E}quiv, \mathcal{I}sof)$ is a model structure on ${\rm Cat}$.
\end{thm}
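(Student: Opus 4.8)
The plan is to verify the five axioms (MC1)--(MC5) in turn for the triple $(\mathcal{I}nj, \mathcal{E}quiv, \mathcal{I}sof)$, drawing on the orthogonality and factorization results established above. Axiom (MC1) is immediate: injections are closed under composition and contain identities (already observed in the text), the same holds for isofibrations (also observed), and for equivalences it is standard. Axiom (MC3), the two-out-of-three property for $\mathcal{E}quiv$, is exactly the ``standard fact'' stated as a lemma above about composable functors $F$, $G$, $GF$. Axiom (MC2), closure under retracts, is covered by the three retract lemmas proved above: one for equivalences, one for injections, one for isofibrations.

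Next I would turn to the factorization axiom (MC5). Given a functor $F\colon \mathcal{C}\rightarrow \mathcal{D}$, Proposition~\ref{prop:factor1} gives $F=pj$ with $j\in \mathcal{I}nj$ and $p$ an acyclic isofibration, i.e. $p\in \mathcal{E}quiv\cap \mathcal{I}sof$; this is factorization (i). Proposition~\ref{prop:factor2} gives $F=q\iota$ with $\iota$ an acyclic injection, i.e. $\iota\in \mathcal{I}nj\cap \mathcal{E}quiv$, and $q\in \mathcal{I}sof$; this is factorization (ii). So (MC5) follows directly from the two explicit constructions already carried out.

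The remaining axiom (MC4), the lifting axiom, splits into two statements: $\mathcal{I}nj \perp (\mathcal{E}quiv\cap \mathcal{I}sof)$ and $(\mathcal{I}nj\cap \mathcal{E}quiv)\perp \mathcal{I}sof$. The second, $(\mathcal{I}nj\cap \mathcal{E}quiv)\perp \mathcal{I}sof$, is precisely the first of the ``less trivial'' orthogonality propositions above (an acyclic injection is orthogonal to any isofibration). For the first, $\mathcal{I}nj \perp (\mathcal{E}quiv\cap \mathcal{I}sof)$, one needs that any injection is orthogonal to any acyclic isofibration; by Lemma~\ref{lem:ac-fib} an acyclic isofibration is the same thing as a fully faithful surjective-on-objects functor, and this is exactly the second ``less trivial'' orthogonality proposition above. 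Hence (MC4) is also already in hand.

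In short, every axiom reduces to a result proved earlier in this subsection, so the proof is a matter of assembling them; I would write it as a short paragraph matching each axiom to its source (MC1 and MC3 to the elementary closure/two-out-of-three facts, MC2 to the three retract lemmas, MC4 to the two orthogonality propositions together with Lemma~\ref{lem:ac-fib}, MC5 to Propositions~\ref{prop:factor1} and \ref{prop:factor2}). The only point requiring the slightest care is bookkeeping the identifications ``acyclic injection $=\mathcal{I}nj\cap \mathcal{E}quiv$'' and ``acyclic isofibration $=\mathcal{E}quiv\cap\mathcal{I}sof$,'' which are true by definition, and the translation ``acyclic isofibration $=$ fully faithful and surjective on objects'' from Lemma~\ref{lem:ac-fib}; there is no genuine obstacle since the substantive work has already been done.
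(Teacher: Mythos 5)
Your proposal is correct and matches the paper's approach exactly: the theorem is presented there as a summary of the subsection's results (with the full details credited to Rezk), and your bookkeeping of which axiom follows from which lemma or proposition is accurate, including the correct pairing of the two orthogonality propositions with the two halves of (MC4). No gaps.
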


This model structure is often called the \emph{natural model structure} on ${\rm Cat}$.  The above result is  well known, which might be traced back to \cite{And, Bou}. We follow the full proof given in \cite{Rezk}. We mention that there are other model structures on ${\rm Cat}$, for example, the \emph{Thomason model structure} \cite{Thom} and the \emph{Morita model structure} \cite[Section~9.2]{Bal}.

In the natural model structure, every small category is both cofibrant and fibrant. Therefore, $\stackrel{l}\sim$ coincides with $\stackrel{r}\sim$. In the next subsection, we will prove that two functors are homotopic if and only if they are naturally isomorphic; see Theorem~\ref{thm:nat-iso}.

\subsection{Natural isomorphisms}

Let $\mathcal{C}$ be a small category. We consider the \emph{codiagonal functor}
$${\rm Id}_\mathcal{C}+{\rm Id}_\mathcal{C}\colon \mathcal{C}\coprod \mathcal{C}\longrightarrow \mathcal{C}.$$
Here, $\mathcal{C}\coprod \mathcal{C}$ denotes the coproduct in ${\rm Cat}$, that is, the disjoint union.

Recall the interval category $I$ from Subsection~\ref{subs:Cfl}, which has a nontrivial isomorphism $a\colon 0\rightarrow 1$. Denote by $\mathcal{C}\times I$ the product of $\mathcal{C}$ and $I$, whose objects are of the form $(C, i)$ for $C\in {\rm Obj}(\mathcal{C})$ and $i=0, 1$. For each $i$, we have a functor $\iota_i\colon \mathcal{C}\rightarrow \mathcal{C}\times I$ given by $\iota_i(C)=(C, i)$. The projection functor ${\rm pr}\colon \mathcal{C}\times I\rightarrow \mathcal{C}$ sends $(C, i)$ to $C$.

\begin{lem}
The codiagonal functor ${\rm Id}_\mathcal{C}+{\rm Id}_\mathcal{C}$ admits the following factorization.
$$ \mathcal{C}\coprod \mathcal{C}\xrightarrow{\iota_0+\iota_1} \mathcal{C}\times I \stackrel{\rm pr}\longrightarrow \mathcal{C}$$
Moreover, $\iota_0+\iota_1$ is an injection and ${\rm pr}$ is an acyclic isofibration. Therefore, it is a very good cylinder object for $\mathcal{C}$.
\end{lem}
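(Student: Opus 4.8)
The plan is to verify the three assertions of the statement one at a time — the factorization, the injectivity of $\iota_0+\iota_1$ on objects, and the fact that ${\rm pr}$ is an acyclic isofibration — and then to read off that $\mathcal{C}\times I$ with the displayed diagram is a very good cylinder object directly from the definitions, using the identification of cofibrations, weak equivalences and fibrations in the model structure $(\mathcal{I}nj, \mathcal{E}quiv, \mathcal{I}sof)$ with injections, equivalences and isofibrations respectively.

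For the factorization, I would compute ${\rm pr}\circ(\iota_0+\iota_1)$ on each of the two copies of $\mathcal{C}$ making up the coproduct: on the $i$-th copy it sends an object $C$ to $(C,i)$ and then to $C$, and acts as the identity on morphisms, so its restriction to the $i$-th copy is ${\rm Id}_\mathcal{C}$. Hence the composite equals ${\rm Id}_\mathcal{C}+{\rm Id}_\mathcal{C}$, as the definition of a cylinder object requires.

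For $\iota_0+\iota_1$, recall from the earlier discussion of coproducts in ${\rm Cat}$ that $\mathcal{C}\coprod\mathcal{C}$ is the disjoint union, so its objects are the pairs $(C,i)$ with $C\in{\rm Obj}(\mathcal{C})$ and $i\in\{0,1\}$; the functor sends $(C,i)$ to $(C,i)\in{\rm Obj}(\mathcal{C}\times I)$, which is plainly injective on objects, so $\iota_0+\iota_1$ is an injection. For ${\rm pr}\colon\mathcal{C}\times I\rightarrow\mathcal{C}$, by Lemma~\ref{lem:ac-fib} it suffices to show it is fully faithful and surjective on objects. Surjectivity is immediate since ${\rm pr}(C,0)=C$. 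For full faithfulness, observe that a morphism $(C,i)\rightarrow(C',j)$ in $\mathcal{C}\times I$ is a pair $(f,\gamma)$ with $f\colon C\rightarrow C'$ in $\mathcal{C}$ and $\gamma\colon i\rightarrow j$ in $I$; since $I$ has exactly one morphism between any ordered pair of its objects, such a morphism is determined by $f$ alone, and ${\rm pr}$ sends $(f,\gamma)$ to $f$, so ${\rm pr}$ induces a bijection on every Hom set. (Alternatively, one notes ${\rm pr}\circ\iota_0={\rm Id}_\mathcal{C}$ while $\iota_0\circ{\rm pr}$ is naturally isomorphic to ${\rm Id}_{\mathcal{C}\times I}$ via the natural isomorphism assembled from the isomorphism $a$ of $I$; thus ${\rm pr}$ is an equivalence which is surjective on objects, hence an acyclic isofibration.)

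Putting this together: $\iota_0+\iota_1$ is a cofibration and the weak equivalence ${\rm pr}$ is in addition a fibration, so the diagram exhibits $\mathcal{C}\times I$ as a cylinder object for $\mathcal{C}$ which is good and in fact very good. I expect no genuine obstacle; the only step that uses anything beyond unwinding definitions is the full faithfulness of ${\rm pr}$, which is exactly where the special shape of the interval category $I$ — the codiscrete category on the two objects $0$ and $1$ — enters.
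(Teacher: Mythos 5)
Your proposal is correct and follows essentially the same route as the paper: the paper's proof simply notes that both $\iota_0$ and $\iota_1$ are quasi-inverses of ${\rm pr}$ (so that ${\rm pr}$ is an equivalence, hence by Lemma~\ref{lem:ac-fib} an acyclic isofibration, being surjective on objects), which is exactly your parenthetical alternative. Your primary argument — direct full faithfulness of ${\rm pr}$ from the codiscreteness of $I$ — is an equally valid and slightly more explicit way to reach the same conclusion via Lemma~\ref{lem:ac-fib}.
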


\begin{proof}
We just mention that both $\iota_i$ are quasi-inverses of ${\rm pr}$. In particular, $\iota_0$ and $\iota_1$ are naturally isomorphic.
\end{proof}

\begin{lem}\label{lem:nat-iso1}
Let $F, G\colon \mathcal{C}\rightarrow \mathcal{D}$ be two functors. Then $F$ and $G$ are naturally isomorphic if and only if there exists a functor $H\colon \mathcal{C}\times I\rightarrow \mathcal{D}$ satisfying $H(\iota_0+\iota_1)=(F+G)$, that is, $H\iota_0=F$ and $H\iota_1=G$.
\end{lem}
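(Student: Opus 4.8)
The plan is to unwind both directions directly from the structure of the product category $\mathcal{C}\times I$. Recall that $I$ has objects $0,1$ and a single nontrivial isomorphism $a\colon 0\to 1$ with inverse $a^{-1}$; consequently a morphism in $\mathcal{C}\times I$ from $(C,i)$ to $(C',j)$ is precisely a pair $(f,\epsilon)$ with $f\colon C\to C'$ in $\mathcal{C}$ and $\epsilon$ the unique morphism $i\to j$ in $I$. The two functors $\iota_0,\iota_1$ identify $\mathcal{C}$ with the two "copies'' $\mathcal{C}\times\{0\}$ and $\mathcal{C}\times\{1\}$ sitting inside $\mathcal{C}\times I$, and the morphisms $({\rm Id}_C,a)\colon (C,0)\to (C,1)$ are the extra isomorphisms linking them.

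For the "if'' direction, suppose $H\colon \mathcal{C}\times I\to \mathcal{D}$ is a functor with $H\iota_0=F$ and $H\iota_1=G$. First I would set $\eta_C:=H({\rm Id}_C,a)\colon F(C)=H(C,0)\to H(C,1)=G(C)$; since $({\rm Id}_C,a)$ is an isomorphism in $\mathcal{C}\times I$ and functors preserve isomorphisms, each $\eta_C$ is an isomorphism in $\mathcal{D}$. Then I would check naturality: for $f\colon C\to C'$ in $\mathcal{C}$, the square in $\mathcal{C}\times I$ with sides $(f,{\rm Id}_0)$, $({\rm Id}_{C'},a)$, $({\rm Id}_C,a)$, $(f,{\rm Id}_1)$ commutes (both composites equal $(f,a)$), so applying $H$ gives $G(f)\circ\eta_C=\eta_{C'}\circ F(f)$. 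Hence $\eta=(\eta_C)$ is a natural isomorphism $F\Rightarrow G$.

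For the "only if'' direction, suppose $\eta\colon F\Rightarrow G$ is a natural isomorphism. I would define $H\colon \mathcal{C}\times I\to\mathcal{D}$ on objects by $H(C,0)=F(C)$ and $H(C,1)=G(C)$, and on morphisms $(f,\epsilon)\colon (C,i)\to (C',j)$ by sending it to the appropriate composite with $\eta$ inserted according to $i,j$: namely $F(f)$ when $i=j=0$, $G(f)$ when $i=j=1$, $\eta_{C'}\circ F(f)=G(f)\circ\eta_C$ when $i=0,j=1$, and $\eta_{C'}^{-1}\circ G(f)=F(f)\circ\eta_C^{-1}$ when $i=1,j=0$. Functoriality ($H$ respects identities and composition) follows from functoriality of $F$ and $G$ together with the naturality of $\eta$ and the fact that $a^{-1}a={\rm Id}_0$, $aa^{-1}={\rm Id}_1$; one checks the finitely many cases of composable pairs. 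By construction $H\iota_i$ agrees with $F$ (resp.\ $G$) on objects and morphisms, which is exactly $H\iota_0=F$ and $H\iota_1=G$.

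The only mildly delicate point is the verification of functoriality of $H$ in the "only if'' direction, since it requires splitting into cases by the source/target components in $I$; but each case reduces immediately to either a functoriality identity for $F$ or $G$ or to one instance of the naturality square for $\eta$, so there is no real obstacle, only bookkeeping. (Alternatively, one could avoid the casework by invoking the universal property: a functor out of $\mathcal{C}\times I$ is the same as a functor out of $\mathcal{C}$ together with a natural isomorphism between its two "ends'', but the hands-on definition above is self-contained.)
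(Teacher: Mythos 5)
Your proposal is correct and follows essentially the same route as the paper: the ``only if'' direction constructs exactly the same functor $H$ (with $H(f,a)=\eta_{C'}\circ F(f)=G(f)\circ\eta_C$ and $H(f,a^{-1})=\eta_{C'}^{-1}\circ G(f)$), and the ``if'' direction extracts the natural isomorphism from $H({\rm Id}_C,a)$, which is just an explicit unwinding of the paper's one-line remark that $\iota_0$ and $\iota_1$ are naturally isomorphic. The only difference is that you spell out the naturality square and the functoriality casework that the paper leaves implicit.
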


\begin{proof}
The ``if" part is easy, since $\iota_0$ and $\iota_1$ are naturally isomorphic.

For the ``only if" part, we assume that $\eta\colon F\rightarrow G$ is a natural isomorphism. The required functor $H\colon \mathcal{C}\times I\rightarrow \mathcal{D}$ is defined as follows: $H(C, 0)=F(C)$ and $H(C, 1)=G(C)$; for any morphism $f\colon C\rightarrow C'$ in $\mathcal{C}$, we have $H(f, {\rm Id}_0)=F(f)$, $H(f,a)=\eta_{C'}\circ F(f)=G(f)\circ \eta_C$, $H(f, {\rm Id}_1)=G(f)$, and $H(f, a^{-1})=\eta^{-1}_{C'}\circ G(f)=F(f)\circ \eta^{-1}_C$. In particular, we have $H({\rm Id}_C, a)=\eta_C$ and $H({\rm Id}_C, a^{-1})=\eta_C^{-1}$.
\end{proof}

For the dual consideration, we take a small category $\mathcal{D}$ and consider the \emph{diagonal functor}.
$$\begin{pmatrix}{\rm Id}_\mathcal{D} \\ {\rm Id}_\mathcal{D}  \end{pmatrix}\colon \mathcal{D} \longrightarrow \mathcal{D}\times \mathcal{D}$$

 We will identify an object in $\mathcal{H}om(I, \mathcal{D})$ with a triple $(D_0, \alpha, D_1)$ with $\alpha\colon D_0\rightarrow D_1$ an isomorphism in $\mathcal{D}$. For $i=0, 1$, the projection functor ${\rm p}_i\colon \mathcal{H}om(I, \mathcal{D})\rightarrow \mathcal{D}$ sends $(D_0, \alpha, D_1)$ to $D_i$. We have the constant functor ${\rm const} \colon \mathcal{D}\rightarrow \mathcal{H}om(I, \mathcal{D})$ sending $D$ to $(D, {\rm Id}_D, D)$.

\begin{lem}
The diagonal functor above admits the following factorization.
$$ \mathcal{D} \xrightarrow{\rm const} \mathcal{H}om(I, \mathcal{D}) \stackrel{\begin{pmatrix} p_0\\ p_1 \end{pmatrix}} \longrightarrow \mathcal{D}\times \mathcal{D}$$
Moreover, ${\rm const}$ is an acyclic injection and $\begin{pmatrix} p_0\\ p_1 \end{pmatrix}$ is an isofibration. Therefore, it is a very good path object for $\mathcal{D}$.
\end{lem}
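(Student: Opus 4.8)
The plan is to dualize the verification of the preceding cylinder lemma, checking the three assertions in turn. First I would fix the identification used throughout: an object of $\mathcal{H}om(I,\mathcal{D})$ is a triple $(D_0,\alpha,D_1)$ with $\alpha\colon D_0\to D_1$ an isomorphism, and a morphism $(D_0,\alpha,D_1)\to (D_0',\alpha',D_1')$ is a pair $(g_0,g_1)$ of morphisms in $\mathcal{D}$ subject to $\alpha'\circ g_0=g_1\circ\alpha$; under this identification ${\rm const}$ sends $f\colon D\to D'$ to $(f,f)$ and $p_i$ sends $(g_0,g_1)$ to $g_i$. With this in hand the factorization is immediate: on objects ${\rm const}(D)=(D,{\rm Id}_D,D)$, so $p_0({\rm const}(D))=D=p_1({\rm const}(D))$, and on morphisms $p_i({\rm const}(f))=f$; hence $\begin{pmatrix}p_0\\p_1\end{pmatrix}\circ{\rm const}$ is the diagonal functor.

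Next I would show that ${\rm const}$ is an acyclic injection. It is plainly injective on objects. A quasi-inverse is $p_0$: one has $p_0\circ{\rm const}={\rm Id}_\mathcal{D}$, while the pair $({\rm Id}_{D_0},\alpha^{-1})\colon (D_0,\alpha,D_1)\to (D_0,{\rm Id}_{D_0},D_0)={\rm const}(p_0(D_0,\alpha,D_1))$ is a legitimate morphism of $\mathcal{H}om(I,\mathcal{D})$ (since ${\rm Id}_{D_0}\circ{\rm Id}_{D_0}=\alpha^{-1}\circ\alpha$), is an isomorphism with inverse $({\rm Id}_{D_0},\alpha)$, and is natural in $(D_0,\alpha,D_1)$; this gives a natural isomorphism ${\rm Id}_{\mathcal{H}om(I,\mathcal{D})}\cong{\rm const}\circ p_0$, so ${\rm const}$ is an equivalence.

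Then I would check that $\begin{pmatrix}p_0\\p_1\end{pmatrix}$ is an isofibration. Given an object $(D_0,\alpha,D_1)$ and an isomorphism $(g_0,g_1)\colon (D_0,D_1)\to (D_0',D_1')$ in $\mathcal{D}\times\mathcal{D}$, I set $\alpha'=g_1\circ\alpha\circ g_0^{-1}$, an isomorphism $D_0'\to D_1'$; then $(g_0,g_1)$ is a morphism $(D_0,\alpha,D_1)\to (D_0',\alpha',D_1')$ in $\mathcal{H}om(I,\mathcal{D})$ (as $\alpha'\circ g_0=g_1\circ\alpha$), it is an isomorphism since each $g_i$ is, and $\begin{pmatrix}p_0\\p_1\end{pmatrix}$ sends it to the prescribed isomorphism. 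Finally, combining the three points: the diagonal functor factors as ${\rm const}$ followed by $\begin{pmatrix}p_0\\p_1\end{pmatrix}$, with ${\rm const}$ a cofibration (an injection) and a weak equivalence, and $\begin{pmatrix}p_0\\p_1\end{pmatrix}$ a fibration (an isofibration); this is exactly the definition of a very good path object for $\mathcal{D}$. I expect the only delicate point to be the bookkeeping of the identification of $\mathcal{H}om(I,\mathcal{D})$ with triples and their morphisms; once that is pinned down, each verification is forced and presents no genuine obstacle.
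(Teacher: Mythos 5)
Your proof is correct and follows the same route the paper indicates: the paper's proof consists only of the remark that the $p_i$ are quasi-inverses of $\mathrm{const}$, and your writeup supplies exactly the details behind that remark (the triple description of $\mathcal{H}om(I,\mathcal{D})$, the explicit natural isomorphism $\mathrm{Id}\cong\mathrm{const}\circ p_0$, injectivity on objects, and the lifting of isomorphisms showing $\begin{pmatrix} p_0\\ p_1\end{pmatrix}$ is an isofibration). All verifications check out.
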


\begin{proof}
We just mention that both $p_i$ are quasi-inverses of the constant functor. In particular, the functors $p_0$ and $p_1$ are naturally isomorphic.
\end{proof}

\begin{lem}\label{lem:nat-iso2}
Let $F, G\colon \mathcal{C}\rightarrow \mathcal{D}$ be two functors. Then $F$ and $G$ are naturally isomorphic if and only if there exists a functor $K\colon \mathcal{C} \rightarrow \mathcal{H}om(I, \mathcal{D})$ satisfying $ \begin{pmatrix} p_0\\ p_1 \end{pmatrix} K= \begin{pmatrix} F\\ G \end{pmatrix}$, that is, $p_0K=F$ and $p_1 K=G$.
\end{lem}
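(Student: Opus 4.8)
The plan is to carry out the argument dual to the proof of Lemma~\ref{lem:nat-iso1}, using the path object $\mathcal{H}om(I,\mathcal{D})$ in place of the cylinder object $\mathcal{C}\times I$. For the ``if'' part I would argue exactly as there: the preceding lemma shows that $p_0$ and $p_1\colon \mathcal{H}om(I,\mathcal{D})\rightarrow \mathcal{D}$ are naturally isomorphic, a natural isomorphism $p_0\rightarrow p_1$ being given at the object $(D_0,\alpha,D_1)$ by the isomorphism $\alpha$ (the image of the arrow $a$ of $I$); composing this with $K$ yields a natural isomorphism $F=p_0K\rightarrow p_1K=G$.

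For the ``only if'' part, let $\eta\colon F\rightarrow G$ be a natural isomorphism. I would set $K(C)=(F(C),\eta_C,G(C))$ on objects, which is a well-defined object of $\mathcal{H}om(I,\mathcal{D})$ since $\eta_C$ is invertible. The key point is to identify, for $f\colon C\rightarrow C'$ in $\mathcal{C}$, a suitable morphism $K(f)\colon K(C)\rightarrow K(C')$ in $\mathcal{H}om(I,\mathcal{D})$. Unwinding definitions, such a morphism is a natural transformation between the two functors $I\rightarrow \mathcal{D}$ represented by these triples, hence a pair $(\psi_0,\psi_1)$ of morphisms of $\mathcal{D}$ satisfying $\eta_{C'}\circ\psi_0=\psi_1\circ\eta_C$ (the naturality square over $a$; naturality over $a^{-1}$ is then automatic). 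I would take $\psi_0=F(f)$ and $\psi_1=G(f)$, the displayed identity being precisely the naturality of $\eta$ at $f$.

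Functoriality of $K$ is then routine: composition in $\mathcal{H}om(I,\mathcal{D})$ is computed legwise, so $K(g\circ f)=K(g)\circ K(f)$ follows from functoriality of $F$ and of $G$, and $K({\rm Id}_C)$ has legs ${\rm Id}_{F(C)}$, ${\rm Id}_{G(C)}$, hence is the identity; moreover $p_0K=F$ and $p_1K=G$ by construction. The only thing demanding attention is to unwind correctly what a morphism of $\mathcal{H}om(I,\mathcal{D})$ is, after which all choices are forced, so I do not foresee a genuine difficulty. Alternatively, one can avoid the construction entirely: since every object of ${\rm Cat}$ is both cofibrant and fibrant, $F$ and $G$ are naturally isomorphic iff $F\stackrel{l}\sim G$ via the good cylinder $\mathcal{C}\times I$ (Lemma~\ref{lem:nat-iso1}) iff $F\stackrel{r}\sim G$ via the good path object $\mathcal{H}om(I,\mathcal{D})$ (Remark~\ref{rem:homo-fixed} together with Proposition~\ref{prop:cof-fib}(3)), and the latter is exactly the asserted functor $K$.
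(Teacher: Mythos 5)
Your proposal is correct and follows essentially the same route as the paper, which only sketches the ``only if'' direction by declaring $K(C)=(F(C),\eta_C,G(C))$; you supply the missing details (the action of $K$ on morphisms via the pair $(F(f),G(f))$, whose compatibility condition is exactly naturality of $\eta$, and the ``if'' direction via the natural isomorphism $p_0\rightarrow p_1$). Both your direct construction and your alternative argument via Remark~\ref{rem:homo-fixed} and Lemma~\ref{lem:nat-iso1} are sound.
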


\begin{proof}
We only sketch the proof of the ``only if" part. Assume that $\eta\colon F\rightarrow G$ is a natural isomorphism. Then the required functor $K$ sends $C$ to the triple $(F(C), \eta_C, G(C))$.
\end{proof}

\begin{rem}
We have an adjunction isomorphism between categories:
$$\mathcal{H}om(\mathcal{C}\times I, \mathcal{D}) \simeq \mathcal{H}om(\mathcal{C}, \mathcal{H}om(I, \mathcal{D})).$$
This isomorphism sends the functor $H$ in Lemma~\ref{lem:nat-iso1} to the functor $K$ in Lemma~\ref{lem:nat-iso2}.
\end{rem}

Denote by ${\rm Ho}({\rm Cat})$ the homotopy category of ${\rm Cat}$ with respect to the natural model structure. We denote by $[{\rm Cat}]$ the category of small categories, whose morphisms are isomorphism classes of functors.

\begin{thm}\label{thm:nat-iso}
Any two functors in ${\rm Cat}$ homotopic if and only if they are naturally isomorphic. Consequently, we have an isomorphism of categories: $[{\rm Cat}]\simeq {\rm Ho}({\rm Cat})$.
\end{thm}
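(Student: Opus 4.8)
The plan is to deduce Theorem~\ref{thm:nat-iso} from the homotopy-theoretic framework already developed, using the concrete cylinder and path objects constructed in this subsection. First I would recall that in the natural model structure every small category is both cofibrant and fibrant, so for any two small categories $\mathcal{C}$ and $\mathcal{D}$ the left- and right-homotopy relations coincide (Proposition~\ref{prop:cof-fib}) and give a well-defined relation $\sim$ on ${\rm Cat}(\mathcal{C}, \mathcal{D})$. By Remark~\ref{rem:homo-fixed}, since $\mathcal{C}$ is cofibrant and $\mathcal{D}$ is fibrant, I may test this relation using any \emph{fixed} good cylinder object for $\mathcal{C}$ or any fixed good path object for $\mathcal{D}$; in particular I will use the very good cylinder object $\mathcal{C}\coprod\mathcal{C}\xrightarrow{\iota_0+\iota_1}\mathcal{C}\times I\xrightarrow{\rm pr}\mathcal{C}$ established above.

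Next I would prove the key equivalence: two functors $F, G\colon \mathcal{C}\rightarrow \mathcal{D}$ satisfy $F\sim G$ if and only if they are naturally isomorphic. By the remarks just cited, $F\sim G$ holds if and only if there is a left-homotopy through the fixed cylinder object $\mathcal{C}\times I$, i.e.\ a functor $H\colon \mathcal{C}\times I\rightarrow \mathcal{D}$ with $H\iota_0 = F$ and $H\iota_1 = G$. But Lemma~\ref{lem:nat-iso1} says precisely that such an $H$ exists if and only if $F$ and $G$ are naturally isomorphic. (Alternatively, one could run the dual argument with the fixed path object $\mathcal{H}om(I, \mathcal{D})$ and invoke Lemma~\ref{lem:nat-iso2}.) This establishes the first assertion of the theorem.

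For the consequence, I would observe that by Theorem~\ref{thm:homo-cat} the canonical functor $\mathcal{C}_{\rm cf}/{\sim}\rightarrow {\rm Ho}({\rm Cat})$ is an equivalence; since here $\mathcal{C}_{\rm cf} = {\rm Cat}$, this reads ${\rm Cat}/{\sim}\;\xrightarrow{\sim}\;{\rm Ho}({\rm Cat})$. By the first part of the theorem, the relation $\sim$ on each Hom set ${\rm Cat}(\mathcal{C}, \mathcal{D})$ is exactly the relation ``naturally isomorphic'', so the factor category ${\rm Cat}/{\sim}$ is by definition $[{\rm Cat}]$. One should also check the mild compatibility that this identification is an isomorphism of categories and not merely a bijection on objects and Hom sets — but composition in ${\rm Cat}/{\sim}$ is induced from composition of functors, which is also how composition in $[{\rm Cat}]$ is defined, so the canonical bijections assemble into an identity-on-objects isomorphism $[{\rm Cat}]\simeq{\rm Cat}/{\sim}$. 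Composing gives $[{\rm Cat}]\simeq{\rm Ho}({\rm Cat})$.

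I do not expect a serious obstacle here, since essentially all the work has been front-loaded into the cylinder/path object lemmas and into Quillen's theorem; the main point to be careful about is the logical chain through Remark~\ref{rem:homo-fixed}, namely that fibrant-cofibrant-ness lets us use a single fixed cylinder (or path) object rather than having to quantify over all of them, which is what makes the comparison with Lemma~\ref{lem:nat-iso1} (or Lemma~\ref{lem:nat-iso2}) clean. A secondary subtlety worth a sentence is that $[{\rm Cat}]$ has a genuine category structure (composition of isomorphism classes of functors is well-defined precisely because natural isomorphism is a congruence with respect to composition, which is the very condition defining an equivalence relation on a category in Section~2), so identifying it with the factor category ${\rm Cat}/{\sim}$ is legitimate.
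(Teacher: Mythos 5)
Your proposal is correct and follows exactly the paper's argument: test homotopy against the fixed very good cylinder object $\mathcal{C}\times I$ (or path object $\mathcal{H}om(I,\mathcal{D})$) via Remark~\ref{rem:homo-fixed}, identify such homotopies with natural isomorphisms via Lemma~\ref{lem:nat-iso1} (or Lemma~\ref{lem:nat-iso2}), and deduce the second statement from Theorem~\ref{thm:homo-cat}. Your added remarks on why ${\rm Cat}/{\sim}$ coincides with $[{\rm Cat}]$ as a category, and why the resulting equivalence (being the identity on objects and fully faithful) is an isomorphism, only make explicit what the paper leaves implicit.
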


\begin{proof}
In view of Remark~\ref{rem:homo-fixed}, Lemma~\ref{lem:nat-iso1} or Lemma~\ref{lem:nat-iso2} implies the first statement. The second one follows from Theorem~\ref{thm:homo-cat}.
\end{proof}

\subsection{Two remarks}

Let $F\colon \mathcal{C}\rightarrow \mathcal{D}$ be a functor. The following remark allows us to view the category $\mathcal{D}'$ in Proposition~\ref{prop:factor1} as the \emph{functor cylinder} of $F$, which is a categorical analogue of the mapping cylinder in topology.

Denote by ${\rm inc}\colon \mathcal{D}\rightarrow \mathcal{D}'$ the inclusion functor. Recall the functors $j\colon \mathcal{C}\rightarrow \mathcal{D'}$ and  $p\colon \mathcal{D}'\rightarrow \mathcal{D}$ in Proposition~\ref{prop:factor1}.

\begin{rem}
(1) The functors ${\rm inc} F$ and $j$ are naturally isomorphic. By Lemma~\ref{lem:nat-iso1}, there exists a left homotopy $H\colon \mathcal{C}\times I\rightarrow \mathcal{D}'$ from ${\rm inc} F$ to $j$. More precisely, we have $H(C, 0)=F(C)$ and $H(C, 1)=C$; moreover, for any morphism $f$ in $\mathcal{C}$, all the morphisms $H(f, {\rm Id}_0)$, $H(f, {\rm Id}_1)$, $H(f, a)$ and $H(f, a^{-1})$ are represented by $F(f)$.

 (2) The following commutative diagram
\[\xymatrix{
\mathcal{C}\ar[d]_-{\iota_0} \ar[rr]^-F && \mathcal{D}\ar[d]^-{\rm inc}\\
\mathcal{C}\times I \ar[rr]^-H && \mathcal{D}'
}\]
is a pushout in ${\rm Cat}$.

(3) Using $p{\rm inc}={\rm Id}_\mathcal{D}$, we have
$$F=p{\rm inc} F=pH\iota_0.$$
We observe the following commutative diagram.
\[\xymatrix{
\mathcal{C}\times I \ar[d]_-{\rm pr} \ar[rr]^-H && \mathcal{D}' \ar[d]^-p\\
\mathcal{C} \ar[rr]^-F && \mathcal{D}
}\]
Using ${\rm pr}\iota_0={\rm pr}\iota_1$ and this commutativity, we infer that
$$pH\iota_0=F {\rm pr} \iota_0=F{\rm pr} \iota_1=pH\iota_1=pj.$$
The rightmost equality uses  the fact that $j=H\iota_1$. In summary, we have $F=pj$, recovering the factorization in Proposition~\ref{prop:factor1}.
\end{rem}

Dually, we consider the category $\mathcal{C}'$ in Proposition~\ref{prop:factor2}. We have the projection functor ${\rm pr}_1\colon \mathcal{C}'\rightarrow \mathcal{C}$, the acyclic injection $\iota\colon \mathcal{C}\rightarrow \mathcal{C}'$ and the isofibration $q\colon \mathcal{C'}\rightarrow \mathcal{D}$.
We will view $\mathcal{C'}$ as the \emph{functor cocylinder} of $F$, which is a categorical analogue of the mapping cocylinder in topology.

\begin{rem}
(1) The functors $F {\rm pr}_1$ and $q$ are naturally isomorphic. By Lemma~\ref{lem:nat-iso2},  we have a
right-homotopy $K\colon \mathcal{C'}\rightarrow \mathcal{H}om(I, \mathcal{D})$.  It sends $(C, \alpha, D)$ to $(F(C), \alpha, D)$, and a morphism $f\colon (C, \alpha, D) \rightarrow (C', \alpha', D')$ in $\mathcal{C}'$ to the morphism represented by the following commutative diagram.
\[
\xymatrix{
F(C)\ar[d]_-{f} \ar[rr]^-\alpha && D \ar[d]^-{\alpha'\circ f\circ \alpha^{-1}}\\
F(C') \ar[rr]^-{\alpha'} && D'
}\]

(2) The following commutative diagram
\[\xymatrix{
\mathcal{C}' \ar[d]_-{{\rm pr}_1} \ar[rr]^-K && \mathcal{H}om(I, \mathcal{D}) \ar[d]^-{p_0}\\
\mathcal{C} \ar[rr]^-F && \mathcal{D}
}\]
is a pullback in ${\rm Cat}$.

(3) We use ${\rm pr}_1\iota={\rm Id}_\mathcal{C}$, the following commutative diagram
\[\xymatrix{
\mathcal{C}\ar[d]_-{\iota} \ar[rr]^-F && \mathcal{D}\ar[d]^-{\rm const}\\
\mathcal{C}' \ar[rr]^-K && \mathcal{H}om(I, \mathcal{D})
}\]
and $p_0{\rm const}=p_1{\rm const}$. Then  we have
$$F=F{\rm pr}_1\iota=p_0K\iota=p_0{\rm const} F=p_1{\rm const} F=p_1 K\iota=q\iota.$$
The rightmost equality uses the fact that $p_1K=q$.  We recover the  factorization in Proposition~\ref{prop:factor2}.
\end{rem}

\section{The category of dg algebras}

Let $\mathbb{K}$ be a commutative unital ring. We will study the category ${\rm DGAlg}$ formed by  differential graded unital $\mathbb{K}$-algebras, which are abbreviated as dg algebras. We emphasize that in the dg setting, we always study homogeneous elements.

 \subsection{Cochain complexes and tensor algebras}

 We denote by $C(\mathbb{K}\mbox{-}{\rm Mod})$ the abelian category of cochain complexes. A cochain complex is usually denoted by $X=(X^n, d_X^n)_{n\in \mathbb{Z}}$, where the differentials $d_X^n\colon X^n\rightarrow X^{n+1}$ satisfy $d_X^{n+1}\circ d_X^n=0$. We denote by $B^n(X)$, $Z^n(X)$ and $H^n(X)$ the $n$-th coboundary, cocycle and cohomology of $X$, respectively. For $x\in X^n$, we say that the degree $|x|$ is $n$.

 The following well-known result will be used often.

 \begin{prop}\label{prop:surj-quas}
 Let $f\colon X\rightarrow Y$ be a cochain map. Then the following conditions are equivalent.
 \begin{enumerate}
 \item The cochain map $f$ is a surjective quasi-isomorphism.
 \item For each $n$, $Z^n(f)\colon Z^n(X)\rightarrow Z^n(Y)$ is surjective and $H^n(f)\colon H^n(X)\rightarrow H^n(Y)$ is injective.
 \item For any $n$, any $x\in X^{n+1}$ and $y\in Y^n$ satisfying $d_X(x)=0$ and $f(x)=d_Y(y)$, there exists $x'\in X^n$ such that $d_X(x')=x$ and $f(x')=y$.
 \end{enumerate}
 \end{prop}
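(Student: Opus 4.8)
The plan is to prove the equivalences by a cycle $(1) \Rightarrow (2) \Rightarrow (3) \Rightarrow (1)$, working degree by degree and keeping track of cocycles and cohomology classes explicitly. All three conditions are ``local'' in the degree $n$, so no spectral-sequence machinery is needed; everything reduces to diagram chasing in $\mathbb{K}$-modules.

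\medskip

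\noindent\emph{Sketch of $(1) \Rightarrow (2)$.} Assume $f$ is a surjective quasi-isomorphism. Injectivity of $H^n(f)$ is immediate since $H^n(f)$ is in fact bijective. For surjectivity of $Z^n(f)$, take $z \in Z^n(Y)$. Since $H^n(f)$ is surjective, there is $x \in Z^n(X)$ with $f(x) - z = d_Y(y)$ for some $y \in Y^{n-1}$; since $f^{n-1}$ is surjective, write $y = f(w)$ with $w \in X^{n-1}$, and then $x - d_X(w) \in Z^n(X)$ maps to $z$.

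\medskip

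\noindent\emph{Sketch of $(2) \Rightarrow (3)$.} Suppose $Z^n(f)$ is surjective and $H^n(f)$ is injective for all $n$. Given $x \in X^{n+1}$ with $d_X(x) = 0$ and $f(x) = d_Y(y)$ for some $y \in Y^n$, the class $[x] \in H^{n+1}(X)$ satisfies $H^{n+1}(f)[x] = [f(x)] = 0$, so by injectivity $[x] = 0$, i.e. $x = d_X(u)$ for some $u \in X^n$. Then $f(u) - y$ is a cocycle in $Y^n$ (its differential is $f(x) - d_Y(y) = 0$), so by surjectivity of $Z^n(f)$ there is $v \in Z^n(X)$ with $f(v) = f(u) - y$. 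Setting $x' = u - v$ gives $d_X(x') = d_X(u) = x$ and $f(x') = f(u) - (f(u) - y) = y$, as required.

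\medskip

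\noindent\emph{Sketch of $(3) \Rightarrow (1)$.} Assume (3). First, $f$ is surjective: given $y \in Y^n$, apply (3) with $x = 0 \in X^{n+1}$ and the element $d_Y(y)$... more carefully, apply (3) to the pair $x = 0$ and $y' = y$ only after checking $f(0) = 0 = d_Y(\cdot)$ need not hold, so instead argue directly. The cleanest route: apply (3) in degree $n$ with input $x := d_X$-image considerations replaced as follows. Given $y \in Y^n$, consider that $d_Y(y) \in Y^{n+1}$ and $0 \in X^{n+2}$; then (3) in degree $n+1$ applied to the pair $(0, d_Y(y))$ — noting $d_X(0) = 0$ and $f(0) = 0$, which forces $d_Y(y) = d_Y(0)$, so this does not work directly. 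I would instead first prove injectivity on cohomology and surjectivity on cocycles from (3), then derive surjectivity of $f$ from surjectivity of $Z^n(f)$ together with a short induction. Concretely: (3) with $x$ a cocycle mapping into a coboundary yields $H^{n+1}(f)$ injective; surjectivity of $Z^n(f)$ follows by taking $y = 0$ suitably; and then for surjectivity of $f^n$ on an arbitrary $y \in Y^n$ one writes $d_Y(y)$, lifts it to a cocycle... I would then conclude $H^n(f)$ is also surjective (hence $f$ a quasi-isomorphism) and that $f$ is surjective, giving (1).

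\medskip

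\noindent The main obstacle is the $(3) \Rightarrow (1)$ step: condition (3) is phrased in a way tailored to lifting along the differential, and extracting plain surjectivity of each $f^n$ from it requires a small but slightly delicate bookkeeping argument (one must feed (3) the right cocycle/coboundary pair, possibly in an adjacent degree, and handle the edge behaviour carefully). The other two implications are routine diagram chases. I would present (3) as the ``working characterization'' — indeed it is exactly the form used later to verify the factorization and lifting axioms for the model structure on ${\rm DGAlg}$ — and spend most of the written proof making the $(3)\Rightarrow(1)$ direction clean.
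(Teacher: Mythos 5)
Your implications $(1)\Rightarrow(2)$ and $(2)\Rightarrow(3)$ are correct, and the latter coincides with the paper's argument (the paper instead proves $(1)\Leftrightarrow(2)$ by chasing the two short exact sequences $0\to Z^n\to X^n\to B^{n+1}\to 0$ and $0\to B^n\to Z^n\to H^n\to 0$, and records only $(2)\Rightarrow(3)$, leaving the converse to the reader). The genuine gap is in your $(3)\Rightarrow(1)$ step: you describe the difficulty but never carry the argument out, and the one concrete substitution you do name --- ``surjectivity of $Z^n(f)$ follows by taking $y=0$'' --- is the wrong one. Setting $y=0$ in (3) only says that a cocycle of $X$ sent by $f$ to $0$ is a coboundary admitting a primitive in $\ker f$; it gives no surjectivity whatsoever.

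The correct completion is short, and it is worth recording since your whole cyclic scheme hinges on it. For surjectivity of $Z^n(f)$, apply (3) with $x=0\in X^{n+1}$ and $y=z$ an arbitrary element of $Z^n(Y)$: the hypotheses $d_X(0)=0$ and $f(0)=0=d_Y(z)$ hold precisely because $z$ is a cocycle, and the conclusion produces $x'\in X^n$ with $d_X(x')=0$ and $f(x')=z$, i.e.\ a preimage in $Z^n(X)$. This immediately gives surjectivity of $H^n(f)$ as well, while injectivity of each $H^{n+1}(f)$ is exactly the main clause of (3) (if $f(x)=d_Y(y)$ with $x$ a cocycle, then $x=d_X(x')$ is a coboundary). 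Finally, for surjectivity of $f^n$ on an arbitrary $y\in Y^n$: since $d_Y(y)\in Z^{n+1}(Y)$, the previous step applied in degree $n+1$ yields a cocycle $x\in Z^{n+1}(X)$ with $f(x)=d_Y(y)$; the pair $(x,y)$ now satisfies the hypotheses of (3), and the resulting $x'$ satisfies $f(x')=y$. This two-step lift is precisely the ``delicate bookkeeping'' you flag, and with it your cycle $(1)\Rightarrow(2)\Rightarrow(3)\Rightarrow(1)$ closes; the approach is legitimate and arguably more uniform than the paper's, but as written the proposal stops short of proving the one implication the paper omits.
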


\begin{proof}
For ``(1) $\Leftrightarrow$ (2)", we use the following two commutative diagrams with exact rows:
\[\xymatrix{
0\ar[r] & Z^n(X)\ar[d]_-{Z^n(f)} \ar[r] & X^n\ar[d]^-{f^n} \ar[r] & B^{n+1}(X)\ar[d]^-{B^{n+1}(f)} \ar[r] & 0\\
0\ar[r] & Z^n(Y) \ar[r] & Y^n \ar[r] & B^{n+1}(Y) \ar[r] & 0
}\]
and
\[\xymatrix{
0\ar[r] & B^n(X)\ar[d]_-{B^n(f)} \ar[r] & Z^n(X)\ar[d]^-{Z^n(f)} \ar[r] & H^n(X)\ar[d]^-{H^n(f)} \ar[r] & 0\\
0\ar[r] & B^n(Y) \ar[r] & Z^n(Y) \ar[r] & H^n(Y) \ar[r] & 0.
}\]

For ``(2) $\Leftrightarrow$ (3)", we only prove ``(2) $\Rightarrow$ (3)". Consider the cohomological class $[x]\in H^{n+1}(X)$. Then $H^{n+1}(f)$ vanishes on it, and the assumption implies that $[x]=0$. Therefore, there exists $x_1\in X^{n}$ satisfying $d_X(x_1)=x$. We have
$$d_Y(y)=f(x)=fd_X(x_1)=d_Y f(x_1).$$
It follows that $y-f(x_1)$ lies in $Z^n(Y)$. By the assumption, there exists $x_2\in Z^n(X)$ satisfying $f(x_2)=y-f(x_1)$. Then the required $x'$ is might be taken to be $x_1+x_2$.
\end{proof}

For each complex $X$, its \emph{suspension} $\Sigma(X)$ is defined such that $\Sigma(X)^n=X^{n+1}$ and $d_{\Sigma(X)}=-d_X$. The following ``$s$-notation" is very convenient: for any $x\in X$, the corresponding element in $\Sigma(X)$ is denoted by $sx$. Therefore, we have
\begin{center}
$|sx|=|x|-1$ and $d_{\Sigma(X)}(sx)=-sd_X(x)$.
\end{center}
In general, we can define $\Sigma^n(X)$, whose typical element is denoted by $s^nx$.

Let $f\colon X\rightarrow Y$ be a cochain map. Its \emph{mapping cone} ${\rm Cone}(f)$ is defined as follows: ${\rm Cone}(f)=Y\oplus \Sigma(X)$ as a graded module, and its differential is given by
$$d_{{\rm Cone}(f)}=\begin{pmatrix} d_Y & f\circ \xi^{-1}\\
                                      0 & d_{\Sigma(X)}\end{pmatrix}.$$
                                      Here, $\xi\colon X\rightarrow \Sigma(X)$ is the canonical map sending $x$ to $sx$.

For a cochain complex $V$, we consider the \emph{tensor dg algebra}
$$T(V)=\bigoplus_{n\geq 0} V^{\otimes n}.$$
For any dg algebra $A$, we have a natural isomorphism
\begin{align}\label{iso:tensor}
{\rm DGAlg}(T(V), A)\simeq C(\mathbb{K}\mbox{-}{\rm Mod})(V, A), \; \theta\mapsto \theta|_{V}.
\end{align}

The two dg algebras in the following examples play a central role.

\begin{exm}
Consider a stalk complex $\mathbb{K}u$ with a free generator $u$ of degree $-n$. The tensor algebra is denoted by $S(n)$, called the $n$-th \emph{sphere}. We mention that $S(n)=\mathbb{K}[u]$ is a graded polynomial algebra with $|u|=-n$ and the zero differential. The isomorphism (\ref{iso:tensor}) is specialized as follows:
$${\rm DGAlg}(S(n), A)\simeq Z^{-n}(A), \theta\mapsto \theta(u).
$$
To emphasize the generator $u$, we often use the better notation $S(u)$ for $S(n)$.
\end{exm}

\begin{exm}\label{exm:disc}
Consider the complex $V=\mathbb{K}t\oplus \mathbb{K}dt$ with two free generators $t$ and $dt$, whose degrees are given by $|t|=-n$ and $|dt|=1-n$; the  differential of $V$ is determined by $d_V(t)=dt$ and $d_V(dt)=0$. The tensor algebra $T(V)$ is denoted by $D(n)$, called the $n$-th \emph{disc}. We mention that $D(n)$ is the free algebra $\mathbb{K}\langle t, dt\rangle$ with two generators $t$ and $dt$. The isomorphism (\ref{iso:tensor}) is specialized as follows:
$${\rm DGAlg}(D(n), A)\simeq A^{-n},  \theta\mapsto \theta(t).$$
We sometimes denote the above disc by $D(t)$.
\end{exm}

\begin{exm}
Let $A$ be a dg algebra and $_AM_A$ be a dg $A$-$A$-bimodule. We denote by
$$T_A(M)=A\oplus M \oplus M\otimes_A M\oplus M\otimes_A M\otimes_A M\oplus \cdots$$
the tensor dg algebra. Its differential $d$ is uniquely determined by $d(a)=d_A(a)$ for any $a\in A$, and $d(x)=d_M(x)$ for any $x\in M$.

Consider a homomorphism  $\alpha\colon M\rightarrow A$ of dg $A$-$A$-modules of degree one. In other words, we have $|\alpha(x)|=|x|+1$ and $d_A(\alpha (x)) = -\alpha(d_M (x))$ for any $x\in M$. Moreover, we have $\alpha(a.y)=(-1)^{|a|} a.\alpha(y)$ and $\alpha(y.b)= \alpha(y).b$ for any $y\in M$ and $a, b\in A$. The \emph{defomed tensor dg algebra} $T_A(M;\alpha)$ is defined as follows: it equals $T_A(M)$ as a graded algebra, and its differential $d'$ is uniquely determined by $d'(a)=d_A(a)$ for any $a\in A$, and $d'(x)=d_M(x)+\alpha(x)$ for any $x\in M$.
\end{exm}

\subsection{Limits and colimits}

Let $\Lambda$ be a small index category. Take a $\Lambda$-diagram  $A\colon \Lambda\rightarrow {\rm DGAlg}$ of dg algebras. Therefore, for each object $i\in \Lambda_0$, $A_i$ is a dg algebra, and for each $\alpha\colon i \rightarrow j$ in $\Lambda_1$, $A_\alpha\colon A_i\rightarrow A_j$ is a dg algebra homomorphism.

We mention that $\mathbb{K}$ is the initial object in ${\rm DGAlg}$, and the zero algebra $0$ is the terminal object in ${\rm DGAlg}$.

\begin{prop}
The category ${\rm DGAlg}$ has small limits and colimits.
\end{prop}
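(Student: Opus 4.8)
\emph{Plan.} I would prove the two halves separately, in each case reducing to the two elementary constructions (recall that a category with all small products and equalizers has all small limits, and that a category with all small coproducts and coequalizers has all small colimits). The initial object $\mathbb{K}$ and terminal object $0$ already noted account for the empty colimit and limit.

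\emph{Limits.} The forgetful functor $U\colon{\rm DGAlg}\rightarrow C(\mathbb{K}\mbox{-}{\rm Mod})$ to cochain complexes creates limits. Concretely, given $A\colon\Lambda\rightarrow{\rm DGAlg}$, form $L={\rm lim}_{i\in\Lambda_0}\,A_i$ in the complete abelian category $C(\mathbb{K}\mbox{-}{\rm Mod})$, so that $L^n$ consists of tuples $(a_i)_{i\in\Lambda_0}$ with $a_i\in A_i^n$ and $A_\alpha(a_i)=a_j$ for all $\alpha\colon i\rightarrow j$ in $\Lambda_1$; equip $L$ with the componentwise product $(a_i)(a_i')=(a_ia_i')$ and unit $(1_{A_i})_i$. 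Since each $A_\alpha$ is a dg algebra homomorphism these tuples remain compatible, the graded Leibniz rule holds on $L$ because it holds in every $A_i$, and associativity and unitality are inherited; a routine check then identifies $L$, with the projections, as the limit in ${\rm DGAlg}$. Equivalently, products are formed componentwise, and the equalizer of $f,g\colon A\rightarrow B$ is the sub-dg-algebra $\{a\in A\mid f(a)=g(a)\}$, which is a subcomplex closed under multiplication and containing $1_A$.

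\emph{Colimits.} It suffices to construct coequalizers and coproducts. The coequalizer of $f,g\colon A\rightarrow B$ is $B/I$, where $I$ is the two-sided ideal of $B$ generated by $\{f(a)-g(a)\mid a\in A\}$. The crucial point is that $I$ is automatically a differential ideal: $d_B(f(a)-g(a))=f(d_Aa)-g(d_Aa)$ is again a generator, and for $r\in I$ the element $d_B(b\,r\,b')$ expands by the graded Leibniz rule into a sum of elements of $I$; hence $B/I$ is a dg algebra, and it clearly has the universal property of the coequalizer. For coproducts, I would imitate the construction of the colimit in ${\rm Cat}$: given $\{A_i\}_{i\in\Lambda_0}$, consider the tensor dg algebra $T(\bigoplus_{i\in\Lambda_0}A_i)$ on the direct sum of the underlying complexes, and let $J$ be the differential ideal generated by all elements $\iota_i(x)\,\iota_i(y)-\iota_i(xy)$ and $\iota_i(1_{A_i})-1$, where $i\in\Lambda_0$, $x,y\in A_i$, and $\iota_i\colon A_i\rightarrow T(\bigoplus_i A_i)$ is the inclusion of the length-one tensors (a cochain map, not an algebra map). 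That $J$ is $d$-stable is checked on generators using the graded Leibniz rule in $T$, so $T(\bigoplus_i A_i)/J$ is a dg algebra, and invoking the adjunction (\ref{iso:tensor}) one verifies it is the coproduct $\coprod_i A_i$. The general colimit of $A\colon\Lambda\rightarrow{\rm DGAlg}$ is then the coequalizer of the two natural maps $\coprod_{(\alpha\colon i\rightarrow j)\in\Lambda_1}A_i\rightrightarrows\coprod_{i\in\Lambda_0}A_i$; alternatively, it can be built in one step as a quotient of $T(\bigoplus_i A_i)$, enlarging $J$ by the ``diagram'' relations $\iota_j(A_\alpha(x))-\iota_i(x)$.

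\emph{Main obstacle.} The limit side and the coequalizer are essentially formal once one observes that subobjects and quotients cut out by algebra-compatible conditions inherit the dg structure. The real work is the coproduct — equivalently, the existence of pushouts of dg algebras — where one must verify that the relation ideal $J$ inside the tensor algebra is closed under the differential (so that the quotient is a genuine dg algebra) and then extract its universal property from the tensor-algebra adjunction. A minor point worth noting is that $\mathbb{K}$ is an arbitrary commutative ring, so the underlying modules of the $A_i$ need not be free or projective; the tensor-algebra-with-relations construction is chosen precisely because it avoids any such hypothesis. One could instead observe that (\ref{iso:tensor}) exhibits ${\rm DGAlg}$ as monadic over $C(\mathbb{K}\mbox{-}{\rm Mod})$, making completeness automatic and cocompleteness follow from the usual reflexive-coequalizer argument, but the hands-on construction is more in keeping with the elementary spirit of this note.
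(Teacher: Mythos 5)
Your proof is correct and takes essentially the same approach as the paper: the limit is the evident sub-dg-algebra of the product, and the colimit is a quotient of the tensor dg algebra $T(\bigoplus_i A_i)$ by the multiplication, unit, and diagram relations — the paper does this in one step (your ``alternatively'' remark is precisely its construction), while you factor it through coproducts and coequalizers and additionally spell out the $d$-stability of the relation ideals, which the paper leaves implicit.
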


\begin{proof}
Let  $A\colon \Lambda\rightarrow {\rm DGAlg}$ be a $\Lambda$-diagram. We will describe its limit and colimit.

The limit algebra ${\rm lim}_{i\in \Lambda_0} \; A_i$ is a dg subalgebra of the product dg algebra $\prod_{i\in \Lambda_0} A_i$:
$${\rm lim}_{i\in \Lambda_0} \; A_i=\{(a_i)_{i\in \Lambda_0} \in \prod_{i\in \Lambda_0} A_i\; |\; A_\alpha(a_i)=a_j \mbox{ for any } \alpha\colon i\rightarrow j\in\Lambda_1 \}.$$

The colimit algebra ${\rm colim}_{i\in \Lambda_0}\; A_i$ is given by the quotient dg algebra
$$T(\bigoplus_{i\in \Lambda_0} A_i)/I, $$
where the two-sided dg ideal $I$ is generated by the following elements:
\begin{enumerate}
\item $x_i\otimes x'_i-x_ix'_i$, any  elements $x_i, x'_i\in A_i$;
\item $1_{A_i}-1$, $i\in \Lambda_0$;
\item $A_\alpha(x)-x$, for any $\alpha\colon i\rightarrow j\in \Lambda_1$ and $x\in A_i$.
\end{enumerate}
Here,  $x_i\otimes x_i'$ is a tensor of length two, and $x_ix'_i$ denotes the product of $x_i$ and $x'_i$ in $A_i$, and thus is viewed as a tensor of length one; $1$ is the  unit of the tensor algebra,  and  $1_{A_i}$ is the unit of $A_i$, which is a tensor of length one.
\end{proof}

\begin{rem}
(1) The coproduct $\coprod_{i\in \Lambda_0} A_i$ of dg algebras  is often called the \emph{free product} of dg algebras, which is often denoted by $*_{i\in \Lambda_0} A_i$.

(2)  If the index category $\Lambda$ is filtered, the filtered colimit algebra ${\rm colim}_{i\in \Lambda}\; A_i$ is easy to describe:
$${\rm colim}_{i\in \Lambda}\; A_i=\bigsqcup_{i\in \Lambda_0} A_i/{\sim},$$
where the equivalence relation  $\sim$ is given such that $x_i\sim x_j$ for $x_i\in A_i$ and $x_j\in A_j$, provided that there exist $\alpha\colon i\rightarrow k$ and $\beta\colon j\rightarrow k$ in $\Lambda_1$ such that $A_\alpha(x_i)=A_\beta(x_j)$ in $A_k$.
\end{rem}

\begin{exm}
Consider the natural embedding $\iota_n\colon S(n)\rightarrow D(n+1)$ of dg algebras sending $u$ to $dt$, and the surjective homomorphism  $\pi_n\colon D(n)\rightarrow S(n)$ sending $t$ to $u$, and $dt$ to $0$. We observe that the following diagram is a pushout.
\[\xymatrix{
S(n) \ar[d]_-{\iota_n} \ar[rr] && \mathbb{K}\ar[d]\\
D(n+1) \ar[rr]^-{\pi_{n+1}} && S(n+1)
}\]
Here, the unnamed horizontal arrow sends $u$ to $0$, and the unnamed vertical arrow is the unique morphism from $\mathbb{K}$ to $S(n+1)$.
\end{exm}

The following general consideration extends the above one.

\begin{exm}
Let $f\colon X\rightarrow Y$ be a cochain map. Denote by ${\rm Cone}(f)$ the mapping cone. Let $A$ be a dg algebra and $\phi\colon T(Y)\rightarrow A$  a dg algebra homomorphism. We have the following homomorphism of degree one between dg $A$-$A$-modules.
$$\alpha\colon A\otimes \Sigma(X)\otimes A\longrightarrow A, \; a\otimes sx\otimes b\mapsto (-1)^{|a|} a\phi(f(x))b$$
Therefore, we form the deformed dg tensor algebra $T_A(A\otimes \Sigma(X)\otimes A, \alpha)$ and have the inclusion ${\rm inc}\colon A\rightarrow T_A(A\otimes \Sigma(X)\otimes A, \alpha)$. The following commutative diagram
\[
\xymatrix{
T(Y)\ar[d] \ar[rr]^\phi && A\ar[d]^-{\rm inc}\\
T({\rm Cone}(f))\ar[rr]^-{\psi} &&  T_A(A\otimes \Sigma(X)\otimes A; \alpha)
}\]
is a pushout, where the unnamed arrow is induced from $\begin{pmatrix} 1\\ 0 \end{pmatrix}\colon Y\rightarrow {\rm Cone}(f)$, and the homomorphism $\psi$ sends any element $\begin{pmatrix} y\\ sx \end{pmatrix}$ in  ${\rm Cone}(f)$ to $\phi(y)+1_A\otimes sx\otimes 1_A$ in $A\otimes \Sigma(X)\otimes A$.
\end{exm}

Let us consider two special cases.

\begin{exm}\label{exm:two-po}
Let $A$ be a dg algebra.

(1) Recall the contractible complex $V=\mathbb{K}t\oplus \mathbb{K}dt$ from Example~\ref{exm:disc}. Then we have the following pushout diagram,
\[
\xymatrix{\mathbb{K}\ar[d] \ar[rr] && A\ar[d]^-{\rm inc}\\
D(t) \ar[rr] && T_A(A\otimes V\otimes A)
}\]
where the inclusion $A\rightarrow T_A(A\otimes V\otimes A)$ is a quasi-isomorphism. We mention that $T_A(A\otimes V\otimes A)=A\ast D(t)$ is the free product.

(2) Take $x\in A$ a cocycle of degree $-n$. Then we have the following pushout diagram
\[
\xymatrix{S(n)\ar[d]_-{\iota_n} \ar[rr]^-{u\mapsto x} && A\ar[d]^-{\rm inc}\\
D(n+1) \ar[rr] && T_A(A\otimes \mathbb{K}t\otimes A; \alpha).
}\]
Here, $\alpha\colon A\otimes \mathbb{K}t\otimes A\rightarrow A$ sends $a\otimes t \otimes b$ to $(-1)^{|a|}axb$, and $T_A(A\otimes \mathbb{K}t\otimes A; \alpha)$ is the deformed dg tensor algebra. The unnamed arrow sends $t$ to $1_A\otimes t\otimes 1_A$, and $dt$ to $x$.

We often write $A\langle t; x\rangle$ for $T_A(A\otimes \mathbb{K}t\otimes A; \alpha)$. In the extension $A\rightarrow A\langle t; x\rangle$, the given cocycle $x$ in $A$ becomes a coboundary in $A\langle t; x\rangle$.
\end{exm}

\subsection{The model structure}

We denote by $\mathcal{Q}uas$ the class formed by all quasi-isomorphisms in ${\rm DGAlg}$, and by $\mathcal{S}urj$ the class formed by all surjective homomorphisms in ${\rm DGAlg}$. We define $\mathcal{C}of={^\perp(\mathcal{Q}uas\cap \mathcal{S}urj)}$, whose elements will be called \emph{cofibrations}.

The main result of this section is due to \cite[Theorem~5]{Jar}.

\begin{thm}\label{thm:Jardine}
The triple $(\mathcal{C}of, \mathcal{Q}uas, \mathcal{S}urj)$ is a model structure on ${\rm DGAlg}$.
\end{thm}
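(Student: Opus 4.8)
The plan is to verify the five axioms (MC1)--(MC5) for the triple $(\mathcal{C}of, \mathcal{Q}uas, \mathcal{S}urj)$. Axioms (MC1), (MC2) and (MC3) are essentially formal: $\mathcal{S}urj$ is visibly closed under compositions, contains all identities, and is closed under retracts (a retract of a surjection of $\mathbb{K}$-modules is a surjection); $\mathcal{Q}uas$ satisfies two-out-of-three because $H^\ast(-)$ is a functor and isomorphisms in any category satisfy two-out-of-three, and it is closed under retracts and compositions for the same reason; and $\mathcal{C}of$, being defined as $^\perp(\mathcal{Q}uas\cap \mathcal{S}urj)$, is closed under compositions and retracts by Lemmas~\ref{lem:perp-retract} and \ref{lem:perp-retract-comp}, and contains the identities by Lemma~\ref{lem:iso-ortho}. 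So the real content is in (MC4) and (MC5), and these two are intertwined.

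The backbone of the argument is the small object argument (Theorem~\ref{thm:Quillen}) applied to two explicitly chosen sets of generating morphisms. First I would take $\mathcal{J} = \{\iota_n\colon S(n)\rightarrow D(n+1)\ |\ n\in\mathbb{Z}\}$, the inclusions sending $u$ to $dt$, together with $\mathbb{K}\rightarrow D(0)$ (to ensure surjectivity is forced), and prove the key lemma: a homomorphism $p$ lies in $\mathcal{J}^\perp$ if and only if $p$ is a surjective quasi-isomorphism. The nontrivial direction uses the adjunction isomorphisms ${\rm DGAlg}(S(n),A)\simeq Z^{-n}(A)$ and ${\rm DGAlg}(D(n+1),A)\simeq A^{-n-1}$, which translate a lifting problem for $\iota_n$ against $p$ precisely into the lifting condition in Proposition~\ref{prop:surj-quas}(3); this is where Proposition~\ref{prop:surj-quas} does its work. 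Second, I would take $\mathcal{I}=\{\iota_n\colon S(n)\rightarrow D(n+1)\}\cup\{0\rightarrow D(n)\ |\ n\in\mathbb{Z}\}$, wait—more carefully, the generating cofibrations should be $\mathcal{I}=\{\mathbb{K}\rightarrow D(n)\ |\ n\in\mathbb{Z}\}\cup\{\pi_n\colon D(n)\rightarrow S(n)\}$ (or an equivalent small family built from spheres and discs), and prove that $\mathcal{I}^\perp=\mathcal{S}urj$. One checks that $S(n)$ and $D(n)$, being finitely generated free constructions, are sequentially small (a homogeneous element of a filtered colimit of dg algebras already lives at a finite stage), so Theorem~\ref{thm:Quillen} applies to both $\mathcal{I}$ and $\mathcal{J}$.

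With these two lemmas in hand the rest falls into place. For (MC5)(i): factor $f=p\circ i$ via the small object argument for $\mathcal{J}$, getting $i\in \mathcal{J}\mbox{-cell}$ and $p\in\mathcal{J}^\perp=\mathcal{Q}uas\cap\mathcal{S}urj$; by Proposition~\ref{prop:cell}, $i\in {^\perp(\mathcal{J}^\perp)}={^\perp(\mathcal{Q}uas\cap\mathcal{S}urj)}=\mathcal{C}of$. For (MC5)(ii): factor $f=q\circ j$ via the small object argument for $\mathcal{I}$, getting $q\in\mathcal{I}^\perp=\mathcal{S}urj$ and $j\in\mathcal{I}\mbox{-cell}$; here one must check $j$ is a \emph{trivial} cofibration, i.e. $j\in\mathcal{C}of\cap\mathcal{Q}uas$. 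That $j\in\mathcal{C}of$ is Proposition~\ref{prop:cell} again; that $j$ is a quasi-isomorphism is the heart of the matter—it follows because each generating cofibration in $\mathcal{I}$ is itself a quasi-isomorphism onto a deformed tensor algebra in which a cocycle becomes a coboundary (the computations behind Example~\ref{exm:two-po}), pushouts of such along arbitrary maps stay quasi-isomorphisms since $T_A(A\otimes V\otimes A)\simeq A$ and $A\rightarrow A\langle t;x\rangle$ is a quasi-isomorphism, and a transfinite composition of quasi-isomorphisms is a quasi-isomorphism by a colimit/exactness argument on cohomology. Finally (MC4): the inclusion $\mathcal{C}of\perp(\mathcal{Q}uas\cap\mathcal{S}urj)$ is the definition of $\mathcal{C}of$; for $(\mathcal{C}of\cap\mathcal{Q}uas)\perp\mathcal{S}urj$ one uses the standard retract trick (Corollary~\ref{cor:retract}-style): given a trivial cofibration $g$, factor it as $g=q\circ j$ with $j\in\mathcal{I}\mbox{-cell}$ and $q\in\mathcal{S}urj$; by two-out-of-three $q$ is also a quasi-isomorphism, so $g\perp q$, and the lifting exhibits $g$ as a retract of $j$, whence $g\in {^\perp\mathcal{S}urj}$.

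The main obstacle, and where I would spend the most care, is proving that the generating trivial cofibrations $\mathcal{J}$ (the inclusions $S(n)\hookrightarrow D(n+1)$) generate, via pushout and transfinite composition, only quasi-isomorphisms—equivalently, that every morphism in $\mathcal{J}\mbox{-cell}$ is a quasi-isomorphism, and more delicately that $\mathcal{C}of\cap\mathcal{Q}uas$ is exactly $^\perp\mathcal{S}urj$ rather than something larger. Pushouts of free dg algebra maps are concrete but the differential on the deformed tensor algebra $T_A(A\otimes\Sigma(X)\otimes A;\alpha)$ must be analyzed to see that $A\rightarrow T_A(A\otimes\Sigma(X)\otimes A;\alpha)$ induces an isomorphism in cohomology when $X$ is contractible (so that killing a cocycle up to coboundary does not change the homotopy type); this is exactly the content hinted at in Example~\ref{exm:two-po}(1) and requires building an explicit contracting homotopy, or a filtration argument on tensor length, on the cokernel. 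Everything else is bookkeeping once Proposition~\ref{prop:surj-quas}, the adjunctions \eqref{iso:tensor}, and the small object argument are invoked.
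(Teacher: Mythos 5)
Your overall skeleton is the paper's (and Jardine's): verify (MC1)--(MC3) formally, run the small object argument on two sets of sphere/disc maps, identify their orthogonal complements via Proposition~\ref{prop:surj-quas} and the adjunction (\ref{iso:tensor}), and deduce (MC4) from the retract trick of Corollary~\ref{cor:retract}. The first half of your proposal, where $\{\iota_n\colon S(n)\rightarrow D(n+1)\}^{\perp}=\mathcal{Q}uas\cap\mathcal{S}urj$ produces the cofibration/acyclic-fibration factorization, is correct and is exactly Lemma~\ref{lem:dgcat-model}(2) plus the first half of (MC5) in the paper.

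However, your treatment of the \emph{second} generating set contains two genuine errors. First, the set whose orthogonal complement is $\mathcal{S}urj$ is just $\{\mathbb{K}\rightarrow D(n)\mid n\in\mathbb{Z}\}$ (Lemma~\ref{lem:dgcat-model}(1)); the maps $\pi_n\colon D(n)\rightarrow S(n)$ you adjoin must not be there. For $n\neq 0$ one has $H^{-n}(D(n))=0$ while $H^{-n}(S(n))=\mathbb{K}u\neq 0$, so $\pi_n$ admits no section, hence $\pi_n\not\perp\pi_n$ even though $\pi_n$ is surjective; with $\pi_n$ included your claimed identity $\mathcal{I}^{\perp}=\mathcal{S}urj$ fails, and $\pi_n$ is neither a cofibration nor a quasi-isomorphism, so $\mathcal{I}\mbox{-cell}$ would not land in $\mathcal{C}of\cap\mathcal{Q}uas$. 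Second, and more seriously, your justification that the cells are quasi-isomorphisms rests on the assertion that $A\rightarrow A\langle t;x\rangle$ is a quasi-isomorphism. It is not: by construction the cocycle $x$ becomes a coboundary in $A\langle t;x\rangle$, so the map kills $[x]\in H^{-n}(A)$ whenever that class is nonzero (this is the pushout of $\iota_n$, as in Example~\ref{exm:two-po}(2), and indeed $\iota_n$ itself is not a quasi-isomorphism since $H^{\ast}(S(n))=\mathbb{K}[u]$ while $H^{\ast}(D(n+1))=\mathbb{K}$). Your closing paragraph compounds this by proposing to prove that $\{S(n)\hookrightarrow D(n+1)\}\mbox{-cell}\subseteq\mathcal{Q}uas$ --- a false statement, since $\mathbb{K}\rightarrow B$ lies in that cell class for every semi-free $B$. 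The correct claim, which the paper isolates as Lemma~\ref{lem:J}, is that $\{\mathbb{K}\rightarrow D(n)\}\mbox{-cell}\subseteq\mathcal{Q}uas$: here the only relevant pushout is $A\rightarrow A\ast D(t)=T_A(A\otimes V\otimes A)$ with $V=\mathbb{K}t\oplus\mathbb{K}dt$ contractible (Example~\ref{exm:two-po}(1)), which is a quasi-isomorphism by a tensor-length filtration, and transfinite compositions preserve this. With the generating acyclic cofibrations corrected to $\{\mathbb{K}\rightarrow D(n)\}$ and the false quasi-isomorphism claim removed, the rest of your argument goes through as in the paper.
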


\begin{rem}
We mention that the category ${\rm DGCAlg}$ of differential graded-commutative algebras has a similar model structure \cite{BG}.
\end{rem}

Let us make some preparation.

\begin{lem}\label{lem:dgcat-model}
The following two statements hold.
\begin{enumerate}
\item $\mathcal{S}urj=\{\mathbb{K}\rightarrow D(n)\; |\;  n\in \mathbb{Z}\}^\perp$;
\item $\mathcal{Q}uas\cap \mathcal{S}urj=\{\iota_n\colon S(n)\rightarrow D(n+1)\; |\;  n\in \mathbb{Z}\}^\perp$.
\end{enumerate}
\end{lem}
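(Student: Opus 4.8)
The statement has two parts, and both are instances of the same principle: a homomorphism lies in $\mathcal{S}^\perp$ for $\mathcal{S}=\{\mathbb{K}\to D(n)\}$ (resp. $\{\iota_n\colon S(n)\to D(n+1)\}$) precisely when it is surjective (resp. a surjective quasi-isomorphism). The engine is the adjunction isomorphisms from the previous subsection: ${\rm DGAlg}(D(n),A)\simeq A^{-n}$ and ${\rm DGAlg}(S(n),A)\simeq Z^{-n}(A)$, together with the fact that $\mathbb{K}$ is initial, so ${\rm DGAlg}(\mathbb{K},A)$ is a point. Using these one translates a lifting problem against $\mathbb{K}\to D(n)$ or against $\iota_n$ into an elementary statement about elements and cocycles of the dg algebras involved.

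\smallskip

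\emph{Part (1).} Let $p\colon A\to B$ be a homomorphism. A commutative square with left edge $\mathbb{K}\to D(n)$ is, by the two adjunction isomorphisms above, the same datum as an element $b\in B^{-n}$ (the image of the generator $t$ of $D(n)$ along the bottom edge; the top edge is forced, since $\mathbb{K}$ is initial). A lifting is exactly an element $a\in A^{-n}$ with $p(a)=b$. Hence $p\in\{\mathbb{K}\to D(n)\mid n\in\mathbb{Z}\}^\perp$ if and only if $p^{-n}\colon A^{-n}\to B^{-n}$ is surjective for every $n$, i.e.\ if and only if $p$ is surjective (recall we work with homogeneous elements). This gives the equality $\mathcal{S}urj=\{\mathbb{K}\to D(n)\mid n\in\mathbb{Z}\}^\perp$ directly.

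\smallskip

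\emph{Part (2).} Now let $p\colon A\to B$ be a homomorphism. Recall $\iota_n\colon S(n)\to D(n+1)$ sends $u$ to $dt$. Under the adjunction isomorphisms, a commutative square
\[
\xymatrix{
S(n)\ar[d]_-{\iota_n}\ar[r] & A\ar[d]^-{p}\\
D(n+1)\ar[r] & B
}
\]
corresponds to a pair $(x,y)$ with $x\in Z^{-n}(A)$ (image of $u$ along the top) and $y\in B^{-n}$ (image of $t$ along the bottom), subject to the compatibility forced by the square: $d_B(y)=p(x)$ and $p(x)=p(x)$ — more precisely, the square commutes iff $p(x)=d_B(y)$, since $\iota_n$ sends $u\mapsto dt$. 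A lifting $D(n+1)\to A$ is, again by the adjunction, an element $x'\in A^{-n}$; it makes the upper triangle commute iff $d_A(x')=x$ and the lower triangle commute iff $p(x')=y$. Thus $\iota_n\perp p$ for all $n$ is exactly condition (3) of Proposition~\ref{prop:surj-quas}, which is equivalent to condition (1): $p$ is a surjective quasi-isomorphism, i.e.\ $p\in\mathcal{Q}uas\cap\mathcal{S}urj$. This establishes the second equality.

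\smallskip

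\emph{Main obstacle.} There is no serious obstacle: the content is entirely packaged in Proposition~\ref{prop:surj-quas} and the two adjunction isomorphisms, and the proof is a careful bookkeeping translation of lifting squares into element-wise data. The one point that requires attention is getting the commutativity conditions for the square and for each of the two triangles translated correctly under the adjunction—in particular, tracking that $\iota_n(u)=dt$ turns "the square commutes" into "$p(x)=d_B(y)$" and that a lift $x'$ satisfies the upper triangle iff $d_A(x')=x$. Once this dictionary is set up, the equivalence with Proposition~\ref{prop:surj-quas}(3) is immediate. I would also remark explicitly that $\supseteq$ in each equality is the easy direction (surjectivity, resp. Proposition~\ref{prop:surj-quas}(1)$\Rightarrow$(3), gives the lifts directly), and $\subseteq$ is obtained by feeding in the universal squares built from specific elements and cocycles.
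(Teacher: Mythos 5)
Your proof is correct and follows essentially the same route as the paper: part (1) is the adjunction ${\rm DGAlg}(D(n),A)\simeq A^{-n}$ read off directly, and part (2) translates the lifting problem against $\iota_n$ into condition (3) of Proposition~\ref{prop:surj-quas} (the paper in fact only writes out the inclusion $\supseteq$ in (2), so your explicit treatment of both directions is slightly more complete). One small bookkeeping slip: since $t\in D(n+1)$ has degree $-n-1$, the element $y$ along the bottom edge lives in $B^{-n-1}$, not $B^{-n}$; this does not affect the argument.
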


\begin{proof}
(1) Use the isomorphism in Example~\ref{exm:disc}.

(2) We only prove that any surjective quasi-isomorphism $f\colon A\rightarrow B$ lies in  $\{\iota_n\; |\;  n\in \mathbb{Z}\}^\perp$. Consider the following commutative square.
\[
\xymatrix{
S(n)\ar[d]_-{\iota_n} \ar[rr]^-\theta  && A\ar[d]^-f\\
D(n+1) \ar[rr]^-{\eta}  && B
}\]
We observe that $\theta(u)=a\in Z^{-n}(A)$  and $\eta(t)=b\in B^{-n-1}$ satisfy $d_B(b)=f(a)$. In view of Proposition~\ref{prop:surj-quas}, there exists $a'\in A^{-n-1}$ such that $d_A(a')=a$ and $f(a')=b$. It gives a lifting $h\colon D(n+1)\rightarrow A$ by setting $h(t)=a'$ and $h(dt)=a$.
\end{proof}

In what follows, we set $\mathcal{I}=\{\iota_n\colon S(n)\rightarrow D(n+1)\; |\; n\in \mathbb{Z}\}$ and $\mathcal{J}=\{\mathbb{K}\rightarrow D(n)\; |\; n\in \mathbb{Z}\}$. We mention that the domains of these morphisms are all sequentially small.

\begin{lem}\label{lem:J}
We have $\mathcal{J}\mbox{-{\rm cell}}\subseteq (^\perp\mathcal{S}urj\cap \mathcal{Q}uas)\subseteq (\mathcal{C}of\cap \mathcal{Q}uas)$.
\end{lem}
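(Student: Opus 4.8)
The plan is to prove the two inclusions separately; only the first one carries real content. For the second, note that $\mathcal{Q}uas\cap\mathcal{S}urj\subseteq\mathcal{S}urj$, so a morphism orthogonal to every element of $\mathcal{S}urj$ is in particular orthogonal to every element of $\mathcal{Q}uas\cap\mathcal{S}urj$; hence ${}^\perp\mathcal{S}urj\subseteq{}^\perp(\mathcal{Q}uas\cap\mathcal{S}urj)=\mathcal{C}of$, and intersecting with $\mathcal{Q}uas$ yields $({}^\perp\mathcal{S}urj\cap\mathcal{Q}uas)\subseteq(\mathcal{C}of\cap\mathcal{Q}uas)$. For the first inclusion, the containment $\mathcal{J}\mbox{-{\rm cell}}\subseteq{}^\perp\mathcal{S}urj$ is immediate from the results already available: by Lemma~\ref{lem:dgcat-model}(1) we have $\mathcal{S}urj=\mathcal{J}^\perp$, and by Proposition~\ref{prop:cell} every morphism of $\mathcal{J}\mbox{-{\rm cell}}$ (being trivially a retract of itself) lies in ${}^\perp(\mathcal{J}^\perp)={}^\perp\mathcal{S}urj$. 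So the whole point is to prove $\mathcal{J}\mbox{-{\rm cell}}\subseteq\mathcal{Q}uas$.

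I would establish this by running through the three operations in the definition of $\mathcal{J}\mbox{-{\rm cell}}$. \emph{Generators.} Each $\mathbb{K}\to D(n)$ is a quasi-isomorphism: writing $D(n)=T(V)$ with $V=\mathbb{K}t\oplus\mathbb{K}dt$, the complex $V$ is contractible ($h(dt)=t$, $h(t)=0$), and since tensoring a contractible complex with anything stays contractible, $V^{\otimes k}$ is contractible for $k\geq1$, whence $H^{\ast}(D(n))=\mathbb{K}$. \emph{A single stage $\phi_i\colon X_i\to X_{i+1}$.} Because $\mathbb{K}$ is the initial object, $\coprod_{x\in\Lambda_i}\mathbb{K}=\mathbb{K}$, so the defining pushout presents $\phi_i$ as the coproduct inclusion $X_i\to X_i\ast\bigl(\coprod_{x}D(n_x)\bigr)$. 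A free product of tensor dg algebras is the tensor dg algebra on the direct sum, so $\coprod_{x}D(n_x)=T(W)$ with $W=\bigoplus_x V_x$ still contractible, and $X_i\ast T(W)\cong T_{X_i}(X_i\otimes W\otimes X_i)$. The word-length decomposition $T_{X_i}(X_i\otimes W\otimes X_i)=\bigoplus_{k\geq0}(X_i\otimes W\otimes X_i)^{\otimes_{X_i}k}$ is respected by the (derivation) differential, and every summand with $k\geq1$ contains a tensor factor $W$, hence is contractible; therefore $X_i\hookrightarrow T_{X_i}(X_i\otimes W\otimes X_i)$ is a quasi-isomorphism. (This is the computation already done for a single disc in Example~\ref{exm:two-po}(1).) \emph{Transfinite composition.} The underlying-complex functor on ${\rm DGAlg}$ commutes with sequential colimits (cf.\ the description of filtered colimits above), and $H^{\ast}$ commutes with sequential colimits of complexes; so if every $\phi_i$ induces an isomorphism on cohomology, then so does $X_0\to{\rm colim}_i X_i$. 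Thus $\mathcal{J}\mbox{-{\rm cell}}\subseteq\mathcal{Q}uas$, and together with the previous paragraph this finishes the first inclusion.

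I expect the one genuinely non-formal point to be the middle step: identifying the pushout of a coproduct of maps from $\mathcal{J}$ with a free product of the form $A\ast T(W)=T_A(A\otimes W\otimes A)$ for a contractible complex $W$, and checking that the word-length grading of the tensor dg algebra is preserved by the differential, so that contractibility of $W$ propagates to all higher summands. The handling of coproducts (collapsing to one tensor algebra on a direct sum), of transfinite compositions (commuting $H^{\ast}$ past a filtered colimit), and of the second inclusion are routine.
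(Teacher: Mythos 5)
Your proposal is correct and follows essentially the same route as the paper: the second inclusion and the containment $\mathcal{J}\mbox{-cell}\subseteq{}^\perp\mathcal{S}urj$ are exactly the paper's formal argument via Lemma~\ref{lem:dgcat-model}(1) and $\mathcal{Q}uas\cap\mathcal{S}urj\subseteq\mathcal{S}urj$, while $\mathcal{J}\mbox{-cell}\subseteq\mathcal{Q}uas$ is deduced from the pushout computation of Example~\ref{exm:two-po}(1). The only difference is that you spell out the details the paper leaves implicit (collapsing the coproduct to $T(\bigoplus_x V_x)$, the word-length filtration of $T_{X_i}(X_i\otimes W\otimes X_i)$, and commuting $H^\ast$ past the sequential colimit), all of which are correct.
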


\begin{proof}
In view of Example~\ref{exm:two-po}(1), we infer that $\mathcal{J}\mbox{-cell}\subseteq \mathcal{Q}uas$. The results follow from $\mathcal{J}\mbox{-cell}\subseteq {^\perp(\mathcal{J}^\perp)}={^\perp\mathcal{S}urj}\subseteq \mathcal{C}of$.
\end{proof}

\begin{lem}\label{lem:dgalg-cof}
A homomorphism $f\colon A\rightarrow B$ is a cofibration if and only if it is a retract of an element in $\mathcal{I}\mbox{-}{\rm cell}$.
\end{lem}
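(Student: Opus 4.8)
The plan is to deduce this from the small object argument (Theorem~\ref{thm:Quillen}) together with Corollary~\ref{cor:retract}, once we identify the right-hand side $^\perp(\mathcal{I}^\perp)$ with $\mathcal{C}of$. First I would recall from Lemma~\ref{lem:dgcat-model}(2) that $\mathcal{I}^\perp = \mathcal{Q}uas\cap\mathcal{S}urj$. By the very definition of cofibrations in this section, $\mathcal{C}of = {}^\perp(\mathcal{Q}uas\cap\mathcal{S}urj) = {}^\perp(\mathcal{I}^\perp)$. So the statement to be proved is literally: $f$ lies in ${}^\perp(\mathcal{I}^\perp)$ if and only if $f$ is a retract of an element of $\mathcal{I}\mbox{-}{\rm cell}$.

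The ``if'' direction is immediate from Proposition~\ref{prop:cell}: any retract of a morphism in $\mathcal{I}\mbox{-}{\rm cell}$ belongs to ${}^\perp(\mathcal{I}^\perp) = \mathcal{C}of$. For the ``only if'' direction I would invoke Corollary~\ref{cor:retract}, which says precisely that any morphism in ${}^\perp(\mathcal{S}^\perp)$ is a retract of a morphism in $\mathcal{S}\mbox{-}{\rm cell}$, \emph{provided} $\mathcal{S}$ is a \emph{set} of morphisms whose domains are sequentially small. Here $\mathcal{S} = \mathcal{I} = \{\iota_n\colon S(n)\rightarrow D(n+1)\mid n\in\mathbb{Z}\}$ is indeed a set, and its domains are the spheres $S(n)$, which are noted in the text (just before Lemma~\ref{lem:J}) to be sequentially small. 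Hence Corollary~\ref{cor:retract} applies and gives the conclusion.

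The only point requiring a little care — and the main (mild) obstacle — is the sequential smallness of $S(n)$, since the rest is a formal combination of already-established results. I would justify it briefly: for a sequence $A_0\rightarrow A_1\rightarrow\cdots$ of dg algebras, the underlying graded module of $\mathrm{colim}_i A_i$ is the colimit of the underlying graded modules (the colimit is built as a quotient of a tensor algebra, but in the filtered/sequential case every element of the colimit comes from some $A_i$, as recorded in the Remark following the construction of colimits in ${\rm DGAlg}$). By the isomorphism of Example~\ref{exm:disc}, ${\rm DGAlg}(S(n), A)\cong Z^{-n}(A)$, and a cocycle in a sequential colimit is represented by a cocycle at some finite stage; thus $\mathrm{colim}_i\, {\rm DGAlg}(S(n), A_i)\rightarrow {\rm DGAlg}(S(n), \mathrm{colim}_i A_i)$ is a bijection. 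With this in hand, the proof is complete by the chain: $f\in\mathcal{C}of \iff f\in{}^\perp(\mathcal{I}^\perp) \iff f$ is a retract of a morphism in $\mathcal{I}\mbox{-}{\rm cell}$, using Lemma~\ref{lem:dgcat-model}(2), Proposition~\ref{prop:cell}, and Corollary~\ref{cor:retract}.
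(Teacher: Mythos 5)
Your proposal is correct and follows essentially the same route as the paper: identify $\mathcal{C}of$ with ${}^\perp(\mathcal{I}^\perp)$ via Lemma~\ref{lem:dgcat-model}(2), then apply Proposition~\ref{prop:cell} and Corollary~\ref{cor:retract}. The only difference is that you spell out the sequential smallness of the spheres $S(n)$, which the paper simply asserts in passing before Lemma~\ref{lem:J}; your justification of that point is sound.
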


\begin{proof}
By Lemma~\ref{lem:dgcat-model}(2), we have $\mathcal{C}of={^\perp(\mathcal{I}^\perp)}$. Then the result follows from Corollary~\ref{cor:retract}.
\end{proof}

\noindent \emph{Proof of Theorem~\ref{thm:Jardine}.}\; The axioms (MC1)-(MC3) are easy to verify.

For (MC5), we apply the small object argument to $\mathcal{I}$. Let $f\colon A\rightarrow B$ be a homomorphism. There is a factorization $A \stackrel{i}\rightarrow  B'\stackrel{p} \rightarrow B$ such that $i\in \mathcal{I}\mbox{-cell}$ and $p\in \mathcal{I}^\perp$. Then $i$ is a cofibration and $p$ is a surjective quasi-isomorphism.

We apply the small object argument to $\mathcal{J}$, and obtain another factorization $A \stackrel{j}\rightarrow  A'\stackrel{q} \rightarrow B$ such that $j\in \mathcal{J}\mbox{-cell}$ and $q\in \mathcal{J}^\perp$. Then $j$ lies in $\mathcal{C}of\cap \mathcal{Q}uas$ and $q$ is surjective.

For (MC4), it suffices to show that $(\mathcal{C}of\cap \mathcal{Q}uas)\perp \mathcal{S}urj$. We claim that each $f\colon A\rightarrow B\in (\mathcal{C}of\cap \mathcal{Q}uas)$ is a retract of some element in $\mathcal{J}\mbox{-cell}$. By $\mathcal{J}\mbox{-cell}\subseteq {^\perp\mathcal{S}urj}$, we infer the required orthogonality.

The following argument resembles the one in the proof of Corollary~\ref{cor:retract}.  Take the above second factorization $A \stackrel{j}\rightarrow  A'\stackrel{q} \rightarrow B$ with $j\in \mathcal{J}\mbox{-cell}$ and $q\in \mathcal{S}urj$. It follows that $q$ is also a quasi-isomorphism. Since $f\in \mathcal{C}of$ and $q\in \mathcal{Q}uas\cap \mathcal{S}urj$, the following commutative diagram admits a lifting $h\colon B\rightarrow A'$.
\[
\xymatrix{
A\ar[d]_-{f} \ar[r]^-j & A'\ar[d]^-{q}\\
B\ar@{=}[r] &  B
}\]
Then by the following commutative diagram,
\[
\xymatrix{
A\ar[d]_-f \ar@{=}[r] & A\ar[d]^-j \ar@{=}[r] & A\ar[d]^-{f}\\
B\ar[r]^-h& A' \ar[r]^-q & B
}\]
we infer that $f$  is a retract of $j$, as required. \hfill $\square$

\subsection{Cofibrant dg algebras and homotopies}
We will study the model category ${\rm DGAlg}$. It is clear that any dg algebra is fibrant.

\begin{defn}
A dg algebra $A$ is called \emph{semi-free}, if it is a free graded algebra $A=\mathbb{K}\langle x\; |\; x\in I \rangle$ such that the homogeneous generating set $I$ has a disjoint decomposition $I=\bigsqcup_{p\geq 0} I_p$ and that
$$d_A(x)\in \mathbb{K}\langle y\; |\; y\in \bigsqcup_{q<p} I_q \rangle$$
for any $x\in I_p$. In particular, $d_A(x)=0$ for any $x\in I_0$.
\end{defn}

Recall that $\mathcal{I}=\{\iota_n\colon S(n)\rightarrow D(n+1)\; |\;  n\in \mathbb{Z}\}$.

\begin{prop}
Let $A$ be a dg algebra. Then $A$ is semi-free if and only if $\mathbb{K}\rightarrow A$ lies in $\mathcal{I}\mbox{-{\rm cell}}$. Consequently, $A$ is cofibrant if and only if $A$ is a retract of a semi-free dg algebra.
\end{prop}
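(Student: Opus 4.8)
The plan is to reduce everything to an explicit analysis of $\mathcal{I}\mbox{-}{\rm cell}$ built on the pushout description in Example~\ref{exm:two-po}(2). Recall that a dg algebra map $S(n)\to R$ is the same as a cocycle $z\in Z^{-n}(R)$, and that the pushout of $\iota_n$ along it is $R\langle t;z\rangle$, which as a \emph{graded} algebra is the free product $R\ast\mathbb{K}\langle t\rangle$ with $|t|=-n-1$, carrying the differential of $R$ together with $d(t)=z$; the same holds, with set-indexed families of new generators, for a pushout of a coproduct $\coprod_x\iota_{n_x}$ along cocycles $z_x\in Z^{-n_x}(R)$. Unwinding the definition of $\mathcal{I}\mbox{-}{\rm cell}$, a morphism $\mathbb{K}\to A$ lies in $\mathcal{I}\mbox{-}{\rm cell}$ exactly when it is the transfinite composition of a sequence $\mathbb{K}=E_0\to E_1\to\cdots$ in which $E_{p+1}$ arises from $E_p$ by freely adjoining a set $I_p$ of homogeneous generators $x$ whose differentials $d(x)$ are cocycles lying in $E_p$. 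By induction $E_p$ is the free graded algebra on $\bigsqcup_{q<p}I_q$, so $A={\rm colim}_p\,E_p$ is free on $I:=\bigsqcup_{p\ge0}I_p$, and the layering $(I_p)_{p\ge0}$ with the condition $d(x)\in E_p=\mathbb{K}\langle y\mid y\in\bigsqcup_{q<p}I_q\rangle$ for $x\in I_p$ is precisely a semi-free structure on $A$. This gives the ``only if'' half.

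For the ``if'' half, let $A=\mathbb{K}\langle x\mid x\in I\rangle$ be semi-free with $I=\bigsqcup_{p\ge0}I_p$, and set $F_p=\mathbb{K}\langle y\mid y\in\bigsqcup_{q<p}I_q\rangle$. Each $F_p$ is a dg subalgebra of $A$ since $d$ sends a layer-$q$ generator into $F_q\subseteq F_p$, and $A=\bigcup_p F_p={\rm colim}_p\,F_p$ because every element of $A$ involves finitely many generators. One then recognizes $F_p\to F_{p+1}$ as the pushout of a coproduct of morphisms from $\mathcal{I}$: for $x\in I_p$ with $|x|=-m-1$ one has $d(x)\in Z^{-m}(F_p)$, so $x$ is adjoined via the pushout of $\iota_m\colon S(m)\to D(m+1)$ along $u\mapsto d(x)$. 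Thus $\mathbb{K}=F_0\to F_1\to\cdots$ exhibits $\mathbb{K}\to A$ as an element of $\mathcal{I}\mbox{-}{\rm cell}$. The degree bookkeeping (keep track of $|t|=-n-1$ and $|d(t)|=-n$ for $\iota_n$) should be carried out once; one also notes the harmless point that stage-$0$ generators may have a scalar in $\mathbb{K}\subseteq E_0$ as differential, which the semi-free condition permits.

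For the ``consequently'': $A$ is cofibrant iff $\mathbb{K}\to A$ lies in $\mathcal{C}of={^\perp(\mathcal{Q}uas\cap\mathcal{S}urj)}={^\perp(\mathcal{I}^\perp)}$, using Lemma~\ref{lem:dgcat-model}(2). If $A$ is a retract of a semi-free $B$, then $\mathbb{K}\to B\in\mathcal{I}\mbox{-}{\rm cell}\subseteq{^\perp(\mathcal{I}^\perp)}=\mathcal{C}of$ by Proposition~\ref{prop:cell} and the equivalence just proved, $\mathbb{K}\to A$ is a retract of $\mathbb{K}\to B$ by Remark~\ref{rem:retract}, and $\mathcal{C}of$ is closed under retracts by (MC2); hence $A$ is cofibrant. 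Conversely, if $A$ is cofibrant, apply the small object argument (Theorem~\ref{thm:Quillen}) to $\mathbb{K}\to A$ with the set $\mathcal{I}$, whose domains are sequentially small, obtaining $\mathbb{K}\xrightarrow{i}T\xrightarrow{p}A$ with $i\in\mathcal{I}\mbox{-}{\rm cell}$ and $p\in\mathcal{I}^\perp=\mathcal{Q}uas\cap\mathcal{S}urj$; since $\mathbb{K}\to A\in{^\perp(\mathcal{Q}uas\cap\mathcal{S}urj)}$, the square with $i$ on top, $p$ on the right and ${\rm Id}_A$ on the bottom admits a lift $h\colon A\to T$ with $ph={\rm Id}_A$, so $A$ is a retract of $T$, and $T$ is semi-free by the equivalence since $\mathbb{K}\to T\in\mathcal{I}\mbox{-}{\rm cell}$.

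The only genuine obstacle is verifying that the layered \emph{free} graded-algebra structure really propagates along the transfinite composition — that each cell attachment adjoins honest free generators rather than merely yielding a quotient of a free algebra — which is exactly what Example~\ref{exm:two-po}(2) and its coproduct version supply by identifying $A\langle t;x\rangle$ with the free product $A\ast\mathbb{K}\langle t\rangle$ on the nose. The remaining ingredients (the degree bookkeeping, that each $F_p$ is a dg subalgebra, the recognition of $F_p\to F_{p+1}$ as a pushout of a coproduct of $\iota$'s, and the retract manipulations at the end) are routine.
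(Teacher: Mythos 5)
Your proposal is correct and takes essentially the same route as the paper: the first equivalence is obtained by repeatedly applying Example~\ref{exm:two-po}(2) to identify cell attachments along $\iota_n$ with the free adjunction of generators $t$ with prescribed cocycle differentials, and the ``consequently'' part is exactly Lemma~\ref{lem:dgalg-cof} (i.e.\ the small object argument plus the retract--lifting trick of Corollary~\ref{cor:retract}) combined with Remark~\ref{rem:retract}. The paper compresses all of this into a one-line citation; your write-up simply supplies the details, including the correct degree bookkeeping.
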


\begin{proof}
 For the first statement, we just apply Example~\ref{exm:two-po}(2) repeatedly. For the second one, we apply Lemma~\ref{lem:dgalg-cof} and Remark~\ref{rem:retract}.
\end{proof}

Let $B$ be a dg algebra. Consider the $0$-th disc $D(t)$ and the free product $B\ast D(t)$. For $i=0, 1$,  we have dg algebra homomorphisms $p_i\colon B\ast D(t)\rightarrow B$ such that $p_i|_{B}={\rm Id}_B$ and $p_i(t)=i$. Then we obtain a very good path object for $B$.
$$B\xrightarrow{\rm inc} B\ast D(t) \xrightarrow{\begin{pmatrix}p_0\\ p_1\end{pmatrix}} B\times B$$

The following is inspired by \cite[6.1~Definition]{BG}.

\begin{defn}
Two homomorphisms $f, g\colon A\rightarrow B$ are said to be \emph{elementarily homotopic}, denoted by $f\stackrel{e}\sim g$, if there is a homomorphism $K\colon A\rightarrow B\ast D(t)$ such that $p_0\circ K=f$ and $p_1\circ K=g$.
\end{defn}

We have another path object; see \cite[Subsection~3.1]{Kel99}. Consider the following formal matrix algebra
$$\Gamma=\begin{pmatrix} B & \Sigma^{-1}(B)\\ 0 & B \end{pmatrix},$$
whose multiplication is given by
$$\begin{pmatrix} b_0 & s^{-1}a\\ 0 & b_1 \end{pmatrix} \cdot \begin{pmatrix} x_0 & s^{-1}y\\ 0 & x_1 \end{pmatrix}=\begin{pmatrix} b_0x_0 & s^{-1}((-1)^{|b_0|} b_0y+ax_1)\\ 0 & b_1x_1 \end{pmatrix}.$$
Its differential is given by
$$d_\Gamma \begin{pmatrix} b_0 & s^{-1}a\\ 0 & b_1 \end{pmatrix}=\begin{pmatrix} d_B(b_0) & s^{-1}(-d_B(a)+b_0-b_1) \\ 0 & d_B(b_1) \end{pmatrix}.$$
We might call $\Gamma$ the \emph{cocylinder} of $B$. For $i=0, 1$, we have homomorphisms $\pi_i\colon \Gamma \rightarrow B$ sending $\begin{pmatrix} b_0 & s^{-1}a\\ 0 & b_1 \end{pmatrix}$ to $b_i$. The diagonal map ${\rm diag}\colon B\rightarrow \Gamma$ sends $b$ to $\begin{pmatrix} b & 0\\ 0 & b \end{pmatrix}$. Then we have a good path object for $B$.
$$B\xrightarrow{\rm diag} \Gamma=\begin{pmatrix} B & \Sigma^{-1}(B)\\ 0 & B \end{pmatrix} \xrightarrow{\begin{pmatrix} \pi_0\\ \pi_1\end{pmatrix}} B\times B $$

\begin{defn}
Two homomorphisms $f, g\colon A\rightarrow B$ are said to be \emph{cochain homotopic}, denoted by $f\stackrel{c}\sim g$, if there exists a homomorphism $K'\colon A\rightarrow \Gamma$ such that $\pi_0\circ K'=f$ and $\pi_1\circ K'=g$.
\end{defn}

\begin{lem}
Assume that $f, g\colon A\rightarrow B$ are two homomorphisms. Then $f\stackrel{c}\sim g$ if and only if there exist an $(f, g)$-derivation $\Delta\colon A\rightarrow B$ of degree $-1$ such that $f-g=d_B\circ \Delta+\Delta\circ d_A$.
\end{lem}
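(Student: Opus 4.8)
The plan is to turn the definition of $\stackrel{c}\sim$ into a dictionary: write a candidate homomorphism $K'\colon A\to\Gamma$ in matrix form and read off, one axiom at a time, what ``$K'$ is a morphism of dg algebras with $\pi_0\circ K'=f$ and $\pi_1\circ K'=g$'' says about its entries.

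First I would note that a $\mathbb{K}$-linear map $K'\colon A\to\Gamma$ satisfies $\pi_0\circ K'=f$ and $\pi_1\circ K'=g$ precisely when it has the form
$$K'(a)=\begin{pmatrix} f(a) & s^{-1}\Delta(a)\\ 0 & g(a)\end{pmatrix}$$
for a unique $\mathbb{K}$-linear map $\Delta\colon A\to B$. Since $|s^{-1}x|=|x|+1$, for $K'$ to be degree-preserving the map $\Delta$ must have degree $-1$, and conversely any such $\Delta$ produces a degree-$0$ $\mathbb{K}$-linear $K'$ compatible with $\pi_0,\pi_1$. So it remains to match the remaining homomorphism axioms for $K'$ with the two conditions on $\Delta$ in the statement.

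Next I would compute $K'(a)K'(b)$ using the multiplication rule for $\Gamma$ and compare with $K'(ab)$. The diagonal entries agree automatically because $f$ and $g$ are algebra homomorphisms, while the off-diagonal entry yields exactly the graded Leibniz identity $\Delta(ab)=\Delta(a)g(b)+(-1)^{|a|}f(a)\Delta(b)$; this is precisely the assertion that $\Delta$ is an $(f,g)$-derivation of degree $-1$, where $B$ is regarded as an $A$-bimodule with left action through $f$ and right action through $g$. The unit axiom $K'(1_A)=1_\Gamma$ forces $\Delta(1_A)=0$, which is part of (or an immediate consequence of) the derivation condition. Then I would compute $d_\Gamma\circ K'$ using the given differential of $\Gamma$ and compare with $K'\circ d_A$: again the diagonal entries match because $f$ and $g$ are cochain maps, and the off-diagonal entry gives $\Delta(d_A(a))=-d_B(\Delta(a))+f(a)-g(a)$, which rearranges to $f-g=d_B\circ\Delta+\Delta\circ d_A$. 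Reading these equivalences in both directions gives the lemma.

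I do not expect a genuine obstacle here; the argument is a componentwise translation between $K'$ and the triple consisting of $\Delta$, the Leibniz identity, and the cochain-homotopy equation. The only point demanding care is the sign bookkeeping in the multiplicativity step: one must check that the $(-1)^{|b_0|}$ appearing in the multiplication of $\Gamma$ is exactly the sign $(-1)^{|a|}$ occurring in the definition of a degree-$(-1)$ derivation. Once the sign conventions are pinned down, every verification is a direct substitution.
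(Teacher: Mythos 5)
Your proposal is correct and is essentially the argument the paper has in mind: the paper's proof simply exhibits the matrix homomorphism $K'(a)=\begin{pmatrix} f(a) & s^{-1}\Delta(a)\\ 0 & g(a)\end{pmatrix}$ and leaves the verification implicit, while you carry out the componentwise dictionary in both directions. Your sign check is right, since $|f(a)|=|a|$ makes the $(-1)^{|b_0|}$ in the multiplication of $\Gamma$ coincide with the $(-1)^{|a|}$ in the derivation identity.
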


Here, a map $\Delta\colon A\rightarrow B$ of degree $-1$ is called an \emph{$(f, g)$-derivation} if
$$\Delta(ab)=\Delta(a)g(b)+(-1)^{|a|}f(a)\Delta(b)$$
for any $a, b\in A$.

\begin{proof}
We just mention that, given such a derivation $\Delta$, we have a homomorphism
$$K'\colon A\longrightarrow \Gamma, \; a\mapsto \begin{pmatrix} f(a) & s^{-1}\Delta(a) \\
                                                    0 & g(a)\end{pmatrix}.$$
                                                    It gives rise to the corresponding cochain homotopy from $f$ to $g$.
\end{proof}

Recall that $\stackrel{r}\sim$ denotes the right homotopy relation.

\begin{prop}
Assume that $f, g\colon A\rightarrow B$ are two homomorphisms. Then the following statements hold.
\begin{enumerate}
\item $f\stackrel{e}\sim g \; \Rightarrow f\stackrel{c}\sim g\;  \Rightarrow\;  f\stackrel{r}\sim g$.
\item Assume that $A$ is cofibrant. Then $\stackrel{e}\sim$, $\stackrel{c}\sim$ and $\stackrel{r}\sim$ coincide.
\end{enumerate}
\end{prop}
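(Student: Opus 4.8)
The plan is to deduce (1) from an explicit comparison of the two path objects $B\ast D(t)$ and $\Gamma$, and to deduce (2) from the general model-category machinery of Section 3, using that both are \emph{good} path objects for $B$. For the first implication of (1), I would produce a morphism of path objects, i.e. a dg algebra homomorphism $\phi\colon B\ast D(t)\to\Gamma$ satisfying $\phi\circ{\rm inc}={\rm diag}$ and $\pi_i\circ\phi=p_i$ for $i=0,1$. By the universal property of the free product it is enough to prescribe $\phi$ on $B$ and on $D(t)$: take $\phi|_B={\rm diag}$, and send the degree-zero generator $t$ of $D(t)$ to $\begin{pmatrix} 0 & 0\\ 0 & 1\end{pmatrix}\in\Gamma$. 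Since $\phi$ must be a cochain map, this forces $\phi(dt)=d_\Gamma\begin{pmatrix} 0 & 0\\ 0 & 1\end{pmatrix}=\begin{pmatrix} 0 & -s^{-1}1\\ 0 & 0\end{pmatrix}$, and one checks directly on the generators $B$, $t$, $dt$ that $\pi_0\phi=p_0$ and $\pi_1\phi=p_1$. Then, given an elementary homotopy $K\colon A\to B\ast D(t)$ from $f$ to $g$, the composite $K'=\phi\circ K\colon A\to\Gamma$ satisfies $\pi_0 K'=p_0 K=f$ and $\pi_1 K'=p_1 K=g$, so $f\stackrel{c}\sim g$.

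The second implication of (1), namely $f\stackrel{c}\sim g\Rightarrow f\stackrel{r}\sim g$, is then immediate: $B\xrightarrow{\rm diag}\Gamma\xrightarrow{(\pi_0,\pi_1)}B\times B$ is a (good) path object for $B$, and a witnessing homomorphism $K'\colon A\to\Gamma$ is precisely a factorization of $\begin{pmatrix} f\\ g\end{pmatrix}$ through $(\pi_0,\pi_1)$, which is what $f\stackrel{r}\sim g$ asks for. For (2), I would recall that every dg algebra is fibrant, so when $A$ is cofibrant we are in the situation of Proposition~\ref{prop:cof-fib} and Remark~\ref{rem:homo-fixed}: on $\mathcal{C}(A,B)$ the relation $\stackrel{r}\sim$ is an equivalence relation, and $f\stackrel{r}\sim g$ holds if and only if $f$ is right-homotopic to $g$ through any prescribed good path object. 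Applying this with the fixed good path object taken to be $B\ast D(t)$ yields $\stackrel{r}\sim\ =\ \stackrel{e}\sim$, and applying it with $\Gamma$ yields $\stackrel{r}\sim\ =\ \stackrel{c}\sim$; hence all three relations coincide. (Observe that (1) gives $\stackrel{e}\sim\ \subseteq\ \stackrel{c}\sim\ \subseteq\ \stackrel{r}\sim$ with no hypothesis on $A$; cofibrancy of $A$ is exactly what forces the reverse inclusions.)

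The genuinely new piece of work, and the main obstacle, is the construction of $\phi$ together with the verification that it is a well-defined dg algebra homomorphism compatible with the two projections. This is where one must keep track of the signs and degree shifts built into the multiplication and differential of $\Gamma$: one checks that $d_\Gamma$ applied to $\phi(t)$ returns exactly the element chosen for $\phi(dt)$, and that $\phi(dt)$ is then automatically a cocycle (using $d_B(1)=0$), so that $\phi$ restricted to $D(t)$ is a cochain map and extends over the free product. Everything else — that $B\ast D(t)$ and $\Gamma$ are good path objects, and the reduction of (2) to Remark~\ref{rem:homo-fixed} — is either already recorded in the text or is a formal consequence of the general theory.
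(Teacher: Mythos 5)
Your proposal is correct and follows essentially the same route as the paper: the paper's proof of (1) exhibits exactly the homomorphism $\phi\colon B\ast D(t)\rightarrow \Gamma$ with $\phi|_B={\rm diag}$ and $\phi(t)=\begin{pmatrix} 0 & 0\\ 0 & 1\end{pmatrix}$, satisfying $\pi_i\circ\phi=p_i$, and deduces (2) from Remark~\ref{rem:homo-fixed}. Your additional checks (that $\phi(dt)$ is forced to be $\begin{pmatrix} 0 & -s^{-1}1\\ 0 & 0\end{pmatrix}$ and is a cocycle) are correct details the paper leaves implicit.
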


\begin{proof}
For (1), it suffices to have the following observation: there is a homomorphism $\phi\colon B\ast D(t)\rightarrow \Gamma$,  which is uniquely determined by
$$\phi|_B={\rm diag}\mbox{ and } \phi(t)=\begin{pmatrix} 0 & 0\\ 0 & 1\end{pmatrix};$$
moreover, we have $\pi_i\circ \phi=p_i$.  The statement (2) follows from Remark~\ref{rem:homo-fixed}.
\end{proof}

\begin{rem}
Let $B=\mathbb{K}$ and $\Gamma=\begin{pmatrix} \mathbb{K} & \Sigma^{-1}(\mathbb{K})\\ 0 & \mathbb{K} \end{pmatrix}$. Consider the homomorphisms $\pi_i\colon \Gamma\rightarrow \mathbb{K}$ for $i=0, 1$.  Then $\pi_0\stackrel{c}\sim \pi_1$ by the very definition. However, it is easy to see that $\pi_0$ and $\pi_1$ are not elementarily homotopic. We point out that $\Gamma$ is not cofibrant.
\end{rem}

\section{The category of small dg categories}

In this section, we study the Dwyer-Kan model structure on the category of small dg categories. We fix a commutative unital ring $\mathbb{K}$, and denote by ${\rm DGCat}$ the category of small dg categories over $\mathbb{K}$. The main references are \cite{Tab05} and \cite[Chapter~1]{Tab15}.

\subsection{Preliminaries}

Let $\mathcal{C}$ be  a small dg category. For any two objects $X$ and $Y$, the differential on the Hom complex $\mathcal{C}(X, Y)$ is usually denoted by $d_\mathcal{C}$. By default, we only consider homogeneous morphisms in dg categories

A morphism $f$ in $\mathcal{C}$ is called \emph{closed} if $d(f)=0$. Denote by $Z^0(\mathcal{C})$ the \emph{ordinary category} of $\mathcal{C}$, which has the same objects as $\mathcal{C}$ and whose morphisms are given by $(Z^0\mathcal{C})(X, Y)=Z^0(\mathcal{C}(X, Y))$. An isomorphism in $Z^0(\mathcal{C})$ is called a \emph{dg-isomorphism} in $\mathcal{C}$.

Denote by $H^0(\mathcal{C})$ the \emph{homotopy category} of $\mathcal{C}$, which has the same objects as $\mathcal{C}$ and whose morphisms are given by $(H^0\mathcal{C})(X, Y)=H^0(\mathcal{C}(X, Y))$. A morphism $f\colon X\rightarrow Y$ in $Z^0(\mathcal{C})$ is called a \emph{homotopy equivalence} if it represents an isomorphism in $H^0(\mathcal{C})$. This is equivalent to the following condition: there exists a closed morphism $g\colon Y\rightarrow X$ of degree zero such that $g\circ f-{\rm Id}_X=d_\mathcal{C}(h_X)$ and $f\circ g-{\rm Id}_Y=d_\mathcal{C}(h_Y)$ for some morphisms $h_X\in \mathcal{C}(X, X)^{-1}$ and $h_Y\in \mathcal{C}(Y, Y)^{-1}$. In such a situation, $g$ is called a \emph{homotopy inverse} of $f$.

\begin{lem}\label{lem:homo-inv}
Assume that $f\colon X\rightarrow Y$ is a homotopy equivalence with a homotopy inverse $g$. Then there exist $h_X\in \mathcal{C}(X, X)^{-1}$, $h_Y\in \mathcal{C}(Y, Y)^{-1}$ and $r\in \mathcal{C}(X, Y)^{-2}$ satisfying the following conditions£º
$$g\circ f-{\rm Id}_X=d_\mathcal{C}(h_X), \; f\circ g-{\rm Id}_Y=d_\mathcal{C}(h_Y), \mbox{ and }h_Y\circ f-f\circ h_X=d_\mathcal{C}(r).$$
\end{lem}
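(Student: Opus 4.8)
The first two relations are immediate: since $g$ is a homotopy inverse of $f$, by definition there are $h_X\in\mathcal{C}(X,X)^{-1}$ and $h_Y\in\mathcal{C}(Y,Y)^{-1}$ with $g\circ f-{\rm Id}_X=d_\mathcal{C}(h_X)$ and $f\circ g-{\rm Id}_Y=d_\mathcal{C}(h_Y)$. The plan is to keep this $h_X$, replace $h_Y$ by a cohomologous element, and then exhibit $r$. Recall that $f$ and $g$ are closed of degree zero, so the Leibniz rule $d_\mathcal{C}(a\circ b)=d_\mathcal{C}(a)\circ b+(-1)^{|a|}a\circ d_\mathcal{C}(b)$ yields
$$d_\mathcal{C}(h_Y\circ f-f\circ h_X)=(f\circ g-{\rm Id}_Y)\circ f-f\circ(g\circ f-{\rm Id}_X)=0,$$
so $u:=h_Y\circ f-f\circ h_X$ is closed of degree $-1$. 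The subtlety is that $u$ need not be a coboundary, so one cannot naively take $d_\mathcal{C}(r)=u$; this is exactly why the original $h_Y$ must be adjusted.

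First I would set
$$h_Y':=h_Y+f\circ h_X\circ g-f\circ g\circ h_Y.$$
Using $d_\mathcal{C}(f)=0=d_\mathcal{C}(g)$ and the two relations above, a short computation shows that each of $f\circ h_X\circ g$ and $f\circ g\circ h_Y$ has differential $f\circ g\circ f\circ g-f\circ g$, hence $f\circ h_X\circ g-f\circ g\circ h_Y$ is closed and $d_\mathcal{C}(h_Y')=f\circ g-{\rm Id}_Y$ still holds. So $(h_X,h_Y')$ satisfies the first two required relations.

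It then remains to compute $h_Y'\circ f-f\circ h_X$. Expanding $h_Y'\circ f$, substituting $g\circ f={\rm Id}_X+d_\mathcal{C}(h_X)$, and repeatedly rewriting with ${\rm Id}_Y-f\circ g=-d_\mathcal{C}(h_Y)$, with $d_\mathcal{C}(h_X\circ h_X)=d_\mathcal{C}(h_X)\circ h_X-h_X\circ d_\mathcal{C}(h_X)$ (valid since $|h_X|=-1$), with $d_\mathcal{C}(f)=0$, and with the closedness of $u$, one arrives at
$$h_Y'\circ f-f\circ h_X=-\,d_\mathcal{C}\bigl(h_Y\circ u+f\circ h_X\circ h_X\bigr).$$
Thus the lemma holds with the given $h_X$, with $h_Y$ taken to be $h_Y'$, and with
$$r:=-\bigl(h_Y\circ(h_Y\circ f-f\circ h_X)+f\circ h_X\circ h_X\bigr),$$
which indeed lies in $\mathcal{C}(X,Y)^{-2}$.

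The main obstacle is purely one of discovery: finding the correct correction term $f\circ h_X\circ g-f\circ g\circ h_Y$ (a dg incarnation of the trick that upgrades an equivalence to an adjoint equivalence). Once it is chosen, the remaining work is a routine, if bookkeeping-heavy, application of the Leibniz rule with the Koszul signs, and neither $f$, nor $g$, nor $h_X$ ever gets changed.
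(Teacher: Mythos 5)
Your proof is correct, and I checked the two computations that carry the load: with $u=h_Y\circ f-f\circ h_X$ one indeed gets $h_Y'\circ f-f\circ h_X=f\circ h_X\circ d_\mathcal{C}(h_X)-d_\mathcal{C}(h_Y)\circ h_Y\circ f$, and using $f\circ d_\mathcal{C}(h_X)=d_\mathcal{C}(h_Y)\circ f$ together with the Leibniz identities for $d_\mathcal{C}(h_Y\circ f\circ h_X)$, $d_\mathcal{C}(f\circ h_X\circ h_X)$ and $d_\mathcal{C}(h_Y\circ h_Y\circ f)$ this collapses to $d_\mathcal{C}\bigl(h_Y\circ f\circ h_X-f\circ h_X\circ h_X-h_Y\circ h_Y\circ f\bigr)$, which is exactly your $d_\mathcal{C}(r)$. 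However, your route is genuinely different from the paper's. The paper keeps $h_Y$ and modifies $h_X$: it observes that $\mathcal{C}(X,X)\rightarrow\mathcal{C}(X,Y)$, $h\mapsto f\circ h$, is a homotopy equivalence of complexes, hence a quasi-isomorphism, so the degree $-1$ cocycle $h_Y\circ f-f\circ h_X'$ can be written as $f\circ h''+d_\mathcal{C}(r)$ with $h''$ closed, and then sets $h_X=h_X'+h''$. That argument is soft (it produces $r$ nonconstructively from surjectivity on cohomology) but short; yours is fully explicit, trading the appeal to a quasi-isomorphism for closed-form correction terms $h_Y'$ and $r$, which is more elementary and would be preferable if one ever needs the homotopies concretely (e.g.\ to write down the contracting homotopy of Remark~\ref{rem:contrac} explicitly). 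The only caveat is presentational: the step you describe as ``a short computation'' and the final rewriting are exactly where all the content sits, so in a written version you should display the identity $h_Y'\circ f-f\circ h_X=f\circ h_X\circ d_\mathcal{C}(h_X)-d_\mathcal{C}(h_Y)\circ h_Y\circ f$ and the three Leibniz expansions rather than leave them implicit.
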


\begin{proof}
By assumption, we may take $h'_X\in \mathcal{C}(X, X)^{-1}$ and $h_Y\in \mathcal{C}(Y, Y)^{-1}$ such that
$$g\circ f-{\rm Id}_X=d_\mathcal{C}(h'_X), \mbox{ and } f\circ g-{\rm Id}_Y=d_\mathcal{C}(h_Y).$$
The following cochain map
$$\mathcal{C}(X, X)\longrightarrow \mathcal{C}(X, Y), \; h\mapsto f\circ h$$
is a homotopy equivalence and thus a quasi-isomorphism. We observe that $h_Y\circ f-f\circ h'_X$ is a cocycle in $\mathcal{C}(X, Y)$ of degree $-1$. By the quasi-isomorphism, we infer that there exists a closed morphism $h''\in \mathcal{C}(X, X)^{-1}$ such that
$$h_Y\circ f-f\circ h'_X=f\circ h''+d_\mathcal{C}(r)$$
for some $r\in \mathcal{C}(X, Y)^{-2}$. We set $h_X=h'_X+h''$ and complete the proof.
\end{proof}

Denote by ${\rm DGMod}\mbox{-}\mathcal{C}$ the dg category of \emph{right} dg $\mathcal{C}$-modules. For each object $X$, we write $\mathcal{C}(-, X)=X^{\wedge}$. We identify $X^\wedge$ with the ``formal" sum $\bigoplus_{T\in {\rm Obj}(\mathcal{C})} \mathcal{C}(T, X)$, whose differential $d_{X^\wedge}$ is given by $d_\mathcal{C}$. For any morphism $a\colon T'\rightarrow T$ in $\mathcal{C}$ and any element $x\in X^\wedge(T)$, we have
$$X^\wedge(a)(x)=(-1)^{|a|\cdot |x|} x\circ a\in X^\wedge(T').$$
The \emph{Yoneda embedding}
$$\mathcal{C}\longrightarrow {\rm DGMod}\mbox{-}\mathcal{C}, \; X\mapsto X^\wedge$$
is a fully faithful dg functor. For a morphism $f\colon X\rightarrow Y$, we describe $f^\wedge \colon X^\wedge \rightarrow Y^\wedge$ as follows: $f^\wedge(x)=f\circ x$ for any $x\in X^\wedge(T)$. The reader should not confuse $f^\wedge$ with $X^\wedge(f)$.

Let $f\colon X\rightarrow Y$ be a closed morphism of degree zero in $\mathcal{C}$. The \emph{mapping cone} of $f^\wedge\colon X^\wedge \rightarrow Y^\wedge$ is a right dg $\mathcal{C}$-module described as follows:
$${\rm Cone}(f^\wedge)=Y^\wedge \oplus \Sigma X^\wedge, $$
whose differential $d_{{\rm Cone}(f^)}$ is given by
\[\begin{pmatrix}
d_{Y^\wedge} & f^\wedge\circ \xi^{-1}\\
0& d_{\Sigma X^\wedge}
\end{pmatrix}.\]
Here, $\Sigma$ denotes the suspension functor, and $\xi\colon X^\wedge \rightarrow \Sigma X^\wedge$ is the canonical isomorphism of degree $-1$. We emphasize that $d_{\Sigma X^\wedge}=-\xi \circ d_{X^\wedge}\circ \xi^{-1}$.

\begin{rem}\label{rem:contrac}
Let $f\colon X\rightarrow Y$ be a homotopy equivalence in $\mathcal{C}$. We use the notation in Lemma~\ref{lem:homo-inv}. Set
\[c=\begin{pmatrix} -{h_Y}^\wedge & r^\wedge \circ \xi^{-1}\\
                     \xi\circ g^\wedge & \xi\circ {h_X}^\wedge\circ \xi^{-1}
                     \end{pmatrix}.\]
                     Then $c$ is a \emph{contracting homotopy} of ${\rm Cone}(f^\wedge)$, that is, $d(c)={\rm Id}_{{\rm Cone}(f^\wedge)}$. The notation $d$ here means the differential in ${\rm DGMod}\mbox{-}\mathcal{C}$. In practice, we have $d(c)=d_{{\rm Cone}(f^\wedge)}\circ c+c\circ d_{{\rm Cone}(f^\wedge)}$.
\end{rem}

\subsection{Limits and colimits}

Let $\mathcal{C}$ be  small dg category. A \emph{dg ideal} $\mathcal{I}$ of $\mathcal{C}$ is determined by the data $(\mathcal{I}(X, Y))_{X, Y\in {\rm Obj}(\mathcal{C})}$, where each $\mathcal{I}(X, Y)\subseteq \mathcal{C}(X, Y)$ is a sub complex with the following property: for any morphisms $f\colon X'\rightarrow X$, $g\in Y\rightarrow Y'$ and $a\in \mathcal{I}(X, Y)$, we have $g\circ a\circ f\in \mathcal{I}(X', Y')$. We denote by $\mathcal{C}/\mathcal{I}$ the \emph{factor dg category}, whose objects are the same as the ones in $\mathcal{C}$ and $(\mathcal{C}/\mathcal{I})(X, Y)=\mathcal{C}(X, Y)/{\mathcal{I}(X, Y)}$ is the quotient complex.

Let $\mathcal{M}$ be a dg $\mathcal{C}$-$\mathcal{C}$-bimodule. We usually identify $\mathcal{M}$ with the ``formal" sum $$\bigoplus_{X, Y\in {\rm Obj}(\mathcal{C})} \mathcal{M}(X, Y),$$
where $\mathcal{M}(X, Y)$ is covariant in $Y$ and contravariant in $X$. In other words, for any morphisms  $f\colon X'\rightarrow X$, $g\in Y\rightarrow Y'$ and any element $x\in \mathcal{M}(X, Y)$, we have $g.x.f\in \mathcal{M}(X', Y')$. Here, we use the dot to denote the $\mathcal{C}$-actions. We observe that $\mathcal{M}(X, Y)$ is a dg $\mathcal{C}(Y, Y)$-$\mathcal{C}(X, X)$-bimodule. For example, we always view $\mathcal{C}$ as a dg $\mathcal{C}$-$\mathcal{C}$-bimodule, whose $\mathcal{C}$-actions are given by the composition of morphisms.

Associated to a dg $\mathcal{C}$-$\mathcal{C}$-bimodule $\mathcal{M}$, we have the \emph{tensor dg category} $T=T_\mathcal{C}(\mathcal{M})$. It has the same object as $\mathcal{C}$, and the Hom complex is
given by the following coproduct of complexes:
\begin{align*}
T&(X, Y)=\mathcal{C}(X, Y)\oplus \mathcal{M}(X, Y)\oplus\\
&\bigoplus_{n\geq 1, U_1, \cdots, U_n\in {\rm Obj}(\mathcal{C})} \mathcal{M}(U_n, Y)\otimes_{\mathcal{C}(U_n, U_n)} \cdots \otimes_{\mathcal{C}(U_2, U_2)} \mathcal{M}(U_1, U_2) \otimes_{\mathcal{C}(U_1, U_1)}\mathcal{M}(X, U_1).
\end{align*}

Let $\alpha\colon \mathcal{M}\rightarrow \mathcal{C}$ be a closed morphism of dg $\mathcal{C}$-$\mathcal{C}$-bimodules of degree one. The \emph{deformed tensor dg category} $T_\mathcal{C}(\mathcal{M}; \alpha)$ is defined as follows: as a graded category, it is the same with $T_\mathcal{C}(\mathcal{M})$; its differential $d'$ is determined by $d'(f)=d_\mathcal{C}(f)$ for any morphism $f$ in $\mathcal{C}$, and $d'(x)=d_\mathcal{M}(x)+\alpha(x)$ for any element $x\in \mathcal{M}$.

\begin{exm}\label{exm:free-add}
Let $f\colon X\rightarrow Y$ be a closed morphism in $\mathcal{C}$. Consider a stalk complex $\mathbb{K}t$ consisting of a free $\mathbb{K}$-module generated by $t$ such that $|t|=|f|-1$. Then $\mathcal{C}(Y, -)\otimes \mathbb{K}t\otimes \mathcal{C}(-, X)$ is naturally a dg $\mathcal{C}$-$\mathcal{C}$-bimodule. We have a morphism of dg $\mathcal{C}$-$\mathcal{C}$-bimodules of degree one:
$$\alpha\colon \mathcal{C}(Y, -)\otimes \mathbb{K}t\otimes \mathcal{C}(-, X)\longrightarrow \mathcal{C}, \quad h\otimes t\otimes k\mapsto (-1)^{|h|} h\circ f\circ k. $$
The deformed dg tensor category
$$T_\mathcal{C}(\mathcal{C}(Y, -)\otimes \mathbb{K}t\otimes \mathcal{C}(-, X);\alpha)$$
 will be denoted by $\mathcal{C}\langle t; f\rangle$. Indeed, it is obtained from $\mathcal{C}$ by \emph{freely} adding a morphism $t\colon X\rightarrow Y$ such that $d(t)=f$. Here, we abuse $t$ with ${\rm Id}_Y\otimes t\otimes {\rm Id}_X$.

We will later consider the following special case $\mathcal{C}\langle c; {\rm Id}_X\rangle$, which is denoted by $\mathcal{C}/\{X\}$; see \cite[Definition~3.1]{Dri}. It is obtained from $\mathcal{C}$ by freely adding a contracting homotopy $c$ for $X$, namely, $c \colon X \rightarrow X$ is of degree $-1$ and satisfies $d(c)={\rm Id}_X$.
\end{exm}

A dg category $\mathcal{C}$ is said to be \emph{discrete} if $\mathcal{C}(X, Y)=0$ for distinct objects $X$ and $Y$, and $\mathcal{C}(X, X)=\mathbb{K}{\rm Id}_X$, a free $\mathbb{K}$-module concentrated in degree zero. We always view $\mathbb{K}$ as a discrete dg category with a single object $\{\ast\}$.

We mention that the empty dg category $\emptyset$ is the initial object in ${\rm DGCat}$. The terminal object is the \emph{zero category} $0$, which consists of a single object with its endomorphism dg algebra being zero.

\begin{prop}
The category ${\rm DGCat}$ has small limits and colimits.
\end{prop}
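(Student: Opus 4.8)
The plan is to follow the explicit constructions already used for ${\rm Cat}$ and for ${\rm DGAlg}$: limits are formed ``objectwise'' both on the underlying object sets and on the Hom complexes, while colimits are obtained by first taking a free dg category on a suitable dg quiver and then passing to a factor dg category modulo a dg ideal that imposes the expected relations. Throughout, fix a $\Lambda$-diagram $\mathcal{C}\colon \Lambda\rightarrow {\rm DGCat}$, and write $\mathcal{C}_i$, $\mathcal{C}_\alpha$ as in the earlier sections.

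First I would construct the limit $\mathcal{L}$. Its objects are the compatible families
$$\{(X_i)_{i\in \Lambda_0}\; |\; X_i\in {\rm Obj}(\mathcal{C}_i);\ \mathcal{C}_\alpha(X_i)=X_j \mbox{ for all }\alpha\colon i\rightarrow j\mbox{ in }\Lambda_1\},$$
and the Hom complex from $(X_i)$ to $(Y_i)$ is the limit ${\rm lim}_{i\in \Lambda_0}\; \mathcal{C}_i(X_i, Y_i)$, computed in the abelian category $C(\mathbb{K}\mbox{-}{\rm Mod})$, with composition and differential induced componentwise. Since $C(\mathbb{K}\mbox{-}{\rm Mod})$ has small limits and the forgetful functor ${\rm Obj}\colon {\rm DGCat}\rightarrow {\rm Set}$ is a right adjoint (its left adjoint sends a set to the discrete dg category on it), hence preserves limits, one checks directly that $\mathcal{L}={\rm lim}_{i\in \Lambda_0}\; \mathcal{C}_i$.

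Next I would construct colimits. As for ${\rm Cat}$, coproducts are trivial: the coproduct of dg categories is the disjoint union of objects and of Hom complexes, so it suffices to treat general colimits by a direct description. Set $V={\rm colim}_{i\in \Lambda_0}\; {\rm Obj}(\mathcal{C}_i)=\bigsqcup_{i\in \Lambda_0} {\rm Obj}(\mathcal{C}_i)/{\sim}$, where $\sim$ is generated by $X_i\sim \mathcal{C}_\alpha(X_i)$; denote classes by $[X]$. Form the dg quiver $Q$ with vertex set $V$ and, for $[X], [Y]\in V$, with Hom complex $Q([X],[Y])=\bigoplus \mathcal{C}_i(X_i, Y_i)$, the sum over all $i\in \Lambda_0$ and all $X_i, Y_i$ with $[X_i]=[X]$, $[Y_i]=[Y]$. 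Let $\mathcal{F}(Q)$ be the free (path) dg category on $Q$, whose Hom complexes are the direct sums of iterated tensor products of the $Q(-,-)$ equipped with the graded Leibniz differential, so that $\mathcal{F}$ is left adjoint to the forgetful functor from dg categories to dg quivers. Finally let $\mathcal{R}$ be the dg ideal of $\mathcal{F}(Q)$ generated by the elements $g_i\circ f_i-g_if_i$ for composable $f_i, g_i$ in some $\mathcal{C}_i$ (here $g_i\circ f_i$ is the composite arrow and $g_if_i$ the tensor of length two), by ${\rm Id}_{X_i}-e_{[X_i]}$ for objects $X_i$, and by $h-\mathcal{C}_\alpha(h)$ for $\alpha\colon i\rightarrow j$ in $\Lambda_1$ and $h$ a morphism in $\mathcal{C}_i$, in complete parallel with the ideal used for ${\rm DGAlg}$. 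Then ${\rm colim}_{i\in \Lambda_0}\; \mathcal{C}_i=\mathcal{F}(Q)/\mathcal{R}$.

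The one point requiring genuine care is that $\mathcal{R}$ is a \emph{dg} ideal, i.e.\ closed under the differential: this holds because $d$ applied to a generator of any of the three types is again an $\mathcal{F}(Q)$-combination of generators of the same types, using that each $\mathcal{C}_\alpha$ is a dg functor and that composition and identities in each $\mathcal{C}_i$ are compatible with $d_{\mathcal{C}_i}$. Granting this, the differential descends to $\mathcal{F}(Q)/\mathcal{R}$, and the universal property is then routine: a cocone $(\mathcal{C}_i\rightarrow \mathcal{E})_{i\in \Lambda_0}$ gives a dg quiver morphism $Q\rightarrow \mathcal{E}$, hence a dg functor $\mathcal{F}(Q)\rightarrow \mathcal{E}$ by freeness, which annihilates $\mathcal{R}$ precisely because the cocone respects compositions, units and the transition dg functors, and conversely. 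I expect the bookkeeping that $\mathcal{R}$ is $d$-stable to be the only real obstacle; everything else copies the ${\rm Cat}$ and ${\rm DGAlg}$ arguments. (When $\Lambda$ is filtered one can bypass all of this and describe the colimit directly, with object set $\bigsqcup_i {\rm Obj}(\mathcal{C}_i)/{\sim}$ and Hom complexes the filtered colimits of the $\mathcal{C}_i(X_i, Y_i)$.)
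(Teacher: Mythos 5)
Your construction is correct and is essentially identical to the paper's: the limit is formed objectwise on objects and on Hom complexes, and the colimit is the quotient of a free dg category on a dg quiver (which is precisely the paper's tensor dg category $T_\mathcal{O}(\mathcal{M})$ over the discrete dg category $\mathcal{O}$ of equivalence classes of objects) by the dg ideal generated by the same three families of relations. Your extra remarks on $d$-stability of the ideal and the universal property are exactly the routine verifications the paper leaves implicit.
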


Let $\Lambda$ be a small index category. We will consider a $\Lambda$-diagram $\mathcal{C}\colon \Lambda\rightarrow {\rm DGCat}$. In other words, for each object $i\in \Lambda_0$, $\mathcal{C}_i$ is a dg category; for any morphism $\alpha\colon i\rightarrow j\in \Lambda_1$, $\mathcal{C}_\alpha\colon \mathcal{C}_i\rightarrow \mathcal{C}_j$ is a dg functor.

\begin{proof}
We will describe the limit and colimit dg categories of any $\Lambda$-diagram $\mathcal{C}\colon \Lambda\rightarrow {\rm DGCat}$.

(1) The limit dg category $\mathcal{L}$ is given as follows:
\begin{align*}
{\rm Obj}(\mathcal{L})&={\rm lim}_{i\in \Lambda_0} \; {\rm Obj}(\mathcal{C}_i)\\
&=\{(X_i)_{i\in \Lambda_0}\in \prod_{i\in \Lambda_0} {\rm Obj}(\mathcal{C}_i)\; |\; \mathcal{C}_\alpha(X_i)=X_j \mbox{ for any } \alpha\colon i\rightarrow j \mbox{  in } \Lambda_1\};
\end{align*}
the Hom complex from $(X_i)_{i\in \Lambda_0}$ to $(Y_i)_{i\in \Lambda_0}$ is given by
\begin{align*}
\mathcal{L}((X_i)_{i\in \Lambda_0},& (Y_i)_{i\in \Lambda_0})={\rm lim}_{i\in \Lambda_0} \; \mathcal{C}_i(X_i, Y_i)\\
&=\{(f_i)_{i\in \Lambda_0}\in \prod_{i\in \Lambda_0}\mathcal{C}_i(X_i, Y_i)\; |\; \mathcal{C}_\alpha(f_i)=f_j  \mbox{ for any } \alpha\colon i\rightarrow j \mbox{  in } \Lambda_1\}.
\end{align*}

(2) The colimit dg category $\mathcal{CL}$ is more subtle.
$${\rm Obj}(\mathcal{CL})={\rm colim}_{i\in \Lambda_0}\; {\rm Obj}(\mathcal{C}_i),$$
whose objects are given by the equivalence class $[X_i]$ for some $X_i\in {\rm Obj}(\mathcal{C}_i)$. We have $[X_i]=[X_j]$ if there exists $\alpha\colon i\rightarrow j$ in $\Lambda_1$ such that $\mathcal{C}_\alpha(X_i)=X_j$. We view $\mathcal{O}={\rm Obj}(\mathcal{CL})$ as a discrete dg category, and have the following dg bimodule.
$$\mathcal{M}([X], [Y])=\bigoplus_{i\in \Lambda_0}\bigoplus_{X', Y'\in {\rm Obj}(\mathcal{C}_i), X'\in [X], Y'\in [Y]}\mathcal{C}_i(X', Y').$$
We form the dg tensor category $T_\mathcal{O}(\mathcal{M})$. Finally, the colimit dg category $\mathcal{CL}$ is given by $T_\mathcal{O}(\mathcal{M})/\mathcal{I}$, where the dg ideal $\mathcal{I}$ is generated by the following morphisms.
\begin{enumerate}
\item $g\circ f-g\otimes f$, for any two composable morphisms $g, f$ in $\mathcal{C}_i$;
\item ${\rm Id}_X-{\rm Id}_{[X]}$, for any object $X$ in $\mathcal{C}_i$;
\item $h-\mathcal{C}_\alpha(h)$, for any morphism $h\in \mathcal{C}_i$ and any $\alpha\colon i\rightarrow j$ in $\Lambda_1$.
\end{enumerate}
Here, in (1), $g\circ f$ denotes the composition in $\mathcal{C}_i$ and $g\otimes f$ is viewed as an element in $T_\mathcal{O}(\mathcal{M})$ of tensor-length two.
\end{proof}

\begin{rem}\label{rem:dgcat-bicom}
(1) Assume that $\Lambda$ is filtered. Then the filtered colimit dg category $\mathcal{CL}$  is easier to describe. We have
$${\rm Obj}(\mathcal{CL})={\rm colim}_{i\in \Lambda_0}\; {\rm Obj}(\mathcal{C}_i)=\bigsqcup_{i\in \Lambda_0} \; {\rm Obj}(\mathcal{C}_i)/{\sim},$$
where the equivalence relation $\sim$ is given as follows: for $X_i\in {\rm Obj}(\mathcal{C}_i)$ and $X_j\in {\rm Obj}(\mathcal{C}_j)$, $X_i\sim X_j$ if and only if there exist $\alpha\colon i\rightarrow k$ and $\beta\colon j\rightarrow k$ in $\Lambda_1$ such that $\mathcal{C}_\alpha(X_i)=\mathcal{C}_\beta(X_j)$. The Hom complex
$$\mathcal{CL}([X], [Y])={\rm colim}_{\{(i, X', Y')\; |\; i\in \Lambda_0, X', Y'\in {\rm Obj}(\mathcal{C}_i) \mbox{ with } [X']=[X], [Y']=[Y]\}}\; \mathcal{C}_i(X', Y'),$$
where the right colimit is taken over a filtered category consisting of triples $(i, X', Y')$, whose morphisms from $(i, X', Y')$ to $(j, X'', Y'')$ are given  by $\alpha\colon i\rightarrow j$ in $\Lambda_1$ satisfying $\mathcal{C}_\alpha(X')=X''$ and $\mathcal{C}_\alpha(Y')=Y''$.

(2) Let $\mathcal{C}$ and $\mathcal{D}$ be two dg categories. Their coproduct $\mathcal{C}\coprod \mathcal{D}$ is described as follows: ${\rm Obj}(\mathcal{C}\coprod \mathcal{D})$ is the disjoint union of ${\rm Obj}(\mathcal{C})$ and ${\rm Obj}(\mathcal{D})$; $(\mathcal{C}\coprod \mathcal{D})(C, D)=0=(\mathcal{C}\coprod \mathcal{D})(D, C)$ for any object $C$ in $\mathcal{C}$ and any object $D$ in $\mathcal{D}$; moreover, both $\mathcal{C}$ and $\mathcal{D}$ are full dg subcategories of $\mathcal{C}\coprod \mathcal{D}$. For example, $\mathcal{C}\coprod \mathbb{K}$ is obtained from $\mathcal{C}$ by adding a new discrete object $\ast$. We mention that by definition the following diagram is a pushout.
\[
\xymatrix{
\emptyset \ar[d]\ar[rr] && \mathcal{C}\ar[d]\\
\mathbb{K} \ar[rr] && \mathcal{C}\coprod \mathbb{K}
}\]
\end{rem}

\subsection{Semi-free dg categories}

We will study semi-free dg categories. Let us begin with spheres and discs.

\begin{defn}
For each integer $n$, we define the $n$-th \emph{sphere} $\mathcal{S}(n)$, which is a dg category given as follows: ${\rm Obj}(\mathcal{S}(n))=\{1, 2\}$, $\mathcal{S}(n)(i, i)=\mathbb{K}{\rm Id}_i$ for $i=1, 2$, and $\mathcal{S}(n)(2, 1)=0$ and $\mathcal{S}(n)(1, 2)=\mathbb{K}u$, a free $\mathbb{K}$-module with a generator $u$ of degree $-n$. The differential is determined by $d_{\mathcal{S}(n)}(u)=0$. To emphasize the generator $u$, we often denote $\mathcal{S}(n)$ by $\mathcal{S}(u)$.
\end{defn}

For any dg category $\mathcal{C}$, we have the following bijection:
$${\rm DGCat}(\mathcal{S}(n), \mathcal{C})\simeq \bigsqcup_{X, Y\in {\rm Obj}(\mathcal{C})} Z^{-n}\mathcal{C}(X, Y), \quad F\mapsto F(u).$$

\begin{defn}
The $n$-th \emph{disc} $\mathcal{D}(n)$ is a dg category defined as follows: ${\rm Obj}(\mathcal{D}(n))=\{1, 2\}$, $\mathcal{D}(n)(i, i)=\mathbb{K}{\rm Id}_i$ for $i=1, 2$, and $\mathcal{D}(n)(2, 1)=0$ and $\mathcal{D}(n)(1, 2)=\mathbb{K}t\oplus \mathbb{K}dt$, a free $\mathbb{K}$-modules with generators $t$ and $dt$ such that $|t|=-n$ and $|dt|=1-n$. The differential is determined by $d_{\mathcal{D}(n)}(t)=dt$ and $d_{\mathcal{D}(n)}(dt)=0$. To emphasize the generator $t$, we sometimes denote $\mathcal{D}(n)$ by $\mathcal{D}(t)$.
\end{defn}

Similar to the above bijection, we have the following one:
\begin{align}\label{equ:disc}
{\rm DGCat}(\mathcal{D}(n), \mathcal{C})\simeq \bigsqcup_{X, Y\in {\rm Obj}(\mathcal{C})} \mathcal{C}^{-n}(X, Y), \quad F\mapsto F(t).
\end{align}

We have a dg functor $\iota_n\colon \mathcal{S}(n) \rightarrow \mathcal{D}(n+1)$, which sends $i$ to $i$, and  $u$ to $dt$. Similarly, we have another dg functor $\pi_n\colon \mathcal{D}(n)\rightarrow \mathcal{S}(n)$ sending $t$ to $u$, and $dt$ to $0$. Then the following commutative diagram
\[\xymatrix{
\mathcal{S}(n)\ar[d]_-{\iota_n} \ar[rr] && \mathbb{K}\coprod\mathbb{K} \ar[d]\\
\mathcal{D}(n+1) \ar[rr]^-{\pi_{n+1}} && \mathcal{S}(n+1)
}\]
is a pushout and also a pullback. Here, the coproduct $\mathbb{K}\coprod\mathbb{K}$ is a discrete dg category with two objects $\{\ast_1, \ast_2\}$. The upper horizontal arrow sends $i$ to $\ast_i$, and the right vertical arrow sends $\ast_i$ to $i$.

\begin{exm}\label{exm:free-add-po}
Let $\mathcal{C}$ be a dg category and  $f$ be a closed morphism of degree $-n$. Recall from Example~\ref{exm:free-add} the dg category $\mathcal{C}\langle t; f \rangle$ obtained from $\mathcal{C}$ by freely adding a morphism $t$ satisfying $d(t)=f$. The following diagram is a pushout.
\[
\xymatrix{
\mathcal{S}(n)\ar[d]_-{\iota_n} \ar[rr]^-{u\mapsto f} && \mathcal{C} \ar[d]^-{\rm inc}\\
\mathcal{D}(n+1)\ar[rr]^-{t\mapsto t, \; dt\mapsto f} && \mathcal{C}\langle t; f \rangle.
}\]
We observe that $\mathbb{K}\coprod\mathbb{K}\langle u; 0\rangle\simeq \mathcal{S}(n+1)$, where $0\colon \ast_1\rightarrow \ast_2$ is the zero morphism  and $|u|=-n-1$.
\end{exm}

By a \emph{graded quiver}, we mean a quiver $Q=(Q_0, Q_1; s,t)$ together with a degree map $|-|\colon Q_1\rightarrow \mathbb{Z}$. The \emph{graded path category} $\mathbb{K}Q$ is defined to be the $\mathbb{K}$-linearization of the path category $\mathcal{P}(Q)$ whose grading is induced by the degree map.

\begin{defn}
A dg category $\mathcal{C}$ is called \emph{semi-free}, if there is a graded quiver $Q$ such that $\mathcal{C}=\mathbb{K}Q$ as a graded category, and that there is a filtration of $Q_1$:
$$Q_1^{(0)}\subseteq Q_1^{(1)} \subseteq Q_1^{(2)}\subseteq \cdots$$
with the property that $Q_1=\bigcup_{i\geq 0}Q_1^{(i)}$ and $d(Q_1^{(i)})\subseteq \mathbb{K}Q^{(i-1)}$ for any $i\geq 0$.  Here, $Q^{(i-1)}$ denotes the sub quiver $(Q_0, Q_1^{(i-1)}; s, t)$ and $\mathbb{K}Q^{(-1)}=0$.
\end{defn}

For example, both $\mathcal{S}(n)$ and $\mathcal{D}(n)$ are semi-free.  We set $\mathcal{I}=\{\emptyset\rightarrow \mathbb{K}, \; \iota_n\colon \mathcal{S}(n)\rightarrow \mathcal{D}(n+1)\; |\;  n\in \mathbb{Z}\}$.

\begin{prop}
A dg category $\mathcal{C}$ is semi-free if and only if $\emptyset\rightarrow \mathcal{C}$ belongs to $\mathcal{I}\mbox{-}{\rm cell}$.
\end{prop}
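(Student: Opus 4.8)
The plan is to prove the two implications separately, using in both directions the explicit description of colimits in ${\rm DGCat}$ together with the pushout computations of Example~\ref{exm:free-add-po} and Remark~\ref{rem:dgcat-bicom}(2).

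For the ``only if'' part, suppose $\mathcal{C}=\mathbb{K}Q$ is semi-free with filtration $Q_1^{(0)}\subseteq Q_1^{(1)}\subseteq\cdots$; set $Q_1^{(-1)}=\emptyset$, so that $\mathbb{K}Q^{(-1)}$ is the discrete dg category on $Q_0$ and the generators in $Q_1^{(0)}$ are closed. I would realise $\emptyset\rightarrow\mathcal{C}$ as the transfinite composition of
\[
\emptyset\longrightarrow \mathbb{K}Q^{(-1)}\longrightarrow \mathbb{K}Q^{(0)}\longrightarrow \mathbb{K}Q^{(1)}\longrightarrow\cdots.
\]
The first map is the pushout of the coproduct of copies of $\emptyset\rightarrow\mathbb{K}$ indexed by $Q_0$, by Remark~\ref{rem:dgcat-bicom}(2). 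For the step $\mathbb{K}Q^{(k-1)}\rightarrow\mathbb{K}Q^{(k)}$ with $k\geq 0$, each $a\in Q_1^{(k)}\setminus Q_1^{(k-1)}$ has a degree, say $-m_a$, and $d(a)$ is a closed morphism of $\mathbb{K}Q^{(k-1)}$; by Example~\ref{exm:free-add-po} the pushout of $\iota_{m_a-1}\colon \mathcal{S}(m_a-1)\rightarrow \mathcal{D}(m_a)$ along the dg functor $\mathcal{S}(m_a-1)\rightarrow \mathbb{K}Q^{(k-1)}$ sending $u$ to $d(a)$ is the dg category obtained from $\mathbb{K}Q^{(k-1)}$ by freely adjoining $a$. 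Forming the pushout of the coproduct of these maps over all $a\in Q_1^{(k)}\setminus Q_1^{(k-1)}$ produces $\mathbb{K}Q^{(k)}$, and the colimit of the displayed sequence is $\mathbb{K}Q=\mathcal{C}$. Hence $\emptyset\rightarrow\mathcal{C}\in \mathcal{I}\mbox{-}{\rm cell}$.

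For the ``if'' part, write $\emptyset\rightarrow\mathcal{C}$ as the transfinite composition of $X_0=\emptyset\rightarrow X_1\rightarrow X_2\rightarrow\cdots$, where each $X_i\rightarrow X_{i+1}$ is a pushout of a coproduct $\coprod_{x\in\Lambda_i}f_x$ of elements of $\mathcal{I}$ along a map into $X_i$. From the colimit description in ${\rm DGCat}$ one checks that $X_{i+1}$ is obtained from $X_i$ by adjoining one new discrete object for each $x$ with $f_x=(\emptyset\rightarrow\mathbb{K})$ and, for each $x$ with $f_x=\iota_{n_x}$, freely adjoining a generator $t_x$ of degree $-n_x-1$ with $d(t_x)$ equal to the closed morphism of $X_i$ that is the image of $u$ under the attaching map; the key point is that $d(t_x)$ lies in $X_i$, because the attaching map has target $X_i$. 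I would then prove by induction on $i$ that $X_i$ is semi-free, carrying along a semi-free filtration $\{Q_1^{[i],(p)}\}_{p\geq 0}$ of its generators with $Q_1^{[i],(p)}\subseteq Q_1^{[i+1],(p)}$ for all $p$. In the inductive step, for each new generator $t_x$ choose the least $q_x\geq -1$ with $d(t_x)\in \mathbb{K}Q^{[i],(q_x)}$ (which exists since $d(t_x)$ is a finite combination of paths of generators of $X_i$, each lying in some finite filtration stage), declare $t_x$ to sit at level $q_x+1$, and set $Q_1^{[i+1],(p)}=Q_1^{[i],(p)}\cup\{t_x\mid q_x+1\leq p\}$; one checks this is a semi-free filtration of $X_{i+1}$ nested over that of $X_i$. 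Passing to the colimit, $\mathcal{C}=\mathbb{K}Q$ with $Q_1=\bigcup_{i,p}Q_1^{[i],(p)}$, and $Q_1^{(p)}:=\bigcup_i Q_1^{[i],(p)}$ is a semi-free filtration of $\mathcal{C}$, since every path of generators from $Q_1^{(p-1)}$ already lies in some $\mathbb{K}Q^{[i],(p-1)}$.

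The essentially routine parts are the verifications that the squares above are pushouts (these are Example~\ref{exm:free-add-po} and Remark~\ref{rem:dgcat-bicom}(2)), that a pushout of a coproduct of such maps is computed ``generator by generator'' as asserted, and that an $\omega$-indexed union of free graded categories along generator-inclusions is free on the union of the generators. The step I expect to be the main obstacle is the reindexing in the ``if'' part: the $\mathcal{I}\mbox{-}{\rm cell}$ presentation organises the adjoined generators into a sequence of stages, each stage possibly adjoining infinitely many generators whose differentials are spread across arbitrarily high filtration levels of the previous stage, and all of these must be repackaged into a single $\mathbb{N}$-indexed filtration satisfying $d(Q_1^{(p)})\subseteq \mathbb{K}Q^{(p-1)}$; the nested-filtration induction above is designed to make this bookkeeping go through.
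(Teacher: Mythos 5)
Your proposal is correct and is essentially a (much more detailed) expansion of the paper's own one-line proof, which simply invokes Remark~\ref{rem:dgcat-bicom}(2) and Example~\ref{exm:free-add-po}: objects are attached via pushouts of $\emptyset\rightarrow\mathbb{K}$ and arrows via pushouts of the $\iota_n$ according to the semi-free filtration, and conversely an $\mathcal{I}$-cell presentation is repackaged into such a filtration. Your careful treatment of the reindexing in the ``if'' direction supplies bookkeeping that the paper leaves entirely implicit.
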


\begin{proof}
Use Remark~\ref{rem:dgcat-bicom}(2) and Example~\ref{exm:free-add-po}.
\end{proof}

The following construction is to add a ``formal" cone of a closed morphism of degree zero.

\begin{exm}\label{exm:add-cone}
Let $\mathcal{C}$ be a dg category and $f\colon X\rightarrow Y$ be a closed morphism of degree zero. Applying Example~\ref{exm:free-add-po} to $\mathcal{C}\coprod \mathbb{K}$ repeatedly, we form a new dg category $\mathcal{C}' $  by freely adding four morphisms in following diagram.
 \[
 \xymatrix{
 X  \ar@<-0.7ex>[dr]_-{j}\ar[rr]^-f && Y \ar@<-.7ex>[ld]_-{i}\\
 & \ast \ar@<-.7ex>[lu]_-{p} \ar@<-.7ex>[ru]_-{q}
 }\]
 These gradings are given by  $|i|=0=|q|$, $|p|=1$ and $|j|=-1$; the differential is determined by $d(i)=0=d(p)$, and $d(q)=-f\circ p$ and $d(j)=i\circ f$. We should be cautious about the minus sign in $d(q)$.

 The quotient dg category of $\mathcal{C}'$ modulo the dg ideal generated by
 $$ q\circ i-{\rm Id}_Y, p\circ j-{\rm Id}_X, \mbox{and } j\circ p+i\circ q-{\rm Id}_\ast$$
  is denoted by $\mathcal{C}\vee {\rm Cone}(f)$. It is a nice exercise to show that both $p\circ i$ and $q\circ j$ lie in the ideal above.

   This notation $\mathcal{C}\vee {\rm Cone}(f)$ is justified by the fact that in this dg category, the new object $\ast$ is the cone of $f$, or equivalently, in the dg category of dg modules $\ast^\wedge$ is dg-isomorphic to ${\rm Cone}(f^\wedge)$.
\end{exm}

\begin{lem}\label{lem:add-cone}
The natural functor $\mathcal{C}\rightarrow \mathcal{C}\vee {\rm Cone}(f)$ is fully faithful.
\end{lem}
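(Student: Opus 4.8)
The functor $\mathcal{C}\to \mathcal{C}\vee{\rm Cone}(f)$ is the identity on objects, so the plan is to check that it induces a bijection on each Hom complex. I would prove fullness by a combinatorial reduction of paths in $\mathcal{C}'$, and faithfulness by factoring the functor through the Yoneda embedding into ${\rm DGMod}\mbox{-}\mathcal{C}$. At several points I will use that $p\circ i$ and $q\circ j$ lie in the defining dg ideal $\mathcal{J}$ of $\mathcal{C}\vee{\rm Cone}(f)$ (the ``nice exercise'' mentioned after Example~\ref{exm:add-cone}); this follows from the three listed relations by substituting ${\rm Id}_\ast=j\circ p+i\circ q$ and using $p\circ j={\rm Id}_X$, $q\circ i={\rm Id}_Y$ modulo $\mathcal{J}$.

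\textbf{Fullness.} Since $\mathcal{C}'$ is obtained from $\mathcal{C}\coprod\mathbb{K}$ by freely adjoining the four arrows $i,j,p,q$, every morphism of $\mathcal{C}'$ is a $\mathbb{K}$-linear combination of paths, each an alternating word in morphisms of $\mathcal{C}\coprod\mathbb{K}$ and the new arrows. Fix objects $Z,W$ of $\mathcal{C}$ and a path $\omega$ from $Z$ to $W$. The only arrows with target $\ast$ are $i$ and $j$, and the only arrows with source $\ast$ are $p$ and $q$ (besides ${\rm Id}_\ast$, which may be deleted from $\omega$); hence every occurrence of the object $\ast$ along $\omega$ is entered by $i$ or $j$ and left by $p$ or $q$, so $\omega$ contains, as a consecutive subword, one of $p\circ i$, $p\circ j$, $q\circ i$, $q\circ j$. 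In $\mathcal{C}\vee{\rm Cone}(f)$ these are $0$, ${\rm Id}_X$, ${\rm Id}_Y$, $0$ respectively. Consequently, in the quotient the image of $\omega$ is either $0$ or, after collapsing the resulting identity morphisms, a composition of morphisms of $\mathcal{C}$; in either case it lies in the image of $\mathcal{C}(Z,W)$. Hence $\mathcal{C}(Z,W)\to(\mathcal{C}\vee{\rm Cone}(f))(Z,W)$ is surjective.

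\textbf{Faithfulness.} I would extend the fully faithful Yoneda embedding $\mathcal{C}\to{\rm DGMod}\mbox{-}\mathcal{C}$, $X\mapsto X^\wedge$, to a dg functor $\Phi\colon\mathcal{C}'\to{\rm DGMod}\mbox{-}\mathcal{C}$ by setting $\Phi(\ast)={\rm Cone}(f^\wedge)$ and sending $i,q,p,j$ to the canonical morphisms $Y^\wedge\to{\rm Cone}(f^\wedge)$, ${\rm Cone}(f^\wedge)\to Y^\wedge$, ${\rm Cone}(f^\wedge)\to X^\wedge$, $X^\wedge\to{\rm Cone}(f^\wedge)$ assembled from the summand inclusions and projections together with $\xi^{\pm 1}$, of degrees $0,0,1,-1$ as prescribed in Example~\ref{exm:add-cone}. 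The one genuine computation is to verify that $\Phi$ respects the deformed differential: $\Phi(i)$ and $\Phi(p)$ are closed, $d(\Phi(q))=-\Phi(f)\circ\Phi(p)$, and $d(\Phi(j))=\Phi(i)\circ\Phi(f)$, which amounts to unwinding the mapping-cone differential of ${\rm Cone}(f^\wedge)$ and checking that the Koszul signs agree with the conventions for $\xi$ and $f^\wedge$. Granting this, $\Phi$ kills the three generators $q\circ i-{\rm Id}_Y$, $p\circ j-{\rm Id}_X$, $j\circ p+i\circ q-{\rm Id}_\ast$ of $\mathcal{J}$, because $Y^\wedge$ and $X^\wedge$ are via these morphisms complementary direct summands of ${\rm Cone}(f^\wedge)$; so $\Phi$ descends to $\overline{\Phi}\colon\mathcal{C}\vee{\rm Cone}(f)\to{\rm DGMod}\mbox{-}\mathcal{C}$. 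The composite $\mathcal{C}\to\mathcal{C}\vee{\rm Cone}(f)\xrightarrow{\overline{\Phi}}{\rm DGMod}\mbox{-}\mathcal{C}$ is the Yoneda embedding, which is faithful, hence $\mathcal{C}\to\mathcal{C}\vee{\rm Cone}(f)$ is faithful. Together with fullness, this gives the lemma. The main obstacle is the differential-compatibility of $\Phi$ (equivalently, the verification that ${\rm Cone}(f^\wedge)$ actually carries the structure encoded by the four new arrows and their differentials); the remaining steps are formal.
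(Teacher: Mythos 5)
Your proposal is correct and follows essentially the same route as the paper: fullness via a reduction of paths according to how many times they pass through $\ast$ (using that $p\circ j$, $q\circ i$ become identities and $p\circ i$, $q\circ j$ vanish in the quotient), and faithfulness by factoring the Yoneda embedding of $\mathcal{C}$ through $\mathcal{C}\vee {\rm Cone}(f)$ via $\ast\mapsto {\rm Cone}(f^\wedge)$. The sign checks you defer do go through with the paper's conventions ($\Phi(p)=(0,\xi^{-1})$, $\Phi(j)=\binom{0}{\xi}$ give $d(\Phi(q))=-\Phi(f)\circ\Phi(p)$ and $d(\Phi(j))=\Phi(i)\circ\Phi(f)$), so there is no gap.
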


\begin{proof}
It is not hard to show that the functor is full by induction on the times of a morphism passing through $\ast$.  For the faithfulness, we observe that the Yoneda embedding of $\mathcal{C}$ factors though this functor.
\end{proof}

\begin{rem}\label{rem:add-cone}
Write $\mathcal{C}'=\mathcal{C}\vee {\rm Cone}(f)$. For any object $Z$ in $\mathcal{C}'$, we observe that $\mathcal{C}'(Z, \ast)$ is isomorphic to the mapping cone of $\mathcal{C}'(Z, f)\colon \mathcal{C}'(Z, X)\rightarrow \mathcal{C}'(Z, Y)$. In more details, it sends $x\in \mathcal{C}'(Z, \ast)$ to $\binom{q\circ x}{-s(p\circ x)}\in {\rm Cone}(\mathcal{C}'(Z, f))$. Here, any element in $\Sigma \mathcal{C}'(Z, X)$ is written as $sz$ for $z\in \mathcal{C}'(Z, X)$.

There are two consequences: (1) if $f$ is a homotopy equivalence, then $\ast$ is a \emph{contractible object}, that is, there is a contacting homotopy $c\in \mathcal{C}'(\ast, \ast)^{-1}$ with $d(c)={\rm Id}_\ast$; (2) if $\mathcal{C}(X, f)$ is an isomorphism, then the cochain complex $\mathcal{C}'(X, \ast)$ is contractible.
\end{rem}

\subsection{The dg category $\mathcal{K}$} The following dg category will play a special role; see \cite[Subsection~3.7]{Dri}.

\begin{defn}
We define $\mathcal{K}$ be a dg category given by the following graded quiver
\[\xymatrix{  1 \ar@(dl,ul)[]|{h_1} \ar@<+1.3ex>@/^.7pc/[rr]|{r} \ar@<+.6ex>[rr]|f && 2\ar@<+.9ex>[ll]|g \ar@(dr, ur)[]|{h_2} }\]
such that $|f|=0=|g|$, $|h_1|=-1=|h_2|$ and $|r|=-2$. The differential is determined by $d_\mathcal{K}(f)=0=d_\mathcal{K}(g)$, $d_\mathcal{K}(h_1)=g\circ f-{\rm Id}_1$, $d_\mathcal{K}(h_2)=f\circ g-{\rm Id}_2$ and $d_\mathcal{K}(r)=h_2\circ f-f\circ h_1$.
\end{defn}

\begin{rem}
We observe that
$$g\circ h_2-h_1\circ g=d_\mathcal{K}(g\circ r\circ g+g\circ h_2^2+h_1^2\circ g-h_1\circ g\circ h_2).$$
 This identity also follows from $d(c^2)=c^3$, where $c$ is the corresponding contracting homotopy for ${\rm Cone}(f^\wedge)$; compare Remark~\ref{rem:contrac}.
\end{rem}

In view of Remark~\ref{rem:contrac}, the following result is natural.

\begin{lem}
Let $\mathcal{C}$ be a small dg category. Then we have a bijection
$${\rm DGCat}(\mathcal{K}, \mathcal{C})\simeq \left\{(u, c)\;{\large |} \; \begin{aligned}
& u \mbox{ is a closed morphism of degree zero in } \mathcal{C}, \\
 & c \mbox{ is a contacting homotopy for } {\rm Cone}(u^\wedge).\end{aligned} \right \}$$
\end{lem}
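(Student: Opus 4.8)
The plan is to describe the claimed bijection explicitly in both directions, the key point being that the condition ``$c$ is a contracting homotopy of ${\rm Cone}(u^\wedge)$'' unwinds into precisely the defining relations of the dg category $\mathcal{K}$.

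First I would unpack the left-hand side. Since $\mathcal{K}=\mathbb{K}Q$ is the graded path category on its quiver, equipped with the differential prescribed on the arrows $f,g,h_1,h_2,r$, a dg functor $F\colon\mathcal{K}\to\mathcal{C}$ is the same thing as a choice of objects $X=F(1)$, $Y=F(2)$ together with morphisms $F(f)\in\mathcal{C}(X,Y)^0$, $F(g)\in\mathcal{C}(Y,X)^0$, $F(h_1)\in\mathcal{C}(X,X)^{-1}$, $F(h_2)\in\mathcal{C}(Y,Y)^{-1}$, $F(r)\in\mathcal{C}(X,Y)^{-2}$, subject only to the equations obtained by applying $F$ to the formulas defining $d_\mathcal{K}$. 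In particular $u:=F(f)$ is then closed of degree zero, and the remaining four equations $d(F(g))=0$, $d(F(h_1))=F(g)F(f)-{\rm Id}_X$, $d(F(h_2))=F(f)F(g)-{\rm Id}_Y$, $d(F(r))=F(h_2)F(f)-F(f)F(h_1)$ are exactly the hypotheses appearing in Lemma~\ref{lem:homo-inv}. Hence Remark~\ref{rem:contrac} applies, and the matrix displayed there (with $h_X=F(h_1)$, $h_Y=F(h_2)$, and the $g,r$ given by $F$) is a contracting homotopy $c$ of ${\rm Cone}(u^\wedge)$. This produces the assignment $F\mapsto(u,c)$, and it remains to check it is bijective.

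For injectivity I would observe that $F$ is determined by the images of the two objects and the five arrows, and all of these can be read off from $(u,c)$: the objects $X,Y$ are the source and target of $u$, and, writing $c$ as a $2\times2$ matrix with respect to the decomposition ${\rm Cone}(u^\wedge)=Y^\wedge\oplus\Sigma X^\wedge$, the fully faithfulness of the Yoneda embedding $\mathcal{C}\to{\rm DGMod}\mbox{-}\mathcal{C}$ together with the canonical degree $-1$ isomorphism $\xi\colon X^\wedge\to\Sigma X^\wedge$ lets one recover $F(h_2)$, $F(r)$, $F(g)$, $F(h_1)$ from the four entries of $c$, up to the explicit signs and $\xi$'s of Remark~\ref{rem:contrac}. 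For surjectivity I would reverse this: given any pair $(u,c)$ with $u\colon X\to Y$ closed of degree zero and $c$ a contracting homotopy of ${\rm Cone}(u^\wedge)$, use the same Yoneda identification to \emph{define} morphisms $g,h_1,h_2,r$ of the prescribed degrees ($0,-1,-1,-2$) from the four components of $c$, and then expand the single equation $d(c)={\rm Id}_{{\rm Cone}(u^\wedge)}$ componentwise, using $d(c)=d_{{\rm Cone}(u^\wedge)}\circ c+c\circ d_{{\rm Cone}(u^\wedge)}$, the explicit form of the cone differential, and the fact that the Yoneda embedding is a faithful dg functor. The two diagonal components then yield $d(h_1)=gf-{\rm Id}_X$ and $d(h_2)=fg-{\rm Id}_Y$, while the two off-diagonal components yield $d(g)=0$ and $d(r)=h_2 f-f h_1$; these are exactly the relations needed for a (necessarily unique) dg functor $\mathcal{K}\to\mathcal{C}$ sending $1,2$ to $X,Y$ and $f,g,h_1,h_2,r$ to $u,g,h_1,h_2,r$, and by construction this functor is sent to $(u,c)$.

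The main obstacle is the sign and suspension bookkeeping in the surjectivity step: expanding $d(c)={\rm Id}$ into its four matrix entries and matching them against the relations requires careful tracking of the signs induced by $|c|=-1$, $|\xi|=-1$ and $|r|=-2$. This is, however, exactly the computation behind Remark~\ref{rem:contrac} run in reverse, so it is routine; I would present it as a single matrix identity rather than as four separate verifications.
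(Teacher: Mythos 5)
Your proposal is correct and follows essentially the same route as the paper: the paper's proof simply exhibits the map $F\mapsto (F(f), c)$ with $c$ given by the matrix of Remark~\ref{rem:contrac}, and your argument fills in the (routine) verification that this assignment is well defined and bijective by reading the four matrix entries of a contracting homotopy back through the Yoneda embedding and matching $d(c)={\rm Id}$ against the defining relations of $\mathcal{K}$. No gaps; the sign bookkeeping you flag does work out as claimed.
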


\begin{proof}
The bijection sends a dg functor $F$ to $(F(f), c)$, where $c$ is given by the following matrix.
\[\begin{pmatrix} -{F(h_2)}^\wedge & F(r)^\wedge \circ \xi^{-1}\\
                     \xi\circ F(g)^\wedge & \xi\circ {F(h_1)}^\wedge\circ \xi^{-1}
                     \end{pmatrix}\]
Here, we recall that ${\rm Cone}(u^\wedge)=F(2)^\wedge \oplus \Sigma F(1)^\wedge$ and that $\xi\colon F(1)^\wedge \rightarrow \Sigma F(1)^\wedge$ is the canonical isomorphism of degree $-1$.
\end{proof}

We have a variant of the above bijection.

\begin{lem}\label{lem:fromK}
Let $\mathcal{C}$ be a dg category and $v$ be a closed morphism of degree zero. Write $\mathcal{C}'=\mathcal{C}\vee {\rm Cone}(v)$. Then we have a bijection
$$\{F\in {\rm DGCat}(\mathcal{K}, \mathcal{C})\; |\; F(f)=v\}\simeq \{c\in \mathcal{C}'(\ast, \ast)^{-1}\; |\; d_{\mathcal{C}'}(c)={\rm Id}_\ast\}.$$
\end{lem}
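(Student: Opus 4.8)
The plan is to realise both sides as the fibre over $v$ of one and the same bijection. By the lemma just proved, giving a dg functor $F\colon \mathcal{K}\to\mathcal{C}$ with $F(f)=v$ amounts to giving a contracting homotopy of the right dg $\mathcal{C}$-module ${\rm Cone}(v^\wedge)$, the functor $F$ corresponding to the morphism
\[
c_0=\begin{pmatrix} -F(h_2)^\wedge & F(r)^\wedge\circ \xi^{-1}\\ \xi\circ F(g)^\wedge & \xi\circ F(h_1)^\wedge\circ \xi^{-1}\end{pmatrix}\colon {\rm Cone}(v^\wedge)\longrightarrow {\rm Cone}(v^\wedge).
\]
So it will be enough to produce a bijection between $\{c\in \mathcal{C}'(\ast, \ast)^{-1}\mid d_{\mathcal{C}'}(c)={\rm Id}_\ast\}$ and the set of contracting homotopies of ${\rm Cone}(v^\wedge)$, and then to compose it with the one above.

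For that, I would invoke the last assertion of Example~\ref{exm:add-cone}, in the precise form of Remark~\ref{rem:add-cone}: the representable dg module $\ast^\wedge$ is dg-isomorphic to ${\rm Cone}(v^\wedge)$ through $x\mapsto \binom{q\circ x}{-s(p\circ x)}$. Since the Yoneda embedding of $\mathcal{C}'$ is fully faithful, $\mathcal{C}'(\ast, \ast)\cong {\rm End}(\ast^\wedge)\cong {\rm End}({\rm Cone}(v^\wedge))$ as dg algebras; and because ${\rm Cone}(v^\wedge)=Y^\wedge\oplus \Sigma X^\wedge$ is built from the representables at the objects $X$ and $Y$ of $\mathcal{C}$, along which $\mathcal{C}\hookrightarrow \mathcal{C}'$ is fully faithful (Lemma~\ref{lem:add-cone}), this endomorphism dg algebra is computed the same way over $\mathcal{C}$ and over $\mathcal{C}'$. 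The isomorphism sends ${\rm Id}_\ast$ to the identity and commutes with differentials, hence restricts to a bijection between the two sets of contracting homotopies. Composing with the previous bijection will then give the statement.

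Concretely, unwinding the two identifications, the resulting bijection should be given by
\[
c=-\,i\circ F(h_2)\circ q+i\circ F(r)\circ p+j\circ F(g)\circ q+j\circ F(h_1)\circ p,
\]
with inverse recovering the data through $F(g)=p\circ c\circ i$, $F(h_1)=p\circ c\circ j$, $F(h_2)=-\,q\circ c\circ i$ and $F(r)=q\circ c\circ j$, where one uses $q\circ i={\rm Id}_Y$, $p\circ j={\rm Id}_X$, $j\circ p+i\circ q={\rm Id}_\ast$ and $p\circ i=0=q\circ j$. What then remains is to check that these two assignments are mutually inverse and well defined: that the displayed $c$ satisfies $d_{\mathcal{C}'}(c)={\rm Id}_\ast$ whenever $F$ is a dg functor, and conversely that the four morphisms read off from a $c$ with $d_{\mathcal{C}'}(c)={\rm Id}_\ast$ satisfy the relations $d(F(h_1))=F(g)\circ v-{\rm Id}_X$, $d(F(h_2))=v\circ F(g)-{\rm Id}_Y$ and $d(F(r))=F(h_2)\circ v-v\circ F(h_1)$ defining $\mathcal{K}$. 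Both are Leibniz-rule computations that use $d(i)=d(p)=0$, $d(q)=-v\circ p$, $d(j)=i\circ v$ and the relations of $\mathcal{C}'$ just listed. The one genuine obstacle is the sign bookkeeping caused by the degree $-1$ map $\xi$ and by the off-diagonal term of the cone differential; the correct signs are precisely those forced by the matrix $c_0$ and by Remark~\ref{rem:contrac}.
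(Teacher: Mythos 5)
Your proposal is correct and follows essentially the same route as the paper, whose entire proof is the single sentence ``We identify the added object $\ast$ in $\mathcal{C}'$ with ${\rm Cone}(v^\wedge)$'' — i.e., combine the preceding lemma with the dg-isomorphism $\ast^\wedge\simeq {\rm Cone}(v^\wedge)$ and the full faithfulness of $\mathcal{C}\rightarrow\mathcal{C}'$. Your explicit formulas for $c$ and for $F(g), F(h_1), F(h_2), F(r)$ agree with those appearing in the proof of Proposition~\ref{prop:K-embed}, so you have in fact supplied more detail than the paper does.
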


\begin{proof}
We identify the added object $\ast$ in $\mathcal{C}'$ with ${\rm Cone}(v^\wedge)$.
\end{proof}

Recall the $0$-th sphere $\mathcal{S}(0)$ with a generator $u$. We refer to Example~\ref{exm:add-cone} for the construction of adding a cone.

\begin{prop}\label{prop:K-embed}
There is an isomorphism of dg categories:
$$\mathcal{K}\vee {\rm Cone}(f) \simeq (\mathcal{S}(0)\vee {\rm Cone}(u))/{{\rm Cone}(u)}.$$
Consequently, we might view $\mathcal{K}$ as a full dg subcategory of $(\mathcal{S}(0)\vee {\rm Cone}(u))/{{\rm Cone}(u)}$.
\end{prop}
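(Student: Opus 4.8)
The plan is to identify both dg categories by the functors they corepresent and apply the Yoneda lemma. Fix an auxiliary small dg category $\mathcal{E}$. Both $\mathcal{K}\vee{\rm Cone}(f)$ and $(\mathcal{S}(0)\vee{\rm Cone}(u))/{\rm Cone}(u)$ are assembled from the elementary operations of Examples~\ref{exm:free-add} and \ref{exm:add-cone}: freely adjoining morphisms with prescribed differentials, forming a factor dg category, and (for the second one) freely adjoining a contracting homotopy for an object. Each of these has an evident universal property, so I would first unwind ${\rm DGCat}(\mathcal{K}\vee{\rm Cone}(f),\mathcal{E})$ and ${\rm DGCat}((\mathcal{S}(0)\vee{\rm Cone}(u))/{\rm Cone}(u),\mathcal{E})$ into explicit sets of tuples of objects and morphisms of $\mathcal{E}$.

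For the second category, a dg functor to $\mathcal{E}$ consists of: a dg functor $\mathcal{S}(0)\to\mathcal{E}$, equivalently two objects $X,Y$ and a closed degree-zero morphism $v\colon X\to Y$ (here one uses $\mathcal{S}(0)(2,1)=0$); an object $Z$ together with \emph{cone data} $I\colon Y\to Z$, $J\colon X\to Z$, $P\colon Z\to X$, $Q\colon Z\to Y$ of degrees $0,-1,1,0$ satisfying $d(I)=0=d(P)$, $d(Q)=-vP$, $d(J)=Iv$, $QI={\rm Id}_Y$, $PJ={\rm Id}_X$, $JP+IQ={\rm Id}_Z$ (whence also $PI=0=QJ$, as in Example~\ref{exm:add-cone}); and finally a morphism $c\in\mathcal{E}(Z,Z)^{-1}$ with $d(c)={\rm Id}_Z$. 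For the first category, since $\mathcal{K}$ is the semi-free dg category $\mathbb{K}Q$ on its defining graded quiver, a dg functor $\mathcal{K}\to\mathcal{E}$ is exactly a tuple $(X,Y;v,w,H_1,H_2,R)$ with $v\colon X\to Y$ and $w\colon Y\to X$ closed of degree $0$, $H_1\colon X\to X$, $H_2\colon Y\to Y$ of degree $-1$, $R\colon X\to Y$ of degree $-2$, subject to $d(H_1)=wv-{\rm Id}_X$, $d(H_2)=vw-{\rm Id}_Y$, $d(R)=H_2v-vH_1$; then $\vee{\rm Cone}(f)$ adjoins the same cone data $Z,I,J,P,Q$ for $v$.

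It then remains to produce a bijection between the two sets of tuples, natural in $\mathcal{E}$. The recipe would be: given $c$, put $w=PcI$, $H_1=PcJ$, $H_2=-QcI$, $R=QcJ$; conversely, given $(w,H_1,H_2,R)$, put $c=JH_1P+JwQ+IRP-IH_2Q$. One has to check these are well defined and mutually inverse. That they are mutually inverse follows by expanding ${\rm Id}_Z=JP+IQ$ on both sides of $c$ and using $PJ={\rm Id}_X$, $QI={\rm Id}_Y$, $PI=0=QJ$. That they are well defined amounts to the statement that $d(c)={\rm Id}_Z$ holds if and only if the three defining relations of $\mathcal{K}$ hold for $(v,w,H_1,H_2,R)$; this is a direct Leibniz-rule computation using the cone identities, and is precisely the sign bookkeeping already carried out in Remark~\ref{rem:contrac} (there for $\mathcal{E}={\rm DGMod}\mbox{-}\mathcal{C}$ and $Z={\rm Cone}(f^\wedge)$); it also amounts to an $\mathcal{E}$-internal variant of Lemma~\ref{lem:fromK}. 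I expect this verification — short but sign-sensitive — to be the only real work and hence the main obstacle. With it in hand, Yoneda gives $\mathcal{K}\vee{\rm Cone}(f)\simeq(\mathcal{S}(0)\vee{\rm Cone}(u))/{\rm Cone}(u)$.

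For the final assertion, the canonical functor $\mathcal{K}\to\mathcal{K}\vee{\rm Cone}(f)$ is fully faithful by Lemma~\ref{lem:add-cone} and is visibly injective on objects; composing it with the isomorphism just established exhibits $\mathcal{K}$ as a full dg subcategory of $(\mathcal{S}(0)\vee{\rm Cone}(u))/{\rm Cone}(u)$.
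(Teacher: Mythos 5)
Your proposal is correct and, despite the Yoneda/corepresentability packaging, is essentially the paper's own argument: your assignments $w=PcI$, $H_1=PcJ$, $H_2=-QcI$, $R=QcJ$ and the inverse formula $c=JH_1P+JwQ+IRP-IH_2Q$ are exactly the paper's dg functor $F$ and its value $F^{-1}(c)$, and both treatments defer the same sign-sensitive Leibniz verification. The final full-subcategory claim via Lemma~\ref{lem:add-cone} also matches the paper.
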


\begin{proof}
The dg category $\mathcal{K}\vee {\rm Cone}(f)$ is given by the following quiver
\[\xymatrix{  1  \ar@<-0.7ex>[dr]_-{j}  \ar@(dl,ul)[]|{h_1} \ar@<+1.3ex>@/^.7pc/[rr]|{r} \ar@<+.9ex>[rr]|f && 2\ar@<+.5ex>[ll]|g   \ar@(dr, ur)[]|{h_2}  \ar@<-.7ex>[ld]_-{i} \\
 & \ast \ar@<-.55ex>[lu]_-{p} \ar@<-.7ex>[ru]_-{q}
}\]
with relations $\{q\circ i-{\rm Id}_2, p\circ j-{\rm Id}_1, j\circ p+i\circ q-{\rm Id}_\ast\}$. On the other hand, the dg category $(\mathcal{S}(0)\vee {\rm Cone}(u))/{{\rm Cone}(u)}$ is given by the following quiver
\[\xymatrix{  1  \ar@<-0.7ex>[dr]_-{j}  \ar[rr]^-u && 2   \ar@<-.7ex>[ld]_-{i} \\
 &\ar@(dl,dr)|c  \ast \ar@<-.7ex>[lu]_-{p} \ar@<-.7ex>[ru]_-{q}
}\]
with the same relations. The required dg functor
$$F\colon \mathcal{K}\vee {\rm Cone}(f)\longrightarrow (\mathcal{S}(0)\vee {\rm Cone}(u))/{{\rm Cone}(u)}$$ acts on objects by the identity. On morphisms,  it acts identically on $\{i, p, q, j\}$; moreover,  we have $F(f)=u$, $F(g)=p\circ c \circ i$, $F(h_1)=p\circ c\circ j$, $F(h_2)=-q\circ c\circ i$ and $F(r)=q\circ c\circ j$. We mention that
$$F^{-1}(c)=-i\circ h_2\circ q+i\circ r\circ p+j\circ g\circ q+j\circ h_1\circ p.$$
We leave the details to the reader. The last statement follows from Lemma~\ref{lem:add-cone}.
\end{proof}

\begin{cor}\label{cor:Kquasi}
The inclusion $\mathbb{K}({\rm Id}_1)\rightarrow \mathcal{K}(1, 1)$ is a quasi-isomorphism.
\end{cor}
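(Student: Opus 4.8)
The plan is to reduce the statement to a computation about $\mathcal{S}(0)\vee{\rm Cone}(u)$ via Proposition~\ref{prop:K-embed}. Write $\mathcal{E}_1 = \mathcal{S}(0)\vee{\rm Cone}(u)$, with adjoined object $\ast = {\rm Cone}(u)$, and $\mathcal{E} = \mathcal{E}_1/\{\ast\} = \mathcal{E}_1\langle c; {\rm Id}_\ast\rangle$. Proposition~\ref{prop:K-embed} together with Lemma~\ref{lem:add-cone} (applied to $\mathcal{K}\hookrightarrow\mathcal{K}\vee{\rm Cone}(f)$) exhibits $\mathcal{K}$ as a full dg subcategory of $\mathcal{E}$ with objects $1, 2$, so that $\mathcal{K}(1,1) = \mathcal{E}(1,1)$. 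Applying Lemma~\ref{lem:add-cone} again, this time to $\mathcal{S}(0)\hookrightarrow\mathcal{E}_1$, gives $\mathcal{E}_1(1,1) = \mathcal{S}(0)(1,1) = \mathbb{K}{\rm Id}_1$, and one checks that under these identifications the inclusion $\mathbb{K}({\rm Id}_1)\hookrightarrow\mathcal{K}(1,1)$ is exactly the map $\mathcal{E}_1(1,1)\to\mathcal{E}(1,1)$ induced by the quotient functor $\mathcal{E}_1\to\mathcal{E}$. So it suffices to show that this map is a quasi-isomorphism.

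The key input is that $\mathcal{E}_1(1,\ast)$ is a \emph{contractible} complex. This is Remark~\ref{rem:add-cone}(2) applied to $\mathcal{C} = \mathcal{S}(0)$ and $f = u\colon 1\to 2$: the relevant map $\mathcal{S}(0)(1,u)\colon \mathcal{S}(0)(1,1)\to\mathcal{S}(0)(1,2)$, $\phi\mapsto u\circ\phi$, sends ${\rm Id}_1$ to $u$ and is therefore an isomorphism. Concretely, $\mathcal{E}_1(1,\ast)$ has a single generator $j$ in degree $-1$ with $d(j) = i\circ u$ a generator of degree $0$, so $s\colon i\circ u\mapsto j$ is a contracting homotopy.

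Finally I would analyze $\mathcal{E}(1,1)$ through the tensor description of $\mathcal{E}_1\langle c; {\rm Id}_\ast\rangle$ from Example~\ref{exm:free-add}, using the increasing filtration by the number of occurrences of $c$ in a morphism. The part of the differential coming from $d(c) = {\rm Id}_\ast$ strictly lowers this number, while the rest preserves it; hence on the associated graded the differential is the tensor-product differential in which $c$ is treated as closed. The filtration-degree-$0$ piece is $\mathcal{E}_1(1,1)$, whose cohomology is $\mathbb{K}$ concentrated in degree $0$; for $n\geq 1$, the filtration-degree-$n$ piece is a (relative) tensor product of $\mathcal{E}_1$-Hom complexes whose rightmost tensor factor is $\mathcal{E}_1(1,\ast)$, and since the contracting homotopy of $\mathcal{E}_1(1,\ast)$ acts on that factor alone it extends, by tensoring with identities, to a contracting homotopy of the whole piece, which is thus acyclic. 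The filtration is exhaustive and bounded below, so the associated spectral sequence converges and collapses, giving $H^\bullet(\mathcal{E}(1,1)) \cong H^\bullet(\mathcal{E}_1(1,1)) = \mathbb{K}$ concentrated in degree $0$, via the canonical map. Combined with the reduction above, this proves the corollary.

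The main obstacle is the bookkeeping in the last step: spelling out the bimodule structure of the graded pieces of $\mathcal{E}_1\langle c;{\rm Id}_\ast\rangle$ and checking that the contracting homotopy of $\mathcal{E}_1(1,\ast)$ genuinely propagates through the relative tensor products. If one prefers to avoid this, $H^\bullet(\mathcal{K}(1,1))$ can be computed directly from the defining quiver of $\mathcal{K}$: the degree-$0$ part is spanned by the closed elements $(g\circ f)^n$, and $d(h_1) = g\circ f - {\rm Id}_1$ forces $(g\circ f)^n - {\rm Id}_1$ to be a coboundary, so $H^0 = \mathbb{K}[{\rm Id}_1]$ at once, while $H^{<0} = 0$ is a longer but entirely explicit verification.
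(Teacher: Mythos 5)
Your argument is correct and is essentially the paper's own proof: the same reduction of $\mathcal{K}(1,1)$ to $(\mathcal{S}(0)\vee{\rm Cone}(u))/{\rm Cone}(u)$ via Proposition~\ref{prop:K-embed}, the same use of Remark~\ref{rem:add-cone}(2) to see that $(\mathcal{S}(0)\vee{\rm Cone}(u))(1,\ast)$ is contractible, and the same filtration by the number of occurrences of $c$ with contractible graded pieces. Your extra care about propagating the contracting homotopy through the tensor factors and about convergence of the filtration only fleshes out steps the paper leaves implicit.
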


\begin{proof}
Write $\mathcal{S}=\mathcal{S}(0)$, $\mathcal{S}'=\mathcal{S}\vee {\rm Cone}(u)$  and $\mathcal{S}''=\mathcal{S}'/{{\rm Cone}(u)}$. By the result above, it suffices to show that $\mathbb{K}({\rm Id}_1) \rightarrow \mathcal{S}''(1, 1)$ is a quasi-isomorphism. Since $\mathcal{S}''$ is obtained from $\mathcal{S}'$ by freely adding $c$, we have
$$\mathcal{S}''(1, 1)=\mathcal{S}'(1, 1)\oplus\bigoplus_{n\geq 1} L_n$$
as graded modules, where
$$ L_n= \mathcal{S}'(\ast, 1)\otimes\mathbb{K}c\otimes \mathcal{S}'(\ast, \ast) \otimes \cdots  \otimes \mathbb{K}c\otimes \mathcal{S}'(\ast, \ast)\otimes \mathbb{K}c\otimes \mathcal{S}'(1, \ast)$$
with $c$ appearing $n$ times. Since $d(c)={\rm Id}_\ast$,  as a cochain complex we have the following filtration for $\mathcal{S}'(1, 1)$,
$$\mathcal{S}'(1, 1)\subseteq \mathcal{S}'(1, 1)\oplus L_1\subseteq \mathcal{S}'(1, 1)\oplus L_1\oplus L_2\subseteq \cdots$$
whose factors are $L_n$. Since $\mathcal{S}(1, u)\colon \mathcal{S}(1, 1)\rightarrow \mathcal{S}(1, 2)$ is an isomorphism, $\mathcal{S}'(1, \ast)$ is contractible by Remark~\ref{rem:add-cone}. It follows that each $L_n$ is contractible. Using the filtration, we infer that the inclusion $\mathbb{K}({\rm Id}_1)=\mathcal{S}'(1,1)\rightarrow \mathcal{S}''(1, 1)$ is a quasi-isomorphism.
\end{proof}

\subsection{The Dwyer-Kan model structure}
We will study the Dwyer-Kan model structure on ${\rm DGCat}$, which is analogous to the usual model structure on the category of small simplicial categories.

\begin{defn}
A dg functor $F\colon \mathcal{C}\rightarrow \mathcal{D}$ is  \emph{quasi-fully faithful}, if for any objects $X, Y\in \mathcal{C}$, the induced cochain map
$$\mathcal{C}(X, Y)\longrightarrow \mathcal{D}(F(X), F(Y)), \; f\mapsto F(f)$$
is a quasi-isomorphism. If in addition $H^0(F)\colon H^0(\mathcal{C})\rightarrow H^0(\mathcal{D})$ is dense, then $F$ is called a \emph{quasi-equivalence}.
\end{defn}

We denote by $\mathcal{Q}equ$ the class formed by all quasi-equivalences.

\begin{defn}
A dg functor $F\colon \mathcal{C}\rightarrow \mathcal{D}$ is called a \emph{full isofibration}, if it satisfies the following two conditions:
\begin{enumerate}
\item it is full, that is, for for any objects $X, Y\in \mathcal{C}$, the induced cochain map
$\mathcal{C}(X, Y)\rightarrow \mathcal{D}(F(X), F(Y))$ is surjective;
\item for any homotopy equivalence $f\colon F(X)\rightarrow Y'$ in $\mathcal{D}$ with $X\in {\rm Obj}(\mathcal{C})$, there exists a homotopy equivalence $f'\colon X\rightarrow X'$ in $\mathcal{C}$ such that $F(f')=f$; in particular, $F(X')=Y'$.
\end{enumerate}
\end{defn}

We denote by $\mathcal{F}isof$ the class formed by all full isofibrations, and set $$\mathcal{C}of={^\perp(\mathcal{Q}equ \cap \mathcal{F}isof)}.$$

The following fundamental result is due to \cite{Tab05}, which is somehow expected in \cite[B.6~Lemma and Subsection~3.7]{Dri}.

\begin{thm}\label{thm:Tabuada}
The triple $(\mathcal{C}of, \mathcal{Q}equ, \mathcal{F}isof)$ is a model structure on {\rm DGCat}.
\end{thm}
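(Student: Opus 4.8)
The plan is to follow the pattern of the proof of Theorem~\ref{thm:Jardine}: exhibit two sets of generating (co)fibrations, apply the small object argument, and then check the axioms. Put $\mathcal{I}=\{\emptyset\rightarrow\mathbb{K}\}\cup\{\iota_n\colon\mathcal{S}(n)\rightarrow\mathcal{D}(n+1)\mid n\in\mathbb{Z}\}$ and $\mathcal{J}=\{\mathbb{K}\coprod\mathbb{K}\rightarrow\mathcal{D}(n)\mid n\in\mathbb{Z}\}\cup\{\mathbb{K}\rightarrow\mathcal{K}\}$, where $\mathbb{K}\coprod\mathbb{K}\rightarrow\mathcal{D}(n)$ sends the two objects to $1$ and $2$, and $\mathbb{K}\rightarrow\mathcal{K}$ picks out the object $1$; all domains appearing are sequentially small. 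The axioms (MC1)--(MC3) are routine: for $\mathcal{Q}equ$ they follow from two-out-of-three and closure under composition and retracts for quasi-isomorphisms (Proposition~\ref{prop:surj-quas}) together with elementary bookkeeping for the density of $H^0(-)$, while for $\mathcal{F}isof$ and for $\mathcal{C}of={}^\perp(\mathcal{Q}equ\cap\mathcal{F}isof)$ they follow from Lemmas~\ref{lem:perp-retract} and \ref{lem:perp-retract-comp}. The factorization axiom (MC5) will be the small object argument (Theorem~\ref{thm:Quillen}) applied to $\mathcal{I}$ and to $\mathcal{J}$, once the two orthogonality computations below are available.

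The first computation, the analogue of Lemma~\ref{lem:dgcat-model}, is that $\mathcal{I}^\perp=\mathcal{Q}equ\cap\mathcal{F}isof$. Indeed, the lifting property against $\emptyset\rightarrow\mathbb{K}$ says exactly that $F$ is surjective on objects, and the lifting property against $\iota_n$ says, by Proposition~\ref{prop:surj-quas}(3), that each $\mathcal{C}(X,Y)\rightarrow\mathcal{D}(F(X),F(Y))$ is a surjective quasi-isomorphism; such an $F$ is a quasi-equivalence and a full isofibration, and conversely a full isofibration which is a quasi-equivalence has both properties --- the iso-lifting condition promotes essential surjectivity of $H^0(F)$ to surjectivity on objects, and full faithfulness of $H^0(F)$ shows that a lift of a homotopy equivalence is again a homotopy equivalence. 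Hence $\mathcal{C}of={}^\perp(\mathcal{I}^\perp)$, which is the first half of (MC4), and by Corollary~\ref{cor:retract} the cofibrations are precisely the retracts of morphisms in $\mathcal{I}\mbox{-}{\rm cell}$; in particular the generators in $\mathcal{J}$ lie in $\mathcal{I}\mbox{-}{\rm cell}$, being built up through Remark~\ref{rem:dgcat-bicom}(2) and Example~\ref{exm:free-add-po}, so they are cofibrations.

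The second computation is $\mathcal{J}^\perp=\mathcal{F}isof$. The lifting property against $\mathbb{K}\coprod\mathbb{K}\rightarrow\mathcal{D}(n)$ is precisely fullness (surjectivity of the Hom-complex maps in every degree) via the bijection~(\ref{equ:disc}), and the lifting property against $\mathbb{K}\rightarrow\mathcal{K}$ at once yields the iso-lifting condition, so $\mathcal{J}^\perp\subseteq\mathcal{F}isof$. For the reverse inclusion, let $F$ be a full isofibration and let $G\colon\mathcal{K}\rightarrow\mathcal{D}$ satisfy $G(1)=F(X)$; using the iso-lifting condition choose a homotopy equivalence $f'\colon X\rightarrow X'$ in $\mathcal{C}$ with $F(f')=G(f)$. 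By Lemma~\ref{lem:fromK}, extending $f'$ to a dg functor $\mathcal{K}\rightarrow\mathcal{C}$ lying over $G$ amounts to lifting a contracting homotopy of the formal cone object of $\mathcal{D}\vee{\rm Cone}(G(f))$ to one of the formal cone object of $\mathcal{C}\vee{\rm Cone}(f')$; since $f'$ and $G(f)$ are homotopy equivalences, Remark~\ref{rem:add-cone} makes both of these endomorphism complexes acyclic, fullness of $F$ makes the induced map between them surjective, and Proposition~\ref{prop:surj-quas}(3) then produces the required contracting homotopy. Thus $F\in\mathcal{J}^\perp$.

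It remains to prove $\mathcal{J}\mbox{-}{\rm cell}\subseteq\mathcal{C}of\cap\mathcal{Q}equ$. Granting this, the second half of (MC4) follows by the retract argument used in the proof of Theorem~\ref{thm:Jardine}: factor $f\in\mathcal{C}of\cap\mathcal{Q}equ$ as $f=q\circ j$ with $j\in\mathcal{J}\mbox{-}{\rm cell}$ and $q\in\mathcal{J}^\perp=\mathcal{F}isof$, observe that $q\in\mathcal{Q}equ$ by two-out-of-three so that $q\in\mathcal{I}^\perp$, lift $f$ against $q$ to exhibit $f$ as a retract of $j$, and conclude $f\in{}^\perp\mathcal{F}isof$ from Proposition~\ref{prop:cell} and Lemma~\ref{lem:perp-retract}. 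The cofibration half of $\mathcal{J}\mbox{-}{\rm cell}\subseteq\mathcal{C}of\cap\mathcal{Q}equ$ is Proposition~\ref{prop:cell}; the quasi-equivalence half is the step I expect to be the main obstacle. Here each generator is itself a quasi-equivalence --- $\mathbb{K}\coprod\mathbb{K}\rightarrow\mathcal{D}(n)$ because $\mathcal{D}(n)(1,2)$ is contractible, and $\mathbb{K}\rightarrow\mathcal{K}$ by Corollary~\ref{cor:Kquasi} --- and one must show that this property survives pushouts, coproducts and transfinite compositions. Quasi-essential surjectivity of the resulting functors is immediate; the delicate case is quasi-full-faithfulness of a pushout of $\mathbb{K}\rightarrow\mathcal{K}$ along an object of $\mathcal{C}$, where the enlarged Hom-complexes are obtained by freely adjoining the data of $\mathcal{K}$ and one filters them --- using Proposition~\ref{prop:K-embed} to realize the new object as a contractible cone, exactly as in the proof of Corollary~\ref{cor:Kquasi} --- to see that the inclusion is a quasi-isomorphism; transfinite composition is then handled by the fact that cohomology commutes with the filtered colimits of Remark~\ref{rem:dgcat-bicom}(1).
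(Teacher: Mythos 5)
Your proposal is correct and follows essentially the same route as the paper: the same generating sets $\mathcal{I}$ and $\mathcal{J}$, the identifications $\mathcal{I}^\perp=\mathcal{Q}equ\cap\mathcal{F}isof$ and $\mathcal{J}^\perp=\mathcal{F}isof$ (the latter via Lemma~\ref{lem:fromK}, Remark~\ref{rem:add-cone} and Proposition~\ref{prop:surj-quas}), the inclusion $\mathcal{J}\mbox{-}{\rm cell}\subseteq\mathcal{C}of\cap\mathcal{Q}equ$ proved by the cone-embedding and filtration argument of Proposition~\ref{prop:K-embed} and Corollary~\ref{cor:Kquasi}, and the retract argument for the remaining half of (MC4). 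You correctly single out the quasi-equivalence half of $\mathcal{J}\mbox{-}{\rm cell}\subseteq\mathcal{Q}equ$ as the genuinely delicate step, which is precisely the content of the paper's Proposition~\ref{prop:J-cell}.
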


The above model structure is called the \emph{Dwyer-Kan model structure} on {\rm DGCat}; it is a nontrivial mixture of the natural model structure on {\rm Cat} and the model structure on {\rm DGAlg}. We mention other model structures on ${\rm DGCat}$, namely, the \emph{pretriangulated model structure} and the \emph{Morita model structure}; see \cite[Chapter~I]{Tab15}.

We will make some preparation for the proof. The following observation is easy.

\begin{lem} \label{lem:Fisof-Qequ}
Let $F\colon \mathcal{C}\rightarrow \mathcal{D}$ be a dg functor.
Then $F$ belongs to $\mathcal{Q}equ \cap \mathcal{F}isof$ if and only if it is surjective on objects and for any objects $X, Y\in \mathcal{C}$, the induced cochain map $\mathcal{C}(X, Y)\rightarrow \mathcal{D}(F(X), F(Y))$ is a surjective quasi-isomorphism. $\square$
\end{lem}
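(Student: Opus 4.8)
The plan is to prove the two implications separately, relying only on the definitions of $\mathcal{Q}equ$ and $\mathcal{F}isof$, on Proposition~\ref{prop:surj-quas}, and on one elementary observation: a quasi-fully faithful dg functor $F$ induces a \emph{fully faithful} functor $H^0(F)\colon H^0(\mathcal{C})\to H^0(\mathcal{D})$, since $H^0$ sends a quasi-isomorphism of complexes to an isomorphism. Because a fully faithful functor reflects isomorphisms, it follows that whenever $F$ is quasi-fully faithful and $F(\phi)$ is a homotopy equivalence, $\phi$ is itself a homotopy equivalence. These facts will do all the work.

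For the ``only if'' part I would assume $F\in\mathcal{Q}equ\cap\mathcal{F}isof$. Being a quasi-equivalence, $F$ is quasi-fully faithful, so each $\mathcal{C}(X,Y)\to\mathcal{D}(F(X),F(Y))$ is a quasi-isomorphism; being a full isofibration, this same map is surjective, hence a surjective quasi-isomorphism. It then remains to check that $F$ is surjective on objects. Given $D\in{\rm Obj}(\mathcal{D})$, the density of $H^0(F)$ yields an object $X\in\mathcal{C}$ together with a homotopy equivalence $f\colon F(X)\to D$ in $\mathcal{D}$; applying condition~(2) in the definition of full isofibration produces a homotopy equivalence $f'\colon X\to X'$ in $\mathcal{C}$ with $F(f')=f$, so that $F(X')=D$.

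For the ``if'' part I would assume $F$ is surjective on objects and that each $\mathcal{C}(X,Y)\to\mathcal{D}(F(X),F(Y))$ is a surjective quasi-isomorphism. Then $F$ is trivially quasi-fully faithful and full, and $H^0(F)$ is dense since every object of $\mathcal{D}$ is literally of the form $F(X)$; hence $F\in\mathcal{Q}equ$. Only condition~(2) for $\mathcal{F}isof$ remains. Let $f\colon F(X)\to Y'$ be a homotopy equivalence in $\mathcal{D}$, so $f$ is a closed morphism of degree zero. Using surjectivity on objects, pick $X'\in\mathcal{C}$ with $F(X')=Y'$. By Proposition~\ref{prop:surj-quas} the surjective quasi-isomorphism $\mathcal{C}(X,X')\to\mathcal{D}(F(X),F(X'))$ is surjective on degree-zero cocycles, so there is a closed degree-zero morphism $f'\colon X\to X'$ with $F(f')=f$. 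Since $F(f')=f$ is a homotopy equivalence and $F$ is quasi-fully faithful, the observation above forces $f'$ to be a homotopy equivalence, giving the required lift.

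The argument is mostly bookkeeping. I expect the only two points needing a moment's care to be the extraction of surjectivity on objects from density together with the isofibration lifting (in the ``only if'' direction), and the claim that the lifted morphism $f'$ is genuinely a homotopy equivalence (in the ``if'' direction), which hinges on a fully faithful functor reflecting isomorphisms; neither is a serious obstacle.
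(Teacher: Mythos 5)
Your proof is correct and is precisely the routine verification the paper omits (the lemma is stated with its proof left to the reader). Both directions are handled properly: extracting surjectivity on objects from density of $H^0(F)$ plus the isofibration lifting condition, and, in the converse, lifting a homotopy equivalence through a surjective quasi-isomorphism on Hom complexes via Proposition~\ref{prop:surj-quas} and then using that the fully faithful functor $H^0(F)$ reflects isomorphisms.
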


Recall that $\mathcal{I}=\{\emptyset\rightarrow \mathbb{K}, \; \iota_n\colon \mathcal{S}(n)\rightarrow \mathcal{D}(n+1)\; |\;  n\in \mathbb{Z}\}$.

\begin{lem}\label{lem:Fisof-Qequ-ortho}
We have $\mathcal{Q}equ \cap \mathcal{F}isof=\mathcal{I}^\perp$.
\end{lem}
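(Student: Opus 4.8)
The plan is to reduce the asserted equality to the recognition criterion of Lemma~\ref{lem:Fisof-Qequ} together with the characterization of surjective quasi-isomorphisms in Proposition~\ref{prop:surj-quas}. Concretely, I will analyze separately the two kinds of generators in $\mathcal{I}$: the morphism $\emptyset\to\mathbb{K}$ and the family $\iota_n\colon\mathcal{S}(n)\to\mathcal{D}(n+1)$, $n\in\mathbb{Z}$, and show that lifting against the former encodes surjectivity on objects while lifting against the latter encodes that all Hom cochain maps are surjective quasi-isomorphisms.

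First, consider $\emptyset\to\mathbb{K}$. Since $\emptyset$ is initial in ${\rm DGCat}$ and a dg functor out of $\mathbb{K}$ is the same datum as the choice of an object, every commutative square with left edge $\emptyset\to\mathbb{K}$ and right edge $F$ is simply a choice of an object $D\in\mathcal{D}$ (the top edge and the commutativity are automatic), and a lifting is exactly an object $C\in\mathcal{C}$ with $F(C)=D$. Hence $(\emptyset\to\mathbb{K})\perp F$ holds if and only if $F$ is surjective on objects.

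Next, fix $n\in\mathbb{Z}$ and unwind $\iota_n\perp F$. Using the bijection (\ref{equ:disc}) together with the analogous one for $\mathcal{S}(n)$, a commutative square
\[\xymatrix@C=3pc{\mathcal{S}(n)\ar[r]^-{\theta}\ar[d]_-{\iota_n} & \mathcal{C}\ar[d]^-{F}\\ \mathcal{D}(n+1)\ar[r]^-{\eta} & \mathcal{D}}\]
amounts to a pair of objects $X=\theta(1)$, $Y=\theta(2)$ of $\mathcal{C}$, a cocycle $a=\theta(u)\in Z^{-n}\mathcal{C}(X,Y)$, and an element $b=\eta(t)\in\mathcal{D}^{-n-1}(F(X),F(Y))$ with $d(b)=F(a)$; and a lifting $h\colon\mathcal{D}(n+1)\to\mathcal{C}$ is precisely an element $a'=h(t)\in\mathcal{C}^{-n-1}(X,Y)$ satisfying $d(a')=a$ and $F(a')=b$. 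Letting $X,Y,a,b$ vary and letting $n$ range over all of $\mathbb{Z}$, this is exactly the statement that for all objects $X,Y$ the cochain map $\mathcal{C}(X,Y)\to\mathcal{D}(F(X),F(Y))$ satisfies condition (3) of Proposition~\ref{prop:surj-quas}, which by that proposition is equivalent to each such cochain map being a surjective quasi-isomorphism.

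Combining the two analyses, $F\in\mathcal{I}^\perp$ if and only if $F$ is surjective on objects and every Hom cochain map $\mathcal{C}(X,Y)\to\mathcal{D}(F(X),F(Y))$ is a surjective quasi-isomorphism; by Lemma~\ref{lem:Fisof-Qequ} this is precisely the condition that $F\in\mathcal{Q}equ\cap\mathcal{F}isof$, completing the proof. The only point requiring care is the dictionary between a lifting against $\iota_n$ and the precise form of condition (3) in Proposition~\ref{prop:surj-quas}, keeping track of the degree shift built into $\iota_n$ (the generator $u$ has degree $-n$ while $t$ has degree $-n-1$, and $d(t)=dt$ corresponds to $d(a')=a$); once this bookkeeping is set up, the argument is purely formal.
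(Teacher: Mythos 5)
Your proposal is correct and follows essentially the same route as the paper: unwind lifting against $\emptyset\rightarrow\mathbb{K}$ as surjectivity on objects, unwind lifting against the $\iota_n$ as condition (3) of Proposition~\ref{prop:surj-quas} for each Hom cochain map, and conclude via Lemma~\ref{lem:Fisof-Qequ}. The degree bookkeeping you carry out (the index $n$ in Proposition~\ref{prop:surj-quas} corresponding to $-n-1$ here, which still ranges over all of $\mathbb{Z}$) is exactly the point the paper leaves implicit.
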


\begin{proof}
We observe that a dg functor $F$ belongs to $\{\emptyset \rightarrow \mathbb{K}\}^\perp$ if and only if it is surjective on objects; $F$ belongs to $\{\iota_n\colon \mathcal{S}(n)\rightarrow \mathcal{D}(n+1)\; |\;  n\in \mathbb{Z}\}^\perp$ if and only if for any objects $X, Y\in \mathcal{C}$, the induced cochain map $\mathcal{C}(X, Y)\rightarrow \mathcal{D}(F(X), F(Y))$ is a surjective quasi-isomorphism. Here, we apply Proposition~\ref{prop:surj-quas} to $\mathcal{C}(X, Y)\rightarrow \mathcal{D}(F(X), F(Y))$.  Combining the observation with Lemma~\ref{lem:Fisof-Qequ}, we deduce the required equality.
\end{proof}

We will consider the unique dg functor $\sigma \colon \mathbb{K}\rightarrow \mathcal{K}$ sending $\ast$ to $1$. For each $n\in \mathbb{Z}$, we have an inclusion $\rho_n\colon \mathbb{K}\coprod \mathbb{K}\rightarrow \mathcal{D}(n)$ which sends $\ast_i$ to $i$. Here, we identify $\mathbb{K}\coprod \mathbb{K}$ with the discrete dg category consisting of two objects $\ast_1$ and $\ast_2$. Set $\mathcal{J}=\{\rho_n, \sigma\; |\; n\in \mathbb{Z}\}$.

The following two propositions are nontrivial; see \cite[Section~1.2]{Tab15}.

\begin{prop}\label{prop:J-perp}
We have $\mathcal{F}isof=\mathcal{J}^\perp$.
\end{prop}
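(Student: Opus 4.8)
The plan is to establish the two inclusions $\mathcal{F}isof\subseteq\mathcal{J}^\perp$ and $\mathcal{J}^\perp\subseteq\mathcal{F}isof$ by translating the orthogonality conditions $\rho_n\perp F$ and $\sigma\perp F$ into concrete statements about $F$. First I would treat the functors $\rho_n$: using the bijection~(\ref{equ:disc}) and the fact that $\mathcal{D}(n)$ is freely generated over $\mathbb{K}\coprod\mathbb{K}$ by a single morphism $t$ of degree $-n$ with $d(t)=dt$, a commutative square with $\rho_n$ on the left is the same as a pair of objects $X,Y$ of $\mathcal{C}$ together with a morphism $g\in\mathcal{D}^{-n}(F(X),F(Y))$, and a lifting is precisely a morphism $h\in\mathcal{C}^{-n}(X,Y)$ with $F(h)=g$ (the value $d(h)$ then being forced, since $F$ is a dg functor). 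Hence $\{\rho_n\mid n\in\mathbb{Z}\}\perp F$ is equivalent to $F$ being full in the sense of the definition of a full isofibration. This settles the fullness clause in both directions simultaneously.

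The content lies in the functor $\sigma\colon\mathbb{K}\to\mathcal{K}$. Recall that a dg functor $\mathcal{K}\to\mathcal{C}$ amounts to a pair $(u,c)$, where $u$ is a closed morphism of degree zero in $\mathcal{C}$ and $c$ is a contracting homotopy of ${\rm Cone}(u^\wedge)$ in ${\rm DGMod}\mbox{-}\mathcal{C}$; moreover, $u$ being the image of $f$, its source is the image of the object $1$, and ${\rm Cone}(u^\wedge)$ being contractible forces $u$ to be a homotopy equivalence, with a homotopy inverse and homotopies read off from $g,h_1,h_2$. Thus a commutative square with $\sigma$ on the left is the datum of an object $X$ of $\mathcal{C}$ together with a homotopy equivalence $u\colon F(X)\to Y'$ in $\mathcal{D}$ and a contracting homotopy $c$ of ${\rm Cone}(u^\wedge)$; a lifting is a dg functor $H\colon\mathcal{K}\to\mathcal{C}$ with $H(1)=X$ and $F\circ H$ equal to the given functor. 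For $\mathcal{J}^\perp\subseteq\mathcal{F}isof$ this is quick: given a homotopy equivalence $u\colon F(X)\to Y'$ in $\mathcal{D}$, choose any contracting homotopy of ${\rm Cone}(u^\wedge)$ (one exists by Remark~\ref{rem:contrac}) to form the square, and a lifting $H$ yields $u':=H(f)\colon X\to X'$ with $F(u')=u$; because $H$ transports the homotopy-inverse data inside $\mathcal{K}$, the morphism $u'$ is automatically a homotopy equivalence. So $F$ satisfies the second clause of a full isofibration, and together with fullness we conclude $F\in\mathcal{F}isof$.

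For the reverse inclusion $\mathcal{F}isof\subseteq\mathcal{J}^\perp$, the first clause of a full isofibration lifts $u$ to a homotopy equivalence $u'\colon X\to X'$ in $\mathcal{C}$ with $F(u')=u$; the remaining task is to lift the contracting homotopy $c$ along $F$. Here I would use that $F$ full makes the cochain map $F_*\colon{\rm End}_{\mathcal{C}}\big({\rm Cone}(u'^\wedge)\big)\to{\rm End}_{\mathcal{D}}\big({\rm Cone}(u^\wedge)\big)$, induced entrywise on the $2\times 2$ matrices of Hom-complexes, surjective and identity-preserving, and that both its source and target are acyclic, since $u'$ and $u$ are homotopy equivalences and hence their cone modules are contractible (Remark~\ref{rem:contrac}). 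Consequently $\ker F_*$ is acyclic by the long exact cohomology sequence. Lifting $c$ to any element $c'_0$ of degree $-1$, the difference $d(c'_0)-{\rm Id}$ is a degree-zero cocycle lying in $\ker F_*$, hence equals $d(e)$ for some $e\in(\ker F_*)^{-1}$, and $c':=c'_0-e$ is then a contracting homotopy of ${\rm Cone}(u'^\wedge)$ with $F_*(c')=c$. By the naturality of the correspondence ${\rm DGCat}(\mathcal{K},-)\simeq\{(u,c)\}$ with respect to dg functors (which follows from the explicit matrix formula for $c$), the pair $(u',c')$ corresponds to a dg functor $H\colon\mathcal{K}\to\mathcal{C}$ that is the required lifting. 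The main obstacle is exactly this last transition — verifying that the $(u,c)$-description is functorial enough that lifting $c$ through the surjection $F_*$ assembles into a lift of the whole square; once that is in place, the acyclicity argument and the rest are routine diagram chases.
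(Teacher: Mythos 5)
Your proposal is correct and follows essentially the same route as the paper: fullness from the $\rho_n$'s via the disc bijection, and for $\sigma$ the identification of dg functors $\mathcal{K}\to\mathcal{C}$ with pairs $(u,c)$, lifting first the homotopy equivalence through the isofibration clause and then the contracting homotopy through a surjective quasi-isomorphism between acyclic complexes. The only (inessential) difference is that you work with the endomorphism complex of ${\rm Cone}(u'^\wedge)$ in ${\rm DGMod}\mbox{-}\mathcal{C}$ and re-derive the lifting from acyclicity of $\ker F_*$, whereas the paper packages the same data as $\mathcal{C}'(\ast,\ast)$ for $\mathcal{C}'=\mathcal{C}\vee{\rm Cone}(v')$ and invokes Proposition~\ref{prop:surj-quas}(3) directly.
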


\begin{proof}
In view of (\ref{equ:disc}), we infer that a dg functor $F$ is full if and only if it belongs to $\{\rho_n\; |\; n\in \mathbb{Z}\}^\perp$.

We assume that $F\colon \mathcal{C}\rightarrow \mathcal{D}$ lies in $\mathcal{J}^\perp$. Then it is full. For any object $X$ in $\mathcal{C}$ and any homotopy equivalence $v\colon F(X)\rightarrow Y$ in $\mathcal{D}$, applying Lemma~\ref{lem:homo-inv} we have a dg functor $G\colon \mathcal{K}\rightarrow \mathcal{D}$ such that $G(f)=v$. Any lifting of the following diagram
\[
\xymatrix{
\mathbb{K}\ar[d]_-\sigma \ar[rr]^-{\ast\mapsto X} && \mathcal{C}\ar[d]^-F \\
\mathcal{K}\ar[rr]^-G && \mathcal{D}
}\]
yields a  homotopy equivalence $v'\colon X\rightarrow X'$ in $\mathcal{C}$ such that $F(v')=v$. We infer that  $F$ is a full isofibration.

Conversely, assume that $F\colon \mathcal{C}\rightarrow \mathcal{D}$ be a full isofibration and that we are given a commutative diagram.
\[
\xymatrix{
\mathbb{K}\ar[d]_-\sigma \ar[rr]^-{\ast\mapsto X} && \mathcal{C}\ar[d]^-F \\
\mathcal{K}\ar[rr]^-G && \mathcal{D}
}\]
The homotopy equivalence $G(f)\colon F(X)\rightarrow Y$ in $\mathcal{D}$ lifts to a homotopy equivalence $v'\colon X\rightarrow X'$ in $\mathcal{C}$. Then $F$ extends naturally to a dg functor
$$F'\colon \mathcal{C}'=\mathcal{C}\vee {\rm Cone}(v')\longrightarrow \mathcal{D}'=\mathcal{D}\vee {\rm Cone}(G(f)).$$
Since $F$ is full, so is $F'$. Therefore, $F'$ induces a surjective quasi-isomorphism
$$F'_{\ast, \ast}\colon \mathcal{C}'(\ast, \ast)\longrightarrow \mathcal{D}'(\ast, \ast),$$
as both $\mathcal{C}'(\ast, \ast)$ and $\mathcal{D}'(\ast, \ast)$ are acyclic; see Remark~\ref{rem:add-cone}. By the bijection in Lemma~\ref{lem:fromK}, the dg functor $G$ corresponds to $c\in \mathcal{D}'(\ast, \ast)^{-1}$ with $d(c)={\rm Id}_\ast$. Applying Proposition~\ref{prop:surj-quas} to $F'_{\ast, \ast}$, we infer that there exists $c'\in \mathcal{C}'(\ast, \ast)^{-1}$ with $d(c')={\rm Id}_\ast$ and $F'(c')=c$. Applying Lemma~\ref{lem:fromK}, we obtain a dg functor $H\colon \mathcal{K}\rightarrow \mathcal{C}$ such that $H(f)=v'$ and that $H$ corresponds to $c'$. It follows from $F'(c')=c$ that $FH=G$. In other words, $H$ is a required lifting of the diagram above.
\end{proof}

\begin{prop}\label{prop:J-cell}
We have $\mathcal{J}\mbox{-{\rm cell}}\subseteq \mathcal{I}\mbox{-{\rm cell}}\cap \mathcal{Q}equ$.
\end{prop}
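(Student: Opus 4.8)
The plan is to prove the two inclusions $\mathcal{J}\mbox{-{\rm cell}}\subseteq \mathcal{I}\mbox{-{\rm cell}}$ and $\mathcal{J}\mbox{-{\rm cell}}\subseteq \mathcal{Q}equ$ separately. \textbf{For the first}, note that $\mathcal{I}\mbox{-{\rm cell}}$ is, straight from its definition, closed under coproducts, pushouts and transfinite compositions, so it suffices to show $\mathcal{J}\subseteq \mathcal{I}\mbox{-{\rm cell}}$. I would factor $\rho_n\colon \mathbb{K}\coprod \mathbb{K}\rightarrow \mathcal{D}(n)$ as $\mathbb{K}\coprod \mathbb{K}\rightarrow \mathcal{S}(n-1)\xrightarrow{\iota_{n-1}}\mathcal{D}(n)$, where the first arrow is the pushout of $\iota_{n-2}\in \mathcal{I}$ given by the pushout square relating $\iota_{n-2}$, $\pi_{n-1}$ and $\mathcal{S}(n-1)$, and the second is the element $\iota_{n-1}\in \mathcal{I}$ itself. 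I would build $\mathcal{K}$ out of the one-object category $\mathbb{K}$ following the semi-free filtration of $\mathcal{K}$: adjoin the second object (pushout of $\emptyset\rightarrow \mathbb{K}$), then the closed degree-zero morphisms $f$ and $g$ (pushouts of $\iota_{-1}$), then $h_1,h_2$ with $d(h_1)=gf-{\rm Id}_1$ and $d(h_2)=fg-{\rm Id}_2$ (pushouts of $\iota_0$, via Example~\ref{exm:free-add-po}), and finally $r$ with $d(r)=h_2f-fh_1$ (pushout of $\iota_1$); hence $\sigma\in \mathcal{I}\mbox{-{\rm cell}}$.

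\textbf{For the second inclusion}, I would first reduce to the generators. A pushout of a coproduct $\coprod_x f_x$ is a transfinite composition of pushouts of the individual $f_x$; and along any tower occurring inside $\mathcal{J}\mbox{-{\rm cell}}$ objects are only added, never identified, so that for a fixed pair of objects the Hom-complexes form a filtered colimit, while cohomology and essential surjectivity both commute with filtered colimits. Hence a transfinite composition of stagewise quasi-equivalences of this shape is again a quasi-equivalence, and it suffices that a single pushout of $\rho_n$, and of $\sigma$, be a quasi-equivalence. The $\rho_n$ case is routine: such a pushout adds no objects, and between objects $P,Q$ of the base it is the inclusion of $\mathcal{C}(P,Q)$ into a direct sum of complexes, indexed by the number of inserted generators, whose remaining summands each carry the contractible complex $\mathbb{K}t\oplus \mathbb{K}dt$ as an outer tensor-over-$\mathbb{K}$ factor and are therefore contractible.

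\textbf{The pushout of $\sigma$ is the main obstacle.} Let $\mathcal{D}$ be the pushout of $\sigma$ at an object $X$ of $\mathcal{C}$. Density of $H^0(\mathcal{C})\rightarrow H^0(\mathcal{D})$ is immediate: the single new object is isomorphic to $X$ in $H^0(\mathcal{D})$ because the relations of $\mathcal{K}$ make the image of $f$ a homotopy equivalence. For quasi-full-faithfulness I would combine Proposition~\ref{prop:K-embed} with Lemma~\ref{lem:add-cone}: adjoining to $\mathcal{D}$ the cone of the image of $f$ is fully faithful and, since adjoining a cone commutes with the pushout along $\mathbb{K}\rightarrow \mathcal{C}$, identifies the result with the pushout $\mathcal{E}$ of $\mathcal{S}''\leftarrow \mathbb{K}\rightarrow \mathcal{C}$, where $\mathcal{S}'=\mathcal{S}(0)\vee {\rm Cone}(u)$ and $\mathcal{S}''=\mathcal{S}'/{\rm Cone}(u)$. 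It thus remains to see that $\mathcal{C}(P,Q)\rightarrow \mathcal{E}(P,Q)$ is a quasi-isomorphism for $P,Q\in {\rm Obj}(\mathcal{C})$. Now $\mathcal{E}$ arises from the pushout $\mathcal{E}_1$ of $\mathcal{S}'\leftarrow \mathbb{K}\rightarrow \mathcal{C}$ by freely adjoining a contracting homotopy $c$ for the new cone object $\star$. By Lemma~\ref{lem:add-cone}, and since freely adjoining a morphism into a new object leaves Hom-complexes between old objects untouched, $\mathcal{E}_1(P,Q)=\mathcal{C}(P,Q)$, whereas $\mathcal{E}_1(P,\star)$ is, by Remark~\ref{rem:add-cone}, the mapping cone of an isomorphism, hence contractible. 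Filtering $\mathcal{E}(P,Q)$ by the number of occurrences of $c$ — exactly the device of the proof of Corollary~\ref{cor:Kquasi} — one finds $\mathcal{C}(P,Q)$ in filtration degree zero, and in every positive filtration degree a tensor monomial carrying the contractible $\mathcal{E}_1(P,\star)$ as an outer tensor-over-$\mathbb{K}$ factor, hence contractible; therefore $\mathcal{C}(P,Q)\rightarrow \mathcal{E}(P,Q)$ is a quasi-isomorphism. This settles the $\sigma$-case, and together with the first part gives $\mathcal{J}\mbox{-{\rm cell}}\subseteq \mathcal{I}\mbox{-{\rm cell}}\cap \mathcal{Q}equ$. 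The delicate point is this last bookkeeping: arranging $\mathcal{E}_1(P,\star)$ on the outside of every $\mathbb{K}$-tensor monomial is what lets contractibility propagate through all the tensor products with no flatness hypothesis on $\mathcal{C}$.
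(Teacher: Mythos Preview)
Your proposal is correct and follows essentially the same route as the paper: reduce $\mathcal{J}\subseteq\mathcal{I}\mbox{-cell}$ to the semi-free construction (Example~\ref{exm:free-add-po}), handle the $\rho_n$-pushout via the contractible bimodule $\mathcal{M}$, and treat the $\sigma$-pushout by passing to the larger category $\mathcal{E}$ (the paper's $\mathcal{C}_3$) via the cone/contracting-homotopy picture of Proposition~\ref{prop:K-embed} and then running the $c$-filtration argument of Corollary~\ref{cor:Kquasi}. The only cosmetic difference is that you invoke Proposition~\ref{prop:K-embed} together with a ``cones commute with pushout'' step to identify $\mathcal{D}\vee{\rm Cone}(f)$ with $\mathcal{E}$, whereas the paper writes $\mathcal{C}_3$ directly by generators and relations and says ``similar to Proposition~\ref{prop:K-embed}''; your emphasis on $\mathcal{E}_1(P,\star)$ sitting as an outer $\mathbb{K}$-tensor factor is exactly the point the paper uses implicitly.
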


\begin{proof}
In view of Example~\ref{exm:free-add-po}, we have $\mathcal{J}\subseteq \mathcal{I}\mbox{-cell}$. Since $\mathcal{I}\mbox{-cell}$ is closed under infinite coproducts, pushouts and transfinite compositions.  It follows that $\mathcal{J}\mbox{-cell}\subseteq \mathcal{I}\mbox{-cell}$.

Let $\mathcal{C}$ be a dg category. Consider the following pushout.
\[\xymatrix{
\mathbb{K}\coprod \mathbb{K} \ar[d]_-{\rho_n}\ar[rr]^-{\ast_1\mapsto X,\;  \ast_2\mapsto Y} && \mathcal{C}\ar@{.>}[d]\\
\mathcal{D}(n) \ar@{.>}[rr] && \mathcal{C}_1
}\]
Here, $\mathcal{C}_1=T_\mathcal{C}(\mathcal{M})$ with a dg $\mathcal{C}$-$\mathcal{C}$-bimodule
$$\mathcal{M}=\mathcal{C}(Y, -)\otimes(\mathbb{K}t\oplus \mathbb{K}dt)\otimes\mathcal{C}(-, X),$$
where $\mathbb{K}t\oplus \mathbb{K}dt$ is a contractible cochain complex with $|t|=-n$ and $|dt|=1-n$. It follows that $\mathcal{M}$ is contractible. Then $\mathcal{C}\rightarrow \mathcal{C}_1$ is a quasi-equivalence.

We consider another pushout.
\[\xymatrix{
\mathbb{K} \ar[d]_-{\sigma}\ar[rr]^-{\ast\mapsto X} && \mathcal{C}\ar@{.>}[d]\\
\mathcal{K} \ar@{.>}[rr] && \mathcal{C}_2
}\]
Then $\mathcal{C}_2$ is obtained from $\mathcal{C}$ by adding a new object $2$ and freely adding five morphisms, which are indicated below.
\[\xymatrix{ \begin{xy}*+{X}*\frm{--}\end{xy} \ar@(dl,ul)[]|{h_X} \ar@<+1.3ex>@/^.7pc/[rr]|{r} \ar@<+.6ex>[rr]|f && 2\ar@<+.9ex>[ll]|g \ar@(dr, ur)[]|{h_2} }\]
Here, we use $\begin{xy}*+{X}*\frm{--}\end{xy}$ to represent the given dg category $\mathcal{C}$ with the chosen object $X$. Similar to Proposition~\ref{prop:K-embed}, we infer that $\mathcal{C}_2$ is identified with a full dg subcategory of the dg category $\mathcal{C}_3$, which is  determined by the following diagram
\[\xymatrix{  \begin{xy}*+{X}*\frm{--}\end{xy}  \ar@<-0.7ex>[dr]_-{j}  \ar[rr]^-f && 2   \ar@<-.7ex>[ld]_-{i} \\
 &\ar@(dl,dr)|c  \ast \ar@<-.7ex>[lu]_-{p} \ar@<-.7ex>[ru]_-{q}
}\]
with the relations $\{q\circ i-{\rm Id}_2, p\circ j-{\rm Id}_X, j\circ p+i\circ q-{\rm Id}_\ast\}$. The same argument in the proof of Corollary~\ref{cor:Kquasi} yields that $\mathcal{C}\rightarrow \mathcal{C}_3$ is a quasi-fully faithful. It follows that $\mathcal{C}\rightarrow \mathcal{C}_2$ is a quasi-equivalence.

We observe that $\mathcal{Q}equ$ is closed under transfinite compositions, which are possibly indexed by large ordinals. From the above two quasi-equivalences arising in pushouts, it is not hard to infer  that $\mathcal{J}\mbox{-cell}\subseteq \mathcal{Q}equ$.
\end{proof}

\vskip 5pt

\noindent \emph{Proof of Theorem~\ref{thm:Tabuada}}. \; The verification of (MC1)-(MC3) is easy.

By Lemma~\ref{lem:Fisof-Qequ-ortho}, we have $\mathcal{C}of={^\perp(\mathcal{I}^\perp)}\supseteq \mathcal{I}\mbox{-cell}$. For (MC5), we apply Theorem~\ref{thm:Quillen} to $\mathcal{I}$ and any dg functor $F\colon \mathcal{C}\rightarrow \mathcal{D}$. We obtain a factorization $F=pi$ with $p\in \mathcal{I}^\perp=\mathcal{Q}equ \cap \mathcal{F}isof$ and $i\in \mathcal{I}\mbox{-cell}\subseteq \mathcal{C}of$.

We apply Theorem~\ref{thm:Quillen} to $\mathcal{J}$ and any dg functor $F\colon \mathcal{C}\rightarrow \mathcal{D}$. We obtain a factorization $F=qj$ with $q\in \mathcal{J}^\perp=\mathcal{F}isof$ and $j\in \mathcal{J}\mbox{-cell}$. By Proposition~\ref{prop:J-cell}, we have $j\in \mathcal{I}\mbox{-cell}\cap \mathcal{Q}equ\subseteq \mathcal{C}of \cap \mathcal{Q}equ$.

For (MC4), it remains to prove $(\mathcal{C}of\cap\mathcal{Q}equ)\perp \mathcal{F}isof$. By Proposition~\ref{prop:J-perp} we have $\mathcal{J}\subseteq {^\perp \mathcal{F}isof}$ and thus $\mathcal{J}\mbox{-cell}\subseteq {^\perp \mathcal{F}isof}$. Therefore, it suffices to prove that any dg functor $F\colon \mathcal{C}\rightarrow \mathcal{D}$ contained in $\mathcal{C}of\cap\mathcal{Q}equ$ is a retract of some dg functor in $\mathcal{J}\mbox{-cell}$.

Consider a factorization $\mathcal{C}\stackrel{j}\rightarrow\mathcal{C}' \stackrel{q}\rightarrow \mathcal{D}$ of $F$ with $j\in \mathcal{J}\mbox{-cell} $ and $q\in \mathcal{J}^\perp=\mathcal{F}isof$. Since $j$ is a quasi-equivalence, so is $q$. Since $F\in \mathcal{C}of$ and $q\in \mathcal{Q}equ\cap \mathcal{F}isof$, we have a lifting $H$ in the following diagram.
\[
\xymatrix{\mathcal{C}\ar[d]_-F \ar[rr]^-{j} && \mathcal{C}'\ar[d]^-q\\
         \mathcal{D} \ar@{.>}[rru]^-H \ar@{=}[rr] && \mathcal{D}}
\]
Then $F$ is a retract of $j\in \mathcal{J}\mbox{-cell} $, as is shown in the following commutative diagram.
\[
\xymatrix{
\mathcal{C}\ar[d]_-F \ar@{=}[r] & \mathcal{C}\ar[d]^-j \ar@{=}[r] &\mathcal{C}\ar[d]^-F\\
\mathcal{D} \ar[r]^-H & \mathcal{C}' \ar[r]^-q &\mathcal{D}
}
\]
This completes the whole proof. \hfill $\square$

\subsection{A path object} We denote by ${\rm Hodgcat}$ the homotopy category of ${\rm DGCat}$ with respect to the Dwyer-Kan model structure. We observe that every dg category is fibrant, and a dg category is cofibrant if and only if it is a retract of a semi-free dg category.

Let $\mathcal{B}$ be a dg category. Following \cite[Subsection~2.9]{Dri}, the \emph{morphism dg category} $mor(\mathcal{B})$ is defined as follows. Its objects are triples $(X_1, X_0; f)$, where $X_i$ are objects in $\mathcal{B}$ and $f\colon X_1\rightarrow X_0$ is a closed morphism of degree zero. The Hom complex between $(X_1, X_0; f)$ and $(Y_1, Y_0; g)$ is given by
$$mor(\mathcal{B})((X_1, X_0; f), (Y_1, Y_0; g))=\begin{pmatrix}
\mathcal{B}(X_0, Y_0) & \Sigma^{-1}\mathcal{B}(X_1, Y_0)\\
0 & \mathcal{B}(X_1, Y_1)
\end{pmatrix},$$
whose typical element of degree $p$ is of the form
$$\begin{pmatrix}
a_0 & s^{-1}h\\
0 & a_1
\end{pmatrix}$$
with $|a_0|=p=|a_1|$ and $|h|=p-1$. We visualize the morphism as the following diagram.
\[\xymatrix{
X_1\ar[d]_-{a_1} \ar@{.>}[drr]^-{h}\ar[rr]^-f && X_0\ar[d]^-{a_0}\\
Y_1\ar[rr]^-g && Y_0
}\]

 The differential of morphisms is given by
$$d_{mor(\mathcal{B})}\begin{pmatrix}
a_0 & s^{-1}h\\
0 & a_1
\end{pmatrix}=\begin{pmatrix}
d_\mathcal{B}(a_0) & s^{-1}(-d_\mathcal{B}(h)+a_0\circ f-g\circ a_1)\\
0 & d_\mathcal{B}(a_1)
\end{pmatrix}.$$
For another morphism $\begin{pmatrix}b_0& s^{-1}e\\
                                      0 & b_1\end{pmatrix}\colon (Y_1, Y_0;g)\rightarrow (Z_1, Z_0;k)$, the composition is defined by
$$\begin{pmatrix}b_0& s^{-1}e\\
                                      0 & b_1\end{pmatrix}\ \circ \begin{pmatrix}
a_0 & s^{-1}h\\
0 & a_1
\end{pmatrix}=\begin{pmatrix}
b_0\circ a_0 & s^{-1}((-1)^{|b_0|}b_0\circ h+e\circ a_1)\\
0 & b_1\circ a_1
\end{pmatrix}.$$

Recall the homotopy category $H^0(\mathcal{B})$. Denote by ${\rm mor}(H^0(\mathcal{B}))$ its morphism category. For a closed morphism $f$ of degree zero, we denote by $[f]$ the corresponding morphism in $H^0(\mathcal{B})$.

The following observation is due to \cite[Lemma~2.2.4]{CC}.

\begin{lem}\label{lem:mor-cat}
The canonical functor
$$H^0(mor(\mathcal{B}))\longrightarrow {\rm mor}(H^0(\mathcal{B})), \; (X_1, X_0; f)\mapsto (X_1, X_0; [f])$$
is full and dense, whose kernel ideal is square zero.
\end{lem}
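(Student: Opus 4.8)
The plan is to work with the explicit description of $H^0(mor(\mathcal{B}))$ coming from the formulas for the differential and composition in $mor(\mathcal{B})$, and to verify the three assertions by elementary manipulations with differentials; only the square-zero statement requires any real work. First I record the concrete picture. A degree-zero morphism of $mor(\mathcal{B})$ from $(X_1,X_0;f)$ to $(Y_1,Y_0;g)$ is a matrix $\left(\begin{smallmatrix}a_0 & s^{-1}h\\ 0 & a_1\end{smallmatrix}\right)$ with $a_i\in\mathcal{B}(X_i,Y_i)^0$ and $h\in\mathcal{B}(X_1,Y_0)^{-1}$; by the differential formula it is closed iff $a_0,a_1$ are closed and $a_0\circ f-g\circ a_1=d_\mathcal{B}(h)$, and it is a coboundary iff $a_i=d_\mathcal{B}(b_i)$ for $b_i$ of degree $-1$ (with $h$ differing from $b_0\circ f-g\circ b_1$ by a coboundary). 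Hence a closed degree-zero morphism yields $[a_0]\circ[f]=[g]\circ[a_1]$ in $H^0(\mathcal{B})$, so the pair $([a_1],[a_0])$ is a morphism of ${\rm mor}(H^0(\mathcal{B}))$, and it is unchanged under adding a coboundary; this defines the canonical functor, which I call $\Phi$, and it is visibly $\mathbb{K}$-linear and multiplicative (the $h$-entries are irrelevant modulo coboundaries). Since every morphism of $H^0(\mathcal{B})$ is by definition the class $[f]$ of a closed degree-zero morphism $f$, each object $(X_1,X_0;\varphi)$ of ${\rm mor}(H^0(\mathcal{B}))$ equals $\Phi(X_1,X_0;f)$ for any closed degree-zero $f$ with $[f]=\varphi$; so $\Phi$ is even surjective on objects, in particular dense.

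For fullness, take a morphism $([a_1],[a_0])\colon(X_1,X_0;[f])\to(Y_1,Y_0;[g])$ of ${\rm mor}(H^0(\mathcal{B}))$. The relation $[a_0\circ f]=[g\circ a_1]$ in $H^0(\mathcal{B}(X_1,Y_0))$ gives $h\in\mathcal{B}(X_1,Y_0)^{-1}$ with $a_0\circ f-g\circ a_1=d_\mathcal{B}(h)$; then $\left(\begin{smallmatrix}a_0 & s^{-1}h\\ 0 & a_1\end{smallmatrix}\right)$ is a closed degree-zero morphism of $mor(\mathcal{B})$ whose $\Phi$-image is $([a_1],[a_0])$.

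It remains to prove that the kernel ideal $\mathcal{N}$ of $\Phi$ satisfies $\mathcal{N}^2=0$. By the description above, a morphism of $H^0(mor(\mathcal{B}))$ represented by $\left(\begin{smallmatrix}a_0 & s^{-1}h\\ 0 & a_1\end{smallmatrix}\right)$ lies in $\mathcal{N}$ iff $a_0=d_\mathcal{B}(c_0)$ and $a_1=d_\mathcal{B}(c_1)$ for some $c_i$ of degree $-1$. Given a second such morphism $\left(\begin{smallmatrix}b_0 & s^{-1}e\\ 0 & b_1\end{smallmatrix}\right)\colon(Y_1,Y_0;g)\to(Z_1,Z_0;k)$, with $b_0=d_\mathcal{B}(c'_0)$ and $b_1=d_\mathcal{B}(c'_1)$, their composite is $\left(\begin{smallmatrix}b_0\circ a_0 & s^{-1}(b_0\circ h+e\circ a_1)\\ 0 & b_1\circ a_1\end{smallmatrix}\right)$, and I will exhibit it as $d_{mor(\mathcal{B})}$ of the degree $-1$ morphism $\left(\begin{smallmatrix}c'_0\circ a_0 & -s^{-1}w\\ 0 & c'_1\circ a_1\end{smallmatrix}\right)$ for a suitable $w\in\mathcal{B}(X_1,Z_0)^{-2}$, which forces the composite to vanish in $H^0$. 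Using $d_\mathcal{B}(a_i)=0$ and the Leibniz rule, $d_\mathcal{B}(c'_0\circ a_0)=b_0\circ a_0$ and $d_\mathcal{B}(c'_1\circ a_1)=b_1\circ a_1$, so the only constraint is on the off-diagonal entry, namely $d_\mathcal{B}(w)=b_0\circ h+e\circ a_1-c'_0\circ a_0\circ f+k\circ c'_1\circ a_1$. The key point is to introduce $\zeta:=c'_0\circ g-k\circ c'_1-e$: a short computation, using that $g$ and $k$ are closed together with the closedness identity $d_\mathcal{B}(e)=b_0\circ g-k\circ b_1$, shows $d_\mathcal{B}(\zeta)=0$; and, using the closedness identity $d_\mathcal{B}(h)=a_0\circ f-g\circ a_1$ to rewrite $d_\mathcal{B}(c'_0\circ h)$, one finds that $w:=c'_0\circ h+\zeta\circ c_1$ satisfies the required equation. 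This proves $\mathcal{N}^2=0$.

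The only genuinely delicate part is this last paragraph: assembling the explicit null-homotopy $w$ and keeping all the signs straight in the definition of $\zeta$ and in the repeated use of the Leibniz rule. Everything else is routine translation between $mor(\mathcal{B})$ and $H^0(\mathcal{B})$.
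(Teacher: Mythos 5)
Your proposal is correct and follows essentially the same route as the paper: fullness and density are the routine translations you describe, and the square-zero property is proved by exhibiting the composite of two kernel morphisms as an explicit coboundary in $mor(\mathcal{B})$. Your primitive $\begin{pmatrix} c'_0\circ a_0 & -s^{-1}w\\ 0 & c'_1\circ a_1\end{pmatrix}$ with $w=c'_0\circ h+(c'_0\circ g-k\circ c'_1-e)\circ c_1$ differs from the paper's choice only in the lower-right entry and the corresponding off-diagonal term, and your sign computations (in particular $d_\mathcal{B}(\zeta)=0$ and the verification of $d_\mathcal{B}(w)$) check out.
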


\begin{proof}
The only nontrivial part is the last statement. For this, we take two composable morphisms in the kernel ideal: $\begin{pmatrix}
d_\mathcal{B}(u_0) & s^{-1}h\\
0 & d_\mathcal{B}(u_1)
\end{pmatrix}\colon (X_1, X_0; f) \rightarrow (Y_1, Y_0;g)$ and $\begin{pmatrix}
d_\mathcal{B}(v_0) & s^{-1}e\\
0 & d_\mathcal{B}(v_1)
\end{pmatrix}\colon (Y_1, Y_0; g) \rightarrow (Z_1, Z_0;k)$. Here, the morphisms $u_i, v_i,h$ and $e$ are of degree $-1$. Using the conditions $d_\mathcal{B}(h)=d_\mathcal{B}(u_0)\circ f-g\circ d_\mathcal{B}(u_1)$ and $d_\mathcal{B}(e)=d_\mathcal{B}(v_0)\circ g-k\circ d_\mathcal{B}(v_1)$, we have
$$\begin{pmatrix}
d_\mathcal{B}(v_0) & s^{-1}e\\
0 & d_\mathcal{B}(v_1)
\end{pmatrix}\circ \begin{pmatrix}
d_\mathcal{B}(u_0) & s^{-1}h\\
0 & d_\mathcal{B}(u_1)
\end{pmatrix}= d_{mor(\mathcal{B})}  \begin{pmatrix}
v_0\circ d_\mathcal{B}(u_0) & s^{-1}h'\\
0 & d_\mathcal{B}(v_1)\circ u_1
\end{pmatrix},$$
where $h'=e\circ u_1-v_0\circ h-v_0\circ g\circ u_1$. Therefore, the corresponding composition in $H^0(mor(\mathcal{B}))$ is zero.
\end{proof}

For $i=0, 1$, we have the projection dg functors $\pi_i\colon mor(\mathcal{B})\rightarrow \mathcal{B}$ sending $(X_1, X_0; f)$ to $X_i$.

\begin{cor}\label{cor:mor-cat}
Let $\begin{pmatrix}
a_0 & s^{-1}h\\
0 & a_1
\end{pmatrix}\colon (X_1, X_0; f)\rightarrow (Y_1, Y_0; g)$ be a closed morphism of degree zero. Then it is a homotopy equivalence in $mor(\mathcal{B})$ if and only if both $a_i$ are homotopy equivalences in $\mathcal{B}$.
\end{cor}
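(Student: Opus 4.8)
The plan is to reduce everything to Lemma~\ref{lem:mor-cat} together with two elementary facts: that a closed morphism of degree zero in a dg category is a homotopy equivalence precisely when its class in the relevant $H^0$ is invertible (which is the very definition of a homotopy equivalence), and that a full functor whose kernel ideal is square zero reflects isomorphisms.

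First I would record the categorical lemma. Let $\Phi\colon \mathcal{A}\rightarrow \mathcal{A}'$ be a full functor whose kernel ideal $\mathcal{N}$ satisfies $\mathcal{N}\circ\mathcal{N}=0$, and let $\alpha\colon A\rightarrow B$ be a morphism in $\mathcal{A}$ with $\Phi(\alpha)$ invertible. Using fullness, choose $\beta\colon B\rightarrow A$ with $\Phi(\beta)=\Phi(\alpha)^{-1}$. Then $n:=\beta\circ\alpha-{\rm Id}_A$ and $n':=\alpha\circ\beta-{\rm Id}_B$ lie in $\mathcal{N}$, so $n^2=0=n'^2$; hence ${\rm Id}_A-n$ is a two-sided inverse of $\beta\circ\alpha={\rm Id}_A+n$, and likewise $\alpha\circ\beta$ is invertible. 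Therefore $\alpha$ has a left inverse and a right inverse, so it is an isomorphism. Applying this to the functor $\Phi\colon H^0(mor(\mathcal{B}))\rightarrow {\rm mor}(H^0(\mathcal{B}))$ of Lemma~\ref{lem:mor-cat} — which is full with square-zero kernel ideal — shows that a closed morphism $M$ of degree zero in $mor(\mathcal{B})$ is a homotopy equivalence, i.e.\ its class $[M]$ is invertible in $H^0(mor(\mathcal{B}))$, if and only if $\Phi([M])$ is invertible in ${\rm mor}(H^0(\mathcal{B}))$.

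Next I would unwind what that image is. For $M=\begin{pmatrix} a_0 & s^{-1}h\\ 0 & a_1\end{pmatrix}\colon (X_1,X_0;f)\rightarrow (Y_1,Y_0;g)$, the morphism $\Phi([M])$ in ${\rm mor}(H^0(\mathcal{B}))$ is the commutative square with horizontal arrows $[f],[g]$ and vertical arrows $[a_1],[a_0]$ — the entry $h$ becomes irrelevant, directly from the definitions of the differential and composition in $mor(\mathcal{B})$. A morphism in the arrow category ${\rm mor}(\mathcal{E})$ of an ordinary category $\mathcal{E}$ is an isomorphism exactly when both its vertical components are isomorphisms in $\mathcal{E}$; hence $\Phi([M])$ is invertible if and only if $[a_0]$ and $[a_1]$ are invertible in $H^0(\mathcal{B})$, that is, if and only if both $a_0$ and $a_1$ are homotopy equivalences in $\mathcal{B}$. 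Combining this with the previous paragraph yields the corollary.

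I do not expect a serious obstacle: the computation needed is only the verification that $\Phi$ sends $[M]$ to exactly the square described, which is immediate. The one point deserving a sentence of care is that one must pass at the outset from a homotopy equivalence $M$ in $mor(\mathcal{B})$ to its class $[M]$ in $H^0(mor(\mathcal{B}))$, since by definition homotopy equivalences are precisely the closed degree-zero morphisms whose classes are isomorphisms there; after that, the statement is a formal consequence of the square-zero kernel property and the description of isomorphisms in an arrow category.
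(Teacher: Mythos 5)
Your argument is correct and matches the paper's proof in substance: both reduce the statement to Lemma~\ref{lem:mor-cat} together with the fact that a full functor with square-zero kernel ideal reflects isomorphisms, a fact the paper merely asserts and you additionally prove via the standard $({\rm Id}+n)({\rm Id}-n)={\rm Id}$ computation. The only cosmetic difference is that you obtain the ``only if'' direction by pushing $[M]$ forward along the canonical functor to ${\rm mor}(H^0(\mathcal{B}))$ and reading off the components, whereas the paper applies the projection dg functors $\pi_i$ directly; both are immediate.
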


\begin{proof}
By applying the dg functors $\pi_i$, we infer the ``only if" part. For the ``if" part, we observe that $$([a_1], [a_0])\colon (X_1, X_0; [f])\longrightarrow (Y_1, Y_0; [g])$$
is an isomorphism in ${\rm mor}(H^0(\mathcal{B}))$. By Lemma~\ref{lem:mor-cat}, the canonical functor reflects isomorphisms. Then we infer that the given morphism represents an isomorphism in $H^0(mor(\mathcal{B}))$, that is, it is a homotopy equivalence in $mor(\mathcal{B})$.
\end{proof}

Denote by $\mathcal{P}(\mathcal{B})$ the full dg subcategory of $mor(\mathcal{B})$ formed by all objects $(X_1, X_0; f)$ with $f\colon X_1\rightarrow X_0$ a homotopy equivalence. The following result is due to \cite[Proposition~2.0.11]{Tab10}; compare \cite[Subsection~D.1]{Dri}.

\begin{prop}\label{prop:dg-pathobj}
The following diagram
$$\mathcal{B}\xrightarrow{\rm diag} \mathcal{P}(\mathcal{B}) \xrightarrow{\begin{pmatrix} \pi_0\\ \pi_1\end{pmatrix}} \mathcal{B}\times \mathcal{B}$$
is a good path object for $\mathcal{B}$, where ${\rm diag}$ sends $X$ to $(X, X; {\rm Id}_X)$.
\end{prop}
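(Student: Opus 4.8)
The plan is to verify the three things required of a good path object in the Dwyer--Kan structure: that $\binom{\pi_0}{\pi_1}\circ{\rm diag}$ is the diagonal dg functor $\mathcal{B}\to\mathcal{B}\times\mathcal{B}$, that ${\rm diag}$ is a quasi-equivalence, and that $\binom{\pi_0}{\pi_1}$ is a full isofibration. The first is immediate from the formulas: ${\rm diag}$ sends $X$ to $(X,X;{\rm Id}_X)$ and a morphism $a$ to $\left(\begin{smallmatrix}a&0\\0&a\end{smallmatrix}\right)$, which $\pi_0$ and $\pi_1$ send back to $X,X$ and to $a,a$.

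To see that ${\rm diag}$ is a quasi-equivalence, I would first treat quasi-full-faithfulness. Fix objects $X,Y$ of $\mathcal{B}$ and set $C=\mathcal{B}(X,Y)$; reading off $d_{mor(\mathcal{B})}$ with both structure maps equal to identities, the Hom complex $\mathcal{P}(\mathcal{B})({\rm diag}X,{\rm diag}Y)$ is $C\oplus\Sigma^{-1}C\oplus C$ with $d(a_0,s^{-1}h,a_1)=(d(a_0),s^{-1}(-d(h)+a_0-a_1),d(a_1))$, and ${\rm diag}$ is the chain map $a\mapsto(a,0,a)$; this is exactly the cocylinder path object of $C$, in the spirit of the formal-matrix construction $\Gamma$ of Section~5. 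Projecting onto the first summand gives a surjective chain map whose kernel $\Sigma^{-1}C\oplus C$ is the mapping cone of an isomorphism, hence contractible, so the projection is a quasi-isomorphism; since it is a retraction of ${\rm diag}$, two-out-of-three forces ${\rm diag}$ to be a quasi-isomorphism. For density of $H^0({\rm diag})$, given an object $(X_1,X_0;f)$ of $\mathcal{P}(\mathcal{B})$ --- so $f$ is a homotopy equivalence --- I would observe that $\left(\begin{smallmatrix}{\rm Id}_{X_0}&0\\0&f\end{smallmatrix}\right)$ is a closed degree-zero morphism $(X_1,X_0;f)\to(X_0,X_0;{\rm Id}_{X_0})={\rm diag}(X_0)$, which is a homotopy equivalence by Corollary~\ref{cor:mor-cat} (both ${\rm Id}_{X_0}$ and $f$ being homotopy equivalences); hence it is an isomorphism in $H^0(\mathcal{P}(\mathcal{B}))$.

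To see that $\binom{\pi_0}{\pi_1}$ is a full isofibration: fullness is clear, since $(a_0,a_1)$ in $\mathcal{B}(X_0,Y_0)\times\mathcal{B}(X_1,Y_1)$ is the image of $\left(\begin{smallmatrix}a_0&0\\0&a_1\end{smallmatrix}\right)$. For the lifting of homotopy equivalences, note that a homotopy equivalence in $\mathcal{B}\times\mathcal{B}$ out of $\binom{\pi_0}{\pi_1}(X_1,X_0;f)=(X_0,X_1)$ is a pair $(w_0,w_1)$ of homotopy equivalences $w_0\colon X_0\to Z_0$ and $w_1\colon X_1\to Z_1$ in $\mathcal{B}$. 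I would choose a homotopy inverse $\bar w_1$ of $w_1$ together with $h_{X_1}\in\mathcal{B}(X_1,X_1)^{-1}$ with $\bar w_1\circ w_1-{\rm Id}_{X_1}=d(h_{X_1})$, put $g=w_0\circ f\circ\bar w_1\colon Z_1\to Z_0$ (a closed degree-zero homotopy equivalence, so $(Z_1,Z_0;g)$ lies in $\mathcal{P}(\mathcal{B})$), and check that $\left(\begin{smallmatrix}w_0& -s^{-1}(w_0\circ f\circ h_{X_1})\\0&w_1\end{smallmatrix}\right)\colon(X_1,X_0;f)\to(Z_1,Z_0;g)$ is closed --- the off-diagonal entry is chosen precisely so that $-d(h)+w_0\circ f-g\circ w_1=0$ --- and lies over $(w_0,w_1)$. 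By Corollary~\ref{cor:mor-cat} it is a homotopy equivalence in $\mathcal{P}(\mathcal{B})$, which is the required lift.

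The argument is essentially bookkeeping once Corollary~\ref{cor:mor-cat} is available; the only point demanding care is the sign convention in $d_{mor(\mathcal{B})}$, which must be tracked both to identify the kernel complex in the quasi-full-faithfulness step as a cone of an isomorphism and to confirm that the proposed lift in the isofibration step is genuinely closed. I do not foresee a serious obstacle beyond this.
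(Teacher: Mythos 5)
Your proposal is correct and follows essentially the same route as the paper: identify $\mathcal{P}(\mathcal{B})(\mathrm{diag}\,X,\mathrm{diag}\,Y)$ with the cocylinder of $\mathcal{B}(X,Y)$ for quasi-full-faithfulness, use Corollary~\ref{cor:mor-cat} to produce a homotopy equivalence between $(X_1,X_0;f)$ and a diagonal object for density, and lift a pair of homotopy equivalences via the closed morphism with off-diagonal entry $-w_0\circ f\circ h_{X_1}$ for the isofibration property. The only (harmless) deviations are that you spell out the contractibility argument for the cocylinder inclusion and connect $(X_1,X_0;f)$ to $\mathrm{diag}(X_0)$ rather than from $\mathrm{diag}(X_1)$.
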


\begin{proof}
Write $\mathcal{P}=\mathcal{P}(\mathcal{B})$. We observe that ${\rm diag}$ is quasi-fully faithful, because $\mathcal{P}((X, X; {\rm Id}_X), (Y, Y; {\rm Id}_Y))$ is the cocylinder of $\mathcal{B}(X, Y)$. For  any object $(X_1, X_0; f)$ in $\mathcal{P}$, we have that
 $$\begin{pmatrix} f & 0\\ 0 & {\rm Id}_{X_1}\end{pmatrix}\colon (X_1, X_1; {\rm Id}_{X_1})\longrightarrow (X_1, X_0; f)$$
is a homotopy equivalence; see Corollary~\ref{cor:mor-cat}. It follows that ${\rm diag}$ is a quasi-equivalence.

The functor $\begin{pmatrix} \pi_0\\ \pi_1\end{pmatrix}$ is clearly full. To prove that it is a full isofibration, we take an object $(X_1, X_0;f )$ in $\mathcal{P}$  and two homotopy equivalences $b\colon X_1\rightarrow X_1'$ and $a\colon X_0\rightarrow X_0'$. Take a homotopy inverse $e\colon X_1'\rightarrow X_1$ of $b$ and $h_{X_1}\in \mathcal{B}(X_1, X_1)^{-1}$ satisfying $e\circ b-{\rm Id}_{X_1}=d_\mathcal{B}(h_{X_1})$. Then $(X'_1, X'_0; a\circ f\circ e)$ is an object in $\mathcal{P}$. By Corollary~\ref{cor:mor-cat}, we have a homotopy equivalence
$$\begin{pmatrix} a & -a\circ f\circ h_{X_1}\\ 0 & b\end{pmatrix}\colon (X_1, X_0; f)\longrightarrow (X'_1, X'_0; a\circ f\circ e).$$
This is the required morphism, because it is sent by $\begin{pmatrix} \pi_0\\ \pi_1\end{pmatrix}$ to $(b, a)$.
\end{proof}

The above result implies the following standard fact: up to  quasi-isomorphism, the dg endomorphism algebra is a  invariant in the homotopy category.

\begin{cor}
Let $X$ and $Y$ be two objects in $\mathcal{B}$ such that they are isomorphic in $H^0(\mathcal{B})$. Then the dg endomorphism algebras $\mathcal{B}(X,X)$ and $\mathcal{B}(Y, Y)$ are linked by a zigzag of quasi-isomorphisms of dg algebras.
\end{cor}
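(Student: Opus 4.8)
The plan is to realize the desired zigzag inside the path object $\mathcal{P}(\mathcal{B})$ of Proposition~\ref{prop:dg-pathobj}. Since $X$ and $Y$ are isomorphic in $H^0(\mathcal{B})$, I would first fix a closed morphism $f\colon X\to Y$ of degree zero representing this isomorphism; by definition $f$ is then a homotopy equivalence in $\mathcal{B}$, so the triple $P=(X,Y;f)$ is an object of $\mathcal{P}(\mathcal{B})$, with $\pi_0(P)=Y$ and $\pi_1(P)=X$. The two projection dg functors $\pi_0,\pi_1\colon \mathcal{P}(\mathcal{B})\to\mathcal{B}$ induce homomorphisms of dg algebras $\mathcal{P}(\mathcal{B})(P,P)\to\mathcal{B}(\pi_i(P),\pi_i(P))$ on endomorphism dg algebras, and the whole proof reduces to showing that these two homomorphisms are quasi-isomorphisms.

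The key observation is that both $\pi_0$ and $\pi_1$ are themselves quasi-equivalences. Indeed, $\mathrm{diag}\colon\mathcal{B}\to\mathcal{P}(\mathcal{B})$ is a quasi-equivalence by Proposition~\ref{prop:dg-pathobj}, and $\pi_i\circ\mathrm{diag}=\mathrm{Id}_\mathcal{B}$; since $\mathcal{Q}equ$ is the class of weak equivalences in the Dwyer--Kan model structure, the two-out-of-three axiom (MC3) forces $\pi_i\in\mathcal{Q}equ$. In particular each $\pi_i$ is quasi-fully faithful, so, applying the defining property with the single object $P$, both cochain maps
\[
\mathcal{B}(X,X)=\mathcal{B}(\pi_1(P),\pi_1(P))\xleftarrow{\ \pi_1\ }\mathcal{P}(\mathcal{B})(P,P)\xrightarrow{\ \pi_0\ }\mathcal{B}(\pi_0(P),\pi_0(P))=\mathcal{B}(Y,Y)
\]
are quasi-isomorphisms; as they are also dg algebra homomorphisms, this is precisely the required zigzag of quasi-isomorphisms of dg algebras.

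I do not expect a genuine obstacle here: once Proposition~\ref{prop:dg-pathobj} is available, the argument is a one-line use of (MC3) together with quasi-full-faithfulness. If one wished to avoid invoking the model structure at all, the same conclusion follows by a direct computation: $\mathcal{P}(\mathcal{B})(P,P)$ is the formal matrix dg algebra $\left(\begin{smallmatrix}\mathcal{B}(Y,Y)&\Sigma^{-1}\mathcal{B}(X,Y)\\ 0&\mathcal{B}(X,X)\end{smallmatrix}\right)$, the maps $\pi_0$ and $\pi_1$ are the surjections onto the upper-left and lower-right corners, and one checks, by reading off the differential of $mor(\mathcal{B})$, that the kernel of each is a two-step filtered complex which up to shift is the mapping cone of $f\circ(-)\colon\mathcal{B}(X,X)\to\mathcal{B}(X,Y)$, respectively $(-)\circ f\colon\mathcal{B}(Y,Y)\to\mathcal{B}(X,Y)$; since $f$ is a homotopy equivalence these are quasi-isomorphisms, so the cones are acyclic and $\pi_0,\pi_1$ are quasi-isomorphisms. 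The only care needed in this alternative is the bookkeeping of signs when identifying the sub-quotient complexes with mapping cones.
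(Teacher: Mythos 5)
Your argument is correct and is essentially the paper's own proof: form the object $(X,Y;f)$ of $\mathcal{P}(\mathcal{B})$, note via Proposition~\ref{prop:dg-pathobj} and two-out-of-three that $\pi_0,\pi_1$ are quasi-equivalences, and read off the zigzag $\mathcal{B}(X,X)\leftarrow \mathcal{P}(\mathcal{B})(P,P)\rightarrow \mathcal{B}(Y,Y)$ of dg algebra quasi-isomorphisms. Your alternative direct computation (identifying the kernels of the two projections with shifted mapping cones of $f\circ(-)$ and $(-)\circ f$) is a correct extra that the paper does not include.
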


\begin{proof}
Assume that $\theta\colon X\rightarrow Y$ is a homotopy equivalence in $\mathcal{B}$. Then $(X, Y; \theta)$ is an object in $\mathcal{P}(\mathcal{B})$ satisfying $\pi_0(X, Y; \theta)=Y$ and $\pi_1(X, Y; \theta)=X$. Proposition~\ref{prop:dg-pathobj} implies that both $\pi_0$ and $\pi_1$ are quasi-equivalences. Therefore, we have a required chain
$$\mathcal{B}(X, X)\longleftarrow  \mathcal{P}(\mathcal{B})((X, Y; \theta), (X, Y; \theta))= \begin{pmatrix}
\mathcal{B}(Y, Y) & \Sigma^{-1}\mathcal{B}(X, Y)\\
0 & \mathcal{B}(X, X)
\end{pmatrix}  \longrightarrow \mathcal{B}(Y, Y)$$
consisting of quasi-isomorphisms between  dg endomorphism algebras.
\end{proof}

\begin{defn}
Two dg functors $F, G\colon \mathcal{A}\rightarrow \mathcal{B}$ are said to be \emph{cochain homotopic}, denoted by $F\stackrel{c}{\sim} G$, if there exists a dg functor $K\colon \mathcal{A}\rightarrow \mathcal{P}(\mathcal{B})$ such that $F=\pi_0 K$ and $G=\pi_1 K$.
\end{defn}

We refer to \cite[Subsection~3.3]{Kel99} and \cite[Remark~2.0.12]{Tab10} for the following observation.

\begin{lem}
Assume that $F, G\colon \mathcal{A}\rightarrow \mathcal{B}$ are two dg functors. Then $F\stackrel{c}{\sim} G$ if and only if the following conditions hold: for each object $A$ in $\mathcal{A}$, there exists a homotopy equivalence $\eta_A\colon G(A)\rightarrow F(A)$ which is in general not functorial in $A$; for any $A$ and $A'$, there exists a map of degree $-1$
$$h_{A, A'}\colon \mathcal{A}(A, A')\longrightarrow \mathcal{B}(G(A), F(A'))$$
such that
$$F(f)\circ \eta_A-\eta_{A'} \circ G(f)=h_{A, A'}(d_\mathcal{A}(f))+d_\mathcal{B}(h_{A, A'}(f)),$$
and
$$h_{A, A''}(g\circ f)=h_{A', A''}(g)\circ G(f)+(-1)^{|g|} F(g)\circ h_{A, A'}(f) $$
hold for any $f\colon A\rightarrow A'$ and $g\colon A'\rightarrow A''$ in $\mathcal{A}$.
\end{lem}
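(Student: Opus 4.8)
The plan is to unwind both sides of the equivalence into the explicit matrix description of $mor(\mathcal{B})$ and then match entries. By definition, $F\stackrel{c}{\sim}G$ means there is a dg functor $K\colon\mathcal{A}\to\mathcal{P}(\mathcal{B})$ with $\pi_0K=F$ and $\pi_1K=G$. Since every object of $\mathcal{P}(\mathcal{B})$ has the form $(X_1,X_0;f)$ with $f$ a homotopy equivalence, and $\pi_1$, $\pi_0$ read off the first and second slots, the object part of $K$ must be $K(A)=(G(A),F(A);\eta_A)$ for a closed degree-zero homotopy equivalence $\eta_A\colon G(A)\to F(A)$. Likewise, for $f\colon A\to A'$ the morphism $K(f)$ lies in $mor(\mathcal{B})\big((G(A),F(A);\eta_A),(G(A'),F(A');\eta_{A'})\big)$, hence is a matrix whose $(1,1)$- and $(2,2)$-entries, being $\pi_0K(f)$ and $\pi_1K(f)$, are forced to equal $F(f)$ and $G(f)$; so $K(f)=\left(\begin{smallmatrix}F(f)&s^{-1}h_{A,A'}(f)\\0&G(f)\end{smallmatrix}\right)$ for a $\mathbb{K}$-linear map $h_{A,A'}$ of degree $-1$ from $\mathcal{A}(A,A')$ to $\mathcal{B}(G(A),F(A'))$.

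It then remains to translate the statement that $K$ is a dg functor into conditions on the $\eta_A$ and $h_{A,A'}$. Comparing the off-diagonal entries of $K(d_\mathcal{A}(f))$ and $d_{mor(\mathcal{B})}(K(f))$, via the displayed differential formula in $mor(\mathcal{B})$ — the diagonal entries already match because $F$ and $G$ are dg functors — gives exactly the first identity $F(f)\circ\eta_A-\eta_{A'}\circ G(f)=h_{A,A'}(d_\mathcal{A}(f))+d_\mathcal{B}(h_{A,A'}(f))$. Comparing the off-diagonal entries of $K(g\circ f)$ and $K(g)\circ K(f)$ via the displayed composition formula, which is where the sign $(-1)^{|g|}$ enters, gives the second identity. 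Preservation of identity morphisms amounts to $h_{A,A}({\rm Id}_A)=0$, but this is not an extra hypothesis: substituting $f=g={\rm Id}_A$ into the composition identity forces $h_{A,A}({\rm Id}_A)=2h_{A,A}({\rm Id}_A)$, hence $h_{A,A}({\rm Id}_A)=0$.

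For the converse, I run the same computations backwards: given homotopy equivalences $\eta_A\colon G(A)\to F(A)$ and degree $-1$ linear maps $h_{A,A'}$ satisfying the two displayed identities, the formulas $K(A)=(G(A),F(A);\eta_A)$ and $K(f)=\left(\begin{smallmatrix}F(f)&s^{-1}h_{A,A'}(f)\\0&G(f)\end{smallmatrix}\right)$ define a dg functor $K\colon\mathcal{A}\to\mathcal{P}(\mathcal{B})$ — the target genuinely lands in $\mathcal{P}(\mathcal{B})$ because each $\eta_A$ is a homotopy equivalence — and clearly $\pi_0K=F$, $\pi_1K=G$, so $F\stackrel{c}{\sim}G$. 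I expect no conceptual obstacle; the only points requiring care are keeping track of the sign $(-1)^{|g|}$ inherited from the composition law of $mor(\mathcal{B})$ and the degree shifts concealed in the $s^{-1}$-notation, together with the small observation that identity-preservation comes for free.
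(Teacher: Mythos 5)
Your proposal is correct and follows exactly the route the paper takes (the paper only records the construction $K(A)=(G(A),F(A);\eta_A)$, $K(f)=\left(\begin{smallmatrix}F(f)&s^{-1}h_{A,A'}(f)\\0&G(f)\end{smallmatrix}\right)$ for the ``if'' part and leaves the verification implicit); your matching of the off-diagonal entries against the displayed differential and composition formulas, and the observation that $h_{A,A}({\rm Id}_A)=2h_{A,A}({\rm Id}_A)$ forces identity-preservation for free, supply precisely the omitted details.
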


\begin{proof}
We only mention that for the ``if" part, we construct $K\colon \mathcal{A}\rightarrow \mathcal{P}(\mathcal{B})$ by $K(A)=(G(A), F(A); \eta_A)$ and $K(f)=\begin{pmatrix} F(f) & s^{-1}h_{A, A'}(f)\\
0 & G(f)\end{pmatrix}$.
\end{proof}

We denote by $\stackrel{r}{\sim}$ the right homotopy relation with respect to the Dwyer-Kan model structure.

\begin{prop}
Let $F, G\colon \mathcal{A}\rightarrow \mathcal{B}$ be two dg functors. Then the following statements hold.
\begin{enumerate}
\item $F\stackrel{c}{\sim} G \Rightarrow F\stackrel{r}{\sim} G$.
\item Assume that $\mathcal{A}$ is cofibrant. Then $\stackrel{c}{\sim}$ and $\stackrel{r}{\sim}$ coincide.
\end{enumerate}
\end{prop}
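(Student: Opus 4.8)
The plan is to read off both statements almost formally, the single nontrivial input being Proposition~\ref{prop:dg-pathobj}, which says that
$$\mathcal{B}\xrightarrow{{\rm diag}}\mathcal{P}(\mathcal{B})\xrightarrow{\binom{\pi_0}{\pi_1}}\mathcal{B}\times\mathcal{B}$$
is a \emph{good} path object for $\mathcal{B}$. I would take that proposition as given and combine it with the general homotopy machinery of Section~3.

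For (1): if $F\stackrel{c}{\sim}G$, then by definition there is a dg functor $K\colon\mathcal{A}\to\mathcal{P}(\mathcal{B})$ with $\pi_0K=F$ and $\pi_1K=G$. Since $\mathcal{P}(\mathcal{B})$ is a path object for $\mathcal{B}$, such a $K$ is exactly a right-homotopy from $F$ to $G$; as $\stackrel{r}{\sim}$ only requires the existence of \emph{some} path object realizing the homotopy, we obtain $F\stackrel{r}{\sim}G$ at once (indeed a good right-homotopy, the path object being good).

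For (2): I would note that in ${\rm DGCat}$ every dg category is fibrant, so $\mathcal{B}$ is fibrant, and $\mathcal{A}$ is cofibrant by hypothesis. Then Proposition~\ref{prop:cof-fib} applies, and more precisely Remark~\ref{rem:homo-fixed} lets one test the right-homotopy relation on ${\rm DGCat}(\mathcal{A},\mathcal{B})$ against a single fixed good path object for $\mathcal{B}$. Choosing that fixed object to be $\mathcal{P}(\mathcal{B})$ — permissible by Proposition~\ref{prop:dg-pathobj} — one gets that $F\stackrel{r}{\sim}G$ if and only if there is a dg functor $K\colon\mathcal{A}\to\mathcal{P}(\mathcal{B})$ with $\pi_0K=F$ and $\pi_1K=G$, i.e. if and only if $F\stackrel{c}{\sim}G$. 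Together with (1), this gives the coincidence of $\stackrel{c}{\sim}$ and $\stackrel{r}{\sim}$ for cofibrant $\mathcal{A}$. (This parallels the corresponding statement for dg algebras, whose proof likewise reduces to Remark~\ref{rem:homo-fixed}.)

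I do not expect any real obstacle once Proposition~\ref{prop:dg-pathobj} is in hand: the argument is a direct quotation of the definitions together with Remark~\ref{rem:homo-fixed}. The only point to be careful about is invoking that remark under its actual hypotheses — cofibrancy of $\mathcal{A}$, fibrancy of $\mathcal{B}$ (automatic here), and goodness of the chosen path object. Cofibrancy of $\mathcal{A}$ is genuinely needed for the reverse implication in (2); dropping it should fail, in the spirit of the remark following the analogous dg-algebra statement, where $\Gamma$ over $\mathbb{K}$ exhibits a cochain homotopy that is not "elementary" precisely because $\Gamma$ is not cofibrant. If one preferred to bypass the abstract remark, the alternative is the usual "any two path objects are comparable" argument — produce a comparison dg functor $\mathcal{B}^I\to\mathcal{P}(\mathcal{B})$ over $\mathcal{B}\times\mathcal{B}$ by a lifting against the full isofibration $\binom{\pi_0}{\pi_1}$ and precompose — which is longer, and keeping track of which lifting axiom (and which cofibrancy) is used would be the only delicate step there.
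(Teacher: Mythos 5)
Your proposal is correct and follows exactly the paper's route: (1) is immediate from the definition once $\mathcal{P}(\mathcal{B})$ is known to be a (good) path object via Proposition~\ref{prop:dg-pathobj}, and (2) is the application of Remark~\ref{rem:homo-fixed} with the fixed good path object taken to be $\mathcal{P}(\mathcal{B})$, using that every dg category is fibrant. Your added care about the hypotheses of that remark, and the parallel with the dg-algebra case, only makes explicit what the paper leaves implicit.
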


\begin{proof}
(1) is trivial, and (2) follows from Remark~\ref{rem:homo-fixed}.
\end{proof}

\vskip 5pt

\noindent {\bf Acknowledgements.} \; The author thanks Xiaofa Chen, Haoyu Wang and  Jianuo Zhou for many helpful comments.


\begin{thebibliography}{99}


\bibitem{And} {\sc D.W. Anderson}, \emph{Fibrations and geometric realizations}, Bull. Amer. Math Soc. {\bf 84}, (1978), 765--786.

\bibitem{Bal} {\sc S. Balchin}, A Handbook of Model Categories, Algebra and Applications {\bf 27}, Springer, 2021.

\bibitem{Bou}   {\sc  A.K. Bousfield}, \emph{Homotopy spectral sequences and obstructions}, Israel J. Math.
{\bf 66} (1989), 1--3.

\bibitem{BG} {\sc A.K. Bousfield, and V.K.A.M. Gugenheim}, {\em On PL de Rham theory and rational homotopy type}, Mem. Amer. Math. Soc.  {\bf 179}, 1976.

\bibitem{CC} {\sc X. Chen, and X.W. Chen}, \emph{An informal introduction to dg categories}, arXiv:1908.04599v2, 2021.

\bibitem{Dri} {\sc V. Drinfeld}, \emph{DG quotients of DG categories}, J. Algebra {\bf 272} (2004), 643--691.

\bibitem{DS} {\sc W.G. Dwyer, and J. Spalinski}, \emph{Homotopy theories and model categories}, in: Handbook of Algebraic Topology, 73--126, North-Holland, 1995.

\bibitem{GZ} {\sc P. Gabriel, and M. Zisman}, Calculus of Fractions and Homotopy Theory, Springer-Verlag, Berlin Heidelberg New York, 1967.

\bibitem{Hir} {\sc P. Hirschhorn}, Model Categories and Their Localizations, Mathematical Surveys and Monographs {\bf 99}, Amer. Math. Soc., 2007.

\bibitem{Jar} {\sc  J.F. Jardine}, {\em A closed model structure for differential graded algebras}, Fields Institute Math. {\bf 17} (1997), 55--58.

\bibitem{Kel99} {\sc B. Keller}, {\em On the cylic homology of exact categories}, J. Pure Appl. Algebra {\bf 136} (1999), 1--56.





\bibitem{Qui} {\sc D.G. Quillen},  Homotopical Algebra,  Lecture Notes in Math. {\bf 43}, Springer-Verlag, Berlin, 1967.

\bibitem{Rezk} {\sc C. Rezk}, {\em A model category for categories},  preprint, 2000.

\bibitem{Tab05} {\sc G. Tabuada}, {\em Une structure de cat\'{e}gorie de mod\`{e}les de Quillen sur la cat\'{e}gorie des dg-cat\'{e}gories},  C. R. Acad. Sci. Paris S¨¦r. I Math. {\bf 340} (1) (2005), 15--19.

\bibitem{Tab10} {\sc G. Tabuada}, {\em On Drinfeld's DG quotient}, J. Algebra {\bf 323} (2010), 1226--1240.

\bibitem{Tab15}{\sc G. Tabuada},  Noncommutative Motives, Univ. Lecture Series {\bf 63}, Amer. Math. Soc., 2015.

\bibitem{Thom} {\sc R.W. Thomason}, {\em Cat as a closed model category}, Cahiers de Topologie et G\'{e}om\'{e}trie Diff\'{e}rentielle Cat\'{e}goriques, Tome {\bf 21} (3) (1980), 305--324.

\end{thebibliography}
\end{document}